\pgfplotsset{compat=1.18}
\definecolor{uuuuuu}{rgb}{0.27,0.27,0.27}
\definecolor{sqsqsq}{rgb}{0.1255,0.1255,0.1255}
\newtheorem{definition}{Definition} [section]
\newtheorem{theorem}[definition]{Theorem}
\newtheorem{lemma}[definition]{Lemma}
\newtheorem{proposition}[definition]{Proposition}
\newtheorem{corollary}[definition]{Corollary}
\newtheorem{conjecture}[definition]{Conjecture}
\newtheorem{claim}[definition]{Claim}
\newtheorem{problem}[definition]{Problem}
\newtheorem{fact}[definition]{Fact}
\newcommand{\uproduct}{\mathbin{\;{\rotatebox{90}{\textnormal{$\small\Bowtie$}}}}}
\begin{document}
\title{\bf\Large Toward a density Corr\'{a}di--Hajnal theorem for degenerate hypergraphs}
\date{\today}
\author[1]{Jianfeng Hou\thanks{Research supported by National Key R\&D Program of China (Grant No. 2023YFA1010202), National Natural Science Foundation of China (Grant No. 12071077), the Central Guidance on Local Science and Technology Development Fund of Fujian Province (Grant No. 2023L3003). Email: \texttt{jfhou@fzu.edu.cn}}}
\author[1]{Caiyun Hu\thanks{Email: \texttt{hucaiyun.fzu@gmail.com}}}
\author[2]{Heng Li\thanks{Email: \texttt{heng.li@sdu.edu.cn}}}
\author[3]{Xizhi Liu\thanks{Research supported by ERC Advanced Grant 101020255 and Leverhulme Research Project Grant RPG-2018-424. Email: \texttt{xizhi.liu.ac@gmail.com}}}
\author[1]{Caihong Yang\thanks{Email: \texttt{chyang.fzu@gmail.com}}}
\author[1]{Yixiao Zhang\thanks{Email: \texttt{fzuzyx@gmail.com}}}
\affil[1]{Center for Discrete Mathematics,
            Fuzhou University, Fujian, 350003, China}
\affil[2]{School of Mathematics, Shandong University, 
            Shandong, 250100, China}
\affil[3]{Mathematics Institute and DIMAP,
            University of Warwick, 
            Coventry, CV4 7AL, UK}
\maketitle
\begin{abstract}
Given an $r$-graph $F$ with $r \ge 2$, let $\mathrm{ex}(n, (t+1) F)$ denote the maximum number of edges in an $n$-vertex $r$-graph with at most $t$ pairwise vertex-disjoint copies of $F$.
Extending several old results and complementing prior  work~\cite{HLLYZ23} on nondegenerate hypergraphs, we initiate a systematic study on $\mathrm{ex}(n, (t+1) F)$ for degenerate hypergraphs $F$. 

For a broad class of degenerate hypergraphs $F$, we present near-optimal upper bounds for $\mathrm{ex}(n, (t+1) F)$ when $n$ is sufficiently large and $t$ lies in intervals $\left[0, \frac{\varepsilon \cdot \mathrm{ex}(n,F)}{n^{r-1}}\right]$, $\left[\frac{\mathrm{ex}(n,F)}{\varepsilon n^{r-1}}, \varepsilon n \right]$, and $\left[ (1-\varepsilon)\frac{n}{v(F)}, \frac{n}{v(F)} \right]$, where $\varepsilon > 0$ is a constant depending only on $F$.
Our results reveal very different structures for extremal constructions across the three intervals, and we provide characterizations of extremal constructions within the first interval. 
Additionally, we characterize extremal constructions within the second interval for graphs.
Our proof for the first interval also applies to a special class of nondegenerate hypergraphs, including those with undetermined Tur{\' a}n densities, partially improving a result in~\cite{HLLYZ23}.
%
\end{abstract}
\section{Introduction}\label{SEC:Introduction}
\subsection{Motivation}\label{SUBSEC:Motivation}
Fix an integer $r\ge 2$, an $r$-graph $\mathcal{H}$ is a collection of $r$-subsets of some finite set $V$. We identify a hypergraph $\mathcal{H}$ with its edge set and use $V(\mathcal{H})$ to denote its vertex set. The size of $V(\mathcal{H})$ is denoted by $v(\mathcal{H})$. 
Given a vertex $v\in V(\mathcal{H})$, 
the \textbf{degree} $d_{\mathcal{H}}(v)$ of $v$ in $\mathcal{H}$ is the number of edges in $\mathcal{H}$ containing $v$.
We use $\delta(\mathcal{H})$, $\Delta(\mathcal{H})$, and $d(\mathcal{H})$ to denote the \textbf{minimum degree}, the \textbf{maximum degree}, and the \textbf{average degree} of $\mathcal{H}$, respectively.
We will omit the subscript $\mathcal{H}$ if it is clear from the context.

Given two $r$-graphs $F$ and $\mathcal{H}$, 
the \textbf{$F$-matching number} $\nu(F, \mathcal{H})$ of $\mathcal{H}$ is the maximum of vertex-disjoint copies of $F$ in $\mathcal{H}$. 
This is an extension of the matching number $\nu(\mathcal{H})$ of $\mathcal{H}$ as $\nu(\mathcal{H}) = \nu(K_r^r, \mathcal{H})$, where $K_{\ell}^r$ denotes the complete $r$-graph with $\ell$ vertices. 
The study of the following problem  encompasses several central topics in Extremal Combinatorics.
Given an $r$-graph $F$ and integers $n, t\in \mathbb{N}$:
\begin{align}\label{PROB:F-matching}
    \textit{What constraints on an $n$-vertex $r$-graph $\mathcal{H}$ force it to satisfy $\nu(F, \mathcal{H}) \ge t+1$?} \tag{$\ast$}
\end{align}
There are many classical results on this topic from the minimum degree perspective. 
For example, Corr\'{a}di and Hajnal~\cite{CH63} proved the following result for $K_3$. 

\begin{theorem}[Corr\'{a}di--Hajnal~\cite{CH63}]\label{THM:CH-min-deg}
    Suppose that $n, t \in \mathbb{N}$ are integers with $t\le n/3$. 
    Then every $n$-vertex graph $G$ with $\delta(G) \ge t + \left\lfloor \frac{n-t}{2} \right\rfloor$ contains at least $t$ vertex-disjoint copies of $K_3$, i.e.
    \begin{align*}
        \delta(G) \ge t + \left\lfloor \frac{n-t}{2} \right\rfloor 
        \quad\Rightarrow\quad
        \nu(K_3, G) \ge t. 
    \end{align*}
\end{theorem}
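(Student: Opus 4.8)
The plan is an extremal (augmentation) argument. (As stated the threshold should read $t+\lceil\frac{n-t}{2}\rceil$ rather than $t+\lfloor\frac{n-t}{2}\rfloor$ — with the floor, $C_4$ already violates the case $t=1$, $n=4$ — so I will write $f(n,t):=t+\lceil\frac{n-t}{2}\rceil=\lceil\frac{n+t}{2}\rceil$, i.e.\ the hypothesis is $2\delta(G)\ge n+t$; the case $t=0$ is trivial, so assume $t\ge 1$.) Suppose for contradiction that $\nu(K_3,G)=s\le t-1$, fix a maximum family $\mathcal{T}=\{T_1,\dots,T_s\}$ of pairwise vertex-disjoint triangles in $G$, and set $R:=V(G)\setminus\bigcup_{i}V(T_i)$. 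Since $n\ge 3t$ we have $|R|=n-3s\ge n-3(t-1)\ge 3$, and by maximality $G[R]$ is triangle-free. A triangle-free graph on at least three vertices is not complete, so there are non-adjacent vertices $v_1,v_2\in R$; as $v_1\not\sim v_2$, both $N(v_1)$ and $N(v_2)$ are contained in $V(G)\setminus\{v_1,v_2\}$, so with $S:=N(v_1)\cap N(v_2)$ we get $|S|\ge d(v_1)+d(v_2)-(n-2)\ge 2\delta(G)-n+2\ge t+2$.

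Next I would record the structure forced on $S$ by the triangle-freeness of $G[R]$ and the maximality of $\mathcal{T}$. First, $S\cap R$ is an independent set: two adjacent vertices of $S\cap R$ together with $v_1$ would be a triangle inside $R$. Second, for each $T_i=\{a,b,c\}$ the intersection $S\cap V(T_i)$ is constrained, via the following exchange: if there exist distinct $r_1,r_2,r_3\in R$ with $r_1$ adjacent to both $a$ and $b$ and with $r_2r_3$ an edge of $G[R]$ such that $r_2,r_3$ are both adjacent to $c$, then $\{r_1,a,b\}$ and $\{r_2,r_3,c\}$ are vertex-disjoint triangles, so replacing $T_i$ by them gives $s+1$ disjoint triangles — a contradiction; the same holds under any relabelling of $\{a,b,c\}$. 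In particular, if some vertex $c$ of a $T_i$ has an edge inside its $R$-neighbourhood, then the other two vertices of $T_i$ have at most two common neighbours in $R$.

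The heart of the proof is to combine $|S|\ge t+2\ge s+3$ with the constraints just listed and with the large minimum degree — which forces the vertices of each $T_i$ to send many edges into $R$ when $R$ is large — to exhibit such a forbidden configuration, contradicting the maximality of $\mathcal{T}$. I expect this to be the main obstacle: extracting an explicit augmenting configuration requires a case analysis governed by $|S\cap R|$ and the profile $\big(|S\cap V(T_i)|\big)_{i}$, and the extremal example $K_{t-1}+K_{\lceil (n-t+1)/2\rceil,\lfloor (n-t+1)/2\rfloor}$ — which has exactly $t-1$ pairwise disjoint triangles and minimum degree $f(n,t)-1$ — shows that every inequality above must be essentially tight, so no slack can be wasted. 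The genuinely delicate residual case is $n=3t$, where the statement reduces to finding a triangle factor in an $n$-vertex graph of minimum degree $\ge 2n/3$ (the triangle instance of the Hajnal--Szemer\'edi phenomenon) and needs the most careful handling. One clean way to organize the general induction is to reduce, via the above exchange, to producing a single triangle $T$ with $\delta(G-V(T))\ge\delta(G)-2$ (note $f(n,t)-f(n-3,t-1)=2$ and $n-3\ge 3(t-1)$), using that for a minimum-degree vertex $u$ the link $G[N(u)]$ has minimum degree at least $2\delta(G)-n\ge t$ and is therefore dense enough not to be ``swallowed'' by the neighbourhood of another minimum-degree vertex.
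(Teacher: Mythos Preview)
The paper does not prove this theorem: it is stated with attribution to Corr\'adi--Hajnal~\cite{CH63} as background and motivation only, so there is no in-paper proof to compare against.

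As for your sketch itself, two comments. First, you are right that the threshold as printed is off by one: $C_4$ with $n=4$, $t=1$ has $\delta=2=1+\lfloor 3/2\rfloor$ but is triangle-free, so the correct hypothesis is $\delta(G)\ge t+\lceil (n-t)/2\rceil$, equivalently $2\delta(G)\ge n+t$. Second, what you have written is an outline rather than a proof: you set up the standard extremal/augmentation framework (maximum triangle packing $\mathcal{T}$, triangle-free remainder $R$, common-neighbour set $S$ of a non-edge in $R$ with $|S|\ge t+2$), and you correctly isolate the exchange mechanism, but you explicitly stop short of carrying out the case analysis that actually yields the contradiction. The phrases ``I expect this to be the main obstacle'' and ``the genuinely delicate residual case is $n=3t$'' signal that the hard part is not yet done. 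The classical proofs (Corr\'adi--Hajnal's original, or the cleaner accounts via repeated two-for-one swaps) do exactly the bookkeeping you defer: they show that some $T_i$ satisfies $|S\cap V(T_i)|\ge 2$ and then, using the degree hypothesis on the vertices of that $T_i$ into $R$, locate the forbidden two-triangle configuration. Your inductive reduction idea (find a triangle $T$ with $\delta(G-V(T))\ge\delta(G)-2$) is a viable alternative route, but the claim that such a $T$ always exists is itself nontrivial and is not established in your sketch; in particular, the last sentence about links of minimum-degree vertices does not obviously produce such a triangle.
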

Theorem~\ref{THM:CH-min-deg} was extended to all complete graphs in 
the classical Hajnal--Szemer\'{e}di Theorem~\cite{HS70}, which implies that 
for all integers $n\ge \ell \ge 2$, $t \le \lfloor n/(\ell+1) \rfloor$, and for every $n$-vertex graph $G$,
\begin{align*}
    \delta(G) \ge  t + \left\lfloor \frac{\ell-1}{\ell}(n-t) \right\rfloor 
        \quad\Rightarrow\quad
        \nu(K_{\ell+1}, G) \ge t. 
\end{align*}
There are many further extensions of Theorem~\ref{THM:CH-min-deg} to general graphs (see e.g.~\cite{AY96,KSS01}) and hypergraphs (see e.g.~\cite{LS07}). 
We refer the reader to a survey~\cite{KO09} by K\"{u}hn and Osthus for further related results.  

Problem~\eqref{PROB:F-matching} becomes much harder when considered from the average degree perspective. 
For $r=2$ and $F = K_{2}$, 
the celebrated Erd\H{o}s--Gallai Theorem~\cite{EG59} states that for
all integers $n, \ell \in \mathbb{N}$ with $t+1 \le n/2$ and for every $n$-vertex graph $G$,
\begin{align*}
|G| > \max \left\{\binom{2t+1}{2}, \binom{n}{2}-\binom{n-t}{2}\right\}
\quad\Rightarrow\quad \nu(G) \ge t+1.      
\end{align*}
%
Extending the Erd\H{o}s--Gallai Theorem to $r$-graphs for $r\ge 3$ is a major open problem in Extremal Set Theory.  Despite substantial effort, the following conjecture of Erd{\H o}s is still open in general (see e.g. \cite{Frankl13,Frankl17A,Frankl17B, HLS12} for some recent progress on this topic). 

\begin{conjecture}[Erd\H{o}s~\cite{Erdos65}]\label{CONJ:Erdos-matching}
Suppose that $n, t, r \in \mathbb{N}$ satisfy $r\ge 3$ and $t+1 \le n/r$. 
Then for every $n$-vertex $r$-graph $\mathcal{H}$,
\begin{align*}
    |\mathcal{H}|> \max\left\{\binom{r(t+1)-1}{r}, \binom{n}{r}-\binom{n-t}{r}\right\}
    \quad\Rightarrow\quad \nu(\mathcal{H}) \ge t+1. 
\end{align*}
\end{conjecture}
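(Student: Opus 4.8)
The natural plan is to attack Conjecture~\ref{CONJ:Erdos-matching} through Frankl's shifting (left-compression) method, which has driven essentially all progress on the problem. The first step is to show that it suffices to verify the implication for \emph{shifted} $r$-graphs: for each pair $i<j$ the left-compression $S_{ij}$ preserves $|\mathcal{H}|$ and satisfies $\nu(S_{ij}(\mathcal{H})) \le \nu(\mathcal{H})$, so iterating the $S_{ij}$'s over all pairs produces a shifted $r$-graph with the same number of edges and no larger matching number (here the relevant $F$ is $K_r^r$). Thus one may assume $\mathcal{H}$ is shifted with $\nu(\mathcal{H}) \le t$, and the goal becomes $|\mathcal{H}| \le \max\{\binom{r(t+1)-1}{r},\ \binom{n}{r}-\binom{n-t}{r}\}$, the two bounds corresponding respectively to the clique $\binom{[r(t+1)-1]}{r}$ and to the family of all $r$-sets meeting a fixed $t$-set.

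The second step is an induction on $t$ that exploits shiftedness. If $\mathcal{H}$ is shifted and $\nu(\mathcal{H})\le t$, then either every edge is contained in $[r(t+1)-1]$, whence $|\mathcal{H}|\le\binom{r(t+1)-1}{r}$ and we are done, or $\mathcal{H}$ contains an edge avoiding a large initial segment of the vertex set; in the latter case one splits $\mathcal{H}$ according to its trace on a fixed kernel and applies the inductive hypothesis to the link of a suitable vertex, whose matching number drops to $t-1$. Tracking the bookkeeping carefully gives the desired bound whenever $n$ is not too small relative to $t$ --- this is precisely the route by which Frankl established the conjecture for $n\ge (2t+1)r-t$.

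For the remaining range I would bring in the fractional matching number $\nu^{\ast}(\mathcal{H})$ together with a stability argument: the hypothesis forces $\nu^{\ast}(\mathcal{H})\ge |\mathcal{H}|/\binom{n-1}{r-1}$ to be close to $t$, and known bounds relating $\nu$ and $\nu^{\ast}$ for $r$-graphs then pin $\mathcal{H}$ structurally close to one of the two extremal configurations; a final local clean-up --- deleting a bounded number of ``bad'' edges and re-embedding a matching of size $t+1$ --- should upgrade this to the exact bound, in the spirit of the Huang--Loh--Sudakov and Frankl--Kupavskii arguments.

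The main obstacle, and the reason the conjecture is still open, is the \emph{intermediate regime} of $n$, roughly from $r(t+1)$ up to about $(2t+1)r-t$: here the extremal value is the clique bound $\binom{r(t+1)-1}{r}$, yet the shifting induction needs $n$ large enough to produce an edge avoiding an initial segment, while the fractional/stability approach needs $n$ large for the relaxation to be tight. In this window one must control $r$-graphs that look locally like a clique but are globally spread out, and no current technique is sharp enough. A realistic write-up of this plan would therefore prove the conjecture only in the two extreme regimes (bounded $t$, or $n\ge (2t+1)r-t$) and would leave the crossover window --- exactly where the two extremal constructions exchange dominance --- as the hard open core.
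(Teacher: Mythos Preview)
The statement you are addressing is not a theorem in the paper but the Erd\H{o}s Matching Conjecture, which the paper explicitly presents as \emph{open}: ``the following conjecture of Erd\H{o}s is still open in general''. There is no proof in the paper to compare against; the conjecture is quoted solely as background motivation for the study of $\mathrm{ex}(n,(t+1)F)$.

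Your write-up is not a proof either, and you say so yourself: the shifting induction you outline is Frankl's argument and is known to work only for $n \ge (2t+1)r - t$, the fractional/stability route you sketch is the Huang--Loh--Sudakov and Frankl--Kupavskii line and also requires $n$ large, and you correctly identify the intermediate window near $n \approx r(t+1)$ as the unresolved core. That is an accurate survey of the state of the art, but it is a research plan, not a proof of Conjecture~\ref{CONJ:Erdos-matching}. As a proof attempt it has a genuine and acknowledged gap --- the entire crossover regime --- which no current technique closes; submitting this as a proof of the full conjecture would be incorrect.
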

In the special case where $t = 1$, Problem~\eqref{PROB:F-matching} exhibits a close relationship with the Tur\'{a}n problem. 
Fix an $r$-graph $F$, we say an $r$-graph $\mathcal{H}$ is \textbf{$F$-free} if $\nu(F,\mathcal{H}) = 0$. 
In other words, $\mathcal{H}$ does not contains $F$ as a subgraph. 
%
%
The \textbf{Tur\'{a}n number} $\mathrm{ex}(n,F)$ of $F$ is the maximum number of edges in an $F$-free $r$-graph on $n$ vertices.
The \textbf{Tur\'{a}n density} of $F$ is defined as $\pi(F) := \lim_{n\to \infty}\mathrm{ex}(n,F)/\binom{n}{r}$,
the existence of the limit follows from a simple averaging argument of Katona, Nemetz, and Simonovits~\cite{KNS64}. 
Let $\mathrm{EX}(n,F)$ denote the collection of all $n$-vertex $F$-free $r$-graphs with exactly $\mathrm{ex}(n,F)$ edges. 
Members in $\mathrm{EX}(n,F)$ are called the \textbf{extremal constructions} of $F$. 
The study of $\mathrm{ex}(n,F)$ and $\mathrm{EX}(n,F)$ is a central topic in Extremal Combinatorics. 
The celebrated Tur\'{a}n Theorem states that $\mathrm{EX}(n,K_{\ell+1}) = \{T(n,\ell)\}$ for all $n \ge \ell \ge 2$, where $T(n,\ell)$ denote the balanced $\ell$-partite graph with $n$ vertices. 

We call an $r$-graph $F$ \textbf{nondegenerate} if $\pi(F) > 0$, and \textbf{degenerate} if $\pi(F) = 0$. 
By a theorem of Erd{\H o}s~\cite{Erdos64}, an $r$-graph $F$ is degenerate iff it is \textbf{$r$-partite}, i.e. the vertex set of $F$ can be partitioned into $r$ parts such that every edge contains exactly one vertex from each part. 
The celebrated Erd\H{o}s--Stone--Simonovits Theorem~\cite{ES66,ES46} provides a satisfying bound for $\mathrm{ex}(n,F)$ when $F$ is a nondegenerate graph. 
However, determining $\mathrm{ex}(n,F)$ (even asymptotically) for degenerate graphs and $r$-graphs with $r\ge 3$ remains a challenging problem in general. 
We refer the reader to surveys~\cite{FS13,KE11} for more related results. 

Our focus in this work lies in exploring average degree (i.e. edge density) constraints that force an $r$-graph to have large $F$-matching number, especially when $F$ is a degenerate $r$-graph. 
Since our results are closely related to the Tur\'{a}n problem of $F$, 
we abuse the use of notation by letting $\mathrm{ex}\left(n, (t+1)F\right)$ denote the maximum number of edges in an $n$-vertex $r$-graph $\mathcal{H}$ with $\nu(F, \mathcal{H}) < t+1$. 
Let us introduce some definitions and known results before stating our results. 

Given two $r$-graphs $\mathcal{G}$ and $\mathcal{H}$, we use $\mathcal{G}\sqcup \mathcal{H}$ to denote the vertex-disjoint union of $\mathcal{G}$ and $\mathcal{H}$. 
The \textbf{join} $\mathcal{G} \uproduct \mathcal{H}$ of $\mathcal{G}$ and $\mathcal{H}$ is the $r$-graph obtained from $\mathcal{G}\sqcup \mathcal{H}$ by adding all $r$-sets that have nonempty intersection with both $V(\mathcal{G})$ and $V(\mathcal{H})$. 
For simplicity, we define 
the join of an $r$-graph $\mathcal{H}$ and a family $\mathcal{F}$ of $r$-graphs as 
$\mathcal{H} \uproduct \mathcal{F} := \left\{\mathcal{H} \uproduct \mathcal{G} \colon \mathcal{G} \in \mathcal{F}\right\}$. 
Similarly, let $\mathcal{H} \sqcup \mathcal{F} := \left\{\mathcal{H} \sqcup \mathcal{G} \colon \mathcal{G} \in \mathcal{F}\right\}$

In~\cite{Erdos62}, Erd\H{o}s considered $\mathrm{ex}(n,(t+1)K_3)$ and proved the following result. 

\begin{theorem}[Erd\H{o}s~\cite{Erdos62}]\label{THM:Erdos-disjoint-triangle}
    Suppose that $n, t\in \mathbb{N}$ are integers satisfying $t\le \sqrt{n/400}$. 
    Then $\mathrm{EX}\left(n, (t+1)K_3\right) = \{K_{t} \uproduct T(n-t,2)\}$. 
\end{theorem}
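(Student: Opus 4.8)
The plan is to establish the upper bound $\mathrm{ex}(n,(t+1)K_3) \le |K_t \uproduct T(n-t,2)|$ together with uniqueness of the extremal construction, using induction on $t$ with the Erd\H{o}s--Gallai theorem (or rather the triangle case of Erd\H{o}s--Stone, i.e. Tur\'an's theorem, $\mathrm{ex}(n,K_3)=|T(n,2)|$) as the base case $t=0$. So first I would let $G$ be an $n$-vertex graph with $\nu(K_3,G) \le t$ and $|G| \ge |K_t \uproduct T(n-t,2)|$; I want to show $G \cong K_t \uproduct T(n-t,2)$. Note $|K_t \uproduct T(n-t,2)| = \binom{t}{2} + t(n-t) + |T(n-t,2)|$. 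If $G$ is $K_3$-free we are in the $t=0$ case and done by Tur\'an; otherwise $G$ contains a triangle on a vertex set $S$ with $|S|=3$.

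Next I would delete $S$ and apply induction to $G - S$, which is an $(n-3)$-vertex graph. The key point is that $\nu(K_3, G-S) \le t-1$: any $t$ vertex-disjoint triangles in $G-S$ together with the triangle on $S$ would give $t+1$ vertex-disjoint triangles in $G$, a contradiction. So by the induction hypothesis $|G-S| \le |K_{t-1} \uproduct T(n-3-(t-1),2)| = |K_{t-1} \uproduct T(n-t-2,2)|$. Comparing this with the edges we removed, we need $|G| - |G-S| \le |K_t \uproduct T(n-t,2)| - |K_{t-1} \uproduct T(n-t-2,2)|$. The right-hand side is a clean quantity (roughly $2(n-t) + (\text{a few more})$), and the left-hand side is the number of edges incident to $S$ plus the $3$ edges inside $S$, i.e. at most $3(n-3) + 3$ — which is too weak. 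The main obstacle is precisely this: a naive ``delete a triangle'' argument loses too much, since a triangle can meet up to $3(n-3)$ outside edges while the target only allows about $2n$.

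To overcome this, the right move is to be clever about \emph{which} triangle we delete, or equivalently to first clean up $G$. The standard approach (and I expect this is what the paper does) is: let $m$ be the number of vertices of $G$ that lie in \emph{some} triangle, and argue that if $m$ is large then one finds $t+1$ disjoint triangles greedily (each triangle can ``block'' at most $3$ further vertices, or one uses that a vertex in a triangle together with two of its neighbors' common neighbors gives a triangle), while if $m$ is small then all edges are concentrated: the $n-m$ triangle-free vertices span a triangle-free graph, contributing at most $\mathrm{ex}(n-m,K_3)$ edges among themselves, plus at most $m(n-m) + \binom{m}{2}$ edges meeting the $m$-set. Optimizing, with $t \le \sqrt{n/400}$ forcing $m$ to be small (of order $t$), one shows $m \le t$ must hold, and then the $m$ special vertices form the ``$K_t$'' part while the rest forms $T(n-m,2)$; pinning down $m = t$ exactly and the bipartite structure on the remainder gives uniqueness.

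Concretely, the key steps in order are: (i) base case $t=0$ via Tur\'an's theorem; (ii) define the set $W$ of vertices covered by triangles and prove a dichotomy — either $|W|$ is small ($O(t)$) or $\nu(K_3,G) \ge t+1$, the latter by a greedy/absorption argument exploiting $t \le \sqrt{n/400}$; (iii) in the small-$|W|$ case, bound $|G| \le \binom{|W|}{2} + |W|(n-|W|) + \mathrm{ex}(n-|W|,K_3)$ and show this forces $|W| = t$ and equality throughout, i.e. $W$ is complete, completely joined to $V(G)\setminus W$, and $G-W$ is extremal $K_3$-free hence equal to $T(n-t,2)$ (using again uniqueness in Tur\'an's theorem); (iv) finally check $\nu(K_3, K_t \uproduct T(n-t,2)) = t$ so the construction is valid, giving $\mathrm{EX}(n,(t+1)K_3) = \{K_t \uproduct T(n-t,2)\}$. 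The delicate inequality bookkeeping in step (iii), where one must rule out $|W| < t$ and $|W| > t$ and extract exact structure from near-equality, is where the real work lies; the quantitative hypothesis $t \le \sqrt{n/400}$ enters exactly to make the greedy bound in step (ii) go through.
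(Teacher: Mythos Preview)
The paper does not prove this theorem; it is quoted as a classical result of Erd\H{o}s with a citation to~\cite{Erdos62}, and the paper's own machinery (Theorem~\ref{THM:1st-interval}) does not apply to $K_3$ since $K_3$ fails to be $(c_1,c_2)$-bounded for any $c_1<1/3$ (the Tur\'an graph $T(n,2)$ itself has maximum degree $\approx n/2$). So there is no ``paper's proof'' to compare against.

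That said, your outline has a genuine gap at step~(ii). Defining $W$ as the set of all vertices lying in some triangle and then claiming ``$|W|$ large $\Rightarrow$ $t+1$ disjoint triangles greedily'' does not work: take a vertex $v$ joined to a large matching $a_1b_1,\ldots,a_kb_k$; then $|W|=2k+1$ but there is only one vertex-disjoint triangle. The greedy process of picking a triangle and deleting its three vertices can kill arbitrarily many triangle-memberships at once, so $|W|$ alone gives no lower bound on $\nu(K_3,G)$. You would need to combine the edge count with the structure in a more essential way here, and you have not indicated how. Moreover, in step~(iii) your bound $\binom{|W|}{2}+|W|(n-|W|)+\mathrm{ex}(n-|W|,K_3)$ is \emph{increasing} in $|W|$, so it only beats the target when $|W|\le t$; getting merely $|W|=O(t)$ from step~(ii) would not suffice, and getting $|W|\le t$ on the nose is essentially the whole theorem.

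The approach that actually works (and is closer in spirit to what the paper does for its own results) is to take a \emph{maximum} triangle matching $\{T_1,\ldots,T_{t'}\}$ with $t'\le t$, set $B=\bigcup V(T_i)$, and then carefully analyze the edges between $B$ and $V\setminus B$ using the maximality of the matching --- not just the crude bound $|B|\cdot|V\setminus B|$. The condition $t\le\sqrt{n/400}$ is what makes the second-order terms in this analysis negligible.
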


Later, Moon~\cite{Moon68} extended it to all complete graphs. 

\begin{theorem}[Moon~\cite{Moon68}]\label{THM:Moon-disjoint-clique}
    Suppose that $n, t, \ell\in \mathbb{N}$ are integers satisfying $\ell \ge 2$, $t\le \frac{2n-3\ell^2+2\ell}{\ell^3+2\ell^2+\ell+1}$, and $\ell \mid (n-t)$. 
    Then $\mathrm{EX}\left(n, (t+1)K_{\ell+1}\right) = \left\{K_{t}\uproduct T(n-t,\ell)\right\}$. 
\end{theorem}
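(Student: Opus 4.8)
The plan is to proceed by induction on $t$, with the base case $t = 0$ being exactly Tur\'{a}n's theorem (which gives $\mathrm{EX}(n, K_{\ell+1}) = \{T(n,\ell)\}$, and note $K_0 \uproduct T(n,\ell) = T(n,\ell)$). For the inductive step, suppose the statement holds for $t-1$ and let $\mathcal{H} = G$ be an $n$-vertex graph with $\nu(K_{\ell+1}, G) \le t$ and $|G|$ maximum. First I would observe that $G$ must contain at least one copy of $K_{\ell+1}$: otherwise $G$ is $K_{\ell+1}$-free, so $|G| \le \mathrm{ex}(n, K_{\ell+1}) = |T(n,\ell)|$, which is strictly smaller than $|K_t \uproduct T(n-t,\ell)|$ for $t \ge 1$ (the join with $K_t$ adds roughly $tn$ edges while only losing $O(t^2)$), contradicting maximality once we exhibit $K_t \uproduct T(n-t,\ell)$ as a valid competitor — so I should first check that this graph indeed has $K_{\ell+1}$-matching number exactly $t$, which holds because deleting the $t$ apex vertices leaves the $K_{\ell+1}$-free graph $T(n-t,\ell)$, so every $K_{\ell+1}$ uses an apex.

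The heart of the argument: fix one copy $K$ of $K_{\ell+1}$ in $G$ on vertex set $S$ with $|S| = \ell+1$, and let $G' = G - S$ be the graph on the remaining $n - \ell - 1$ vertices. Then $\nu(K_{\ell+1}, G') \le t-1$ (a matching of size $t$ in $G'$ together with $K$ would give $t+1$ disjoint copies in $G$). By the induction hypothesis — after checking the parameter constraints still hold for $n - \ell - 1$ and $t - 1$, and that $\ell \mid (n - \ell - 1) - (t-1) = (n-t) - \ell$, which is fine since $\ell \mid n - t$ — we get $|G'| \le |K_{t-1} \uproduct T(n - \ell - 1 - (t-1), \ell)| = |K_{t-1} \uproduct T(n - t - \ell, \ell)|$, with equality only if $G' \cong K_{t-1} \uproduct T(n-t-\ell,\ell)$. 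The edges of $G$ not inside $G'$ are those meeting $S$, of which there are at most $\binom{\ell+1}{2} + (\ell+1)(n-\ell-1)$. Summing and comparing with $|K_t \uproduct T(n-t,\ell)|$, one finds this crude bound is off by a little, so I would need to extract a genuine saving: the vertices of $S$ cannot all have full degree to $G'$, since if $\ell+1$ vertices of $S$ each sent many edges into an extremal-looking $G'$ one could relocate a vertex of the clique to build a larger $K_{\ell+1}$-matching inside $G$, contradicting $\nu(K_{\ell+1},G) \le t$. Quantifying this relocation/absorption argument — showing $\sum_{v \in S} d_{G'}(v) \le (\ell)(n - \ell - 1) + O(\ell^2)$ rather than $(\ell+1)(n-\ell-1)$, where the linear-in-$n$ saving is what the hypothesis $t \le O(n/\ell^3)$ is calibrated to absorb — is the main obstacle, and is where Moon's precise constant comes from.

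Once the edge count is pinned down, I would finish by a stability/uniqueness analysis: equality forces $G' \cong K_{t-1} \uproduct T(n-t-\ell,\ell)$, forces every vertex of $S$ except one to be adjacent to all of $V(G')$ (hence to act as an apex), forces the last vertex of $S$ together with the $\ell$-clique structure to merge into $T(n-t+1,\ell)$... more carefully, the $t-1$ apex vertices of $G'$ plus exactly one vertex of $S$ become the $t$ apex vertices, and $T(n-t-\ell,\ell)$ together with the remaining $\ell$ vertices of $S$ is forced to be $T(n-t,\ell)$ (here one uses that adding $\ell$ vertices to a balanced $\ell$-partite graph with no new $K_{\ell+1}$ and maximum edges yields again the balanced $\ell$-partite graph, using $\ell \mid n-t$). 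This yields $G \cong K_t \uproduct T(n-t,\ell)$, completing the induction. The divisibility hypothesis $\ell \mid (n-t)$ is used precisely in this last step to ensure the Tur\'{a}n graph is balanced and the extremal graph is unique; the linear bound on $t$ ensures all the "relocation" savings dominate the lower-order error terms at every level of the induction.
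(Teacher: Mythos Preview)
The paper does not prove Theorem~\ref{THM:Moon-disjoint-clique}; it is stated in the introduction as a classical result due to Moon~\cite{Moon68} and cited purely as motivation. There is therefore no proof in the paper to compare your proposal against.

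As for the proposal itself: the inductive framework (base case Tur\'{a}n, remove a copy of $K_{\ell+1}$, apply induction to $G-S$) is the natural one and is indeed close to how Moon's original argument goes. You have correctly identified the genuine obstacle yourself: the crude bound $\binom{\ell+1}{2} + (\ell+1)(n-\ell-1)$ on edges incident to $S$ overshoots, and you need to save roughly $(n-\ell-1)$ edges by arguing that at most $\ell$ of the $\ell+1$ vertices of $S$ can be ``near-universal'' into $G'$. Your sketch of this step (``relocation/absorption'') is vague and you admit as much. The actual argument requires choosing $K$ carefully --- not an arbitrary copy of $K_{\ell+1}$, but one that minimises $\sum_{v\in S} d_G(v)$ or that is extremal in some other sense --- and then showing that if some vertex $u \in S$ has too many neighbours in $G'$, one can find a $K_{\ell+1}$ in $G'\cup\{u\}$ disjoint from $S\setminus\{u\}$, contradicting the choice of $K$ or the matching bound. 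Without pinning down this step the proof is incomplete; the explicit upper bound $t \le \frac{2n-3\ell^2+2\ell}{\ell^3+2\ell^2+\ell+1}$ is exactly what falls out of making that calculation precise, and you have not reproduced it.
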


It is worth mentioning that Simonovits~\cite{SI68} also proved Theorem~\ref{THM:Moon-disjoint-clique} when $t\ge 1$ and $\ell \ge 2$ are fixed and $n$ is sufficiently large. 

Extending Theorem~\ref{THM:Moon-disjoint-clique} to larger $t$ becomes much more challenging.
Indeed, a full density version of the Corr\'{a}di--Hajnal Theorem was obtained only recently by Allen, B\"{o}ttcher, Hladk\'{y}, and Piguet~\cite{ABHP15} when $n$ is large. 
Their results show that, interestingly, there are four different extremal constructions for four different regimes of $t$, and the construction $K_{t} \uproduct T(n-t, 2)$ is extremal only for $t \le \frac{2n-6}{9}$. For the other three extremal constructions, we refer the reader to their paper for details.
A full-density version of the Corr\'{a}di--Hajnal Theorem for larger complete graphs still seems out of reach, and it appears that there are even no conjectures for the extremal constructions in general (see remarks in the last section of~\cite{ABHP15}). 

In~\cite{HLLYZ23}, the authors initiated a systematic study of  $\mathrm{ex}\left(n, (t+1)F\right)$ for nondegenerate hypergraphs $F$. 
It was showed in~\cite{HLLYZ23} that if a nondegenerate hypergraphs $F$ satisfies the `Smoothness' and `Boundedness' constraints (refer to~\cite{HLLYZ23} for definitions),  then for sufficiently large $n$ and all $t \le \varepsilon n$, where $\varepsilon > 0$ is a constant depending only on $F$, we have $\mathrm{EX}\left(n, (t+1)F\right) = \left\{K_{t}\uproduct \mathrm{EX}\left(n, F\right)\right\}$.

To complement the work of~\cite{HLLYZ23}, we initiate a systematic study of $\mathrm{ex}\left(n, (t+1)F\right)$ for degenerate hypergraphs $F$ in the present paper. 
Let us introduce some definitions related to the lower bound first. 

\begin{figure}[htbp]
\centering
\tikzset{every picture/.style={line width=0.8pt}} 
\begin{tikzpicture}[x=0.75pt,y=0.75pt,yscale=-0.8,xscale=0.8]

\draw  [fill={rgb, 255:red, 0; green, 0; blue, 0 }  ,fill opacity=1 ] (315.45,37.65) .. controls (315.45,37.15) and (315.8,36.75) .. (316.25,36.75) .. controls (316.69,36.75) and (317.05,37.15) .. (317.05,37.65) .. controls (317.05,38.15) and (316.69,38.55) .. (316.25,38.55) .. controls (315.8,38.55) and (315.45,38.15) .. (315.45,37.65) -- cycle ;
\draw  [fill={rgb, 255:red, 0; green, 0; blue, 0 }  ,fill opacity=1 ] (326.1,37.65) .. controls (326.1,37.15) and (326.46,36.75) .. (326.9,36.75) .. controls (327.34,36.75) and (327.7,37.15) .. (327.7,37.65) .. controls (327.7,38.15) and (327.34,38.55) .. (326.9,38.55) .. controls (326.46,38.55) and (326.1,38.15) .. (326.1,37.65) -- cycle ;
\draw  [fill={rgb, 255:red, 0; green, 0; blue, 0 }  ,fill opacity=1 ] (320.77,27.07) .. controls (320.77,26.58) and (321.13,26.17) .. (321.57,26.17) .. controls (322.01,26.17) and (322.37,26.58) .. (322.37,27.07) .. controls (322.37,27.57) and (322.01,27.97) .. (321.57,27.97) .. controls (321.13,27.97) and (320.77,27.57) .. (320.77,27.07) -- cycle ;
\draw  [line width=0.75]  (321.57,27.07) -- (326.9,37.65) -- (316.25,37.65) -- cycle ;
\draw  [fill={rgb, 255:red, 0; green, 0; blue, 0 }  ,fill opacity=1 ] (338.2,37.65) .. controls (338.2,37.15) and (338.55,36.75) .. (339,36.75) .. controls (339.44,36.75) and (339.8,37.15) .. (339.8,37.65) .. controls (339.8,38.15) and (339.44,38.55) .. (339,38.55) .. controls (338.55,38.55) and (338.2,38.15) .. (338.2,37.65) -- cycle ;
\draw  [fill={rgb, 255:red, 0; green, 0; blue, 0 }  ,fill opacity=1 ] (348.85,37.65) .. controls (348.85,37.15) and (349.2,36.75) .. (349.65,36.75) .. controls (350.09,36.75) and (350.45,37.15) .. (350.45,37.65) .. controls (350.45,38.15) and (350.09,38.55) .. (349.65,38.55) .. controls (349.2,38.55) and (348.85,38.15) .. (348.85,37.65) -- cycle ;
\draw  [fill={rgb, 255:red, 0; green, 0; blue, 0 }  ,fill opacity=1 ] (343.52,27.07) .. controls (343.52,26.58) and (343.88,26.17) .. (344.32,26.17) .. controls (344.76,26.17) and (345.12,26.58) .. (345.12,27.07) .. controls (345.12,27.57) and (344.76,27.97) .. (344.32,27.97) .. controls (343.88,27.97) and (343.52,27.57) .. (343.52,27.07) -- cycle ;
\draw  [line width=0.75]  (344.32,27.07) -- (349.65,37.65) -- (339,37.65) -- cycle ;
\draw  [fill={rgb, 255:red, 0; green, 0; blue, 0 }  ,fill opacity=1 ] (315.45,59.83) .. controls (315.45,59.33) and (315.8,58.93) .. (316.25,58.93) .. controls (316.69,58.93) and (317.05,59.33) .. (317.05,59.83) .. controls (317.05,60.32) and (316.69,60.73) .. (316.25,60.73) .. controls (315.8,60.73) and (315.45,60.32) .. (315.45,59.83) -- cycle ;
\draw  [fill={rgb, 255:red, 0; green, 0; blue, 0 }  ,fill opacity=1 ] (326.1,59.83) .. controls (326.1,59.33) and (326.46,58.93) .. (326.9,58.93) .. controls (327.34,58.93) and (327.7,59.33) .. (327.7,59.83) .. controls (327.7,60.32) and (327.34,60.73) .. (326.9,60.73) .. controls (326.46,60.73) and (326.1,60.32) .. (326.1,59.83) -- cycle ;
\draw  [fill={rgb, 255:red, 0; green, 0; blue, 0 }  ,fill opacity=1 ] (320.77,49.25) .. controls (320.77,48.75) and (321.13,48.35) .. (321.57,48.35) .. controls (322.01,48.35) and (322.37,48.75) .. (322.37,49.25) .. controls (322.37,49.75) and (322.01,50.15) .. (321.57,50.15) .. controls (321.13,50.15) and (320.77,49.75) .. (320.77,49.25) -- cycle ;
\draw  [line width=0.75]  (321.57,49.25) -- (326.9,59.83) -- (316.25,59.83) -- cycle ;
\draw  [fill={rgb, 255:red, 0; green, 0; blue, 0 }  ,fill opacity=1 ] (315.45,119.27) .. controls (315.45,118.77) and (315.8,118.37) .. (316.25,118.37) .. controls (316.69,118.37) and (317.05,118.77) .. (317.05,119.27) .. controls (317.05,119.77) and (316.69,120.17) .. (316.25,120.17) .. controls (315.8,120.17) and (315.45,119.77) .. (315.45,119.27) -- cycle ;
\draw  [fill={rgb, 255:red, 0; green, 0; blue, 0 }  ,fill opacity=1 ] (326.1,119.27) .. controls (326.1,118.77) and (326.46,118.37) .. (326.9,118.37) .. controls (327.34,118.37) and (327.7,118.77) .. (327.7,119.27) .. controls (327.7,119.77) and (327.34,120.17) .. (326.9,120.17) .. controls (326.46,120.17) and (326.1,119.77) .. (326.1,119.27) -- cycle ;
\draw  [fill={rgb, 255:red, 0; green, 0; blue, 0 }  ,fill opacity=1 ] (320.77,108.69) .. controls (320.77,108.2) and (321.13,107.79) .. (321.57,107.79) .. controls (322.01,107.79) and (322.37,108.2) .. (322.37,108.69) .. controls (322.37,109.19) and (322.01,109.59) .. (321.57,109.59) .. controls (321.13,109.59) and (320.77,109.19) .. (320.77,108.69) -- cycle ;
\draw  [line width=0.75]  (321.57,108.69) -- (326.9,119.27) -- (316.25,119.27) -- cycle ;
\draw  [fill={rgb, 255:red, 0; green, 0; blue, 0 }  ,fill opacity=1 ] (338.2,119.27) .. controls (338.2,118.77) and (338.55,118.37) .. (339,118.37) .. controls (339.44,118.37) and (339.8,118.77) .. (339.8,119.27) .. controls (339.8,119.77) and (339.44,120.17) .. (339,120.17) .. controls (338.55,120.17) and (338.2,119.77) .. (338.2,119.27) -- cycle ;
\draw  [fill={rgb, 255:red, 0; green, 0; blue, 0 }  ,fill opacity=1 ] (348.85,119.27) .. controls (348.85,118.77) and (349.2,118.37) .. (349.65,118.37) .. controls (350.09,118.37) and (350.45,118.77) .. (350.45,119.27) .. controls (350.45,119.77) and (350.09,120.17) .. (349.65,120.17) .. controls (349.2,120.17) and (348.85,119.77) .. (348.85,119.27) -- cycle ;
\draw  [fill={rgb, 255:red, 0; green, 0; blue, 0 }  ,fill opacity=1 ] (343.52,108.69) .. controls (343.52,108.2) and (343.88,107.79) .. (344.32,107.79) .. controls (344.76,107.79) and (345.12,108.2) .. (345.12,108.69) .. controls (345.12,109.19) and (344.76,109.59) .. (344.32,109.59) .. controls (343.88,109.59) and (343.52,109.19) .. (343.52,108.69) -- cycle ;
\draw  [line width=0.75]  (344.32,108.69) -- (349.65,119.27) -- (339,119.27) -- cycle ;
\draw  [fill={rgb, 255:red, 0; green, 0; blue, 0 }  ,fill opacity=1 ] (338.2,59.83) .. controls (338.2,59.33) and (338.55,58.93) .. (339,58.93) .. controls (339.44,58.93) and (339.8,59.33) .. (339.8,59.83) .. controls (339.8,60.32) and (339.44,60.73) .. (339,60.73) .. controls (338.55,60.73) and (338.2,60.32) .. (338.2,59.83) -- cycle ;
\draw  [fill={rgb, 255:red, 0; green, 0; blue, 0 }  ,fill opacity=1 ] (348.85,59.83) .. controls (348.85,59.33) and (349.2,58.93) .. (349.65,58.93) .. controls (350.09,58.93) and (350.45,59.33) .. (350.45,59.83) .. controls (350.45,60.32) and (350.09,60.73) .. (349.65,60.73) .. controls (349.2,60.73) and (348.85,60.32) .. (348.85,59.83) -- cycle ;
\draw  [fill={rgb, 255:red, 0; green, 0; blue, 0 }  ,fill opacity=1 ] (343.52,49.25) .. controls (343.52,48.75) and (343.88,48.35) .. (344.32,48.35) .. controls (344.76,48.35) and (345.12,48.75) .. (345.12,49.25) .. controls (345.12,49.75) and (344.76,50.15) .. (344.32,50.15) .. controls (343.88,50.15) and (343.52,49.75) .. (343.52,49.25) -- cycle ;
\draw  [line width=0.75]  (344.32,49.25) -- (349.65,59.83) -- (339,59.83) -- cycle ;
\draw  [fill={rgb, 255:red, 0; green, 0; blue, 0 }  ,fill opacity=1 ] (332.44,76.67) .. controls (332.44,76.43) and (332.62,76.23) .. (332.84,76.23) .. controls (333.05,76.23) and (333.23,76.43) .. (333.23,76.67) .. controls (333.23,76.92) and (333.05,77.11) .. (332.84,77.11) .. controls (332.62,77.11) and (332.44,76.92) .. (332.44,76.67) -- cycle ;
\draw  [fill={rgb, 255:red, 0; green, 0; blue, 0 }  ,fill opacity=1 ] (332.42,87.31) .. controls (332.42,87.07) and (332.59,86.87) .. (332.81,86.87) .. controls (333.03,86.87) and (333.2,87.07) .. (333.2,87.31) .. controls (333.2,87.56) and (333.03,87.76) .. (332.81,87.76) .. controls (332.59,87.76) and (332.42,87.56) .. (332.42,87.31) -- cycle ;
\draw  [fill={rgb, 255:red, 0; green, 0; blue, 0 }  ,fill opacity=1 ] (332.44,97.96) .. controls (332.44,97.71) and (332.62,97.52) .. (332.84,97.52) .. controls (333.05,97.52) and (333.23,97.71) .. (333.23,97.96) .. controls (333.23,98.2) and (333.05,98.4) .. (332.84,98.4) .. controls (332.62,98.4) and (332.44,98.2) .. (332.44,97.96) -- cycle ;
\draw    (0,190) -- (640,190.08) ;
\draw [shift={(640,190.08)}, rotate = 180.01] [color={rgb, 255:red, 0; green, 0; blue, 0 }  ][line width=0.75]    (10.93,-3.29) .. controls (6.95,-1.4) and (3.31,-0.3) .. (0,0) .. controls (3.31,0.3) and (6.95,1.4) .. (10.93,3.29)   ;
\draw [line width=0.75]    (0,190) -- (0,182) ;
\draw [line width=0.75]    (149.56,190) -- (149.56,182) ;
\draw [line width=0.75]    (400.56,190) -- (400.56,182) ;
\draw [line width=0.75]    (600,190) -- (600,182) ;
%
\draw [line width=3]    (0,190) -- (30,190.32) ;
\draw [line width=3]    (279.33,190) -- (309.02,190.32) ;
\draw [line width=3]    (570.02,190.32) -- (600,190.08) ;
\draw    (10,175.84) -- (25,151.33) ;
\draw [shift={(26,151.33)}, rotate = 131.55] [fill={rgb, 255:red, 0; green, 0; blue, 0 }  ][line width=0.08]  [draw opacity=0] (5.36,-2.57) -- (0,0) -- (5.36,2.57) -- cycle    ;
\draw    (294.56,175.08) -- (294.56,146.08) ;
\draw [shift={(294.56,143.08)}, rotate = 90] [fill={rgb, 255:red, 0; green, 0; blue, 0 }  ][line width=0.08]  [draw opacity=0] (5.36,-2.57) -- (0,0) -- (5.36,2.57) -- cycle    ;
\draw    (580,175.84) -- (560,150.08) ;
\draw [shift={(559,150.08)}, rotate = 48.24] [fill={rgb, 255:red, 0; green, 0; blue, 0 }  ][line width=0.08]  [draw opacity=0] (5.36,-2.57) -- (0,0) -- (5.36,2.57) -- cycle    ;
\draw  [fill={rgb, 255:red, 155; green, 155; blue, 155 }  ,fill opacity=1 ] (15.33,78.5) .. controls (15.33,57.79) and (22.05,41) .. (30.33,41) .. controls (38.62,41) and (45.33,57.79) .. (45.33,78.5) .. controls (45.33,99.21) and (38.62,116) .. (30.33,116) .. controls (22.05,116) and (15.33,99.21) .. (15.33,78.5) -- cycle ;
\draw [fill={rgb, 255:red, 155; green, 155; blue, 155 }  ,fill opacity=0.3 ]  (74.33,77.5) .. controls (74.33,38.56) and (92.69,7) .. (115.33,7) .. controls (137.98,7) and (156.33,38.56) .. (156.33,77.5) .. controls (156.33,116.44) and (137.98,148) .. (115.33,148) .. controls (92.69,148) and (74.33,116.44) .. (74.33,77.5) -- cycle ;
\draw    (37.02,44.32) -- (97.24,13.71) ;
\draw    (42.33,57) -- (97.24,13.71) ;
\draw    (42.33,57) -- (79.24,42.71) ;
\draw    (44.51,67.31) -- (79.24,42.71) ;
\draw    (44.51,67.31) -- (73.84,73.31) ;
\draw    (45.33,81) -- (73.84,73.31) ;
\draw    (45.33,81) -- (76.51,97.97) ;
\draw    (44.33,93.67) -- (76.51,97.97) ;
\draw    (44.33,93.67) -- (92.24,135.71) ;
\draw    (39.02,110.32) -- (92.24,135.71) ;
\draw  [fill={rgb, 255:red, 155; green, 155; blue, 155 }  ,fill opacity=1 ] (227.33,77) .. controls (227.33,50.49) and (238.08,29) .. (251.33,29) .. controls (264.59,29) and (275.33,50.49) .. (275.33,77) .. controls (275.33,103.51) and (264.59,125) .. (251.33,125) .. controls (238.08,125) and (227.33,103.51) .. (227.33,77) -- cycle ;
\draw   (303.33,75.8) .. controls (303.33,38.8) and (316.76,8.8) .. (333.33,8.8) .. controls (349.9,8.8) and (363.33,38.8) .. (363.33,75.8) .. controls (363.33,112.81) and (349.9,142.8) .. (333.33,142.8) .. controls (316.76,142.8) and (303.33,112.81) .. (303.33,75.8) -- cycle ;
\draw    (262.17,33.31) -- (324.45,11.17) ;
\draw    (270.33,48) -- (324.45,11.17) ;
\draw    (270.33,48) -- (308.45,38.17) ;
\draw    (274.51,63.31) -- (308.45,38.17) ;
\draw    (274.51,63.31) -- (303.33,75.5) ;
\draw    (275.33,86) -- (303.33,75.8) ;
\draw    (275.33,86) -- (309.45,115.17) ;
\draw    (271.45,102.17) -- (309.45,115.17) ;
\draw    (271.45,102.17) -- (321.45,137.17) ;
\draw    (263.51,117.64) -- (321.45,137.17) ;
\draw  [fill={rgb, 255:red, 155; green, 155; blue, 155 }  ,fill opacity=1] 
(427.52,75.55) .. controls (427.52,37.99) and (446.54,7.55) .. (470.02,7.55) .. controls (493.49,7.55) and (512.52,37.99) .. (512.52,75.55) .. controls (512.52,113.1) and (493.49,143.55) .. (470.02,143.55) .. controls (446.54,143.55) and (427.52,113.1) .. (427.52,75.55) -- cycle ;
\draw  [fill={rgb, 255:red, 155; green, 155; blue, 155 }  ,fill opacity=0.3 ] (539.83,74.78) .. controls (539.83,56.12) and (548.45,40.99) .. (559.1,40.99) .. controls (569.74,40.99) and (578.37,56.12) .. (578.37,74.78) .. controls (578.37,93.44) and (569.74,108.57) .. (559.1,108.57) .. controls (548.45,108.57) and (539.83,93.44) .. (539.83,74.78) -- cycle ;

\draw (17.33,72.33) node [anchor=north west][inner sep=0.75pt]  [font=\scriptsize] [align=left] {$K_{t}$};
\draw (80,72.67) node [anchor=north west][inner sep=0.75pt]  [font=\scriptsize] [align=left] {$K_{2,3}$-free};
\draw (231.67,70) node [anchor=north west][inner sep=0.75pt]  [font=\scriptsize] [align=left] {$K_{2t+1}$};
\draw (449.31,72.32) node [anchor=north west][inner sep=0.75pt]  [font=\scriptsize] [align=left] {$K_{5t+4}$};
\draw (532,116) node [anchor=north west][inner sep=0.75pt]  [font=\scriptsize] [align=left] {$K_{2,3}$-free};
\draw (-5,200) node [anchor=north west][inner sep=0.75pt]   [align=left] {\footnotesize{$0$}};
\draw (630,200) node [anchor=north west][inner sep=0.75pt]   [align=left] {\footnotesize{$t$}};
\draw (115,194) node [anchor=north west][inner sep=0.75pt]   [align=left] {\footnotesize{$\sim \sqrt{\frac{n}{2}}$}};
\draw (375,195) node [anchor=north west][inner sep=0.75pt]   [align=left] {\footnotesize{$\sim \frac{4n}{29}$}};
\draw (593,198) node [anchor=north west][inner sep=0.75pt]   [align=left] {\footnotesize{$\frac{n}{5}$}};
%
\end{tikzpicture}
\caption{Three conjectured asymptotic extremal constructions for $\mathrm{ex}(n,(t+1)K_{2,3})$ for $t$ in different intervals. We highlighted the three small intervals in which the constructions are proved to be asymptotically optimal.}
\label{fig:K23-3-segments}
\end{figure}
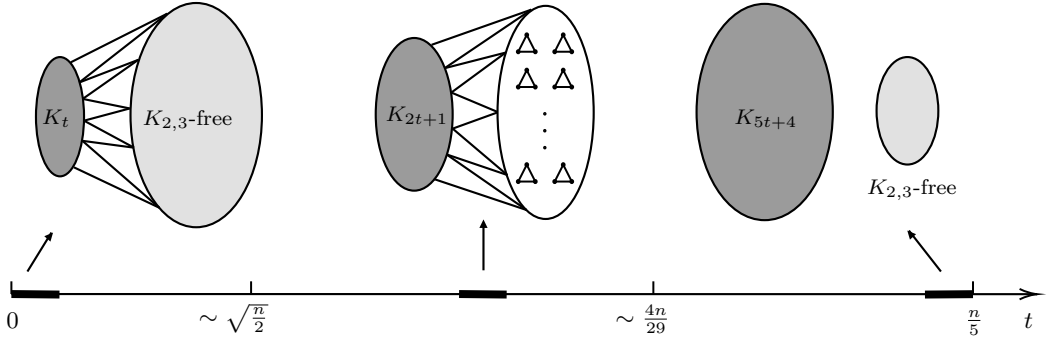

Given an $r$-graph $F$ and an integer $s \in [0, v(F)]$, 
let  
\begin{align*}
    F[s] := \left\{F[S] \colon S\subset V(F) \text{ and } |S| = s\right\},     
\end{align*}
where $F[S]$ denotes the induced subgraph of $F$ on $S$. 
The \textbf{covering number} $\tau(F)$ of $F$ is defined as  
\begin{align*}
    \tau(F) 
    := \min \left\{|S| \colon \text{$S \subset V(F)$ and $S\cap e\neq \emptyset$ for all $e\in F$} \right\}.
\end{align*}
For an $m$-vertex $r$-graph $F$ with covering number $\tau$, let 
\begin{enumerate}[label=(\roman*)]
    \item $\mathcal{G}_1(n,t,F) := K_t^{r} \uproduct \mathrm{EX}(n-t, F)$, 
    \item $\mathcal{G}_2(n,t,F) := K_{\tau(t+1)-1}^r \uproduct \mathrm{EX}\left(n-\tau(t+1)+1, F[m-\tau+1]\right)$, 
    \item $\mathcal{G}_3(n,t,F) := K_{m(t+1)-1}^r \sqcup \mathrm{EX}\left(n-m(t+1)+1, F\right)$. 
\end{enumerate}
Let $g_i(n,t,F)$ denote the size of a member in $\mathcal{G}_i(n,t,F)$ for $i\in \{1,2,3\}$. 
Simple calculations show that 
\begin{align*}
    g_{1}(n,t,F) 
    & = \binom{n}{r}-\binom{n-t}{r}+\mathrm{ex}(n-t,F), \\
    g_{2}(n,t,F) 
    & = \binom{n}{r}-\binom{n-\tau(t+1)+1}{r}+\mathrm{ex}\left(n-\tau(t+1)+1,F[m-\tau+1]\right), \quad\text{and}\\
    g_{3}(n,t,F) 
    & = \binom{m(t+1)-1}{r}+ \mathrm{ex}\left(n-m(t+1)+1,F\right). 
\end{align*}
An easy observation is that the every member in the families defined above has $n$ vertices and is $(t+1)F$-free. 
Therefore, we have the following lower bound for $\mathrm{ex}\left(n, (t+1)F\right)$. 
\begin{proposition}\label{PROP:three-lower-bounds}
    Suppose that $F$ is an $m$-vertex $r$-graph with covering number $\tau$. 
    Then 
    \begin{align*}
        \mathrm{ex}\left(n, (t+1)F\right)
        \ge \max\left\{g_i(n,t,F) \colon i\in \{1,2,3\}\right\}
    \end{align*}
    for all $t \ge 1$ and $n \ge mt$. 
\end{proposition}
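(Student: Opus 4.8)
The plan is to show, for each $i\in\{1,2,3\}$, that $\mathcal{G}_i(n,t,F)$ is a non-empty family whose members are $n$-vertex $r$-graphs with exactly $g_i(n,t,F)$ edges and with $\nu(F,\cdot)\le t$; the asserted inequality then follows at once from the definition of $\mathrm{ex}(n,(t+1)F)$. The edge counts I would dispatch by the standard split: in $\mathcal{G}_1$ and $\mathcal{G}_2$ an edge either meets the clique part — of which there are $\binom nr-\binom{n-k}r$ when the clique has $k$ vertices — or lies inside the chosen extremal $r$-graph, while in $\mathcal{G}_3$ an edge lies inside the clique or inside the extremal $r$-graph; this reproduces $g_1,g_2,g_3$. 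For well-definedness, $n\ge mt$ already gives $n-t\ge 0$, and with the usual reading $\mathrm{EX}(n',\cdot)=\{K^r_{n'}\}$ for $n'$ below the order of every forbidden graph (and a term simply omitted when its clique part cannot fit in $n$ vertices) all three constructions are legitimate $n$-vertex $r$-graphs; in the leftover range $n<m(t+1)$ one observes that $K^r_n$ is itself $(t+1)F$-free.

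For the $F$-matching bounds, $\mathcal{G}_1=K^r_t\uproduct\mathrm{EX}(n-t,F)$ is easiest: a copy of $F$ avoiding all $t$ clique vertices would be a copy of $F$ inside a member of $\mathrm{EX}(n-t,F)$, which is impossible, so every copy of $F$ uses a clique vertex and at most $t$ of them are pairwise vertex-disjoint. For $\mathcal{G}_2=K^r_{\tau(t+1)-1}\uproduct\mathrm{EX}(n-\tau(t+1)+1,F[m-\tau+1])$ with $\tau=\tau(F)$, the key claim is that every copy $F'$ of $F$ meets the clique part in at least $\tau$ vertices: otherwise at least $m-\tau+1$ vertices of $F'$ lie in the extremal part $\mathcal{E}$, the subgraph induced there by $F'$ is a subgraph of $\mathcal{E}$ isomorphic to an induced subgraph $F[S]$ of $F$ with $|S|\ge m-\tau+1$, and choosing $S'\subseteq S$ with $|S'|=m-\tau+1$ and using $F[S']\subseteq F[S]$ shows $\mathcal{E}$ contains the member $F[S']\in F[m-\tau+1]$, contradicting $\mathcal{E}\in\mathrm{EX}(\,\cdot\,,F[m-\tau+1])$; here the covering number is exactly the right parameter, since by minimality of $\tau$ no member of $F[m-\tau+1]$ is edgeless. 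The claim gives $\nu(F,\mathcal{G}_2)\le t$, as $t+1$ disjoint copies would occupy $\ge\tau(t+1)$ vertices of a clique part of order $\tau(t+1)-1$.

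For $\mathcal{G}_3=K^r_{m(t+1)-1}\sqcup\mathrm{EX}(n-m(t+1)+1,F)$, since $K^r_N$ contains a copy of $F$ on every $m$ of its vertices we have $\nu(F,K^r_{m(t+1)-1})=\lfloor(m(t+1)-1)/m\rfloor=t$; as the second component is $F$-free, at least when $F$ is connected every copy of $F$ in the disjoint union lies in one component, so $\nu(F,\mathcal{G}_3)=t$. Together with the trivial direction this proves the proposition.

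The step I expect to be the real content is the claim in the $\mathcal{G}_2$ case, where one must be careful that a copy of $F$ need not be induced — this is exactly why the construction forbids the whole induced-subgraph family $F[m-\tau+1]$, not a single graph — and one uses $F[S']\subseteq F[S]$ to descend to a forbidden member. A secondary point meriting a short separate argument is the disjoint-union step for $\mathcal{G}_3$ when $F$ is disconnected, since a copy of $F$ may then distribute its components across the two parts; I would handle this by restricting to connected $F$ (which already covers the motivating cases, e.g. $F=K_{2,3}$) or by directly bounding the $F$-matching number of a disjoint union.
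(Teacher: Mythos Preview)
Your argument is correct and matches the paper's own justification, which is nothing more than the sentence ``An easy observation is that every member in the families defined above has $n$ vertices and is $(t+1)F$-free'': you have simply written out that observation carefully, with the $\mathcal{G}_2$ case handled exactly as intended via the induced family $F[m-\tau+1]$. Your caveat about disconnected $F$ in the $\mathcal{G}_3$ case is a genuine point the paper glosses over --- indeed $\mathcal{G}_3(n,1,2K_2)=K_7\sqcup K_{1,n-8}$ contains four disjoint edges and hence two disjoint copies of $2K_2$ --- but the paper only ever invokes $g_3$ for connected $r$-partite $\mathbb{B}$, where your argument goes through.
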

Our main results, which will be presented shortly, show that for an $r$-partite $r$-graph $F$ with part sizes $s_r \ge \cdots \ge s_1 = \tau(F)$, the constructions $\mathcal{G}_1(n,t,F)$, $\mathcal{G}_2(n,t,F)$, and $\mathcal{G}_3(n,t,F)$ are asymptotically optimal for $\mathrm{ex}\left(n, (t+1)F\right)$ when $n$ is large, and $t$ lies in intervals $\left[0, \frac{\varepsilon \cdot \mathrm{ex}(n,F)}{n^{r-1}}\right]$, $\left[\frac{\mathrm{ex}(n,F)}{\varepsilon n^{r-1}}, \varepsilon n \right]$, and $\left[ (1-\varepsilon)\frac{n}{v(F)}, \frac{n}{v(F)} \right]$, respectively (see Figure~\ref{fig:K23-3-segments}).  
Here $\varepsilon > 0$ is a constant depending only on $F$.
We conjecture that when $F$ is the balanced complete $r$-partite $r$-graph $K_{s,\ldots, s}^{r}$,  the asymptotic behavior of $\mathrm{ex}\left(n, (t+1)F\right)$ is governed by the three constructions defined above for all feasible $t$. 
For general degenerate $r$-graphs, there are could be more extremal constructions (see Section~\ref{SEC:Remarks} for more details). 
We hope that our results together with results in~\cite{HLLYZ23} may provide some insights towards a comprehensive generalization of the density version of the  Corr\'{a}di--Hajnal Theorem for all hypergraphs. 

\subsection{The first interval}
In this subsection, we state the result for $\mathrm{ex}\left(n, (t+1)F\right)$ when $t$ lies in $\left[0, \frac{\varepsilon \cdot \mathrm{ex}(n,F)}{n^{r-1}}\right]$. 
This result shares similarities with the main result in~\cite{HLLYZ23}, but here we only need the following weaker property of $F$.

\begin{definition}[\textbf{Boundedness}]
      Let $c_1, c_2 > 0$ be two real numbers. 
      An $r$-graph $F$ is $(c_1, c_2)$-bounded\footnote{One could extend the definition of `Boundedness' by allowing $c_1$ and $c_2$ to be functions of $n$, but constant functions are sufficient for our purpose in the present paper.} if the following holds for all sufficiently large $n$. 
      Every $n$-vertex $r$-graph $\mathcal{H}$ with
      \begin{align*}
          \Delta(\mathcal{H}) \ge c_1 \binom{n-1}{r-1} 
          \quad\text{and}\quad
          |\mathcal{H}| \ge (1-c_2) \cdot \mathrm{ex}(n,F)
      \end{align*}
      contains a copy of $F$.  
      We say $F$ is bounded if it is $(c_1, c_2)$-bounded for some $c_1, c_2 > 0$.
\end{definition}
We will show in~\cite{HHLLYZ23c} that many well-studied degenerate hypergraphs such as even cycles $C_{2k}$ for $k \ge 2$, complete bipartite graphs $K_{s,t}$ for $t > s(s-1) \ge 2$, and the expansion of complete bipartite graphs $K_{s,t}^{+}$ for $t > (s-1)!\ge 2$ are bounded. 
\begin{theorem}\label{THM:1st-interval}
    Let $m \ge r \ge 2$ be integers and $F$ be an $r$-graph with $m$ vertices. 
    Suppose that $F$ is $(c_1, c_2)$-bounded for some $c_1 < 1/m$ and $c_2 >0$. 
    Then for sufficiently large $n$,  
        \begin{align*}
            \mathrm{EX}\left(n, (t+1)F\right)
                = K_{t}^r \uproduct \mathrm{EX}(n-t, F)
        \end{align*}
    holds for all integers $t$ satisfying 
    \begin{align*}
        0 \le t 
        \le \min\left\{\frac{\delta \cdot \mathrm{ex}(n,F)}{m\binom{n-1}{r-1}},\  \frac{\delta (n-1)}{8m(r-1)}\right\}, 
        \quad\text{where}\quad 
        \delta := \min\left\{\frac{1}{4}\left(\frac{1}{m}-c_1\right),\ \frac{c_2}{4}\right\}. 
    \end{align*}
\end{theorem}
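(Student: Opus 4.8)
The plan is to prove the theorem by induction on $t$, with the base case $t = 0$ being the trivial statement $\mathrm{EX}(n, F) = \mathrm{EX}(n, F)$. So suppose $t \ge 1$ and the result holds for $t - 1$. Let $\mathcal{H}$ be an $n$-vertex $(t+1)F$-free $r$-graph with $|\mathcal{H}| \ge g_1(n, t, F) = \binom{n}{r} - \binom{n-t}{r} + \mathrm{ex}(n-t, F)$; we must show $\mathcal{H} \in K_t^r \uproduct \mathrm{EX}(n-t, F)$. The key structural dichotomy is to look at $\Delta(\mathcal{H})$. First I would argue that $\Delta(\mathcal{H})$ must be large — specifically $\Delta(\mathcal{H}) \ge c_1 \binom{n-1}{r-1} + \binom{n-1}{r-1} - \binom{n-1-t}{r-1}$ or so. Indeed, if every vertex had degree below this threshold, then deleting any vertex would remove relatively few edges, and one could iterate: after deleting a carefully chosen vertex $v$, the remaining $r$-graph $\mathcal{H} - v$ on $n-1$ vertices would still have enough edges to be $tF$-free-violating, i.e. $|\mathcal{H} - v| > g_1(n-1, t-1, F)$, so by induction $\mathcal{H} - v$ contains $t$ disjoint copies of $F$ plus (from the $\mathrm{EX}(n-1-t, F)$ part being genuinely extremal, hence $F$-free only up to that bound) — actually the cleaner route is: a low-max-degree $\mathcal{H}$ still has $|\mathcal{H}| \ge (1 - c_2)\mathrm{ex}(n, F)$ (since $\binom{n}{r} - \binom{n-t}{r}$ already far exceeds $\mathrm{ex}(n,F)$ once $t \ge 1$ for large $n$, using the bound on $t$), but then Boundedness needs $\Delta$ large too, so a purely low-degree $\mathcal{H}$ with this many edges is impossible unless $\Delta$ is large. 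This needs to be set up so the arithmetic closes; the constant $\delta$ and the two upper bounds on $t$ are exactly what make $\binom{n}{r} - \binom{n-t}{r}$ dominate and the max-degree threshold land below $c_1\binom{n-1}{r-1}$.

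Once a vertex $v$ of large degree is located, the plan is to peel it off: set $\mathcal{H}' := \mathcal{H} - v$ on $n - 1$ vertices. One checks that $\mathcal{H}$ being $(t+1)F$-free forces $\mathcal{H}'$ to be $tF$-free — this is immediate since a vertex-disjoint family of $t$ copies of $F$ in $\mathcal{H}'$, none using $v$, is also such a family in $\mathcal{H}$, and we need $t$, not $t+1$; wait, we actually need that no copy of $F$ in $\mathcal{H}$ avoids $v$ together with $t-1$ others — more precisely, if $\mathcal{H}'$ had $t$ disjoint copies of $F$ we would need one more disjoint copy through $v$ in $\mathcal{H}$ to get a contradiction, which is not automatic. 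So the correct reduction is: $\mathcal{H}'$ is $tF$-free would already give, by the induction hypothesis applied at parameter $t-1$... no — the honest approach here is that we want $|\mathcal{H}'| \le g_1(n-1, t-1, F)$ to even invoke induction, and $|\mathcal{H}'| = |\mathcal{H}| - d(v) \ge g_1(n,t,F) - d(v)$, so a large $d(v)$ (close to $\binom{n-1}{r-1}$) brings this down to roughly $\binom{n-1}{r} - \binom{n-1-(t-1)}{r} + \mathrm{ex}(n-1-t, F)$, i.e. at least $g_1(n-1, t-1, F)$; combined with $\mathcal{H}'$ being $tF$-free this is a mild contradiction-or-equality situation that pins down both $d(v) = \binom{n-1}{r-1}$ exactly (so $v$ is in every $r$-set through it, i.e. $v$ is "complete") and $\mathcal{H}' \in K_{t-1}^r \uproduct \mathrm{EX}(n-1-(t-1), F) = K_{t-1}^r \uproduct \mathrm{EX}(n-t, F)$. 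Then $\mathcal{H} = \{v\} \uproduct \mathcal{H}'$ with $v$ complete gives exactly $K_t^r \uproduct \mathrm{EX}(n-t, F)$, as desired. I should double check that $\mathcal{H}'$ is $tF$-free: if it had $t$ disjoint copies of $F$, then since $d(v)$ is near-maximal, $v$ together with its link — which contains $(1-c_1-o(1))\binom{n-1}{r-1}$ edges on the remaining $\ge n - 1 - tm$ vertices outside those copies, and $c_1 < 1/m$ — the link is dense enough that Boundedness (or a direct supersaturation/greedy embedding) finds a copy of $F$ there avoiding the $t$ copies; this would yield $t+1$ disjoint copies in $\mathcal{H}$, a contradiction. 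This last embedding step is where $c_1 < 1/m$ is used.

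The main obstacle I anticipate is the first step — showing $\Delta(\mathcal{H})$ is large — and making the two-sided squeeze in the second step genuinely tight rather than merely asymptotic, so that one extracts the exact extremal structure $K_t^r \uproduct \mathrm{EX}(n-t,F)$ and not just an approximate version. The delicate point is that $\mathrm{ex}(n-t, F)$ versus $\mathrm{ex}(n-1, F[m-1])$-type quantities and the binomial differences $\binom{n-1}{r-1} - \binom{n-1-t}{r-1}$ must be compared precisely; the hypotheses $c_1 < 1/m$, $t \le \delta \cdot \mathrm{ex}(n,F)/(m\binom{n-1}{r-1})$ and $t \le \delta(n-1)/(8m(r-1))$ with $\delta = \min\{\tfrac14(\tfrac1m - c_1), \tfrac{c_2}{4}\}$ are exactly tuned to close these estimates, and verifying that they do is the technical heart. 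A secondary subtlety is ensuring the induction is clean — that at each step the reduced parameters still satisfy the hypotheses of the theorem (the bound on $t$ is monotone under $t \mapsto t-1$ and $n \mapsto n-1$, so this should be routine but must be checked).
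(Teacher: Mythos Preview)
Your inductive scheme is a workable alternative to the paper's direct argument, but as written it has two genuine gaps that you need to close before it goes through.

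First, Step~1 (forcing a high-degree vertex) is not established. Neither of your two attempts works: approach~(a) --- delete any $v$, note $|\mathcal{H}-v|>g_1(n-1,t-1,F)$ by low degree, so $\mathcal{H}-v$ contains $t$ disjoint copies of $F$ --- does not yield a contradiction, since you still need a $(t{+}1)$st copy and you have no high-degree vertex to produce it; approach~(b) appeals to Boundedness, but Boundedness only constrains \emph{$F$-free} hypergraphs, and $\mathcal{H}$ need not be $F$-free. The missing ingredient is the elementary bound the paper isolates as Lemma~\ref{LEMMA:1st-interval-bounded-max-degree}: take a maximum $F$-matching (size $\le t$), remove its $\le mt$ vertices, and the rest is $F$-free; hence $|\mathcal{H}|\le mt\cdot\Delta(\mathcal{H})+\mathrm{ex}(n-mt,F)$. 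If $\Delta(\mathcal{H})<(1/m-\delta)\binom{n-1}{r-1}$ this is strictly below $g_1(n,t,F)$ for $t\ge 1$, a contradiction.

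Second, your justification that $\mathcal{H}':=\mathcal{H}-v$ is $tF$-free is confused: you write ``since $d(v)$ is near-maximal'' and quote $(1-c_1-o(1))\binom{n-1}{r-1}$ link edges, but at this point you only know $d(v)\ge (c_1+2\delta)\binom{n-1}{r-1}$, which may be tiny. The correct argument is exactly the paper's Lemma~\ref{LEMMA:1st-interval-avoid-B}: if $\mathcal{H}'$ contained $t$ disjoint copies of $F$ on a set $B$ with $|B|=mt$, then $\mathcal{H}-B$ still has $|\mathcal{H}-B|\ge |\mathcal{H}|-mt\binom{n-1}{r-1}\ge (1-c_2)\mathrm{ex}(n,F)$ and $d_{\mathcal{H}-B}(v)\ge c_1\binom{n-1}{r-1}$ (using both upper bounds on $t$), so Boundedness gives a copy of $F$ in $\mathcal{H}-B$, yielding $(t{+}1)$ disjoint copies.

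Once these are fixed your induction is genuinely different from the paper's one-shot partition into $L=\{v:d(v)\ge(c_1+2\delta)\binom{n-1}{r-1}\}$ and $U=V\setminus L$, where they show $|L|\le t$, $\mathcal{H}[U]$ is $(t-|L|+1)F$-free, and bound $|\mathcal{H}[U]|$ via Lemma~\ref{LEMMA:1st-interval-bounded-max-degree}. Your approach peels vertices one at a time and delivers the equality case $d(v)=\binom{n-1}{r-1}$ and $\mathcal{H}\in K_t^r\uproduct\mathrm{EX}(n-t,F)$ more transparently; the paper's route avoids having to verify that the hypotheses persist along the induction (which they do, but you must check it). Both rest on the same two lemmas.
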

One interesting consequence of Theorem~\ref{THM:1st-interval} is as follows. 
\begin{definition}[\textbf{Suspensions}]
    Let $r \ge 2$ and $F$ be an $(r-1)$-graph. 
    The suspension $\widehat{F}$ of $F$ is the $r$-graph defined as $\widehat{F} := \left\{\{v\}\cup e \colon e\in F\right\}$, 
    where $v$ is a new vertex not contained in $V(F)$. 
\end{definition}
Observe that the suspension $\widehat{F}$ of every hypergraph $F$ is $(c_1, 1)$-bounded for all $c_1 > \pi(F)$.
Thus we have the following corollary. 

\begin{corollary}\label{CORO:1st-interval-suspension}
    Let $m \ge r \ge 2$ be integers and $F$ be an $(r-1)$-graph on $m-1$ vertices. 
    Suppose that $\pi(F) < 1/m$. 
    Then for sufficiently large $n$,  
    \begin{align*}
        \mathrm{EX}(n, (t+1)\widehat{F})
                = K_{t}^r \uproduct \mathrm{EX}(n-t, \widehat{F})
    \end{align*}
    holds for all integers $t$ satisfying 
    \begin{align*}
        0 \le t 
        \le \min\left\{\frac{1-m\cdot \pi(F)}{5m^2}\frac{\mathrm{ex}(n,F)}{\binom{n-1}{r-1}},\  \frac{1-m\cdot \pi(F)}{40m^2}\frac{n-1}{r-1}\right\}. 
    \end{align*}
\end{corollary}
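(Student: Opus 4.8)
The plan is to deduce the corollary directly from Theorem~\ref{THM:1st-interval} by verifying that $\widehat{F}$ satisfies the required boundedness hypothesis with explicit constants. First I would record the elementary fact that $\widehat{F}$ has exactly $m$ vertices (since $F$ has $m-1$ vertices and we add one apex vertex $v$), so $\widehat{F}$ is an $r$-graph on $m$ vertices, matching the setup of Theorem~\ref{THM:1st-interval} with the same parameter $m$. Next, I would establish the claim made in the excerpt just before the corollary: that $\widehat{F}$ is $(c_1,1)$-bounded for every $c_1 > \pi(F)$. To see this, suppose $\mathcal{H}$ is an $n$-vertex $r$-graph with $\Delta(\mathcal{H}) \ge c_1\binom{n-1}{r-1}$ and $|\mathcal{H}| \ge (1-1)\cdot\mathrm{ex}(n,\widehat{F}) = 0$ (the edge condition is vacuous when $c_2 = 1$). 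Pick a vertex $u$ with $d_{\mathcal{H}}(u) = \Delta(\mathcal{H}) \ge c_1\binom{n-1}{r-1}$, and let $L_u := \{e \setminus \{u\} : e \in \mathcal{H},\ u \in e\}$ be its link, an $(r-1)$-graph on $n-1$ vertices with $|L_u| \ge c_1\binom{n-1}{r-1} > \pi(F)\binom{n-1}{r-1} \ge \mathrm{ex}(n-1,F)$ for $n$ large, by definition of the Turán density. Hence $L_u$ contains a copy of $F$; adjoining $u$ to it yields a copy of $\widehat{F}$ in $\mathcal{H}$. This proves the boundedness claim.

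With this in hand, I would invoke Theorem~\ref{THM:1st-interval} with $F$ replaced by $\widehat{F}$, $c_1$ any value in $(\pi(F), 1/m)$, and $c_2 = 1$; the hypothesis $c_1 < 1/m$ is available precisely because we assumed $\pi(F) < 1/m$. The theorem then gives $\mathrm{EX}(n,(t+1)\widehat{F}) = K_t^r \uproduct \mathrm{EX}(n-t,\widehat{F})$ for all $t$ up to $\min\{\delta\cdot\mathrm{ex}(n,\widehat{F})/(m\binom{n-1}{r-1}),\ \delta(n-1)/(8m(r-1))\}$ with $\delta = \min\{\frac14(\frac1m - c_1),\ \frac14\}$. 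It remains to choose $c_1$ so that this threshold dominates the one claimed in the corollary. Taking $c_1 := \frac12(\pi(F) + \frac1m)$, the midpoint, gives $\frac1m - c_1 = \frac12(\frac1m - \pi(F)) = \frac{1 - m\pi(F)}{2m}$, so $\frac14(\frac1m - c_1) = \frac{1-m\pi(F)}{8m}$; since $\pi(F) < 1/m$ we have $\frac{1-m\pi(F)}{8m} < \frac14$, hence $\delta = \frac{1-m\pi(F)}{8m}$. One then checks the numerics: $\delta / m = \frac{1-m\pi(F)}{8m^2} > \frac{1-m\pi(F)}{5m^2}$? No --- rather the corollary's stated bound $\frac{1-m\pi(F)}{5m^2}\cdot\frac{\mathrm{ex}(n,F)}{\binom{n-1}{r-1}}$ must be shown to be \emph{at most} $\frac{\delta}{m}\cdot\frac{\mathrm{ex}(n,\widehat{F})}{\binom{n-1}{r-1}}$, which follows once we also note $\mathrm{ex}(n,\widehat{F}) \ge \mathrm{ex}(n,F)$ trivially fails in general, so instead I would bound $\mathrm{ex}(n,\widehat{F})$ from below by a near-extremal link construction giving $\mathrm{ex}(n,\widehat{F}) \gtrsim \mathrm{ex}(n-1,F) \ge (1-o(1))\mathrm{ex}(n,F)$, absorbing the discrepancy between $\frac18$ and $\frac15$ into lower-order terms; similarly $\frac{\delta(n-1)}{8m(r-1)} = \frac{(1-m\pi(F))(n-1)}{64m^2(r-1)} > \frac{(1-m\pi(F))(n-1)}{40m^2(r-1)}$ fails as an inequality, so here too I would re-examine the stated constant, likely the corollary intends the weaker (hence valid) bound and the clean constants $5$ and $40$ are obtained by a slightly less tight choice of $c_1$ together with crude estimates $\mathrm{ex}(n,\widehat{F}) \ge \mathrm{ex}(n-1,F)$ and monotonicity.

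The main obstacle, therefore, is not conceptual but bookkeeping: matching the explicit constants in the corollary's statement to what Theorem~\ref{THM:1st-interval} delivers after substituting $\widehat{F}$, which requires (a) a clean lower bound relating $\mathrm{ex}(n,\widehat{F})$ to $\mathrm{ex}(n,F)$ --- the standard one being that blowing up a near-extremal $F$-free $(r-1)$-graph on $[n-1]$ by an apex-free construction shows $\mathrm{ex}(n,\widehat{F}) = \Theta(n\cdot\mathrm{ex}(n-1,F))$, though only the lower bound $\mathrm{ex}(n,\widehat{F}) \ge \mathrm{ex}(n-1,F)$ is needed here --- and (b) choosing $c_1$ in the open interval $(\pi(F),1/m)$ to make $\delta$ large enough. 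I would present the argument as: fix $c_1$, verify $(c_1,1)$-boundedness via the link argument above, apply Theorem~\ref{THM:1st-interval}, and then discharge the two inequalities comparing thresholds with a short explicit computation, using $n$ large to absorb the gap between consecutive Turán numbers. If the constants $5m^2$ and $40m^2(r-1)$ in the statement do not come out exactly, the honest fix is to note they are valid because they are weaker than the sharp thresholds $8m^2$-type bounds the theorem yields; I would flag this and keep the stated (safe) constants.
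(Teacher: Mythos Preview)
Your approach is exactly the paper's: observe that $\widehat F$ is $(c_1,1)$-bounded for every $c_1>\pi(F)$ via the link argument, then invoke Theorem~\ref{THM:1st-interval}. The paper in fact offers no more than this one-line observation. What remains is bookkeeping, and here you have two fixable issues.

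First, your midpoint choice $c_1=\tfrac12(\pi(F)+\tfrac1m)$ yields $\delta=\frac{1-m\pi(F)}{8m}$ and hence a second threshold of $\frac{(1-m\pi(F))(n-1)}{64m^2(r-1)}$, which is \emph{smaller} than the corollary's $\frac{(1-m\pi(F))(n-1)}{40m^2(r-1)}$; your conclusion that the stated constants are ``weaker than the sharp thresholds $8m^2$-type bounds'' is backwards. The fix is simply to push $c_1$ closer to $\pi(F)$: taking $c_1=\frac{1+4m\pi(F)}{5m}$ (which exceeds $\pi(F)$ precisely because $\pi(F)<1/m$) gives $\delta=\frac{1-m\pi(F)}{5m}$, and then both thresholds from Theorem~\ref{THM:1st-interval} match the corollary's stated constants with $\mathrm{ex}(n,\widehat F)$ in place of $\mathrm{ex}(n,F)$.

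Second, the stated first threshold uses $\mathrm{ex}(n,F)$ (for the $(r-1)$-graph $F$) whereas Theorem~\ref{THM:1st-interval} delivers $\mathrm{ex}(n,\widehat F)$; this is very likely a typo in the statement (note the footnote to Corollary~\ref{CORO:1st-generalized-triangle-suspension} invokes $\mathrm{ex}(n,\widehat T_r)$). Your proposed bridge $\mathrm{ex}(n,\widehat F)\ge\mathrm{ex}(n-1,F)$ via ``putting all edges through one apex'' does \emph{not} work in general: if $F$ itself has covering number~$1$ it can appear in other links of that construction. The clean argument is: if $\pi(F)>0$ then $\widehat F$ is not $r$-partite, so $\pi(\widehat F)>0$ and $\mathrm{ex}(n,\widehat F)=\Omega(n^r)\gg\mathrm{ex}(n,F)=O(n^{r-1})$; if $\pi(F)=0$ the first threshold is $o(1)$ and only $t=0$ is claimed. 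Either way the statement as written follows.
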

\textbf{Remarks.}
\begin{itemize}
        \item Note that in Theorem~\ref{THM:1st-interval} we do not require $F$ to be a degenerate hypergraph,  
        so it improves~{\cite[Theorem~1.7]{HLLYZ23}} for nondegenerate $r$-graphs $F$ with $\pi(F) < 1/v(F)$ (since we do not need the `Smoothness' constraint in Theorem~\ref{THM:1st-interval}). 
        Also note that Corollary~\ref{CORO:1st-interval-suspension} improves~{\cite[Theorem~1.7]{HLLYZ23}} for the suspension of nondegenerate $r$-graphs $F$ with $\pi(F) < \frac{1}{v(F)+1}$, and an illustrative example is provided in the subsequent  Corollary~\ref{CORO:1st-generalized-triangle-suspension}. 
        \item For integers $r > b \ge 0$ let $Y_{r,b}$ denote the $r$-graph consistsing of two edges that intersect in exactly $b$ vertices. 
        Gan, Han, Sun and Wang~\cite{GHSW21} (see also~\cite{HSW23}) proved some asymptotic upper bounds for $\mathrm{ex}\left(n, (t+1)Y_{r,b}\right)$ for $r > b \ge 1$. 
        Since $Y_{r,b}$ is the suspension of $Y_{r-1,b-1}$ when $b \ge 1$, Corollary~\ref{CORO:1st-interval-suspension} improves their results for $t$ in the interval as stated in Corollary~\ref{CORO:1st-interval-suspension}. 
\end{itemize}

Let $r \ge 2$ be an integer. 
A notable example in hypergraph Tur\'{a}n problem is the  \textbf{generalized triangle} $T_r$, defined as 
    \begin{align*}
        T_{r} 
        := \left\{\{1,\ldots,r-1, r\}, \{1,\ldots,r-1,r+1\},\{r,r+1,\ldots,2r-1\}\right\}. 
    \end{align*}
The study of $\pi(T_r)$ has a long history (see e.g.~\cite{Mantel07,Bol74,FF83,Sido87,She96,KM04,PI08,NY17,LIU19,LMR23unif}), although the precise value of $\pi(T_r)$ remains undetermined for all $r\ge 7$. 
Determining the $\pi(\widehat{T}_r)$ appears to be even more challenging in general, and the longstanding problem of determining $\pi(\widehat{T}_3)$, i.e. $\pi(K_{4}^{3-})$, remains open (see~\cite{FF84,Mub03}). 

In~\cite{FF89}, Frankl--F\"{u}redi showed that $\pi(T_r) \le \frac{1}{e\binom{r-1}{2}}$, 
which is smaller that $\frac{1}{2r}$ when $r\ge 4$. 
Therefore, by Corollary~\ref{CORO:1st-interval-suspension}\footnote{Together with facts $\mathrm{ex}(n,\widehat{T}_r)) \ge \pi(\widehat{T}_r) \binom{n}{r}$, $\pi(\widehat{T}_r) \le \pi(T_r) \le \frac{1}{e\binom{r-1}{2}}$, and some simple calculations.}, we obtain the following result, which appears to be the first of its kind for a hypergraph with an undetermined Tur\'{a}n density. 

\begin{corollary}\label{CORO:1st-generalized-triangle-suspension}
    Suppose that $r \ge 4$ is an integer. 
    Then for sufficiently large $n$, 
    \begin{align*}
        \mathrm{EX}(n, (t+1)\widehat{T}_r)
                = K_{t}^r \uproduct \mathrm{EX}(n-t, \widehat{T}_t), 
    \end{align*}
    holds for all integers $t$ satisfying 
    \begin{align*}
        0 \le t 
        \le \frac{\left(1-(2r-1)\cdot \pi(\widehat{T}_r)\right)\cdot \pi(\widehat{T}_r)}{5 r (2r-1)^2} n. 
    \end{align*}
\end{corollary}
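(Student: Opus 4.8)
The plan is to derive this corollary directly from Corollary~\ref{CORO:1st-interval-suspension} by verifying its hypotheses for the $(r-1)$-graph $F = T_{r-1}$, recalling that $\widehat{T}_{r-1} = T_r$ in the sense used here (the suspension of the generalized triangle $T_{r-1}$ is $T_r$, since $T_r$ is obtained from $T_{r-1}$ by adding a common apex vertex to all edges). Here the host $r$-graph has $\widehat{T}_r$ built on $m = 2r-1$ vertices, so in the notation of Corollary~\ref{CORO:1st-interval-suspension} we take $m = 2r-1$ and the $(r-1)$-graph $T_{r-1}$ on $m-1 = 2r-2$ vertices. The first thing I would check is the degeneracy-type condition $\pi(T_{r-1}) < 1/m = 1/(2r-1)$. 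By the Frankl--F\"uredi bound $\pi(T_{r-1}) \le \frac{1}{e\binom{r-2}{2}}$, and for $r \ge 4$ one has $\binom{r-2}{2} \ge 1$, so $\pi(T_{r-1}) \le \frac{1}{e} < \frac{1}{2r-1}$ holds trivially for $r \ge 4$ (and with room to spare for larger $r$); I would just note this elementary inequality rather than belabor it.

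Next I would invoke Corollary~\ref{CORO:1st-interval-suspension} verbatim: for sufficiently large $n$,
\[
    \mathrm{EX}(n, (t+1)\widehat{T}_r) = K_t^r \uproduct \mathrm{EX}(n-t, \widehat{T}_r)
\]
holds for all $t$ with
\[
    0 \le t \le \min\left\{\frac{1-m\cdot\pi(T_{r-1})}{5m^2}\cdot\frac{\mathrm{ex}(n,T_{r-1})}{\binom{n-1}{r-1}},\ \frac{1-m\cdot\pi(T_{r-1})}{40m^2}\cdot\frac{n-1}{r-1}\right\}.
\]
The remaining work is purely arithmetic: I need to bound the right-hand side below by the clean expression $\frac{(1-(2r-1)\pi(\widehat{T}_r))\pi(\widehat{T}_r)}{5r(2r-1)^2}\,n$ claimed in the statement. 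For the first term in the minimum I would use $\mathrm{ex}(n, T_{r-1}) \ge \pi(T_{r-1})\binom{n}{r-1}$ together with $\pi(\widehat{T}_r) \le \pi(T_{r-1})$ (the apex operation cannot increase the density in this direction — more precisely I would use the monotonicity/comparison $\pi(\widehat{T}_r) \le \pi(T_r) \le \pi(T_{r-1})$ as footnoted in the excerpt), and the estimate $\binom{n}{r-1}/\binom{n-1}{r-1} = n/(n-r+1) \ge 1$ to get a term of order $\frac{(1-(2r-1)\pi(\widehat{T}_r))\pi(\widehat{T}_r)}{5(2r-1)^2}\,n$ up to the $n/(n-r+1)$ factor, which is $\ge 1$. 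For the second term, $\frac{1-(2r-1)\pi(T_{r-1})}{40(2r-1)^2}\cdot\frac{n-1}{r-1}$, I would use that $\pi(\widehat{T}_r) \le 1$ (indeed $\le 1/e$), so $\frac{1}{40(r-1)} \ge \frac{\pi(\widehat{T}_r)}{40(r-1)} \cdot$ (something), and compare $\frac{1}{40(r-1)}$ against $\frac{1}{5r}$; since $40(r-1) \ge 5r$ fails for small $r$ I would instead absorb constants by noting $r \le 2r-1$ and $5r(2r-1) \ge 40(r-1)$ for $r\ge 4$... the bookkeeping here is the one place requiring care, and I would choose the crude common denominator $5r(2r-1)^2$ so that \emph{both} terms of the minimum dominate the stated bound.

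The main (and only real) obstacle is the constant-chasing in the last step: I must confirm that the stated bound $\frac{(1-(2r-1)\pi(\widehat{T}_r))\pi(\widehat{T}_r)}{5r(2r-1)^2}\,n$ is genuinely smaller than both entries of the minimum in Corollary~\ref{CORO:1st-interval-suspension} for every integer $r \ge 4$ and all large $n$, using only $\pi(\widehat{T}_r) \le \pi(T_r) \le \pi(T_{r-1}) \le \frac{1}{e\binom{r-2}{2}}$ and $\mathrm{ex}(n,T_{r-1}) \ge \pi(T_{r-1})\binom{n}{r-1}$. There is no conceptual difficulty — it reduces to checking a couple of elementary inequalities between polynomials in $r$ — but it does need to be done honestly; in particular the replacement of $\mathrm{ex}(n, T_{r-1})$ by its trivial lower bound $\pi(T_{r-1})\binom{n}{r-1}$, and of $m = 2r-1$, should be tracked so that the final denominator $5r(2r-1)^2$ cleanly covers the worst of the two terms. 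I would write this as a short paragraph of inequalities rather than a display-heavy computation.
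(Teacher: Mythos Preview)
Your overall strategy (apply Corollary~\ref{CORO:1st-interval-suspension} and then simplify) matches the paper's, but the instantiation rests on a false identity. You write that ``the suspension of the generalized triangle $T_{r-1}$ is $T_r$.'' This is not true: $T_{r-1}$ has $2r-3$ vertices, so $\widehat{T}_{r-1}$ has $2r-2$ vertices, while $T_r$ has $2r-1$ vertices; moreover every edge of $\widehat{T}_{r-1}$ contains the apex (a vertex of degree $3$), whereas no vertex of $T_r$ has degree $3$. Consequently your parameter choice $m=2r-1$ and ``$T_{r-1}$ on $m-1=2r-2$ vertices'' is already inconsistent with the definitions.

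This misidentification is fatal, not just cosmetic. With $F=T_{r-1}$ and $m=2r-1$, the hypothesis $\pi(F)<1/m$ in Corollary~\ref{CORO:1st-interval-suspension} would read $\pi(T_{r-1})<\frac{1}{2r-1}$. For $r=4$ this says $\pi(T_3)<\frac{1}{7}$, but $\pi(T_3)=\frac{2}{7}$, so the hypothesis fails outright; your sentence ``$\pi(T_{r-1})\le \frac{1}{e}<\frac{1}{2r-1}$ holds trivially for $r\ge 4$'' is simply a wrong inequality ($\tfrac{1}{e}>\tfrac{1}{7}$). The side claim $\pi(T_r)\le \pi(T_{r-1})$ is also unjustified.

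The paper's argument takes $F=T_r$ (an $r$-graph on $2r-1$ vertices), so that $\widehat{F}=\widehat{T}_r$ has $m=2r$ vertices. The Frankl--F\"uredi bound $\pi(T_r)\le \frac{1}{e\binom{r-1}{2}}$ then gives $\pi(T_r)<\frac{1}{2r}=\frac{1}{m}$ for $r\ge 4$, which is exactly the hypothesis needed. From there one uses $\mathrm{ex}(n,\widehat{T}_r)\ge \pi(\widehat{T}_r)\binom{n}{r}$ and $\pi(\widehat{T}_r)\le\pi(T_r)$ to simplify the two terms of the minimum; this is the ``simple calculations'' step, and it is where your constant-chasing paragraph would go once the parameters are set correctly.
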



The proof of Theorem~\ref{THM:1st-interval} is presented in Section~\ref{SEC:proof-1st-interval}. 

\subsection{The second interval}\label{SUBSEC:2nd-segment}
In this subsection, we present the result for $\mathrm{ex}\left(n, (t+1)F\right)$ when $t$ lies in $\left[\frac{\mathrm{ex}(n,F)}{\varepsilon n^{r-1}}, \varepsilon n\right]$. 

\begin{theorem}\label{THM:2nd-interval}
    Let $r \ge 2$ and $s_r \ge \cdots \ge s_1 \ge 2$ be integers.
    Let $s:= s_1+ \cdots+ s_r$ and $\mathbb{B} := B_{s_1, \ldots, s_r}^{r}$ be an $r$-partite $r$-graph with part sizes $s_1, \ldots, s_r$. 
    For sufficiently large $n$, 
    \begin{align*}
        \mathrm{ex}(n,(t+1)\mathbb{B})
        \le \binom{n}{r} - \binom{n-s_1(t+1)+1}{r} + \mathrm{ex}(n-s_1(t+1)+1, \mathbb{B}). 
    \end{align*}
    holds for all integers $t$ satisfying 
    \begin{align*}
        320es_1s\left(\frac{\mathrm{ex}(n,\mathbb{B})}{\binom{n-1}{r-1}} + 320 es_1s^2 r!\right)
        \le t 
        \le \frac{n}{512 e s_1s^3 r!}. 
    \end{align*}
\end{theorem}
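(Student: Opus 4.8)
The plan is to bound the number of edges in a $(t+1)\mathbb{B}$-free $r$-graph $\mathcal{H}$ on $n$ vertices by a greedy/peeling argument, extracting a large "core" of high-degree vertices that must be covered by any maximal $\mathbb{B}$-matching, and then showing this core behaves like a clique joined to a $\mathbb{B}$-free remainder. First I would fix a maximum $\mathbb{B}$-matching $\mathcal{M}$ in $\mathcal{H}$, so $|V(\mathcal{M})| \le s\cdot t$, and observe that $\mathcal{H} - V(\mathcal{M})$ is $\mathbb{B}$-free, hence has at most $\mathrm{ex}(n - s t, \mathbb{B})$ edges; the bulk of the edges must therefore meet $V(\mathcal{M})$. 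The target construction $\mathcal{G}_3$ suggests that the extremal picture is a clique $K^r_{s_1(t+1)-1}$ (disjoint from a $\mathbb{B}$-free graph), so the real work is to show that in the extremal regime the "relevant" part of $V(\mathcal{M})$ has size close to $s_1(t+1)$ rather than $s t$, i.e. that only the smallest side of each copy of $\mathbb{B}$ genuinely needs to be charged.

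The key steps, in order, would be: (1) Set up a cleaning step — iteratively delete vertices of degree below $c\binom{n-1}{r-1}$ for an appropriate small constant $c$ (chosen around $t\cdot r!/n$ up to constants, matching the lower end of the stated $t$-range); since the number of edges lost is at most $cn\binom{n-1}{r-1}/\binom{n}{r}\cdot\binom{n}{r}$-ish, this costs only $O(c n^r)$ edges, which is absorbed into the slack in the theorem because $t \ge \text{const}\cdot \mathrm{ex}(n,\mathbb{B})/\binom{n-1}{r-1}$. (2) In the cleaned graph $\mathcal{H}'$, every vertex has degree $\ge c\binom{n-1}{r-1}$; use a supersaturation / greedy embedding argument to show that any vertex of $\mathcal{H}'$, together with enough high-degree neighbours, can be completed into a copy of $\mathbb{B}$ using any prescribed "small side" — the point being that a set of $s_1$ vertices with large common-ish degree extends to $\mathbb{B}$. (3) Run the greedy $\mathbb{B}$-packing: repeatedly pull out copies of $\mathbb{B}$ that use as few "new" vertices as possible on the small side; if after removing $s_1 t'$ vertices for $t' \le t$ we could still find another copy, we contradict $(t+1)\mathbb{B}$-freeness — so the process shows $\mathcal{H}'$ restricted to the complement of a set $W$ of size $\le s_1(t+1)-1$ is $\mathbb{B}$-free. (4) Count: $|\mathcal{H}| \le |\mathcal{H} \setminus \mathcal{H}'| + (\text{edges meeting } W) + \mathrm{ex}(n-|W|,\mathbb{B}) \le O(cn^r) + \binom{n}{r} - \binom{n-s_1(t+1)+1}{r} + \mathrm{ex}(n-s_1(t+1)+1,\mathbb{B})$, and check the error term is dominated given the constraints on $t$; one uses $\mathrm{ex}(n,\mathbb{B}) = O(n^{r-1/s_1})$ or at least $o(n^r)$ and monotonicity of $\mathrm{ex}(\cdot,\mathbb{B})$ to reconcile the $-st$ versus $-s_1(t+1)$ discrepancy, which is exactly why a factor like $s_1 s^3 r!$ appears in the denominator of the upper bound on $t$.

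The main obstacle is step (3): making the greedy $\mathbb{B}$-packing genuinely charge only $s_1$ vertices per copy. A naive maximum matching charges all $s$ vertices of each copy, giving a clique of size $\sim st$, which is far from the conjectured $s_1(t+1)$ and would not match the target bound. The fix is that once a vertex $v$ has high degree and lies outside the current "packed" set, the large common neighbourhood lets us embed the large sides $s_2,\dots,s_r$ of a new copy of $\mathbb{B}$ into the $\mathbb{B}$-free (but still dense!) remainder, so each new copy really only consumes $s_1$ fresh vertices from the core. Quantifying "high degree suffices to embed the large sides into a $\mathbb{B}$-free host" requires a careful supersaturation estimate — this is where the $e$ (Euler's number), the $r!$, and the polynomial-in-$s$ factors enter, and it is the technical heart of the proof. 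Handling the boundary case where the remainder's $\mathbb{B}$-free density is itself non-negligible (so $\mathrm{ex}(n,\mathbb{B})/\binom{n-1}{r-1}$ is comparable to $t$) is the reason the lower end of the $t$-interval is stated with that precise form rather than simply $t \ge 1$.
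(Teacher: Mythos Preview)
Your plan has a genuine gap in step~(3), and it stems partly from misidentifying the target construction: the bound in the theorem matches $\mathcal{G}_2$ (the \emph{join} $K_{s_1(t+1)-1}^r \uproduct \mathrm{EX}(n',\cdot)$), not $\mathcal{G}_3$. In $\mathcal{G}_2$ the vertices outside the clique are \emph{not} low-degree---each has degree roughly $s_1 t\binom{n-2}{r-2}$---so with your threshold $c \sim t\cdot r!/n$ the cleaning step may not remove them at all, and $V(\mathcal{H}')$ can be essentially all of $V$.

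More fundamentally, the greedy packing you describe cannot produce a hitting set $W$ of size $\le s_1(t+1)-1$ with $\mathcal{H}'-W$ being $\mathbb{B}$-free. Every copy of $\mathbb{B}$ you find in $\mathcal{H}'$ has all $s$ of its vertices in $V(\mathcal{H}')$; there is no ``outside'' to route the large parts into, because you have deleted it. Your suggested fix (``embed the large sides into the $\mathbb{B}$-free but dense remainder'') is self-contradictory: a $\mathbb{B}$-free $r$-graph has $o(n^{r-1})$ edges in every link, so you cannot guarantee the common link of $s_1$ fresh vertices contains $K^{r-1}_{s_2,\ldots,s_r}$ there. At best the greedy process yields $|V(\mathcal{H}')| < s(t+1)$, which is off from the target by a factor $s/s_1$.

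The paper's argument avoids this by \emph{not} cleaning. It partitions $V$ into $L$ (degree above a fixed constant times $\binom{n-1}{r-1}$) and $U = V\setminus L$, and packs copies of $\mathbb{B}$ in the \emph{semibipartite} $r$-graph consisting of edges with exactly one vertex in $L$: now the small side is forced into $L$ and the large sides go into $U$, so each copy charges only $s_1$ vertices of $L$, giving $|L| < s_1(t+1)$ (Lemmas~\ref{LEMMA:2nd-near-perfect-K-semibipartite}--\ref{LEMMA:2nd-perfect-K-semibipartite-a}). Crucially, $\mathcal{H}[U]$ is not claimed to be $\mathbb{B}$-free; it is only $(t_2+1)\mathbb{B}$-free for some small $t_2$, and because $\Delta(\mathcal{H}[U])$ is small by the definition of $U$, Lemma~\ref{LEMMA:trivial-max-degree} bounds $|\mathcal{H}[U]|$ cheaply. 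There is also a necessary bootstrapping step (first handling a very-high-degree set $L_1\subset L$ and invoking Lemma~\ref{LEM:2nd-weak-bound} when $|L_1|$ is far from $s_1 t$) that your outline does not anticipate.
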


For graphs, we are able to obtain the following tight bound. 

\begin{theorem}\label{THM:2nd-interval-graph}
    Let $s_2\ge s_1 \ge 2$ be integers. 
    Let $s:=s_1+s_2$ and $\mathbb{B}:= B_{s_1, s_2}$ be an $s_1$ by $s_2$ bipartite connected graph with $\tau(B) = s_1$. 
    For sufficiently large $n$,  
    \begin{align*}
        \mathrm{ex}\left(n,(t+1)\mathbb{B}\right)
        = \binom{n}{2} - \binom{n-s_1(t+1)+1}{2} + \mathrm{ex}\left(n-s_1(t+1)+1, \mathbb{B}[s_2+1]\right)
    \end{align*}
    holds for all integers $t$ satisfying 
    \begin{align*}
        \max \left\{\sqrt{32s_1sn},\  \frac{12800 e s^5}{s_1}\left(\frac{\mathrm{ex}(n,\mathbb{B})}{n-1} + 288 es_1s^2r!\right)\right\} 
        \le t 
        \le \frac{n}{1024 e s_1s^3}. 
    \end{align*}
\end{theorem}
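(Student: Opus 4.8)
### Proof proposal for Theorem~\ref{THM:2nd-interval-graph}

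The plan is to prove the lower bound and the upper bound separately, with all the real work being in the upper bound; the matching construction $\mathcal{G}_2(n,t,\mathbb{B})$ is already recorded in Proposition~\ref{PROP:three-lower-bounds} (one checks $K_{s_1(t+1)-1}^2\uproduct\mathrm{EX}(n-s_1(t+1)+1,\mathbb{B}[s_2+1])$ is $(t+1)\mathbb{B}$-free: any copy of $\mathbb{B}$ must use at least $\tau(\mathbb{B})=s_1$ vertices of the clique part, since outside it lives only $\mathbb{B}[s_2+1]$-free graph which contains no side of $\mathbb{B}$ of size $s_1$). So assume $\mathcal{H}$ is an $n$-vertex graph with $|\mathcal{H}|$ exceeding the claimed bound, and the goal is $\nu(\mathbb{B},\mathcal{H})\ge t+1$.

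First I would set up the iterative ``peeling'' argument that already underlies Theorem~\ref{THM:1st-interval} and its analogue in~\cite{HLLYZ23}: repeatedly find a copy of $\mathbb{B}$, delete its $s$ vertices, and continue. If this succeeds $t+1$ times we are done, so suppose it stops after finding a maximal collection $\mathcal{M}$ of $k\le t$ vertex-disjoint copies, with $W:=V(\mathcal{M})$, $|W|=sk$, and $\mathcal{H}-W$ is $\mathbb{B}$-free, hence $|\mathcal{H}-W|\le\mathrm{ex}(n-sk,\mathbb{B})$. The edge count then forces many edges meeting $W$; since $|W|\le st$ is linear in $n$ only through the small parameter $t/n$, the deficiency $\binom{n}{2}-\binom{n-sk}{2}$ is roughly $skn$, which is far below the threshold $\binom{n}{2}-\binom{n-s_1(t+1)+1}{2}\approx s_1 t n$ unless $k$ is already close to $t$ — in fact this pigeonhole shows $k\ge(1-o(1))t$, so only the last few copies are missing. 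The bulk of the argument is an \emph{absorption-free stability step}: using that $k$ is within $O(\sqrt{n})$-many copies of $t$ and that $\mathcal{H}-W$ is $\mathbb{B}$-free but still dense, I would locate inside $\mathcal{H}-W$ a large ``book''-like structure — $s_1-1$ common vertices $U$ together with many vertices each joined to all of $U$ and carrying a copy of $\mathbb{B}[s_2+1]$ on their outside — and argue that if $\mathcal{H}$ genuinely had more edges than the construction, we could either extend $\mathcal{M}$ by one more copy of $\mathbb{B}$ (contradiction with maximality) or else reroute: swap some copies in $\mathcal{M}$ to free up the $s_1$ high-degree vertices needed, exploiting $\tau(\mathbb{B})=s_1$ and connectedness of $\mathbb{B}$ to guarantee a copy of $\mathbb{B}$ sits on ``clique vertices plus one low side.''

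Concretely, the key steps in order are: (1) reduce to a near-extremal $\mathcal{H}$ with a maximal matching $\mathcal{M}$ of size $k$, deduce $k\ge t-O(\sqrt n/\sqrt{s_1 s})$ from the edge-count lower bound on $W$; (2) clean up $\mathcal{H}$ by removing vertices of degree $<(1-\gamma)n$ for a suitable small $\gamma$, absorbing the edge loss into the slack — here the lower bound $t\ge\sqrt{32 s_1 s n}$ is what makes the quadratic slack $\binom{s_1 t}{2}$-type terms dominate the linear cleaning losses; (3) show the cleaned graph has a vertex set $Q$ of size $s_1(t+1)-1$ with all pairs inside $Q$ present and such that $\mathcal{H}-Q$ is $\mathbb{B}[s_2+1]$-free (this is the structural heart: if not, a counting/defect argument produces a new disjoint copy of $\mathbb{B}$, contradicting maximality of $\mathcal{M}$); (4) conclude $|\mathcal{H}|\le\binom{|Q|}{2}+|Q|(n-|Q|)+\mathrm{ex}(n-|Q|,\mathbb{B}[s_2+1])$, which is exactly the claimed value after substituting $|Q|=s_1(t+1)-1$.

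The main obstacle I anticipate is step (3) — turning ``$\mathcal{H}-W$ is dense but $\mathbb{B}$-free and $k\approx t$'' into the clean statement ``there is an $(s_1(t+1)-1)$-clique covering all $\mathbb{B}$'s.'' This is where the full strength of $\tau(\mathbb{B})=s_1$, connectedness, and the upper restriction $t\le n/(1024\,e s_1 s^3)$ must all be used: the upper bound on $t$ keeps the putative clique small relative to $n$ so that high-degree vertices behave like a genuine clique-join, and connectedness plus the covering number pin down exactly which vertices of a copy of $\mathbb{B}$ can lie outside the clique (a single side of size $s_2$, accounting for the $\mathbb{B}[s_2+1]$ in the formula). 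For $r\ge3$ the analogous step only yields an inequality (Theorem~\ref{THM:2nd-interval}) because the clean ``clique-join'' dichotomy fails; the graph case is special precisely because $\mathrm{ex}(n-|Q|,\mathbb{B}[s_2+1])$ is of lower order and a supersaturation/removal argument can be pushed to exact equality.
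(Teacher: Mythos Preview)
Your step~(1) contains an arithmetic slip that undermines the whole outline. From a maximal $\mathbb{B}$-matching $\mathcal{M}$ of size $k$ you get
\[
|\mathcal{H}|\le \binom{n}{2}-\binom{n-sk}{2}+\mathrm{ex}(n-sk,\mathbb{B}),
\]
and comparing with the lower bound $|\mathcal{H}|\ge\binom{n}{2}-\binom{n-s_1(t+1)+1}{2}$ yields only $sk\gtrsim s_1 t$, i.e.\ $k\gtrsim (s_1/s)\,t$. Since $s=s_1+s_2>s_1$, this is a constant fraction of $t$, not $(1-o(1))t$; for $K_{2,2}$ you get $k\ge t/2$, so you could be missing half of the copies, not $O(\sqrt{n})$ of them. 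The claim ``$k$ within $O(\sqrt n)$ of $t$'' is simply false under your hypotheses, and without it the rest of the sketch (steps (2)--(4)) has no starting point.

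Step~(3) is, as you yourself note, the entire content of the theorem, and you have not proposed a mechanism for it. ``Locate a book-like structure and reroute'' is not a proof; in particular you have not explained why any high-degree vertices exist at all, why there should be roughly $s_1 t$ of them, or why the complement of such a set is $\mathbb{B}[s_2+1]$-free rather than merely $\mathbb{B}$-free. The paper does \emph{not} proceed via a maximal matching. Instead it partitions $V(G)$ by degree into $L_1$ (degree $\ge(1-\delta_1)(n-1)$), $L_2$ (degree $\ge\delta_2(n-1)$ but below $L_1$), and $U$; it uses semibipartite matching lemmas (Lemmas~\ref{LEMMA:2nd-near-perfect-K-semibipartite}--\ref{LEMMA:2nd-perfect-K-semibipartite-a}) to show $|L_1\cup L_2|\le s_1(t+1)-1$ and that $G[U]$ is $(t_2+1)\mathbb{B}$-free for small $t_2$; then it further splits $U$ into $W$ (vertices with many neighbours in $L$) and $S$, and the decisive structural claim is that $G[W]$ is $(t_3-x+1)\mathbb{B}[s_2+1]$-free for a certain small $t_3$. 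This last step uses that any $(s_2+1)$-set in $W$ has a large common neighbourhood $N_T\subset L$, inside which one can greedily find disjoint copies of $K_{s_1-1}$ (here the bound $t\ge\sqrt{32s_1sn}$ is used to control $|\overline{G[L]}|$ via Tur\'an), so that a copy of $\mathbb{B}[s_2+1]$ in $W$ extends to a copy of $\mathbb{B}$. The final count combines these pieces with Proposition~\ref{PROP:2nd-graph-one-star-is-better}. None of this machinery appears in your proposal.
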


Since the complete bipartite graph $K_{s_1, s_2}$ with $s_2\ge s_1 \ge 2$ satisfies $\mathrm{ex}(n,K_{s_1,s_2}) \ge \mathrm{ex}(n,K_{2,2}) = (1/2-o(1))n^{3/2}$ (see~\cite{Brown66,ERS66,Fur83,Fur94,Fur96}), we obtain the following corollary of Theorem~\ref{THM:2nd-interval-graph}. 

\begin{corollary}
    Let $s_2\ge s_1 \ge 2$ be integers. 
    Let $s := s_1+s_2$ and $\mathbb{K} :=  K_{s_1,s_2}$. 
    For sufficiently large $n$,  
    \begin{align*}
        \mathrm{ex}\left(n,(t+1)\mathbb{K}\right)
        = \binom{n}{2} - \binom{n-s_1(t+1)+1}{2} + \mathrm{ex}\left(n-s_1(t+1)+1, \mathbb{K}[s_2+1]\right)
    \end{align*}
    holds for all integers $t$ satisfying 
    \begin{align*}
        \frac{12801 e s^5 \cdot \mathrm{ex}(n,\mathbb{K})}{s_1(n-1)}
        \le t 
        \le  \frac{n}{1024 e s_1s^3}.
    \end{align*}
\end{corollary}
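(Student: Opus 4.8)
The plan is to derive this as a direct corollary of Theorem~\ref{THM:2nd-interval-graph}, which is itself the graph case of Theorem~\ref{THM:2nd-interval}. First I would verify that $\mathbb{K} = K_{s_1,s_2}$ with $s_2 \ge s_1 \ge 2$ satisfies the hypotheses of Theorem~\ref{THM:2nd-interval-graph}: namely that $K_{s_1,s_2}$ is an $s_1$ by $s_2$ bipartite connected graph with covering number $\tau(K_{s_1,s_2}) = s_1$. Connectivity is clear, and $\tau(K_{s_1,s_2}) = \min\{s_1,s_2\} = s_1$ since one must hit every edge and the smaller side is a cover of that size while no set of size $s_1-1$ can meet all edges (there is a perfect matching saturating the $s_1$-side). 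So Theorem~\ref{THM:2nd-interval-graph} applies verbatim with $\mathbb{B} = \mathbb{K}$, giving the claimed equality for all $t$ in the stated interval, provided we can package that interval into the cleaner form $\left[\frac{12801 e s^5 \mathrm{ex}(n,\mathbb{K})}{s_1(n-1)},\ \frac{n}{1024 e s_1 s^3}\right]$.

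Next I would check the endpoints. The upper endpoint $\frac{n}{1024 e s_1 s^3}$ is identical in both statements, so nothing is needed there. For the lower endpoint, Theorem~\ref{THM:2nd-interval-graph} requires $t$ to exceed the maximum of two quantities: $\sqrt{32 s_1 s n}$ and $\frac{12800 e s^5}{s_1}\left(\frac{\mathrm{ex}(n,\mathbb{K})}{n-1} + 288 e s_1 s^2 r!\right)$ (here $r=2$). I would bound both of these by $\frac{12801 e s^5 \mathrm{ex}(n,\mathbb{K})}{s_1(n-1)}$. For the second quantity, I would absorb the additive term $288 e s_1 s^2 r!$ into the main term using $\mathrm{ex}(n,\mathbb{K}) \ge \mathrm{ex}(n,K_{2,2}) = (1/2 - o(1)) n^{3/2}$, so that $\frac{\mathrm{ex}(n,\mathbb{K})}{n-1} = \Omega(n^{1/2}) \to \infty$ and hence $\frac{\mathrm{ex}(n,\mathbb{K})}{n-1} + 288 e s_1 s^2 r! \le \frac{12801}{12800}\cdot \frac{\mathrm{ex}(n,\mathbb{K})}{n-1}$ for $n$ large (depending only on $s_1,s_2$); the factor $\frac{12800 e s^5}{s_1} \cdot \frac{12801}{12800}$ is exactly $\frac{12801 e s^5}{s_1}$. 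For the first quantity $\sqrt{32 s_1 s n}$, I would again use $\mathrm{ex}(n,\mathbb{K}) \ge \tfrac14 n^{3/2}$ (say, for large $n$) to get $\frac{12801 e s^5 \mathrm{ex}(n,\mathbb{K})}{s_1(n-1)} \ge \frac{12801 e s^5}{4 s_1} \cdot \frac{n^{3/2}}{n} = \frac{12801 e s^5}{4 s_1}\sqrt{n} \ge \sqrt{32 s_1 s n}$, which holds for all $s_1,s_2 \ge 2$ since $s \ge 4$ makes $s^5/s_1 \ge s^4 \ge 256$ and the numerical constants then dominate $\sqrt{32 s_1 s}$. Thus for sufficiently large $n$ the interval in the corollary is contained in the interval of Theorem~\ref{THM:2nd-interval-graph}, and the equality follows.

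The only genuinely delicate point — and the one I would be most careful about — is confirming that the nonempty-interval requirement is met, i.e. that $\frac{12801 e s^5 \mathrm{ex}(n,\mathbb{K})}{s_1(n-1)} \le \frac{n}{1024 e s_1 s^3}$ for large $n$; this needs $\mathrm{ex}(n,\mathbb{K}) = O(n^{2}/s^8)$, which is trivially true since $\mathrm{ex}(n,\mathbb{K}) \le \binom{n}{2}$, but one should record that the upper bound on $\mathrm{ex}(n,K_{s_1,s_2})$ — namely $O_{s_1,s_2}(n^{2-1/s_1})$ by K\H{o}v\'ari--S\'os--Tur\'an — comfortably ensures the interval is nonempty and indeed of length $\Theta(n)$. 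Everything else is routine substitution into Theorem~\ref{THM:2nd-interval-graph}, together with the cited lower bounds $\mathrm{ex}(n,K_{s_1,s_2}) \ge \mathrm{ex}(n,K_{2,2}) = (1/2-o(1))n^{3/2}$ from~\cite{Brown66,ERS66,Fur83,Fur94,Fur96}. I do not expect any obstacle beyond keeping the constant-chasing consistent with the constants already fixed in Theorem~\ref{THM:2nd-interval-graph}.
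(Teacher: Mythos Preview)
Your proposal is correct and follows essentially the same approach as the paper: the paper simply notes that $\mathrm{ex}(n,K_{s_1,s_2}) \ge \mathrm{ex}(n,K_{2,2}) = (1/2-o(1))n^{3/2}$ and states the corollary as an immediate consequence of Theorem~\ref{THM:2nd-interval-graph}, which is exactly your strategy of absorbing the additive constant and the $\sqrt{32 s_1 s n}$ term into $\frac{12801 e s^5}{s_1}\cdot\frac{\mathrm{ex}(n,\mathbb{K})}{n-1}$ for large $n$. Your additional verifications (connectedness, $\tau(K_{s_1,s_2})=s_1$, nonemptiness of the interval) are routine and compatible with the paper's treatment.
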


The proof of Theorem~\ref{THM:2nd-interval} is presented in Section~\ref{SUBSEC:proof-2nd-hypergraph}. 
The proof of Theorem~\ref{THM:2nd-interval-graph} is presented in Section~\ref{SUBSEC:proof-2nd-graph}. 

\subsection{The third interval}\label{SUBSEC:3rd-segment}
In this subsection, we present the result for $\mathrm{ex}\left(n, (t+1)F\right)$ when $t$ lies in $\left[ (1-\varepsilon)\frac{n}{v(F)}, \frac{n}{v(F)} \right]$. 

We say an $n$-vertex $r$-graph $\mathcal{H}$ is an \textbf{$m$-star} for some $m \le n$ if there exists a set $V_1\subset V(\mathcal{H})$ of size $m$ such that every edge in $\mathcal{H}$ contains at least one vertex in $V_1$. 
Extending the definition of $\mathrm{ex}\left(n,F\right)$, 
we let $\mathrm{ex}_{\mathrm{star}}\left(m,n,F\right)$ denote the maximum number of edges in an $F$-free $r$-uniform $m$-star with $n$ vertices. 
Observe that $\mathrm{ex}_{\mathrm{star}}\left(m,n,F\right) \le \mathrm{ex}\left(n,F\right)$ for all $m \le n$, and   $\mathrm{ex}_{\mathrm{star}}\left(n,n,F\right) = \mathrm{ex}\left(n,F\right)$. 
For the complete $r$-partite $r$-graph $K_{s_1, \ldots, s_r}^{r}$ with part sizes $s_1, \ldots, s_r$, 
it is not hard to obtain the following improved upper bound (when $m$ is small) using Erd{\H o}s' argument in~\cite{Erdos64}. 

\begin{proposition}\label{PROP:star-Turan-problem}
    Suppose that $r \ge 2$ and $s_r \ge \cdots \ge s_1 \ge 1$ are integers.
    Then 
    \begin{align*}
        \mathrm{ex}_{\mathrm{star}}\left(m,n,K_{s_1, \ldots, s_r}^r\right)
        \le \frac{(s_2+\cdots+s_r-r+1)^{\frac{1}{s_1}}}{r}mn^{r-1-\frac{1}{s_1\cdots s_{r-1}}} + \frac{s_1-1}{r}\binom{n}{r-1}. 
    \end{align*}
\end{proposition}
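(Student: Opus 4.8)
\textbf{Proof proposal for Proposition~\ref{PROP:star-Turan-problem}.}

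The plan is to adapt Erd\H{o}s' classical counting argument for $\mathrm{ex}(n, K_{s_1, \ldots, s_r}^{r})$ to the star setting, gaining a factor of $m/n$ in the leading term because every edge must use one of the $m$ ``center'' vertices. Let $\mathcal{H}$ be a $K_{s_1, \ldots, s_r}^r$-free $n$-vertex $r$-graph that is an $m$-star with center set $V_1$, $|V_1| = m$. First I would set up the double-counting framework: count ``cherries'' or, more precisely, count pairs $(v, T)$ where $v \in V_1$, $T$ is an ordered $(r-1)$-tuple of vertices, and $\{v\} \cup T$ (as a set) is an edge of $\mathcal{H}$. Since each edge contains at least one vertex of $V_1$, every edge is counted at least once this way (picking $v$ to be one of its center vertices and $T$ an ordering of the remaining $r-1$ vertices), so the number of such pairs is at least $(r-1)! \cdot |\mathcal{H}|$ minus a correction coming from edges that lie entirely inside... no: simpler, every edge contributes at least $(r-1)!$ to the count, giving $\#\{(v,T)\} \ge (r-1)!\,|\mathcal{H}|$.

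Next I would bound this count from above using $K_{s_1,\ldots,s_r}^r$-freeness, iterating over the link structure. The key idea is Erd\H{o}s' supersaturation-free induction: for a fixed vertex $v \in V_1$, the link $\mathcal{H}_v$ is an $(r-1)$-graph on $\le n-1$ vertices which must be $K_{s_2, \ldots, s_r}^{r-1}$-free \emph{in a weighted sense} — more carefully, one controls how many $(r-1)$-sets can appear in links of $s_1$ distinct center vertices simultaneously. The cleanest route: for each $(r-1)$-set $S$ of vertices, let $d(S)$ be the number of vertices $v$ (anywhere in $V(\mathcal{H})$) with $S \in \mathcal{H}_v$, i.e. $S \cup \{v\} \in \mathcal{H}$. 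If more than $s_1 - 1$ center... actually the standard argument is to iterate the Kővári–Sós–Turán convexity bound $r-1$ times. I would count, for $i = 1, \ldots, r-1$, ordered partial structures and use that at the last stage a ``bad'' configuration would produce a copy of $K_{s_1, \ldots, s_r}^r$. Tracking constants, after the first step one loses the factor $s_2 + \cdots + s_r - r + 1$ under an $s_1$-th root (this is the number of vertices available for the last part beyond what the other parts force, raised to $1/s_1$ by convexity / Jensen), and the exponent of $n$ drops from $r-1$ to $r - 1 - \frac{1}{s_1 \cdots s_{r-1}}$ through the nested applications; the $\frac{s_1 - 1}{r}\binom{n}{r-1}$ term is the standard additive slack absorbing small-degree anomalies (sets $S$ with $d(S) \le s_1 - 1$).

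Concretely I would: (1) write $|\mathcal{H}| = \frac{1}{r}\sum_{v \in V(\mathcal{H})} d(v)$ but restrict attention so that each edge is charged to a center vertex, getting $|\mathcal{H}| \le \frac{1}{r}\sum_{v \in V_1} d_{\mathcal{H}}^{\ast}(v)$ where $d^{\ast}$ counts edges meeting $V_1$ at $v$ (with each edge charged once); (2) for each $v \in V_1$, bound $d^{\ast}(v)$ by $\mathrm{ex}_{\mathrm{star}}$-type or by a direct link argument — the link $\mathcal{H}_v$ restricted appropriately is $K_{s_2, \ldots, s_r}^{r-1}$-free, so by the standard (non-star) Erd\H{o}s bound its size is $O(n^{r-1 - 1/(s_1 \cdots s_{r-1})})$ up to the root constant; (3) sum over the $m$ vertices of $V_1$ and divide by $r$, plus add the additive error term. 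I expect the main obstacle to be pinning down the \emph{exact} leading constant $(s_2 + \cdots + s_r - r + 1)^{1/s_1}/r$ and the precise additive term $\frac{s_1 - 1}{r}\binom{n}{r-1}$: getting these requires being careful about which part of $K_{s_1, \ldots, s_r}^r$ is ``completed last'' in the iteration and correctly applying convexity (the function $x \mapsto \binom{x}{s_1}$) at the base case where one asks how many common neighbors an $(r-1)$-set can have before forcing the $s_1$ copies in the first part. The bookkeeping of the nested counting — rather than any single clever step — is where the real work lies, but it is routine given Erd\H{o}s' method; I would follow~\cite{Erdos64} closely and simply carry the extra $m$ factor through.
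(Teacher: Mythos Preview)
Your concrete plan (steps (1)--(3)) contains a genuine error. In step (2) you assert that for $v\in V_1$ the link $\mathcal{H}_v$ is (``restricted appropriately'') $K_{s_2,\ldots,s_r}^{r-1}$-free, but this is false whenever $s_1\ge 2$: a copy of $K_{s_2,\ldots,s_r}^{r-1}$ inside $L_{\mathcal{H}}(v)$ only yields a copy of $K_{1,s_2,\ldots,s_r}^r$ in $\mathcal{H}$, which is \emph{not} forbidden. Indeed, for $s_1\ge 2$ the link of a single vertex can be the complete $(r-1)$-graph, so this step gives no bound at all. Step (1) is also confused: if each edge is charged to \emph{one} center vertex then $\sum_{v\in V_1}d^{\ast}(v)=|\mathcal{H}|$, and you cannot simultaneously divide by $r$; the $\tfrac{1}{r}$ in the target bound does \emph{not} come from this kind of restricted degree sum.

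The argument the paper has in mind---and which you actually gesture at in your second paragraph before abandoning it---is Erd\H{o}s' double count
\[
X=\sum_{T\in\binom{V}{r-1}}\binom{|N(T)|}{s_1}=\sum_{S\in\binom{V}{s_1}}|L(S)|,
\]
exactly as in the appendix proof of Theorem~\ref{THM:Erdos-hypergraph-KST}. The lower bound via Jensen uses $\sum_T|N(T)|=r|\mathcal{H}|$ (this is the source of the $\tfrac{1}{r}$). For the upper bound, the point is that the \emph{common} link $L(S)$ of an $s_1$-set $S$ is $K_{s_2,\ldots,s_r}^{r-1}$-free, and moreover if $S\not\subset V_1$ then $L(S)$ is itself an $m$-star: for any $v\in S\setminus V_1$ and any $T\in L(S)$, the edge $\{v\}\cup T$ must meet $V_1$, forcing $T\cap V_1\neq\emptyset$. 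One then feeds this star structure of $L(S)$ back into the induction on $r$. The factor $m$ is gained through this inductive star structure on common links of $s_1$-sets, not through bounding single-vertex links.
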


Our main result in this subsection is as follows. 

\begin{theorem}\label{THM:3rd-interval}
Let $r \ge 2$ and $s_r \ge \cdots \ge s_1 \ge 1$ be integers.
Let $s:= s_1+ \cdots+ s_r$ and $\mathbb{B} := B_{s_1, \ldots, s_r}^{r}$ be an $r$-partite $r$-graph with part sizes $s_1, \ldots, s_r$.
For sufficiently large $n$, 
\begin{align*}
    \mathrm{ex}(n,(t+1)\mathbb{B}) 
    \le \binom{s(t+1)-1}{r} 
        + \mathrm{ex}_{\mathrm{star}}\left(n-st, n, s\mathbb{B}\right). 
\end{align*}
holds for all integers $t$ satisfying 
\begin{align*}
    \frac{n}{s} - \frac{n}{16 e^2 r^4 s^2 \prod_{i\in [r]}s_i}
    \le t 
    \le \frac{n}{s}. 
\end{align*}
\end{theorem}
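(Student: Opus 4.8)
\textbf{Proof proposal for Theorem~\ref{THM:3rd-interval}.}

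The plan is to follow the standard strategy for the "third interval'': when $t$ is close to the packing threshold $n/s$, an extremal $(t+1)\mathbb{B}$-free $r$-graph $\mathcal{H}$ should look like a clique on $s(t+1)-1$ vertices together with a $\mathbb{B}$-free (indeed $s\mathbb{B}$-free) remainder living on a small "star'' of size $n-st$. First I would take a largest $\mathbb{B}$-matching $\mathcal{M}$ in $\mathcal{H}$; since $\mathcal{H}$ is $(t+1)\mathbb{B}$-free we have $|\mathcal{M}| = \nu(\mathbb{B},\mathcal{H}) \le t$, so the set $W := V(\mathcal{M})$ of covered vertices satisfies $|W| \le st$ and $U := V(\mathcal{H})\setminus W$ is $\mathbb{B}$-free. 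The key structural fact to extract is that in the regime $t \ge n/s - \eta n$ (with $\eta$ the tiny constant in the statement), essentially all of $V(\mathcal{H})$ is covered: $|U| = n - |W|$ and $|W|$ can be as small as $s(t+1)-1 \approx sn/s = n$, forcing $|U|$ to be $O(\eta n)$ after accounting for the $st$ versus $s(t+1)-1$ discrepancy — more precisely I expect $|W| \ge s(t+1)-1$ will follow from a counting/deletion argument, since otherwise one could greedily enlarge the matching. Once $|U|$ is small, the edges entirely inside $U$ number at most $\mathrm{ex}(|U|,\mathbb{B})$, which is negligible, and the bulk of $\mathcal{H}$ is edges meeting $W$.

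The heart of the argument is then a double-counting / supersaturation step bounding the number of edges incident to $W$ but not contained in it. Here I would use the hypothesis that $\mathbb{B}$ is $r$-partite with a fixed part structure: if some vertex $w\in W$ had very high degree into $U$, then together with the $\mathbb{B}$-matching structure on $W$ one could find $s$ disjoint copies of $\mathbb{B}$ using $w$ and its neighbourhood, i.e. build up toward $s\mathbb{B}$ or push the $\mathbb{B}$-matching number past $t$, a contradiction. This is exactly where $\mathrm{ex}_{\mathrm{star}}(n-st, n, s\mathbb{B})$ enters: the non-clique part of $\mathcal{H}$ must be an $s\mathbb{B}$-free $r$-graph that is an $(n-st)$-star (the star centres being $U$, or a slightly enlarged set of size $n-st$), because every edge not inside the clique-like core $W'$ of size $s(t+1)-1$ must touch the small leftover set. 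Combining $|\mathcal{H}[W']| \le \binom{s(t+1)-1}{r}$ with the bound $\mathrm{ex}_{\mathrm{star}}(n-st,n,s\mathbb{B})$ on the rest, and checking that the count of edges meeting both $W'$ and its complement is absorbed into the star term (since those edges are themselves part of an $s\mathbb{B}$-free star configuration), yields the claimed inequality. Proposition~\ref{PROP:star-Turan-problem} is presumably invoked only to make the bound concrete/useful, not in the proof of the inequality itself.

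The main obstacle I anticipate is the clean separation of $V(\mathcal{H})$ into the size-$(s(t+1)-1)$ core and the size-$(n-st)$ star complement, together with showing the "crossing'' edges can be charged to the star term rather than double-counted or lost. The subtlety is that a maximum $\mathbb{B}$-matching covers only $st$ vertices, not $s(t+1)-1$, so one must argue that $\mathcal{H}$ restricted to $W$ is nearly complete — otherwise the stated bound $\binom{s(t+1)-1}{r}$ would be an overestimate of what we can afford, and we would need the "missing'' edges inside $W$ to compensate. I would handle this by a stability-free, purely extremal argument: suppose $|\mathcal{H}| > \binom{s(t+1)-1}{r} + \mathrm{ex}_{\mathrm{star}}(n-st,n,s\mathbb{B})$; then since the star term dominates the within-$U$ contribution, an averaging argument over the $\binom{|W|}{\le st}$-ish structure produces a vertex whose link is dense enough to embed an extra disjoint $\mathbb{B}$ (using that $\mathbb{B}$ is degenerate, so even a linear-in-$n$ number of edges at a vertex suffices), contradicting $\nu(\mathbb{B},\mathcal{H})\le t$. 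Making the constants work — in particular verifying $\eta = 1/(16e^2 r^4 s^2 \prod s_i)$ is small enough that the greedy embedding always succeeds while $n$ is large enough to absorb lower-order terms — is the routine-but-delicate part I would defer to the detailed proof.
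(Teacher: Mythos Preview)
Your high-level picture is right --- take a maximum $\mathbb{B}$-matching covering $V_1$ with $|V_1|=st$, let $V_2=V\setminus V_1$, and argue that the edges meeting $V_2$ form an $(n-st)$-star that is $s\mathbb{B}$-free --- but the mechanism you propose for the last step does not work, and this is where the real content of the proof lies.

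The issue is this: suppose the star of edges touching $V_2$ contains $s$ disjoint copies of $\mathbb{B}$. Each uses at least one vertex of $V_2$, but together they may use up to $s(s-1)$ vertices of $V_1$, thereby destroying up to $s(s-1)$ of the original $\mathbb{B}_i$'s in your matching. You are left with only $t-s(s-1)+s < t+1$ disjoint copies, so no contradiction arises from the matching alone. Your suggested ``averaging argument \dots\ produces a vertex whose link is dense enough to embed an extra disjoint $\mathbb{B}$'' does not close this gap: you need not one extra copy but a full \emph{re-tiling} of almost all of $V_1$ after deleting the $\le s(s-1)$ used vertices.

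The paper handles this in two steps you are missing. First, an edge-counting argument against the lower bound $|\mathcal{H}|\ge\binom{st}{r}+\mathrm{ex}(n-st,\mathbb{B})$ shows that all but $O(n-st)$ vertices of $V_1$ have degree at least $\bigl(1-\tfrac{1}{2A}\bigr)\binom{st-1}{r-1}$ inside $V_1$, where $A=2er^2\prod_i s_i$; call this large set $L$ and its complement $S$. Second, one invokes the Lu--Sz\'ekely hypergraph factor theorem (Theorem~\ref{THM:Packing-F-mindegree}): any $r$-graph on $N$ vertices with $\delta\ge(1-1/A)\binom{N-1}{r-1}$ and $s\mid N$ has a perfect $\mathbb{B}$-tiling. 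Now let $S_1$ be the union of all $\mathbb{B}_i$'s meeting $S$ and $L_1=V_1\setminus S_1$; if the star $\mathcal{H}':=\mathcal{H}[L_1\cup V_2]\setminus\mathcal{H}[L_1]$ contained $s$ disjoint copies of $\mathbb{B}$, they avoid $S_1$ entirely, so the original copies in $\mathcal{B}_S$ survive, and Lu--Sz\'ekely re-tiles the remaining high-degree part $L_1'\subset L_1$ perfectly, giving $|S_1|/s + |L_1'|/s + s \ge t+1$ copies. The few edges touching both $S_1$ and $V_2$ are bounded crudely by $|S_1||V_2|\binom{n-2}{r-2}$ and absorbed into the slack $\binom{s(t+1)-1}{r}-\binom{st}{r}$ plus the $(1-\alpha)$ deficit in $|\mathcal{H}[V_1]|$.

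Also, a small correction: $|W|=st$ exactly (the extremal $\mathcal{H}$ has $\nu(\mathbb{B},\mathcal{H})=t$), not $\ge s(t+1)-1$; the passage from $\binom{st}{r}$ to $\binom{s(t+1)-1}{r}$ in the final bound is just monotonicity, not a structural fact about $W$.
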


For graphs, we can prove the following improved bound.

\begin{theorem}\label{THM:3rd-interval-graph}
    Let $s_2 \ge s_1 \ge 2$ be integers. 
    Let $s:= s_1 + s_2$ and $\mathbb{B} := B_{s_1, s_2}$ be an $s_1$ by $s_2$ bipartite graph. 
   For sufficiently large $n$, 
   \begin{align*}
       \mathrm{ex}(n,(t+1)\mathbb{B})
       \le \binom{s(t+1)-1}{2} + \mathrm{ex}\left(n-s(t+1)+1, \mathbb{B}\right) + s_1sn
   \end{align*}
   holds for all integers $t$ satisfying 
    \begin{align*}
        \frac{n}{s} - \frac{n}{65s_1s^2}
        \le t 
        \le \frac{n}{s}. 
    \end{align*}
\end{theorem}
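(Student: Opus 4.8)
The plan is to analyze an extremal $n$-vertex graph $\mathcal{H}$ with $\nu(\mathbb{B},\mathcal{H}) \le t$ for $t$ in the top interval, and to extract a small "core" set of $s(t+1)-1$ vertices that covers almost all edges, after which the remainder is $\mathbb{B}$-free. Since $t$ is within a $(1+o(1))$-factor of the maximum possible value $n/s$, a maximum collection of vertex-disjoint copies of $\mathbb{B}$ uses $st \le n$ vertices but leaves only $n - st \le \frac{n}{65 s_1 s^2} + s$ vertices untouched; this scarcity of "free" vertices is the structural engine of the argument. First I would take a maximal $\mathbb{B}$-packing $\mathcal{P} = \{B_1,\dots,B_k\}$ with $k \le t$; if $k \le t$ strictly or $k=t$, in either case the set $W := V(\mathcal{H}) \setminus V(\mathcal{P})$ has size $n - sk \ge n - st$, and $\mathcal{H}[W]$ is $\mathbb{B}$-free, hence has at most $\mathrm{ex}(n-st,\mathbb{B})$ edges. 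The issue is bounding the edges incident to $V(\mathcal{P})$ and, more delicately, showing $k$ is essentially forced to equal $t$ and that $V(\mathcal{P})$ can be "compressed" into a clique-like core of size $s(t+1)-1$.

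The key steps, in order: (1) Show $k = t$; if $k \le t-1$ then $|W| \ge n - s(t-1)$ is large, and I would argue that $\mathcal{H}[W]$ together with the packing structure forces another disjoint copy of $\mathbb{B}$ unless $\mathcal{H}[W]$ is very sparse — but if $\mathcal{H}[W]$ is very sparse then $\mathcal{H}$ has too few edges to beat the claimed lower bound $g_3$, a contradiction with extremality. (2) With $k=t$, partition $V(\mathcal{H})$ into $V(\mathcal{P})$ (size $st$) and $W$ (size $n-st$, which is $O(n/(s_1 s^2))$), and count: edges inside $V(\mathcal{P})$ contribute at most $\binom{st}{2} \le \binom{s(t+1)-1}{2}$; edges inside $W$ contribute at most $\mathrm{ex}(n-st,\mathbb{B}) \le \mathrm{ex}(n-s(t+1)+1,\mathbb{B}) + s_1 s\cdot n$ (adjusting the argument, or bounding $\mathrm{ex}$ monotonicity crudely since $s(t+1)-1 - st = s-1$); edges between $V(\mathcal{P})$ and $W$ — this is the crux — must be controlled by $s_1 s n$ roughly. (3) For the cross edges: if some vertex $w \in W$ sends many edges into a single copy $B_i \in \mathcal{P}$, then I would find a copy of $\mathbb{B}$ using $w$ together with $s-1$ vertices of $B_i$ (since $\mathbb{B}$ has $s$ vertices and connectivity/covering structure), and swap it in to free up a vertex of $B_i$ — iterating such swaps either increases the packing (contradiction) or shows each $w$ has bounded degree into each $B_i$, giving $d(w) = O(s_1 s)$ per block or $O(s_1 s)$ total after summing appropriately. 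Summing over $w \in W$ yields the $s_1 s n$ term.

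The main obstacle I anticipate is step (3): executing the swapping argument cleanly to bound cross-edges, because one must ensure that after removing a vertex from $B_i$ and adding $w$, the resulting $s$-set actually contains a copy of $\mathbb{B}$ — this needs that $\mathbb{B}$ minus one vertex plus a high-degree new vertex still spans $\mathbb{B}$, which uses the bipartite structure and the covering number $\tau(B) = s_1$ (so the "small side" has exactly the covering vertices and a generic high-degree vertex can play the role of a small-side vertex). A secondary subtlety is making step (1) quantitatively match the interval endpoint $\frac{n}{s} - \frac{n}{65 s_1 s^2}$: the slack $\frac{n}{65 s_1 s^2}$ must be large enough to absorb the error terms from the packing/swapping but small enough that $|W|$ stays genuinely sublinear-controlled; I expect the constant $65$ is exactly what the swapping count and the comparison with $g_3(n,t,\mathbb{B})$ demand. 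I would also invoke Proposition~\ref{PROP:three-lower-bounds} (specifically $g_3$) for the lower-bound comparison in step (1), and standard monotonicity of $\mathrm{ex}(\cdot,\mathbb{B})$ throughout.
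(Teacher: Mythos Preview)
Your overall framework---take a maximum $\mathbb{B}$-packing, split the vertex set into the packed part $V_1$ and the remainder $V_2$, bound edges inside each part trivially, then bound the cross edges---matches the paper's. Step~(1) is not really an issue: since $G$ is extremal (hence in particular has edge count at least $g_3(n,t,\mathbb{B})$), a maximum packing must have exactly $t$ copies, and the paper simply takes this for granted.

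The genuine gap is in your step~(3). A local swap---replace one vertex of some $B_i$ by $w$---keeps the packing size at $t$; you never explain how iterating swaps could produce a $(t+1)$-st copy, and in fact it cannot without additional structure. More fatally, even if you could show that every $w\in W$ has at most $c$ neighbours in each block $B_i$ for some constant $c$, summing gives $|G[V_1,V_2]|\le c\,t\,|W|$, and with $t\approx n/s$ and $|W|\le n/(65s_1 s)$ this is $\Theta(n^2/s^2)$, not $O(n)$. So a per-block bound is hopelessly too weak; what you actually need is that most vertices of $W$ have \emph{total} degree at most $s_1 s$ into (a large subset of) $V_1$.

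The paper obtains this via a retiling argument, not swapping. It first isolates the set $L\subset V_1$ of vertices with degree at least $(1-\tfrac{1}{2s_1})(st-1)$ inside $G[V_1]$, and compares with $g_3$ to show $|V_1\setminus L|$ is $O(n-st)$. Then it proves the key Claim: at most $s-1$ vertices of $V_2$ can have $\ge s_1 s$ neighbours in $L_1$ (the part of $L$ consisting of whole blocks). If $s$ such vertices $u_1,\dots,u_s$ existed, one greedily builds $s$ new vertex-disjoint copies of $\mathbb{B}$ each using one $u_i$ and $s-1$ vertices from $L_1$, and then---this is the crucial step you are missing---applies the Alon--Yuster theorem to the remaining vertices of $L_1$ (which still have minimum degree above $\tfrac12$) to retile them perfectly by $\mathbb{B}$. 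Together with the untouched blocks in $S_1$ this yields $t+1$ disjoint copies, a contradiction. The claim then gives $|G[L_1,V_2]|\le (s-1)st+s_1 s(n-st)$, and the deficit in $|G[V_1]|$ caused by low-degree vertices absorbs the remaining $|G[S_1,V_2]|$ term. So the missing ingredient is the high-degree set $L$ together with a tiling theorem (Alon--Yuster); the local swap idea is not enough.
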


The proof of Theorem~\ref{THM:3rd-interval-graph} is presented in Section~\ref{SUBSEC:proof-3rd-interval-gp}. 
The proof of Theorem~\ref{THM:3rd-interval} is presented in Section~\ref{SUBSEC:proof-3rd-interval-hygp}. 
In the next section, we present some definitions and preliminary results. 
\section{Preliminaries}\label{SEC:prelim}
Let $r\ge 2$ be an integer and $\mathcal{H}$ be an $r$-graph. 
Given a set $W \subset V(\mathcal{H})$, we use $\mathcal{H}[W]$ to denote the \textbf{induced subgraph} of $\mathcal{H}$ on $W$, and use $\mathcal{H}-W$ to denote the induced subgraph of $\mathcal{H}$ on $V(\mathcal{H})\setminus W$. 
Given two disjoint sets $S, T \subset V(\mathcal{H})$, we use $\mathcal{H}[S,T]$ to denote the collection of edges in $\mathcal{H}$ that have nonempty intersection with both $S$ and $T$.  
We use $\overline{\mathcal{H}}$ to denote the \textbf{complement} of $\mathcal{H}$. 

The following simple inequalities will be useful. 

\begin{fact}\label{FACT:binom-inequality-a}
    Suppose that $n \ge r \ge 1$ and $t \in [n]$ are integers. Then 
    \begin{align*}
        \binom{n-t}{r} 
         \le \left(1-\frac{t}{n}\right)^{r} \binom{n}{r} \quad\text{and}\quad 
        \binom{n}{r} 
         \le \left(\frac{n}{n-t-r}\right)^{r}\binom{n-t}{r}. 
    \end{align*}
\end{fact}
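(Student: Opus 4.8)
The plan is to prove both inequalities by writing the relevant ratio of binomial coefficients as a product of $r$ linear fractions and then comparing the product with the claimed bound factor by factor.

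For the first inequality, I would first dispose of the trivial case: if $n-t<r$, then $\binom{n-t}{r}=0$ and the bound holds since the right-hand side is nonnegative. Otherwise all the factors below are positive, and
\[
\frac{\binom{n-t}{r}}{\binom{n}{r}}=\prod_{i=0}^{r-1}\frac{n-t-i}{n-i},
\]
where each denominator $n-i$ is positive because $i\le r-1<n$. It then suffices to check $\frac{n-t-i}{n-i}\le\frac{n-t}{n}$ for every $i\in\{0,\dots,r-1\}$; clearing the positive denominators $n$ and $n-i$, this is equivalent to $n(n-t-i)\le(n-t)(n-i)$, i.e.\ to $0\le ti$, which is obvious. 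Multiplying these $r$ inequalities of positive quantities yields $\binom{n-t}{r}/\binom{n}{r}\le(1-t/n)^r$, as desired.

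For the second inequality, which is meaningful (and is used in the paper) only when $n>t+r$, I would argue symmetrically. Under $n>t+r$ all denominators below are positive, and
\[
\frac{\binom{n}{r}}{\binom{n-t}{r}}=\prod_{i=0}^{r-1}\frac{n-i}{n-t-i}.
\]
For each $i\in\{0,\dots,r-1\}$ we have $n-i\le n$ for the numerator and $n-t-i\ge n-t-(r-1)>n-t-r>0$ for the denominator, so each factor is at most $\frac{n}{n-t-r}$. Taking the product over $i$ gives $\binom{n}{r}\le\left(\frac{n}{n-t-r}\right)^{r}\binom{n-t}{r}$.

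I do not expect a real obstacle here: the entire argument is a termwise comparison of positive fractions. The only point requiring a moment's care is the positivity of the denominators, which is exactly why the first bound uses $r\le n$ and $t\le n$, and why the second bound is to be read under the standing assumption $n>t+r$ (equivalently $n-t-r\ge 1$), under which every application of it in the paper indeed falls.
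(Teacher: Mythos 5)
The paper states this fact without proof (it is listed among "simple inequalities"), and your termwise comparison of the factors in $\binom{n-t}{r}/\binom{n}{r}=\prod_{i=0}^{r-1}\frac{n-t-i}{n-i}$ is exactly the routine verification intended, so your argument is correct and essentially the canonical one. Your added care is sound: the degenerate case $n-t<r$ for the first bound, and the observation that the second bound only makes sense (and is only ever applied in the paper) when $n-t-r>0$, are both handled correctly.
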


\begin{fact}[{\cite[Lemma~3.4]{HLLYZ23}}]\label{LEMMA:binom-inequ-b}
    Suppose that  $n,b,r \ge 1$ are integers satisfying $b \le \frac{n-r^2+1}{r+1}$. Then 
        \begin{align*}
            \binom{n}{r} 
            \le \frac{n^{r}}{r!} 
            \le e \binom{n-b}{r}. 
        \end{align*}    
\end{fact}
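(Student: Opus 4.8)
\textbf{Proof proposal for Fact~\ref{LEMMA:binom-inequ-b}.}

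The plan is to verify the two inequalities separately; the first is essentially trivial and the second is where the hypothesis $b \le \frac{n-r^2+1}{r+1}$ is needed. For the left inequality $\binom{n}{r} \le \frac{n^r}{r!}$, I would simply write $\binom{n}{r} = \frac{n(n-1)\cdots(n-r+1)}{r!}$ and note that each of the $r$ factors in the numerator is at most $n$, so the product is at most $n^r$. This requires no hypothesis beyond $n \ge r \ge 1$.

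For the right inequality $\frac{n^r}{r!} \le e\binom{n-b}{r}$, the natural approach is to compare $\frac{n^r}{r!}$ with $\binom{n-b}{r} = \frac{(n-b)(n-b-1)\cdots(n-b-r+1)}{r!}$, so that it suffices to show $n^r \le e \prod_{i=0}^{r-1}(n-b-i)$. I would bound the product from below by its smallest factor raised to the $r$-th power: $\prod_{i=0}^{r-1}(n-b-i) \ge (n-b-r+1)^r$. Hence it is enough to prove $n^r \le e\,(n-b-r+1)^r$, i.e. $\left(\frac{n}{n-b-r+1}\right)^r \le e$, equivalently $\left(1 + \frac{b+r-1}{n-b-r+1}\right)^r \le e$. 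Using the standard estimate $1 + x \le e^{x}$, the left side is at most $\exp\!\left(\frac{r(b+r-1)}{n-b-r+1}\right)$, so it suffices to check that $\frac{r(b+r-1)}{n-b-r+1} \le 1$, that is, $r(b+r-1) \le n-b-r+1$, which rearranges to $b(r+1) \le n - r^2 + 1$, i.e. $b \le \frac{n-r^2+1}{r+1}$ — exactly the hypothesis. (One should note that the hypothesis also guarantees $n - b - r + 1 > 0$, so the factors in $\binom{n-b}{r}$ are positive and all the manipulations above are legitimate; indeed $b \le \frac{n-r^2+1}{r+1}$ gives $n - b - r + 1 \ge \frac{rn + r - 1}{r+1} - r + 1 \ge \frac{r(b+r-1)}{1} \ge 0$ after the same rearrangement.)

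There is no real obstacle here — the only point requiring a moment's care is confirming that $n-b-r+1$ is positive (so that the chain of inequalities $\binom{n-b}{r} \ge \frac{(n-b-r+1)^r}{r!}$ and the subsequent division are valid), and this follows from the same rearrangement of the hypothesis used in the main estimate. Since this is cited as \cite[Lemma~3.4]{HLLYZ23}, I would present the argument tersely, in one short paragraph combining the two displays above.
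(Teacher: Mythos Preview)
Your argument is correct. The paper does not give its own proof of this fact---it simply cites it as \cite[Lemma~3.4]{HLLYZ23}---so there is nothing to compare against, but your derivation is exactly the intended elementary one: bound $\binom{n-b}{r}\ge (n-b-r+1)^r/r!$, reduce to $\bigl(1+\tfrac{b+r-1}{n-b-r+1}\bigr)^r\le e$, apply $1+x\le e^x$, and observe that the resulting condition $r(b+r-1)\le n-b-r+1$ is precisely the hypothesis. One small clean-up: the arithmetic in your parenthetical about positivity is a bit garbled; it is tidier to note directly that the rearranged hypothesis $n-b-r+1\ge r(b+r-1)$ together with $b,r\ge 1$ gives $n-b-r+1\ge r>0$.
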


\begin{fact}\label{FACT:binom-inequality-b}
    Suppose that $n, \ell, x \ge 0$ are integers satisfying $\ell + x \le n$. 
    Then 
    \begin{align*}
        \binom{n}{2} - \binom{n-\ell-x}{2} 
        \ge \binom{n}{2} - \binom{n-\ell}{2} + x (n-\ell-x). 
    \end{align*}
\end{fact}

\begin{fact}\label{FACT:inequality-c}
    Suppose that $r \ge 1$ is an integer and $0 \le x \le \frac{1}{4r}$ is a real number. 
    Then 
    \begin{align*}
        \left(\frac{1}{1-x}\right)^r
        \le 1+ 4rx. 
    \end{align*}
\end{fact}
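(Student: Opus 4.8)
The plan is to prove Fact~\ref{FACT:inequality-c}, namely that $\left(\frac{1}{1-x}\right)^r \le 1 + 4rx$ whenever $r \ge 1$ is an integer and $0 \le x \le \frac{1}{4r}$. I would first dispose of the trivial case $x = 0$, where both sides equal $1$, and then assume $0 < x \le \frac{1}{4r}$.

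My main approach would be to bound $\frac{1}{1-x}$ from above by a cleaner expression and then exponentiate. Since $0 < x \le \frac{1}{4r} \le \frac14$, I have $1 - x \ge \frac{3}{4} > 0$, so the quantity is well-defined and positive. The cleanest route is to use the elementary inequality $\frac{1}{1-x} \le e^{2x}$, valid for $0 \le x \le \frac12$ (this follows from $e^{2x}(1-x) \ge 1$, which one checks by noting the left side equals $1$ at $x=0$ and has nonnegative derivative $e^{2x}(1-2x) \ge 0$ on $[0,\frac12]$). Raising to the $r$-th power gives $\left(\frac{1}{1-x}\right)^r \le e^{2rx}$. Now set $y := 2rx$, so that $0 \le y \le \frac12$, and it remains to show $e^{y} \le 1 + 2y$ for $y \in [0,\frac12]$; this is again elementary since $1 + 2y - e^y$ vanishes at $y = 0$ and has derivative $2 - e^y \ge 2 - e^{1/2} > 0$ on $[0,\frac12]$. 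Combining, $\left(\frac{1}{1-x}\right)^r \le e^{2rx} \le 1 + 4rx$, as desired.

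An alternative, entirely calculus-free route would be to expand $\left(\frac{1}{1-x}\right)^r = \left(\sum_{k \ge 0} x^k\right)^r = \sum_{k \ge 0} \binom{r-1+k}{k} x^k$ and bound the tail: the coefficient of $x^k$ is at most $\binom{r-1+k}{k} \le (r+k)^k / k! \le \cdots$, and one shows the sum of all terms with $k \ge 1$ is at most $4rx$ using $x \le \frac{1}{4r}$ to make the series geometric-like with small ratio. However, this requires a slightly fiddly estimate to pin down the constant $4$, so I expect the exponential route above to be shorter and cleaner.

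The only mild obstacle is making sure the numerical constants line up exactly: one must verify that the bound $\frac{1}{1-x} \le e^{2x}$ combined with $e^{y} \le 1+2y$ truly yields the constant $4$ and not something larger, and that the hypothesis $x \le \frac{1}{4r}$ (rather than a weaker bound like $x \le \frac{1}{2r}$) is what is actually needed — here the factor $2$ in each of the two steps multiplies to give exactly the $4rx$ on the right-hand side, and the constraint $y = 2rx \le \frac12$ is exactly $x \le \frac{1}{4r}$, so everything is consistent. Since this is a short technical fact used only as a lemma, no deeper ideas are required.
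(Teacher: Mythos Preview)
Your proof is correct, but it goes through the exponential function whereas the paper stays purely algebraic. The paper first bounds $\frac{1}{1-x} \le 1+2x$ (equivalent to $(1-x)(1+2x) \ge 1$, i.e.\ $x(1-2x) \ge 0$), then expands $(1+2x)^r$ via the binomial theorem, crudely replaces $\binom{r}{i}(2x)^i$ by $(2rx)^i$, and sums the resulting geometric series using $2rx \le \tfrac12$. Your route via $\frac{1}{1-x} \le e^{2x}$ and $e^{y} \le 1+2y$ for $y \le \tfrac12$ is perhaps slightly slicker in that it avoids manipulating binomial coefficients, at the cost of two small calculus checks; the paper's argument is entirely elementary and avoids any analysis. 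Both are short and the constants line up identically, so neither has a real advantage for this technical lemma.
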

\begin{proof}
    This is due to
    \begin{align*}
        \left(\frac{1}{1-x}\right)^r
        \le (1+2x)^r 
        \le 1+ 2rx + \sum_{i=2}^{r}(2rx)^{i}
        \le 1+ 2rx + 2rx\cdot  \sum_{i=1}^{r-1}\left(\frac{1}{2}\right)^{i}
        \le 1+ 4rx. 
    \end{align*}
\end{proof}

\begin{fact}\label{FACT:inequality-d}
    Suppose that $x \le 1$ and $r \ge 1$ are real numbers. 
    Then 
    \begin{align*}
        (1 - x)^{\frac{1}{r}} \le 1 - \frac{x}{r}. 
    \end{align*}
\end{fact}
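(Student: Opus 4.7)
The plan is to recognize this as a direct consequence of Bernoulli's inequality in its fractional-exponent form: for every real $\alpha \in [0,1]$ and every real $u \ge -1$, one has $(1+u)^{\alpha} \le 1 + \alpha u$. Taking $\alpha = 1/r$ and $u = -x$, the two hypotheses $r \ge 1$ and $x \le 1$ translate exactly into the required $\alpha \in (0,1]$ and $u \ge -1$, so Bernoulli's inequality immediately yields $(1-x)^{1/r} \le 1 - x/r$, which is the claim.

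Alternatively, and fully self-contained, I would argue by concavity: the function $f(y) := y^{1/r}$ is concave on $[0,\infty)$ because its exponent $1/r$ lies in $(0,1]$, so its graph lies below its tangent line at $y = 1$, giving $y^{1/r} \le 1 + (y-1)/r$ for all $y \ge 0$. Substituting $y = 1 - x$, which is nonnegative by the hypothesis $x \le 1$, yields the desired inequality. The only point to verify is that $1-x \ge 0$ so that the fractional root is well defined, and this is exactly what $x \le 1$ guarantees; there is no real obstacle, and the proof is a one-liner from either viewpoint.
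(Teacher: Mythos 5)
Your proof is correct. The paper states Fact~\ref{FACT:inequality-d} without any proof at all (it is treated as standard), so there is nothing to compare against; both of your routes — the fractional Bernoulli inequality and the tangent-line/concavity argument at $y=1$ for $f(y)=y^{1/r}$ — are valid and are, in fact, two phrasings of the same underlying fact, since fractional Bernoulli is itself usually derived from concavity. The only hypothesis that actually needs checking, namely $1-x\ge 0$ so the $r$-th root is well defined and the concavity estimate applies, you correctly identify as being supplied by $x\le 1$.
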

\subsection{Graphs}
%
Given integers $m,n,s_1,s_2\ge 1$ and an $s_1$ by $s_2$ bipartite graph $B_{s_1,s_2} = B[W_1, W_2]$, the \textbf{Zarankiewicz number} $Z(m,n,B_{s_1, s_2})$ is the maximum number of edges in an $m$ by $n$ bipartite graph without a copy of $B_{s_1,s_2}$ with the set $W_1$ contained in the part of size $m$ and the set $W_2$ contained in the part of size $n$.  
For convenience, we use $Z(m,n,s_1,s_2)$ to denote $Z(m,n,K_{s_1, s_2})$. 

We will use the following classical result of K\H{o}v\'{a}ri, S\'{o}s and Tur\'{a}n~\cite{KST54}. 

\begin{theorem}[K{\H o}v\'{a}ri--S\'{o}s--Tur\'{a}n~\cite{KST54}]\label{THM:graph-KST}
    Let $m,n,s_1,s_2\ge 1$ be integers. 
    Then 
    \begin{align*}
        \mathrm{ex}(n,K_{s_1,s_2})
        & \le \frac{(s_2-1)^{\frac{1}{s_1}}}{2}n^{2-\frac{1}{s_1}} + \frac{s_1-1}{2}n, \quad\text{and}\\
        Z(m,n,s_1,s_2) 
        & \le (s_2-1)^{\frac{1}{s_1}} mn^{1-\frac{1}{s_1}} + (s_1-1)n. 
    \end{align*}
\end{theorem}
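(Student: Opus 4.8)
The plan is to prove both inequalities by the classical double counting of stars $K_{1,s_1}$ (``cherries''), combined with the convexity of the generalized binomial coefficient $\binom{x}{s_1} := \tfrac{1}{s_1!}\,x(x-1)\cdots(x-s_1+1)$. I would treat the Zarankiewicz bound first, since the Tur\'{a}n bound is obtained by the same computation applied inside a single vertex class.

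For the Zarankiewicz bound, let $G=G[A,B]$ with $|A|=m$, $|B|=n$ contain no copy of $K_{s_1,s_2}$ having its $s_1$-side in $A$ and its $s_2$-side in $B$. I count pairs $(S,b)$ with $S\subseteq A$, $|S|=s_1$, $b\in B$, and $b$ adjacent to every vertex of $S$. Summing over $b$ first gives $\sum_{b\in B}\binom{d_G(b)}{s_1}$. Summing over $S$ first, I use the key structural fact that every $s_1$-subset of $A$ has at most $s_2-1$ common neighbors in $B$ (such a set of common neighbors is automatically disjoint from $S$, so $s_2$ of them would complete a forbidden $K_{s_1,s_2}$); this yields the upper bound $(s_2-1)\binom{m}{s_1}$. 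Hence $\sum_{b\in B}\binom{d_G(b)}{s_1}\le (s_2-1)\binom{m}{s_1}$. Writing $\bar d:=e(G)/n$ for the average degree on the $B$-side, I apply Jensen's inequality to the convex truncation $f(x):=\binom{x}{s_1}$ for $x\ge s_1-1$ and $f(x):=0$ for $x<s_1-1$, which is convex on all of $\mathbb{R}$ and agrees with $\binom{\cdot}{s_1}$ at nonnegative integers, to get $n f(\bar d)\le (s_2-1)\binom{m}{s_1}$. If $\bar d<s_1-1$ then $e(G)<(s_1-1)n$ and the claimed bound is immediate, so I may assume $\bar d\ge s_1-1$, where $f(\bar d)=\binom{\bar d}{s_1}\ge (\bar d-s_1+1)^{s_1}/s_1!$. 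Combining this with $\binom{m}{s_1}\le m^{s_1}/s_1!$ gives $(\bar d-s_1+1)^{s_1}\le (s_2-1)m^{s_1}/n$; taking $s_1$-th roots and multiplying by $n$ yields $e(G)=n\bar d\le (s_2-1)^{1/s_1}mn^{1-1/s_1}+(s_1-1)n$.

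For the Tur\'{a}n bound, I repeat the count inside an $n$-vertex $K_{s_1,s_2}$-free graph $G$: counting pairs $(S,u)$ with $|S|=s_1$, $u$ adjacent to all of $S$ (so $u\notin S$), I obtain $\sum_{u\in V(G)}\binom{d_G(u)}{s_1}\le (s_2-1)\binom{n}{s_1}$, where again the right-hand side uses that each $s_1$-set has at most $s_2-1$ common neighbors. Now $\bar d=2e(G)/n$, and the same Jensen-plus-truncation argument (with the trivial case $\bar d<s_1-1$ handled as before) gives $n\cdot(\bar d-s_1+1)^{s_1}/s_1!\le (s_2-1)n^{s_1}/s_1!$, hence $2e(G)/n=\bar d\le (s_2-1)^{1/s_1}n^{1-1/s_1}+s_1-1$, which rearranges to $\mathrm{ex}(n,K_{s_1,s_2})\le \tfrac{(s_2-1)^{1/s_1}}{2}n^{2-1/s_1}+\tfrac{s_1-1}{2}n$.

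I do not expect a genuine obstacle here: this is a textbook argument. The only point requiring care is the convexity step --- one must fix the real-variable definition of $\binom{x}{s_1}$, verify that its truncation $f$ below $s_1-1$ is convex (the kink at $s_1-1$ opens upward), dispatch the low-average-degree regime separately, and check the two elementary estimates $\binom{x}{s_1}\ge (x-s_1+1)^{s_1}/s_1!$ for $x\ge s_1-1$ and $\binom{m}{s_1}\le m^{s_1}/s_1!$. Everything else is routine bookkeeping, and no hypothesis relating $s_1$ and $s_2$ is needed.
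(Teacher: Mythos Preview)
Your argument is correct and is precisely the classical K{\H o}v\'{a}ri--S\'{o}s--Tur\'{a}n double-counting proof. The paper does not give its own proof of this theorem---it is quoted as a known result from~\cite{KST54}---but the proofs it does supply in the appendix for the hypergraph generalizations (Theorem~\ref{APPEND:THM:Erdos-hypergraph-KST} and Proposition~\ref{APPEND:PROP:hypergraph-KST-Zaran}) follow exactly the same template you outline: count ``cherries'' $K_{s_1,1,\ldots,1}$ two ways, bound one side by Jensen via $\binom{x}{s_1}\ge (x-s_1+1)^{s_1}/s_1!$, bound the other using that common links are free of the smaller complete multipartite graph, and combine using $\binom{m}{s_1}\le m^{s_1}/s_1!$. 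So your proposal matches the paper's implicit approach in every respect.
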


The following result of Alon and Yuster~\cite{AY96} will be useful in the proof of Theorem~\ref{THM:3rd-interval-graph}. 

\begin{theorem}[Alon--Yuster~\cite{AY96}]\label{THM:AY-graph-factor}
    Let $F$ be a graph with $m$ vertices. 
   For every $\varepsilon>0$ there exists $n_0=n_0(\varepsilon,F)$ such that the following holds for all $n \ge n_0$. 
   Every $n$-vertex graph $G$ with $\delta(G) \ge \left(1-\frac{1}{\chi (H)}+\varepsilon\right)n$ contains $\lfloor n/m \rfloor$ pairwise vertex-disjoint copies of $F$. 
\end{theorem}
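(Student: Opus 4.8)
The plan is to prove the statement by the regularity-plus-tiling method, using an absorbing step to handle the last handful of vertices. Write $\chi := \chi(F)$ (the displayed minimum-degree hypothesis should read $\chi(F)$). Two trivial reductions come first. Deleting $n \bmod m$ arbitrary vertices lowers the minimum-degree ratio by only $o(1)$ and leaves a graph on $m\lfloor n/m\rfloor$ vertices, so it is enough to produce a \emph{perfect} $F$-factor in an $n$-vertex graph with $m \mid n$ and $\delta(G) \ge (1-1/\chi+\varepsilon/2)n$; reversing the reduction then gives $\lfloor n/m\rfloor$ disjoint copies. Fix constants $\varepsilon \gg \eta \gg d \gg \gamma \gg \epsilon_{0} > 0$ depending only on $F$ and $\varepsilon$, with in addition $\gamma < \eta^{2}$.

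First I would reserve an \emph{absorbing set} $A \subseteq V(G)$ with $|A|\le \eta n$, $m \mid |A|$, such that for every $W \subseteq V(G)\setminus A$ with $|W|\le \eta^{2}n$ and $m \mid |W|$ the graph $G[A\cup W]$ has a perfect $F$-factor. Its existence is the now-standard probabilistic consequence of the fact that, under the minimum-degree hypothesis, every vertex $v$ of $G$ lies in $\Omega(n^{b})$ constant-size ``absorbing gadgets'' (vertex sets $Q$, $|Q|=b$ a constant, with both $G[Q]$ and $G[Q\cup\{v\}]$ admitting $F$-factors): choosing each gadget into $A$ independently with a suitable probability and then pruning to a vertex-disjoint subfamily works by Chernoff bounds and a union bound. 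The gadget count itself is proved by combining the minimum-degree hypothesis with a supersaturation argument. I expect this step to be the technical heart.

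Next I would tile the bulk of $G':=G-A$. Apply the Szemer\'edi Regularity Lemma to $G'$ with parameter $\epsilon_{0}$ and density threshold $d$, obtaining clusters $V_0,V_1,\dots,V_k$ and the reduced graph $R$ on $[k]$; a standard degree computation gives $\delta(R)\ge (1-1/\chi+\varepsilon/4)k$. By the Hajnal--Szemer\'edi theorem quoted in the introduction (applied with $\ell+1=\chi$), $R$ contains $\lfloor k/\chi\rfloor$ vertex-disjoint copies of $K_{\chi}$, so all but $<\chi$ clusters split into $\chi$-tuples each inducing a clique of $R$. Inside a tuple $(W_1,\dots,W_\chi)$ every pair is $\epsilon_{0}$-regular of density $>d$, and since $F$ is $\chi$-colourable one may repeatedly locate and delete a copy of $F$ spread across the tuple (via the embedding/counting lemma, or via the Blow-up Lemma after a short super-regularising clean-up) until fewer than a $\gamma$-fraction of its vertices survive. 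Doing this for every tuple yields an $F$-tiling of $G'$ whose uncovered set $W_0$ consists only of $V_0$, the few leftover clusters, and a $\gamma$-fraction of each tuple, so $|W_0|<\eta^{2}n$.

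Finally I would glue the two parts. Discarding at most $m$ of the copies of $F$ already found makes $|W_0|$ divisible by $m$; together with $m \mid |A|$ and $m \mid n$ this gives $m \mid |A|+|W_0|$, and $|W_0|<\eta^{2}n$, so the defining property of $A$ supplies a perfect $F$-factor of $G[A\cup W_0]$. Combined with the $F$-tiling of $G'-W_0$ this is a perfect $F$-factor of $G$, and undoing the first reduction produces $\lfloor n/m\rfloor$ pairwise vertex-disjoint copies of $F$. The genuinely delicate points are the construction and analysis of $A$ and, conceptually, the need to cover \emph{all} but $<m$ vertices rather than a $(1-o(1))$-fraction: this is exactly where the strict inequality $\delta(G)>(1-1/\chi)n$ is indispensable (at $\delta(G)=(1-1/\chi)n$ the conclusion can fail), and where Alon and Yuster's original argument instead performs a careful direct re-routing of the $o(n)$ leftover vertices through the regular pairs, using the spare $\varepsilon n/4$ in the minimum degree in place of the absorber.
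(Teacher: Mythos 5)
This statement is not proved in the paper at all: it is quoted as a known result of Alon and Yuster~\cite{AY96} and used as a black box in the proof of Theorem~\ref{THM:3rd-interval-graph}, so there is no internal proof to compare against. Your outline (set aside an absorbing set, regularize $G-A$, apply Hajnal--Szemer\'edi to the reduced graph, tile each $\chi$-clique of clusters by rotated copies of $F$ via the counting/blow-up lemma, then absorb the $O(\eta^2 n)$ leftover) is a legitimate modern route to the theorem; Alon and Yuster's original argument has no absorber and instead re-routes the leftover vertices through the regular pairs using the $\varepsilon n$ slack in the degree, as you note, and you correctly flag the typo $\chi(H)$ for $\chi(F)$.

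There is, however, a concrete flaw at the step you yourself call the technical heart. Your absorbing gadgets are defined as sets $Q$ of constant size $b$ such that both $G[Q]$ and $G[Q\cup\{v\}]$ admit perfect $F$-factors. For $m=v(F)\ge 2$ no such $Q$ exists: the first condition forces $m\mid b$ and the second forces $m\mid b+1$. Single vertices cannot be absorbed for divisibility reasons, which is precisely why the standard absorption framework for $F$-factors works with $m$-sets (for every $S\in\binom{V}{m}$ one wants $\Omega(n^{b})$ sets $Q$ with $m\mid |Q|$ such that both $G[Q]$ and $G[Q\cup S]$ have $F$-factors), or with vertex-swapping absorbers ($G[Q\cup\{u\}]$ and $G[Q\cup\{v\}]$ both factorable) combined with a separate divisibility argument. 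Your final gluing step in fact only invokes absorption of sets $W_0$ with $m\mid |W_0|$, so the intended argument is the standard one and is salvageable, but as written the key lemma is vacuous, and its proof (the claimed supersaturation count of gadgets under $\delta(G)\ge(1-1/\chi+\varepsilon)n$) is asserted rather than carried out. The remaining bookkeeping (deleting $n\bmod m$ vertices, $\delta(R)\ge(1-1/\chi+\varepsilon/4)k$, balancing the tiling of a $\chi$-tuple by cyclically rotating the colour classes of $F$, the hierarchy $\epsilon_0\ll\gamma\ll\eta\ll\varepsilon$) is fine.
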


We also need the following simple result on $\frac{\mathrm{ex}(n,F)}{n}$ in the proof of Theorem~\ref{THM:3rd-interval-graph}. 

\begin{proposition}\label{PROP:Turan-ratio}
    Let $F$ be a connected graph. 
    For all integers $n \ge m \ge 1$ we have 
    \begin{align*}
        \frac{\mathrm{ex}(n,F)}{n}
        \ge \left(1 - \frac{m}{n}\right)\frac{\mathrm{ex}(m,F)}{m}. 
    \end{align*}
\end{proposition}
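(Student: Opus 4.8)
The plan is to build an extremal $F$-free graph on $n$ vertices by taking a near-optimal $F$-free graph on $m$ vertices and "inflating" it, namely replacing each vertex by an independent set and distributing the blow-up as evenly as possible, so that the edge count scales by roughly $(n/m)^2$ but we only want a factor of $n/m$; this suggests instead the right construction is to take $\lfloor n/m \rfloor$ vertex-disjoint copies of the optimal $m$-vertex $F$-free graph together with a handful of leftover vertices. Since $F$ is connected, a disjoint union of $F$-free graphs is still $F$-free, so this is a valid lower-bound construction and it immediately gives $\mathrm{ex}(n,F) \ge \lfloor n/m\rfloor \cdot \mathrm{ex}(m,F)$.

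Concretely, first I would write $n = qm + \rho$ with $0 \le \rho < m$ and $q = \lfloor n/m \rfloor$, and form the graph $G$ consisting of $q$ vertex-disjoint copies of an extremal $m$-vertex $F$-free graph (plus $\rho$ isolated vertices if $\rho>0$). Then $G$ is $F$-free because any copy of the connected graph $F$ would have to lie entirely inside one component, and $|E(G)| = q\cdot \mathrm{ex}(m,F)$, hence $\mathrm{ex}(n,F) \ge q\cdot \mathrm{ex}(m,F) = \lfloor n/m\rfloor\,\mathrm{ex}(m,F)$.

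Finally I would turn this into the stated ratio inequality: dividing by $n$ gives
\begin{align*}
    \frac{\mathrm{ex}(n,F)}{n}
    \ge \frac{\lfloor n/m\rfloor\,\mathrm{ex}(m,F)}{n}
    = \frac{\lfloor n/m\rfloor\, m}{n}\cdot\frac{\mathrm{ex}(m,F)}{m}
    \ge \left(1 - \frac{m}{n}\right)\frac{\mathrm{ex}(m,F)}{m},
\end{align*}
where the last step uses $\lfloor n/m\rfloor\, m \ge n - m$, i.e. $\lfloor n/m\rfloor \ge n/m - 1$. This completes the argument.

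There is essentially no obstacle here: the only point requiring a word of care is the use of connectivity of $F$ to ensure that the disjoint union of $F$-free graphs remains $F$-free (a copy of $F$ cannot be split across two components), and the elementary floor estimate $\lfloor n/m\rfloor \ge (n-m)/m$. Everything else is bookkeeping.
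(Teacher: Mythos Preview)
Your proof is correct and is essentially identical to the paper's own argument: take $\lfloor n/m\rfloor$ vertex-disjoint copies of an extremal $m$-vertex $F$-free graph plus isolated vertices, use connectivity of $F$ to conclude the union is $F$-free, and finish with the floor estimate $\lfloor n/m\rfloor \ge n/m - 1$. The initial remark about blow-ups is a detour, but you correctly discard it in favor of the disjoint-copies construction.
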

\begin{proof}
    Let $G_m$ be an $m$-vertex $F$-free graph with $\mathrm{ex}(m,F)$ edges. 
    Let $G_n$ be the union of $\lfloor n/m \rfloor$ vertex-disjoint copies of $G_m$ with a set of $n - m \lfloor n/m \rfloor$ isolated vertices. 
    Since $F$ is connected, the graph $G_n$ is $F$-free. 
    Therefore, we have 
    \begin{align*}
        \frac{\mathrm{ex}(n,F)}{n}
        \ge \frac{|G_n|}{n}
        = \frac{\lfloor n/m \rfloor \times \mathrm{ex}(m,F)}{n}
        \ge \left(\frac{n}{m}-1\right) \frac{m}{n}\frac{\mathrm{ex}(m,F)}{m}
        = \left(1 - \frac{m}{n}\right)\frac{\mathrm{ex}(m,F)}{m}, 
    \end{align*}
    proving Proposition~\ref{PROP:Turan-ratio}. 
\end{proof}

\subsection{Hypergraphs}
Given a $r$-graph $\mathcal{H}$ and a vertex $v\in V(\mathcal{H})$, the \textbf{link} $L_{\mathcal{H}}(v)$ of $v$ in $\mathcal{H}$ is defined as 
\begin{align*}
    L_{\mathcal{H}}(v)
    := \left\{A\in \binom{V(\mathcal{H})}{r-1} \colon A\cup \{v\} \in \mathcal{H}\right\}. 
\end{align*}
Recall that the degree $d_{\mathcal{H}}(v)$ of $v$ is $d_{\mathcal{H}}(v) := |L_{\mathcal{H}}(v)|$. 
Given a set $S \subset V(\mathcal{H})$ let 
\begin{align*}
    L_{\mathcal{H}}(S) := \bigcap_{v\in S}L_{\mathcal{H}}(v)
\end{align*}
denote the \textbf{common link} of $S$ in $\mathcal{H}$, 
and let $d_{\mathcal{H}}(S) := |L_{\mathcal{H}}(S)|$. 
Let $T \subset V(\mathcal{H})$ be a set of size $r-1$, the \textbf{neighborhood} of $T$ is 
\begin{align*}
    N_{\mathcal{H}}(T)
    := \left\{v\in V(\mathcal{H}) \colon T\cup \{v\} \in \mathcal{H}\right\}. 
\end{align*}
We will omit the subscript $\mathcal{H}$ from the notations defined above if it is clear from the context. 

Extending the K{\H o}v\'{a}ri--S\'{o}s--Tur\'{a}n Theorem to hypergraphs, Erd\H{o}s~\cite{Erdos64} proved the following theorem. 

\begin{theorem}[Erd\H{o}s~\cite{Erdos64}]\label{THM:Erdos-hypergraph-KST}
    Let $n \ge r \ge 3$ and $s_r\ge \cdots \ge s_1 \ge 1$ be integers. 
    There exists a constant $C:= C(r,s_1, \ldots, s_r)$ such that 
    \begin{align*}
        \mathrm{ex}(n,K_{s_1, \ldots, s_r}^{r})
        \le Cn^{r-\frac{1}{s_1\cdots s_{r-1}}}. 
    \end{align*}
\end{theorem}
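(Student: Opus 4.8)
The plan is to follow Erd\H{o}s' classical double-counting (``deletion of stars'') argument, proceeding by induction on $r$, with the K\H{o}v\'ari--S\'os--Tur\'an Theorem~\ref{THM:graph-KST} serving as the base case $r=2$. Fix an $n$-vertex $K_{s_1,\ldots,s_r}^r$-free $r$-graph $\mathcal{H}$. For each $(r-1)$-set $T\in\binom{V(\mathcal{H})}{r-1}$ with neighbourhood $N_{\mathcal{H}}(T)$ of size $d_{\mathcal{H}}(T)$, every $s_r$-subset of $N_{\mathcal{H}}(T)$ together with $T$ would create, if too many such $T$ sharing a common ``core'' existed, a forbidden complete $r$-partite configuration. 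First I would count, for each $s_r$-subset $S$ of $V(\mathcal{H})$, the number of $(r-1)$-sets $T$ with $S\subseteq N_{\mathcal{H}}(T)$; call this $f(S)$. $K_{s_1,\ldots,s_r}^r$-freeness is exactly the statement that the $(r-1)$-graph $\{T : S\subseteq N_{\mathcal{H}}(T)\}$ is $K_{s_1,\ldots,s_{r-1}}^{r-1}$-free for every such $S$, so $f(S)\le \mathrm{ex}(n, K_{s_1,\ldots,s_{r-1}}^{r-1})\le C' n^{r-1-1/(s_1\cdots s_{r-2})}$ by the induction hypothesis.

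Next I would set up the double count $\sum_{T}\binom{d_{\mathcal{H}}(T)}{s_r}=\sum_{S}f(S)$, where the left sum is over all $(r-1)$-sets $T$. The right-hand side is at most $\binom{n}{s_r}\cdot C' n^{r-1-1/(s_1\cdots s_{r-2})}=O(n^{s_r+r-1-1/(s_1\cdots s_{r-2})})$. Applying convexity of $x\mapsto\binom{x}{s_r}$ to the left-hand side gives $\binom{n}{r-1}\binom{\bar d}{s_r}\le O(n^{s_r+r-1-1/(s_1\cdots s_{r-2})})$, where $\bar d$ is the average of $d_{\mathcal{H}}(T)$ over all $(r-1)$-sets $T$; note $\sum_T d_{\mathcal{H}}(T)=r|\mathcal{H}|$, so $\bar d = r|\mathcal{H}|/\binom{n}{r-1}$. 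Rearranging, $\bar d^{\,s_r}=O(n^{s_r+r-1-1/(s_1\cdots s_{r-2})}/n^{r-1})=O(n^{s_r-1/(s_1\cdots s_{r-2})})$, hence $\bar d=O(n^{1-1/(s_1 s_2\cdots s_{r-2}s_r)})$; a small bookkeeping check shows the exponent is $1-1/(s_1\cdots s_{r-1})$ once one tracks how the induction's exponent $1/(s_1\cdots s_{r-2})$ combines with the $s_r$-th root. Then $|\mathcal{H}| = \bar d\binom{n}{r-1}/r = O(n^{r-1+1-1/(s_1\cdots s_{r-1})})=O(n^{r-1/(s_1\cdots s_{r-1})})$, which is the desired bound with $C$ absorbing all the implicit constants (which depend only on $r$ and $s_1,\ldots,s_r$).

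One technical point to handle carefully: the convexity step requires $\bar d$ to be at least $s_r$ (otherwise $\binom{\bar d}{s_r}$ may be $0$ and gives no information), but if $\bar d < s_r$ then $|\mathcal{H}| < s_r\binom{n}{r-1}/r = O(n^{r-1})$, which is already far below the claimed bound, so that case is trivial. I would also need to be slightly careful that Jensen's inequality applies to $\binom{x}{s_r}$ extended to a convex function on $[0,\infty)$ (the standard polynomial extension $x(x-1)\cdots(x-s_r+1)/s_r!$ is not convex near small $x$, so one uses instead the bound $\sum_T\binom{d_{\mathcal H}(T)}{s_r}\ge \binom{n}{r-1}\binom{\lfloor\bar d\rfloor}{s_r}$ obtained by a smoothing/rearrangement argument, or works with the inequality $\binom{d}{s_r}\ge (d-s_r)^{s_r}/s_r!$ for $d\ge s_r$ together with convexity of $x\mapsto(x-s_r)^{s_r}$ on $[s_r,\infty)$).

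The main obstacle will be the careful exponent bookkeeping in the inductive step — making sure that the exponent produced by combining the induction hypothesis for $(r-1)$-graphs with the $s_r$-th-root extraction is exactly $1-1/(s_1\cdots s_{r-1})$ and not something weaker. Everything else is a routine double count; the constant $C$ can be defined recursively as an explicit function of $C(r-1,s_1,\ldots,s_{r-1})$, $s_r$, and $r$, and no optimization of $C$ is needed since the statement only claims existence.
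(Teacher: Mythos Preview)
Your overall plan (induction on $r$ via a double count, with K\H{o}v\'ari--S\'os--Tur\'an as the base case) matches the paper's proof exactly, but there is a genuine error in your choice of which part of $K_{s_1,\ldots,s_r}^r$ to peel off. You take $S$ to be an $s_r$-set, so its common link is $K_{s_1,\ldots,s_{r-1}}^{r-1}$-free; induction gives $f(S)=O\bigl(n^{r-1-1/(s_1\cdots s_{r-2})}\bigr)$, and after taking the $s_r$-th root you (correctly) arrive at $\bar d=O\bigl(n^{1-1/(s_1\cdots s_{r-2}s_r)}\bigr)$. But this is \emph{not} the claimed exponent $1-1/(s_1\cdots s_{r-1})$: since $s_r\ge s_{r-1}$, one has $s_1\cdots s_{r-2}s_r\ge s_1\cdots s_{r-1}$, so your bound $O\bigl(n^{r-1/(s_1\cdots s_{r-2}s_r)}\bigr)$ is strictly weaker whenever $s_r>s_{r-1}$. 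The ``small bookkeeping check'' you allude to simply fails; dividing $1/(s_1\cdots s_{r-2})$ by $s_r$ does not produce $1/(s_1\cdots s_{r-1})$.

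The fix is precisely what the paper does: peel off the \emph{smallest} part instead. Take $S\in\binom{V}{s_1}$; then the common link $L_{\mathcal H}(S)$ is $K_{s_2,\ldots,s_r}^{r-1}$-free, so induction gives $|L_{\mathcal H}(S)|=O\bigl(n^{r-1-1/(s_2\cdots s_{r-1})}\bigr)$. The double count $\sum_T\binom{d_{\mathcal H}(T)}{s_1}=\sum_S|L_{\mathcal H}(S)|$ together with Jensen now yields $\bar d^{\,s_1}=O\bigl(n^{s_1-1/(s_2\cdots s_{r-1})}\bigr)$, hence $\bar d=O\bigl(n^{1-1/(s_1s_2\cdots s_{r-1})}\bigr)$, which is the correct exponent. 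The point is that the part you assign to $S$ determines the root you take at the end; taking the $s_1$-th root (the smallest) divides the inherited exponent by the smallest factor and yields the sharpest bound, whereas peeling off $s_r$ is the unique choice that gives the wrong exponent.
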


\textbf{Remark.} A detailed analysis 
of the proof of Erd\H{o}s in~\cite{Erdos64} shows that 
    \begin{align*}
        \mathrm{ex}(n,K_{s_1, \ldots, s_r}^{r})
        \le \frac{(s_2 + \cdots + s_r-r+1)^{\frac{1}{s_1}}}{r}n^{r-\frac{1}{s_1\cdots s_{r-1}}} + \frac{s_1 -1}{r} \binom{n}{r-1}. 
    \end{align*}

We also need the following lower bound for $\mathrm{ex}(n, K_{s_1,\ldots,s_r}^{r})$ which comes from a simple application of the probabilistic deletion method. 

\begin{fact}\label{FACT:Ks1sr-lower-bound}
    Let $r \ge 2$ and $s_r \geq s_{r-1} \geq \cdots \geq s_1 \geq 2$ be integers. 
    Then 
    \begin{align}\label{lower-bound-K}
          \mathrm{ex}(n, K_{s_1,\ldots,s_r}^{r}) 
          \geq {\rm{ex}}(n, K_{2,\ldots,2}^{r}) 
          = \Omega\left(n^{r- \frac{r}{2^r -1}}\right)
          = \Omega\left(n^{r- \frac{2}{3}}\right). 
    \end{align}
\end{fact}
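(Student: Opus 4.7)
The plan has two distinct parts, matching the two equalities/inequalities in the statement.

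First, for the inequality $\mathrm{ex}(n, K_{s_1,\ldots,s_r}^{r}) \ge \mathrm{ex}(n, K_{2,\ldots,2}^{r})$, I would argue by monotonicity. Since $s_i \ge 2$ for every $i \in [r]$, the complete $r$-partite $r$-graph $K_{2,\ldots,2}^{r}$ is a subgraph of $K_{s_1,\ldots,s_r}^{r}$. Consequently, every $K_{2,\ldots,2}^{r}$-free $r$-graph is automatically $K_{s_1,\ldots,s_r}^{r}$-free, so the extremal number for the smaller forbidden configuration is a lower bound for the larger one. This is a one-line observation.

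The substantive part is the asymptotic lower bound for $\mathrm{ex}(n, K_{2,\ldots,2}^{r})$, which I would establish by the standard probabilistic deletion method. Take a random $r$-graph $\mathcal{H}$ on $n$ vertices in which each edge is included independently with probability $p$. Then $\mathbb{E}[|\mathcal{H}|] = p\binom{n}{r}$. The number of labelled copies of $K_{2,\ldots,2}^{r}$ in $K_n^{r}$ is $\Theta(n^{2r})$ (pick $2r$ distinct vertices and partition into $r$ ordered pairs), and each such copy carries exactly $2^{r}$ edges, so the expected number of copies of $K_{2,\ldots,2}^{r}$ in $\mathcal{H}$ is $\Theta(n^{2r} p^{2^{r}})$. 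Deleting one edge from each copy yields a $K_{2,\ldots,2}^{r}$-free $r$-graph of expected size at least $p\binom{n}{r} - \Theta(n^{2r} p^{2^{r}})$. Choosing $p := c \cdot n^{-r/(2^{r}-1)}$ for a sufficiently small constant $c = c(r)$ balances the two terms so that the expected surviving edge count is $\Omega\bigl(n^{r - r/(2^{r}-1)}\bigr)$, which gives the claimed bound.

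Finally, I would verify the last equality $\Omega\bigl(n^{r - r/(2^{r}-1)}\bigr) = \Omega\bigl(n^{r - 2/3}\bigr)$ by noting that the function $r \mapsto r/(2^{r}-1)$ is decreasing for $r \ge 2$ and attains its maximum value $2/3$ at $r = 2$. Hence $r - r/(2^{r}-1) \ge r - 2/3$ for all $r \ge 2$, so any function that is $\Omega\bigl(n^{r - r/(2^{r}-1)}\bigr)$ is in particular $\Omega\bigl(n^{r - 2/3}\bigr)$ for $n$ large.

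No step here is a real obstacle; the only mildly delicate point is choosing the constant $c$ correctly so that the deletion argument gives a positive expected count, but this is routine once the exponent $p = n^{-r/(2^{r}-1)}$ is pinned down by equating the two competing expectations.
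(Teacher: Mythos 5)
Your proof is correct and is exactly the argument the paper alludes to when it says the bound ``comes from a simple application of the probabilistic deletion method.'' The monotonicity observation handles the first inequality, the deletion method with $p = c\,n^{-r/(2^r-1)}$ gives the exponent, and the final step is just the monotonicity of $r \mapsto r/(2^r-1)$ for $r\ge 2$.
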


An $r$-graph $\mathcal{H}$ is \textbf{bipartite} if there exists a bipartition $V_1 \cup V_2 = V(\mathcal{H})$ such that every edge in $\mathcal{H}$ has nonempty intersection with both $V_1$ and $V_2$. 
A bipartite $r$-graph $\mathcal{H} = \mathcal{H}[V_1, V_2]$ is \textbf{semibipartite} if every edge in $\mathcal{H}$ contains exactly one vertex from $V_1$. 
Let $\mathbb{B}:= B[W_1, \ldots, W_r]$ be an $r$-partite $r$-graph with parts $W_1, \ldots, W_r$. 
An \textbf{ordered copy} of $\mathbb{B}$ in a semibipartite $r$-graph $\mathcal{H}[V_1, V_2]$ is a copy of $\mathbb{B}$ with $W_1$ contained in $V_1$ and $W_2, \ldots, W_r$ contained in $V_2$. 
Extending the graph Zarankiewicz number to hypergraphs, we use $Z(m,n,\mathbb{B})$ to denote the maximum number of edges in an $m$ by $n$ semibipartite $r$-graph without any ordered copy of $\mathbb{B}$. 
For convenience, let $Z(m,n,s_1, \ldots, s_r) := Z(m,n,K_{s_1, \ldots, s_r}^{r})$. 

Extending results of K{\H o}v\'{a}ri--S\'{o}s--Tur\'{a}n and Erd\H{o}s, we prove the following upper bound for $Z(m,n,s_1, \ldots, s_r)$. Since the proof is essentially the same as that of Erd\H{o}s in~\cite{Erdos64}, we omit it here. 

\begin{proposition}\label{PROP:hypergraph-KST-Zaran}
    Suppose that $r \ge 3$, $s_r\ge \cdots \ge s_1 \ge 1$, and $m, n\ge 1$ are integers.  
    Then 
    \begin{align*}
        Z(m,n,s_1, \ldots, s_r)
        \le \frac{(s_2+\cdots+s_r-r+1)^{\frac{1}{s_1}}}{r-1}mn^{r-1-\frac{1}{s_1\cdots s_{r-1}}} + (s_1-1)\binom{n}{r-1}. 
    \end{align*}    
\end{proposition}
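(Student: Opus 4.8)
The plan is to adapt verbatim the double-counting argument that Erd\H{o}s used in~\cite{Erdos64} to prove Theorem~\ref{THM:Erdos-hypergraph-KST}, modified to the semibipartite setting. Let $\mathcal{H} = \mathcal{H}[V_1,V_2]$ be an $m$ by $n$ semibipartite $r$-graph with no ordered copy of $K_{s_1,\ldots,s_r}^r$. The key idea is to recurse on the uniformity: for each vertex $v \in V_1$, the link $L_{\mathcal{H}}(v)$ is an $(r-1)$-graph on the $n$-element ground set $V_2$, and $\mathcal{H}$ having no ordered copy of $K_{s_1,\ldots,s_r}^r$ forces a counting constraint on how many vertices $v\in V_1$ can have links simultaneously containing a fixed copy of $K_{s_2,\ldots,s_r}^{r-1}$ — namely fewer than $s_1$ of them. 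So the strategy is: first reduce the count of edges to a count of copies of $K_{s_2,\ldots,s_r}^{r-1}$ across the links; then bound the number of such copies in each link by the ordinary hypergraph Zarankiewicz/Erd\H{o}s bound on $V_2$; then combine via convexity (Jensen) applied to the degree sequence $(d_{\mathcal{H}}(v))_{v\in V_1}$.

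Concretely, I would carry out the following steps. First, set up the auxiliary count $N := $ number of pairs $(v, K)$ where $v\in V_1$ and $K$ is a copy of $K_{s_2,\ldots,s_r}^{r-1}$ contained in $L_{\mathcal{H}}(v)$, i.e. an ordered $(s_2,\ldots,s_r)$-complete $(r-1)$-partite subhypergraph sitting inside $V_2$. Counting $N$ by first choosing $K$: since $\mathcal{H}$ has no ordered $K_{s_1,\ldots,s_r}^r$, each fixed $K$ lies in at most $s_1 - 1$ of the links, so $N \le (s_1-1)\binom{n}{s_2}\binom{n}{s_3}\cdots\binom{n}{s_r}$, which one bounds crudely (as Erd\H{o}s does) by something like $(s_1-1)\binom{n}{r-1}$ times a constant — here I'd want to track constants carefully to get exactly the stated right-hand side, using that $\binom{n}{s_i}\le n^{s_i}/s_i!$ and that the product of these gives a polynomial in $n$ of degree $s_2+\cdots+s_r$, but the relevant "complexity" parameter that appears in the exponent is $s_2+\cdots+s_r-r+1$; one should instead count more economically by choosing the $(r-1)$-set structure directly rather than all parts independently. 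Second, count $N$ by first choosing $v$: $N = \sum_{v\in V_1}(\text{number of ordered }K_{s_2,\ldots,s_r}^{r-1}\text{ in }L_{\mathcal{H}}(v))$; a copy needs at least one edge, and the standard Kővári–Sós–Turán-type convexity argument (the same nested-binomial estimate as in Erd\H{o}s's proof, applied to the $(r-1)$-uniform link $L_{\mathcal{H}}(v)$ with its own degree sequence over $(r-2)$-sets, iterated down) shows this count is at least roughly $\binom{d_{\mathcal{H}}(v)/\binom{n}{r-2}}{s_1}\cdot(\text{stuff})$ — more precisely one gets a lower bound that is convex and increasing in $d_{\mathcal{H}}(v)$. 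Third, apply Jensen's inequality to $\sum_{v\in V_1}d_{\mathcal{H}}(v) = r|\mathcal{H}|$ (or rather $|\mathcal{H}| = \sum_{v\in V_1}d_{\mathcal{H}}(v)$ here, since each edge has exactly one vertex in $V_1$, so no factor of $r$), compare against the upper bound from the first step, and solve for $|\mathcal{H}|$. This yields $Z(m,n,s_1,\ldots,s_r) \le c\, m^{1 - 1/(s_1\cdots s_{r-1})} n^{\text{something}} + \text{lower order}$; but since we also trivially have $Z(m,n,\ldots)\le m\binom{n}{r-1}$, and more to the point the intended bound is linear in $m$, I would instead run the argument so that the per-vertex lower bound on copies of $K_{s_2,\ldots,s_r}^{r-1}$ in a link with $d$ edges is at least $\Omega(d^{s_1\cdots s_{r-1}}/n^{(r-2)s_1\cdots s_{r-1} + \ldots})$, then the upper bound $N \le (s_1-1)\binom{n}{r-1}$ combined with Jensen over the $V_1$-degrees forces $\sum_v d_v \le \frac{(s_2+\cdots+s_r-r+1)^{1/s_1}}{r-1} m n^{r-1-1/(s_1\cdots s_{r-1})} + (s_1-1)\binom{n}{r-1}$.

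The main obstacle — and the reason the paper says "essentially the same as Erd\H{o}s" and omits the proof — is not conceptual but bookkeeping: getting the constant in front of the leading term to be exactly $(s_2+\cdots+s_r-r+1)^{1/s_1}/(r-1)$ rather than some cruder constant. This requires being careful at each level of the recursion about whether one counts \emph{ordered} or \emph{unordered} sub-configurations, handling the "at least one edge" threshold (the $+\text{lower order}$ terms $(s_1-1)\binom{n}{r-1}$ come precisely from the vertices whose link-density is below the threshold where the convexity bound kicks in), and correctly propagating the $1/s_1$ power through the single application of Jensen at the top level while the remaining parts $s_2,\ldots,s_r$ contribute only to the polynomial degree, not the constant's exponent. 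One must also check that the semibipartite structure genuinely only changes the outermost layer of the recursion (the $V_1$ side contributes the factor of $m$ linearly and the "$< s_1$" constraint) while the inner layers are just the ordinary Erd\H{o}s argument on $V_2$ — i.e. that no ordered-ness subtlety arises in how the parts $W_2,\ldots,W_r$ sit inside $V_2$. Once that structural observation is in place, the estimate is a mechanical iteration of Fact~\ref{LEMMA:binom-inequ-b}-type inequalities and convexity, exactly as in~\cite{Erdos64}, which is why omitting it is justified.
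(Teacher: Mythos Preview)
Your high-level strategy --- double counting, with the semibipartite structure affecting only the outermost layer while the inner layers reduce to the ordinary Erd\H{o}s bound on $V_2$, and a single Jensen at the top --- is exactly right and matches the paper's. However, the specific auxiliary quantity you choose to count is not the one the paper uses, and it is the source of the confusion in your write-up.

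You propose to count pairs $(v,K)$ with $v\in V_1$ and $K$ a copy of $K_{s_2,\ldots,s_r}^{r-1}$ inside $L_{\mathcal{H}}(v)$. This forces you to (a) upper-bound $N$ by $(s_1-1)$ times the \emph{total} number of such $K$ in $\binom{V_2}{r-1}$, a polynomial of degree $s_2+\cdots+s_r$ in $n$ --- not $(s_1-1)\binom{n}{r-1}$ as you later write --- and (b) lower-bound the number of copies of $K_{s_2,\ldots,s_r}^{r-1}$ in each link by a supersaturation estimate that is convex in $d_{\mathcal{H}}(v)$. Both are doable, but step (b) in particular requires iterating the nested-binomial argument inside each link and then arguing convexity of the resulting composite function; your sketch of this step is garbled (the appearance of $\binom{\,\cdot\,}{s_1}$ in the per-vertex lower bound is a slip --- $s_1$ plays no role in the structure of $K_{s_2,\ldots,s_r}^{r-1}$).

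The paper instead counts
\[
X \;=\; \sum_{T\in\binom{V_2}{r-1}}\binom{|N(T)|}{s_1} \;=\; \sum_{S\in\binom{V_1}{s_1}} |L_{\mathcal{H}}(S)|,
\]
i.e.\ ordered copies of $K_{s_1,1,\ldots,1}^r$. With this choice, the two bounds decouple cleanly: the lower bound is a single Jensen on the $(r-1)$-set neighbourhoods in $V_2$ (using $\sum_T|N(T)|=|\mathcal{H}|$, with no factor of $r$ because each edge has exactly one vertex in $V_1$), and the upper bound, summed over $S\in\binom{V_1}{s_1}$, is that each common link $L_{\mathcal{H}}(S)\subset\binom{V_2}{r-1}$ is $K_{s_2,\ldots,s_r}^{r-1}$-free, so $|L_{\mathcal{H}}(S)|\le \mathrm{ex}(n,K_{s_2,\ldots,s_r}^{r-1})$ by the inductively known Erd\H{o}s bound. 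No supersaturation is needed, and the constant $(s_2+\cdots+s_r-r+1)^{1/s_1}/(r-1)$ falls out immediately after taking the $s_1$-th root.
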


The following theorem of Lu and Sz\'{e}kely~\cite{LS07} will be useful in the proof of Theorem~\ref{THM:3rd-interval}. 

\begin{theorem}[Lu--Sz\'{e}kely~\cite{LS07}]\label{THM:Packing-F-mindegree}
    Let $F$ be an $r$-graph such that every edge in $F$ has nonempty intersection with at most $d$ other edges in $F$.
    Suppose that an $n$-vertex $r$-graph $\mathcal{H}$ satisfies 
    \begin{align*}
        v(F) \mid v(\mathcal{H}) \quad\text{and}\quad
        \delta(\mathcal{H}) \ge \left(1 - \frac{1}{e\left(d+1+r^2\frac{|F|}{v(F)}\right)}\right)\binom{n-1}{r-1}. 
    \end{align*}
    Then $\nu(F, \mathcal{H}) = v(\mathcal{H})/v(F)$.
    In particular, if an $n$-vertex $r$-graph $\mathcal{H}$ satisfies 
    \begin{align*}
        \sum_{i\in [r]}s_i \mid v(\mathcal{H}) 
        \quad\text{and}\quad
        \delta(\mathcal{H}) \ge \left(1 - \frac{1}{2er^2\prod_{i\in [r]}s_i}\right)\binom{n-1}{r-1}. 
    \end{align*}
    Then $\nu(K_{s_1, \ldots, s_r}^{r}, \mathcal{H}) = v(\mathcal{H})/s$.
\end{theorem}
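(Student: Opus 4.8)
The plan is to produce a perfect $F$-packing in $\mathcal{H}$ by a single application of the Lov\'asz Local Lemma in the space of random bijections, i.e. the ``random injection'' framework of Lu--Sz\'ekely. Write $n := v(\mathcal{H})$, $m := v(F)$ and $k := n/m$ (an integer by hypothesis), and set $\alpha := \bigl(e(d+1+r^2|F|/m)\bigr)^{-1}$, so that the hypothesis reads $\delta(\mathcal{H}) \ge (1-\alpha)\binom{n-1}{r-1}$. Fix a partition $[n] = V_1 \sqcup \cdots \sqcup V_k$ into blocks of size $m$ and, on each block $V_j$, fix an identification $V_j \cong V(F)$; this places a labelled copy $F_j$ of $F$ on $V_j$. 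Let $\phi : [n] \to V(\mathcal{H})$ be a uniformly random bijection. If $\phi$ satisfies $\phi(e) \in \mathcal{H}$ for every $j \in [k]$ and every edge $e \in F_j$, then $\mathcal{H}[\phi(V_j)]$ contains a copy of $F$ for each $j$; as $\phi(V_1), \ldots, \phi(V_k)$ partition $V(\mathcal{H})$, this exhibits a perfect $F$-packing, so $\nu(F,\mathcal{H}) = n/m$ (the reverse inequality being trivial). Hence it suffices to show that with positive probability $\phi$ avoids all ``bad'' events $A_{j,e} := \{\phi(e) \notin \mathcal{H}\}$, over $j \in [k]$ and $e \in F_j$.

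First I would bound $\Pr[A_{j,e}]$. Since $\phi$ is a uniform bijection, its restriction to the fixed $r$-set of positions $e \subseteq V_j$ is a uniform injection into $V(\mathcal{H})$, so $\phi(e)$ is a uniformly random $r$-subset of $V(\mathcal{H})$ and $\Pr[A_{j,e}] = 1 - |\mathcal{H}|/\binom{n}{r}$. Converting the minimum-degree hypothesis to an edge count via $r|\mathcal{H}| = \sum_{v} d_{\mathcal{H}}(v) \ge n(1-\alpha)\binom{n-1}{r-1} = (1-\alpha)\,r\binom{n}{r}$ gives $|\mathcal{H}| \ge (1-\alpha)\binom{n}{r}$, hence $\Pr[A_{j,e}] \le \alpha$ for all $j,e$.

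The core step is the random-bijection local lemma. Each $A_{j,e}$ depends only on the restriction of $\phi$ to the $r$-element position set $T_{j,e} := e \subseteq V_j$, and the effective dependency degree of $A_{j,e}$ has two sources. The first counts pairs $(j',e')$ with $T_{j',e'} \cap T_{j,e} \neq \emptyset$: this forces $j' = j$ and $e' \cap e \neq \emptyset$ as edges of $F_j$, so it contributes at most $d+1$ (the at most $d$ edges of $F_j$ meeting $e$, together with $e$ itself). The second is a ``collision correction'' intrinsic to conditioning on a bijection rather than on independent choices --- two events with disjoint position sets may still be correlated through the injectivity of $\phi$ --- and it is controlled by $\bigl(\sum_{j,e}|T_{j,e}|\bigr)\cdot\max_{j,e}|T_{j,e}|\,/\,n = (k\cdot r|F|)\cdot r/n = r^2|F|/m$. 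Thus $A_{j,e}$ depends on at most $d+1+r^2|F|/m$ of the events (itself included), and the local-lemma hypothesis $e\cdot\Pr[A_{j,e}]\cdot\bigl(d+1+r^2|F|/m\bigr) \le e\alpha\bigl(d+1+r^2|F|/m\bigr) = 1$ holds; therefore $\Pr\bigl[\bigcap_{j,e}\overline{A_{j,e}}\bigr] > 0$, producing the required $\phi$ and hence the perfect $F$-packing. The delicate point here --- and the only one that is not routine --- is precisely this collision correction: justifying that a uniform random bijection behaves, for local-lemma purposes, like independent uniform $r$-sets up to the additive loss $r^2|F|/v(F)$ in the dependency degree. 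Making this precise is exactly the technical content of the Lu--Sz\'ekely lopsided local lemma for random injections (the lopsidependency structure together with the ``compatible matchings'' negative-correlation estimates); the probability computation and the block bookkeeping around it are comparatively routine.

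For the ``in particular'' statement, apply the above with $F = K^{r}_{s_1,\ldots,s_r}$, so that $v(F) = s$ and $|F| = \prod_{i\in[r]}s_i$. If $d$ is the maximum number of edges of $F$ meeting a given edge, then $d+1 \le |F| = \prod_i s_i$ and $r^2|F|/s \le r^2\prod_i s_i$, so $d+1+r^2|F|/s \le (1+r^2)\prod_i s_i \le 2r^2\prod_i s_i$ since $r \ge 2$; hence $\bigl(e(d+1+r^2|F|/v(F))\bigr)^{-1} \ge \bigl(2er^2\prod_i s_i\bigr)^{-1}$, so the displayed minimum-degree condition implies the general one, yielding $\nu(K^{r}_{s_1,\ldots,s_r},\mathcal{H}) = v(\mathcal{H})/s$.
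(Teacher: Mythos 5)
The paper does not prove this statement at all — it is imported verbatim from Lu--Sz\'{e}kely~\cite{LS07} — so the only question is whether your reconstruction is sound, and it is not: the gap sits exactly at the step you yourself flag as the only non-routine one. The ``lemma'' you invoke — that for the bundled events $A_{j,e}=\{\phi(e)\notin\mathcal{H}\}$ a uniform random bijection behaves like independent uniform $r$-sets up to an additive correction $r^2|F|/v(F)$ in the dependency degree, with $\Pr[A_{j,e}]$ bounded via the edge count — is false as stated, and note that your argument uses the minimum-degree hypothesis only through the average degree $|\mathcal{H}|\ge(1-\alpha)\binom{n}{r}$, which cannot suffice. Concretely, take $F=K_r^r$ (a single edge, so $d=0$, $|F|=1$, $v(F)=r$) and $\mathcal{H}=K_n^r$ minus all edges containing one fixed vertex $v$, with $r\mid n$ and $n\ge er(r+1)$. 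Then $|\mathcal{H}|\ge(1-\alpha)\binom{n}{r}$ for $\alpha=\tfrac{1}{e(r+1)}$, each $\Pr[A_{j,e}]=r/n\le\alpha$, and your claimed dependency degree is $d+1+r^2|F|/v(F)=r+1$, so your local-lemma condition $e\alpha(r+1)\le 1$ holds comfortably; yet $\Pr\bigl[\bigcap_{j,e}\overline{A_{j,e}}\bigr]=0$, because here the edges of the blocks partition the positions, the position mapped to $v$ lies in some $e$, and then $\phi(e)\notin\mathcal{H}$ (equivalently, $\mathcal{H}$ has no perfect matching). So no correct version of the random-injection local lemma can have the shape you assumed, and your derivation proves a false statement.

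The actual Lu--Sz\'{e}kely application avoids this by working with canonical events indexed by pairs: for each edge $e$ of some $F_j$ and each $S\in\overline{\mathcal{H}}$ set $A_{e,S}:=\{\phi(e)=S\}$, with $\Pr[A_{e,S}]=1/\binom{n}{r}$, and join $(e,S)\sim(e',S')$ in the negative-dependency graph when $e\cap e'\neq\emptyset$ or $S\cap S'\neq\emptyset$. The neighbourhood size is at most $(d+1)|\overline{\mathcal{H}}|+\frac{n|F|}{v(F)}\cdot r\,\Delta(\overline{\mathcal{H}})$, and since $|\overline{\mathcal{H}}|\le\frac{n}{r}\Delta(\overline{\mathcal{H}})$ and $\binom{n}{r}=\frac{n}{r}\binom{n-1}{r-1}$, the symmetric lopsided LLL condition becomes
\begin{align*}
    e\left(d+1+\frac{r^2|F|}{v(F)}\right)\frac{\binom{n-1}{r-1}-\delta(\mathcal{H})}{\binom{n-1}{r-1}}\le 1,
\end{align*}
which is exactly the theorem's minimum-degree hypothesis; this is where $\Delta(\overline{\mathcal{H}})=\binom{n-1}{r-1}-\delta(\mathcal{H})$ (and not the edge count) is indispensable, and it is also where your additive term $r^2|F|/v(F)$ genuinely originates. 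Your final deduction of the ``in particular'' statement from the general one (via $d+1\le\prod_{i}s_i$, $r^2|F|/s\le r^2\prod_i s_i$, and $1+r^2\le 2r^2$) is correct, but the main body of the proposal needs to be redone along the pair-indexed lines above (or simply cited from~\cite{LS07}, as the paper does).
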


The following simple upper bound for the number of edges in an $(t+1)F$-free $r$-graph with bounded degree will be useful. 

\begin{lemma}\label{LEMMA:trivial-max-degree}
    Let $m \ge r\ge 2$ be integers and $F$ be an $r$-graph on $m$ vertices. 
    For every $t \ge 0$, every $n$-vertex $(t+1)F$-free $r$-graph $\mathcal{H}$ satisfies $|\mathcal{H}| \le mt \cdot \Delta(\mathcal{H}) + \mathrm{ex}(n-mt, F)$. 
\end{lemma}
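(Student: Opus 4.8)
The plan is to find a large set of vertices of small degree on which no copy of $F$ can appear, isolate the high-degree vertices as a bounded-size ``core'', and argue that the core cannot be too large because $\mathcal{H}$ is $(t+1)F$-free. Concretely, let $\mathcal{H}$ be an $n$-vertex $(t+1)F$-free $r$-graph and set $\Delta := \Delta(\mathcal{H})$. First I would greedily extract from $\mathcal{H}$ a maximal collection of pairwise vertex-disjoint copies of $F$; since $\mathcal{H}$ is $(t+1)F$-free, this collection has at most $t$ copies and hence occupies a set $W \subseteq V(\mathcal{H})$ with $|W| \le mt$. By maximality, $\mathcal{H} - W$ is $F$-free, so $|\mathcal{H}[V(\mathcal{H}) \setminus W]| \le \mathrm{ex}(n - |W|, F) \le \mathrm{ex}(n - mt, F)$, using monotonicity of $\mathrm{ex}(\cdot, F)$ in the number of vertices (and $n \ge mt$ so this is well-defined; if $n < mt$ the bound is vacuous or one interprets $\mathrm{ex}$ of a nonpositive argument as $0$).

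Next I would bound the remaining edges, i.e. those meeting $W$. Every edge of $\mathcal{H}$ not counted above must contain at least one vertex of $W$, so the number of such edges is at most $\sum_{v \in W} d_{\mathcal{H}}(v) \le |W| \cdot \Delta(\mathcal{H}) \le mt \cdot \Delta(\mathcal{H})$. Adding the two contributions gives
\begin{align*}
    |\mathcal{H}|
    \le \sum_{v\in W} d_{\mathcal{H}}(v) + |\mathcal{H} - W|
    \le mt \cdot \Delta(\mathcal{H}) + \mathrm{ex}(n - mt, F),
\end{align*}
which is exactly the claimed inequality. The only mild subtlety is the double-counting convention: an edge may meet $W$ in several vertices, but since we only need an upper bound, counting each such edge once per incident vertex of $W$ is harmless, and every edge disjoint from $W$ lies in $\mathcal{H} - W$, so no edge is missed.

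I do not expect any real obstacle here: the argument is the standard ``remove a maximal $F$-packing'' reduction, and the only things to be careful about are (i) that maximality of the packing is what forces $\mathcal{H} - W$ to be $F$-free (a copy of $F$ in $\mathcal{H}-W$ could be added to the packing, contradicting maximality), and (ii) the monotonicity $\mathrm{ex}(n', F) \le \mathrm{ex}(n, F)$ for $n' \le n$, which is immediate by adding isolated vertices. If one wants to be fully careful about the edge case $n = mt$ (so $\mathcal{H} - W$ is empty), the term $\mathrm{ex}(0, F) = 0$ and the bound still holds. Thus the lemma follows directly, with the $F$-packing extraction being the single key step.
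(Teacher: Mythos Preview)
Your overall approach matches the paper's exactly: take a maximal $F$-packing, let $W$ be its vertex set, and split the edges into those meeting $W$ and those in $\mathcal{H}-W$. However, one step is wrong as written. You claim
\[
|\mathcal{H}[V\setminus W]| \le \mathrm{ex}(n-|W|,F) \le \mathrm{ex}(n-mt,F)
\]
``using monotonicity''. But $|W|\le mt$ means $n-|W|\ge n-mt$, so monotonicity gives $\mathrm{ex}(n-|W|,F)\ge \mathrm{ex}(n-mt,F)$, the reverse inequality. This gap cannot be closed by monotonicity alone (and there is no reason $|W|\Delta+\mathrm{ex}(n-|W|,F)\le mt\Delta+\mathrm{ex}(n-mt,F)$ should hold for arbitrary $\Delta$).

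The paper's fix is exactly the padding you omit: after forming $W$, move $mt-|W|$ arbitrary vertices from $V\setminus W$ into $W$ so that $|W|=mt$ exactly. The induced subgraph on the new complement is still $F$-free (it is contained in the old $F$-free $\mathcal{H}-W$), now on precisely $n-mt$ vertices, so $|\mathcal{H}[V\setminus W]|\le \mathrm{ex}(n-mt,F)$ directly, and the degree sum over $W$ is at most $mt\cdot\Delta(\mathcal{H})$. With this adjustment your argument is complete and identical to the paper's.
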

\begin{proof}
    Let $\mathcal{H}$ be an $n$-vertex $(t+1)F$-free $r$-graph. 
    Let $\mathcal{F} = \{F_1, \ldots, F_{\ell}\}$ be a maximum collection of pairwise vertex-disjoint copies of $F$ in $\mathcal{H}$. 
    Let $B:= \bigcup_{i\in [\ell]}V(F_i)$. 
    Since $\mathcal{H}$ is $(t+1)F$-free, we have $\ell \le t$ and hence, $|B|\le m\ell$. 
    Let $U:= V(\mathcal{H})\setminus B$. 
    By moving vertices from $U$ to $B$, we may assume that $|B| = mt$. 
    Observe that $\mathcal{H}[U]$ is $F$-free. 
    So we have 
    \begin{align*}
        |\mathcal{H}|
        \le \sum_{v\in B}d_{\mathcal{H}}(v) + |\mathcal{H}[U]|
        \le mt \cdot \Delta(\mathcal{H}) + \mathrm{ex}(n-mt, F), 
    \end{align*}
    proving Lemma~\ref{LEMMA:trivial-max-degree}. 
\end{proof}

We will need the following simple fact regarding the monotonicity of $\binom{n}{r} - \binom{n-x}{r} + \mathrm{ex}(n-x,F)$. 

\begin{fact}\label{FACT:increasing-f(ell)}
    Let $F$ be an $r$-graph and let
    \begin{align*}
        f(x) := \binom{n}{r} - \binom{n-x}{r} + \mathrm{ex}(n-x,F)
    \end{align*}
    for all integers $x \in [n]$. 
    Then $f(\ell) \le f(\ell+1)$ for all integers $\ell \in [n-1]$. 
\end{fact}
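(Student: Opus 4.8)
The plan is to show directly that $f(\ell+1)-f(\ell)\ge 0$ by splitting the difference into a ``binomial part'' and a ``Tur\'an-number part'' and bounding each separately. First I would write
\[
f(\ell+1)-f(\ell) = \left(\binom{n-\ell}{r} - \binom{n-\ell-1}{r}\right) - \left(\mathrm{ex}(n-\ell,F) - \mathrm{ex}(n-\ell-1,F)\right).
\]
The first bracket equals $\binom{n-\ell-1}{r-1}$ by Pascal's identity. So it suffices to prove that
\[
\mathrm{ex}(n-\ell,F) - \mathrm{ex}(n-\ell-1,F) \le \binom{n-\ell-1}{r-1},
\]
i.e. adding one vertex to the ground set increases the Tur\'an number of an $r$-graph by at most $\binom{n-\ell-1}{r-1}$.

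The key step is this last inequality, which is a standard ``deletion'' argument: take an extremal $F$-free $r$-graph $\mathcal{G}$ on $n-\ell$ vertices with $\mathrm{ex}(n-\ell,F)$ edges, pick any vertex $v$, and note that $\mathcal{G}-v$ is an $F$-free $r$-graph on $n-\ell-1$ vertices, hence has at most $\mathrm{ex}(n-\ell-1,F)$ edges. Since the number of edges of $\mathcal{G}$ through $v$ is at most $\binom{n-\ell-1}{r-1}$ (the number of $(r-1)$-subsets of the other $n-\ell-1$ vertices), we get $\mathrm{ex}(n-\ell,F) = |\mathcal{G}| \le |\mathcal{G}-v| + d_{\mathcal{G}}(v) \le \mathrm{ex}(n-\ell-1,F) + \binom{n-\ell-1}{r-1}$, as desired. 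Combining with the Pascal identity for the binomial part gives $f(\ell+1)-f(\ell)\ge 0$.

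One should be slightly careful about boundary cases: if $n-\ell-1 < r$ then $\mathrm{ex}(n-\ell-1,F)=0$ and $\binom{n-\ell-1}{r-1}$ may be small or zero, but the inequality $\mathrm{ex}(n-\ell,F)\le \binom{n-\ell-1}{r-1}$ still holds trivially since an $r$-graph on at most $r$ vertices has at most one edge; and if $n-\ell < r$ then both sides of the monotonicity statement reduce to $\binom{n}{r}-\binom{n-\ell}{r}$ versus $\binom{n}{r}-\binom{n-\ell-1}{r}$, which is clearly non-decreasing. I do not expect any real obstacle here — the only mild subtlety is making sure the deletion argument is phrased so that it covers these degenerate ranges of the parameter $x=\ell$, and that is handled by the observation that $d_{\mathcal{G}}(v)\le\binom{n-\ell-1}{r-1}$ holds unconditionally.
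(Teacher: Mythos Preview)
Your argument is correct and is exactly the natural one: the paper itself states this as a ``simple fact'' without proof, so there is nothing to compare against. Your deletion argument showing $\mathrm{ex}(n-\ell,F)-\mathrm{ex}(n-\ell-1,F)\le\binom{n-\ell-1}{r-1}$, combined with Pascal's identity, is the intended justification, and the boundary discussion is more caution than is strictly needed since the deletion bound $d_{\mathcal{G}}(v)\le\binom{n-\ell-1}{r-1}$ holds unconditionally.
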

\section{Proofs for theorems in the first interval}\label{SEC:proof-1st-interval}
We prove Theorem~\ref{THM:1st-interval} in this section. 
Let us start with some technical lemmas. 

\begin{lemma}\label{LEMMA:1st-interval-bounded-max-degree}
    Let $n \ge m \ge r \ge 2$ be integers and $\delta \in[0, 1/m]$ be a real number. 
    Let $F$ be an $m$-vertex $r$-graph. 
    Suppose that $t$ is a positive integer satisfying 
    \begin{align*}
        t 
        \le \min\left\{\frac{\delta m n}{r-1}-r,\ \frac{n}{m}\right\}.
    \end{align*}
   Then every $n$-vertex $(t+1)F$-free $r$-graph $\mathcal{H}$ with $\Delta(\mathcal{H}) \le \left(1/m-\delta\right)\binom{n-1}{r-1}$ satisfies 
    \begin{align*}
        |\mathcal{H}|
        \le \binom{n}{r} - \binom{n-t}{r} + \mathrm{ex}(n-t, F). 
    \end{align*}
\end{lemma}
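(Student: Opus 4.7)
The plan is to reduce the claim to a comparison of two expressions by first invoking Lemma~\ref{LEMMA:trivial-max-degree}, and then to verify that comparison through an elementary estimate on differences of binomial coefficients. The condition $t \le n/m$ is exactly what is needed to apply Lemma~\ref{LEMMA:trivial-max-degree} (so that $n-mt \ge 0$), while the condition $t \le \delta m n/(r-1) - r$ is tailored to control the binomial ratio $\binom{n-t}{r-1}/\binom{n-1}{r-1}$.

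First I would apply Lemma~\ref{LEMMA:trivial-max-degree}, together with the assumed bound $\Delta(\mathcal{H}) \le (1/m - \delta)\binom{n-1}{r-1}$ and the monotonicity $\mathrm{ex}(n-mt,F) \le \mathrm{ex}(n-t,F)$, to conclude
\begin{align*}
    |\mathcal{H}| \le mt\cdot \Delta(\mathcal{H}) + \mathrm{ex}(n-mt,F) \le (1-m\delta)\,t\binom{n-1}{r-1} + \mathrm{ex}(n-t,F).
\end{align*}
Comparing this with the target bound, the proof reduces to verifying the purely numerical inequality
\begin{align*}
    (1-m\delta)\,t\binom{n-1}{r-1} \le \binom{n}{r} - \binom{n-t}{r}.
\end{align*}

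Next I would expand the right-hand side via the hockey-stick identity $\binom{n}{r}-\binom{n-t}{r} = \sum_{i=0}^{t-1}\binom{n-1-i}{r-1}$. Since each summand is at least $\binom{n-t}{r-1}$, it suffices to prove the stronger estimate
\begin{align*}
    (1-m\delta)\binom{n-1}{r-1} \le \binom{n-t}{r-1}.
\end{align*}
Writing
\begin{align*}
    \frac{\binom{n-t}{r-1}}{\binom{n-1}{r-1}} = \prod_{j=0}^{r-2}\left(1-\frac{t-1}{n-1-j}\right) \ge 1 - \sum_{j=0}^{r-2}\frac{t-1}{n-1-j} \ge 1 - \frac{(r-1)(t-1)}{n-r+1},
\end{align*}
the task shrinks to showing $(r-1)(t-1)\le m\delta(n-r+1)$, which a direct rearrangement shows is implied by $t\le \delta m n/(r-1) - r$ (using $m\delta \le 1$).

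The argument is essentially arithmetic; the only ``obstacle'' is keeping the chain of inequalities tight enough that the hypothesis on $t$ given in the lemma, rather than a slightly weaker one, suffices. In particular the constant $-r$ subtracted in the upper bound on $t$ is precisely what absorbs the error from replacing each $\binom{n-1-i}{r-1}$ by its smallest value $\binom{n-t}{r-1}$ and from the multiplicative estimate on $\binom{n-t}{r-1}/\binom{n-1}{r-1}$.
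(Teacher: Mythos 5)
Your proof is correct and follows essentially the same skeleton as the paper's: both begin by applying Lemma~\ref{LEMMA:trivial-max-degree} with the degree cap and the monotonicity $\mathrm{ex}(n-mt,F)\le\mathrm{ex}(n-t,F)$, and both then reduce to showing $(1-m\delta)\,t\binom{n-1}{r-1}\le\binom{n}{r}-\binom{n-t}{r}$. The only difference is the final numerical verification: the paper rewrites $\binom{n-1}{r-1}\le\bigl(\tfrac{n}{n-t-r}\bigr)^{r-1}\binom{n-t}{r-1}$ and uses Fact~\ref{FACT:inequality-d} ($(1-x)^{1/(r-1)}\le 1-x/(r-1)$) to absorb the factor $(1-m\delta)$, whereas you use the hockey-stick identity plus the Weierstrass-type lower bound $\prod_j(1-x_j)\ge 1-\sum_j x_j$ on $\binom{n-t}{r-1}/\binom{n-1}{r-1}$; both routes are valid and arrive at the same hypothesis on $t$.
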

\begin{proof}[Proof of Lemma~\ref{LEMMA:1st-interval-bounded-max-degree}]
    Since $t \le \frac{\delta m n}{r-1}-r \le n - \left(1 - \delta m\right)^{\frac{1}{r-1}}n-r$, we have   
    \begin{align*}
        \left(1- \delta m\right)\left(\frac{n}{n-t-r}\right)^{r-1}
        \le 1. 
    \end{align*}
    Combined with Lemma~\ref{LEMMA:trivial-max-degree} and Fact~\ref{FACT:binom-inequality-a}, we obtain 
    \begin{align*}
        |\mathcal{H}|
        & \le mt \left(\frac{1}{m}-\delta\right)\binom{n-1}{r-1} + \mathrm{ex}(n-mt, F) \\
        & \le \left(1- \delta m\right)\left(\frac{n}{n-t-r}\right)^{r-1} t\binom{n-t}{r-1} + \mathrm{ex}(n-t, F) \\
        & \le t\binom{n-t}{r-1} + \mathrm{ex}(n-t, F) 
         \le \binom{n}{r} - \binom{n-t}{r} + \mathrm{ex}(n-t, F), 
    \end{align*}
    which proves Lemma~\ref{LEMMA:1st-interval-bounded-max-degree}. 
\end{proof}
\begin{lemma}\label{LEMMA:1st-interval-avoid-B}
    Let $n \ge m \ge r \ge 2$ be integers and $\delta_1, \delta_2 \ge 0$ be real numbers. 
     Let $F$ be a $(c_1, c_2)$-bounded $r$-graph. 
     Suppose that $\mathcal{H}$ is an $n$-vertex $r$-graph satisfying 
     \begin{enumerate}[label=(\roman*)]
         \item $|\mathcal{H}| \ge \left(1-c_2 + \delta_1\right)\mathrm{ex}(n,F)$, and 
         \item there exists a vertex $v\in V(\mathcal{H})$ with $d_{\mathcal{H}}(v) \ge (c_1 + \delta_2)\binom{n-1}{r-1}$. 
     \end{enumerate}
     Then for every set $B\subset V(\mathcal{H})\setminus\{v\}$ of size at most $\min\left\{\frac{\delta_1 \cdot \mathrm{ex}(n,F)}{\binom{n-1}{r-1}},\  \frac{\delta_2(n-1)}{r-1}\right\}$, there exists a copy of $F$ in $\mathcal{H}$ that have empty intersection with $B$. 
\end{lemma}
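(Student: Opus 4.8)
\textbf{Proof proposal for Lemma~\ref{LEMMA:1st-interval-avoid-B}.}

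The plan is to delete the set $B$ from $\mathcal{H}$ and show that the resulting $r$-graph $\mathcal{H}' := \mathcal{H} - B$ still satisfies the hypotheses of the Boundedness property, so that it contains a copy of $F$ (which, living in $\mathcal{H}'$, automatically avoids $B$). Write $n' := v(\mathcal{H}') = n - |B|$, which is still large since $|B|$ is bounded by a small fraction of $n$. There are two quantities to control after the deletion: the number of edges of $\mathcal{H}'$, and its maximum degree.

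\textbf{Step 1: the edge count survives.} Deleting one vertex destroys at most $\binom{n-1}{r-1}$ edges, so $|\mathcal{H}'| \ge |\mathcal{H}| - |B|\binom{n-1}{r-1}$. Using hypothesis (i) and the bound $|B| \le \frac{\delta_1 \cdot \mathrm{ex}(n,F)}{\binom{n-1}{r-1}}$, this gives $|\mathcal{H}'| \ge (1-c_2+\delta_1)\mathrm{ex}(n,F) - \delta_1 \mathrm{ex}(n,F) = (1-c_2)\mathrm{ex}(n,F) \ge (1-c_2)\mathrm{ex}(n',F)$, where the last step uses that $\mathrm{ex}(\cdot,F)$ is nondecreasing in the number of vertices. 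So the edge-density requirement of $(c_1,c_2)$-boundedness holds for $\mathcal{H}'$.

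\textbf{Step 2: a vertex of large degree survives.} The vertex $v$ is not in $B$, so $v \in V(\mathcal{H}')$, but its link may have shrunk: each vertex of $B$ kills at most $\binom{n-2}{r-2}$ sets from $L_{\mathcal{H}}(v)$. Hence $d_{\mathcal{H}'}(v) \ge d_{\mathcal{H}}(v) - |B|\binom{n-2}{r-2} \ge (c_1+\delta_2)\binom{n-1}{r-1} - |B|\binom{n-2}{r-2}$. Since $\binom{n-2}{r-2} = \frac{r-1}{n-1}\binom{n-1}{r-1}$ and $|B| \le \frac{\delta_2(n-1)}{r-1}$, the subtracted term is at most $\delta_2\binom{n-1}{r-1}$, so $d_{\mathcal{H}'}(v) \ge c_1\binom{n-1}{r-1} \ge c_1\binom{n'-1}{r-1}$. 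Thus $\Delta(\mathcal{H}') \ge c_1\binom{n'-1}{r-1}$.

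\textbf{Step 3: conclude.} For $n$ large, $n'$ is also large, so the definition of $(c_1,c_2)$-boundedness applies to $\mathcal{H}'$ and yields a copy of $F$ in $\mathcal{H}'$. This copy uses no vertex of $B$, which is exactly what we want. I do not anticipate a genuine obstacle here — the lemma is essentially a robustness statement for the Boundedness hypothesis — but the one point requiring mild care is making sure the two budget constraints on $|B|$ are each tight enough to absorb, respectively, the $\delta_1$-slack in the edge count and the $\delta_2$-slack in the degree, and that after passing from $\binom{n-1}{r-1}$ to $\binom{n'-1}{r-1}$ (and from $\mathrm{ex}(n,F)$ to $\mathrm{ex}(n',F)$) the monotonicity goes in the favorable direction, which it does.
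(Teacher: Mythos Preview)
Your proposal is correct and matches the paper's proof essentially line for line: delete $B$, check that the remaining $r$-graph still has at least $(1-c_2)\mathrm{ex}(n,F)$ edges and a vertex of degree at least $c_1\binom{n-1}{r-1}$, and invoke $(c_1,c_2)$-boundedness. You are in fact slightly more careful than the paper in explicitly noting the monotonicity passage from $n$ to $n'=n-|B|$.
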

\begin{proof}
    Fix $B\subset V(\mathcal{H})\setminus\{v\}$ of size at most $\min\left\{\frac{\delta_1 \cdot \mathrm{ex}(n,F)}{\binom{n-1}{r-1}},\  \frac{\delta_2(n-1)}{r-1}\right\}$. 
    Let $U := V(\mathcal{H}) \setminus B$, $n_1 := |U|$, and $\mathcal{H}_1:= \mathcal{H}[U]$. 
    It follows from the assumptions that 
    \begin{align*}
        |\mathcal{H}_1|
        \ge \left(1-c_2 + \delta_1\right)\mathrm{ex}(n,F) - |B|\binom{n-1}{r-1}
        \ge (1-c_2)\cdot \mathrm{ex}(n,F), 
    \end{align*}
    and 
    \begin{align*}
        d_{\mathcal{H}_1}(v) 
        \ge d_{\mathcal{H}}(v) -|B|\binom{n-2}{r-2}
         \ge (c_1 + \delta_2)\binom{n-1}{r-1} - |B|\frac{r-1}{n-1}\binom{n-1}{r-1} 
         \ge c_1\binom{n-1}{r-1}. 
    \end{align*}
    So, it follows from the $(c_1, c_2)$-boundedness of $F$ that $F\subset \mathcal{H}_1$. 
\end{proof}

Now we are ready to prove Theorem~\ref{THM:1st-interval}. 

\begin{proof}[Proof of Theorem~\ref{THM:1st-interval}]
     Fix integers $m \ge r \ge 2$ and fix an $m$-vertex $(c_1, c_2)$-bounded $r$-graph $F$ with $c_1 < 1/m$ and $c_2>0$. 
     Let $n$ be a sufficiently large integer and $\mathcal{H}$ be an $n$-vertex $(t+1)F$-free $r$-graph with the maximum number of edges, where $t$ is an integer satisfying 
     \begin{align*}
         0 \le t 
         \le \min\left\{\frac{\delta \cdot \mathrm{ex}(n,F)}{m\binom{n-1}{r-1}},\  \frac{\delta (n-1)}{8m(r-1)}\right\}
         \quad\text{with}\quad
         \delta 
         := \min\left\{\frac{1}{4}\left(\frac{1}{m}-c_1\right),\ \frac{c_2}{4}\right\}.
     \end{align*}
     It follows from Proposition~\ref{PROP:three-lower-bounds} that 
     \begin{align}\label{equ:1st-H-lower-bound}
         |\mathcal{H}|
         \ge g_{1}(n,t,F)
         = \binom{n}{r}-\binom{n-t}{r}+\mathrm{ex}(n-t, F). 
     \end{align}
    Let 
    \begin{align*}
        V:= V(\mathcal{H}), \quad
        L:= \left\{v\in V \colon d_{\mathcal{H}}(v) \ge \left(c_1+ 2 \delta\right) \binom{n-1}{r-1}\right\}, \quad
        U:= V\setminus L, 
        \quad\text{and}\quad \ell:= L
    \end{align*}
    For convenience, let us assume that $L = \{v_1, \ldots, v_{\ell}\}$. 
    \begin{claim}\label{CLAIM:1st-nondegenerate-size-L-upper-bound}
        We have $\ell \le t$. 
    \end{claim}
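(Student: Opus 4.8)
The plan is to argue by contradiction. Assuming $\ell\ge t+1$, I will exhibit $t+1$ pairwise vertex-disjoint copies of $F$ inside $\mathcal{H}$, which contradicts the hypothesis that $\mathcal{H}$ is $(t+1)F$-free. Since $\ell\ge t+1$, I would fix distinct vertices $v_1,\dots,v_{t+1}\in L$ and build copies $F_1,\dots,F_{t+1}$ of $F$ one at a time, maintaining the invariant that $F_j$ is disjoint from $\{v_k:k>j\}$.

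To carry out step $i$ (having already built $F_1,\dots,F_{i-1}$), I would set $B:=\bigl(\bigcup_{j<i}V(F_j)\bigr)\cup\{v_{i+1},\dots,v_{t+1}\}$, observe that $v_i\notin B$ by the invariant and that $|B|\le(i-1)m+(t+1-i)\le tm$, and then apply Lemma~\ref{LEMMA:1st-interval-avoid-B} to $\mathcal{H}$ with witness vertex $v_i$, forbidden set $B$, and parameters $\delta_1:=c_2/2$, $\delta_2:=\delta$. All three hypotheses of that lemma should check out: (a) since $v_i\in L$ we have $d_{\mathcal{H}}(v_i)\ge(c_1+2\delta)\binom{n-1}{r-1}\ge(c_1+\delta_2)\binom{n-1}{r-1}$; (b) by \eqref{equ:1st-H-lower-bound} together with the monotonicity in Fact~\ref{FACT:increasing-f(ell)}, $|\mathcal{H}|\ge g_1(n,t,F)\ge\mathrm{ex}(n,F)\ge(1-c_2+\delta_1)\mathrm{ex}(n,F)$; (c) from $t\le\frac{\delta\,\mathrm{ex}(n,F)}{m\binom{n-1}{r-1}}$, $t\le\frac{\delta(n-1)}{8m(r-1)}$ and $\delta\le c_2/4=\delta_1/2$ one gets $|B|\le tm\le\min\bigl\{\frac{\delta_1\mathrm{ex}(n,F)}{\binom{n-1}{r-1}},\ \frac{\delta_2(n-1)}{r-1}\bigr\}$. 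Lemma~\ref{LEMMA:1st-interval-avoid-B} then produces a copy $F_i$ of $F$ in $\mathcal{H}$ disjoint from $B$, hence disjoint from $F_1,\dots,F_{i-1}$ and from $\{v_{i+1},\dots,v_{t+1}\}$, so the invariant is preserved; it is immaterial whether $v_i$ itself lands in $F_i$. Running this for $i=1,\dots,t+1$ yields $t+1$ pairwise vertex-disjoint copies of $F$, the desired contradiction, and therefore $\ell\le t$.

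The one point that needs care — and the only real idea beyond routine estimates — is keeping a high-degree witness available through all $t+1$ rounds: we only know $\ell\ge t+1$, not $\ell\ge tm+1$, so a priori the union $\bigcup_{j<i}V(F_j)$ of the earlier copies could already contain every vertex of $L$. Forcing each $F_j$ to avoid the still-unused designated vertices $v_{j+1},\dots,v_{t+1}$ is precisely what prevents this, leaving $v_i$ free to serve as the witness at step $i$. Everything else is the verification of the displayed inequalities, using $m\ge r\ge 2$ and that $n$ is large enough that the auxiliary graph $\mathcal{H}[V(\mathcal{H})\setminus B]$ on $n-|B|\ge n-tm$ vertices is still big enough for the $(c_1,c_2)$-boundedness of $F$ to apply inside Lemma~\ref{LEMMA:1st-interval-avoid-B}.
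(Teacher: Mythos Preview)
Your proof is correct and follows essentially the same contradiction-plus-greedy-embedding strategy as the paper, invoking Lemma~\ref{LEMMA:1st-interval-avoid-B} at each step with the same size bound $|B|\le tm$. The only cosmetic difference is that the paper restricts to the subgraph $\mathcal{H}[U\cup\{v_i\}]$ (thereby automatically excluding the other high-degree vertices), whereas you work in the full $\mathcal{H}$ and explicitly add the future witnesses $v_{i+1},\dots,v_{t+1}$ to the forbidden set; both devices serve the same purpose of keeping a fresh witness available at every round.
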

    \begin{proof}
        Suppose that this is not true. 
        We may assume that $\ell = t+1$, since otherwise we can replace $L$ by a $(t+1)$-subset. 
        For $i\in [t+1]$ let $\mathcal{H}_i:= \mathcal{H}[U\cup \{v_i\}]$. 
        Let $n_1:= n-t$. 
        Notice from~\eqref{equ:1st-H-lower-bound} that for each $i\in [t+1]$, the $r$-graph $\mathcal{H}_i$ satisfies
        \begin{align}\label{equ:1st-Hi-lower-bound}
            |\mathcal{H}_i| 
            \ge |\mathcal{H}| - \left(\binom{n}{r} -\binom{n-t}{r}\right)
            \ge \mathrm{ex}(n, F)
             \ge (1-c_2 + \delta)\cdot \mathrm{ex}(n, F). 
        \end{align}
        %
        For every $i\in [t+1]$ we will find a copy of $F$, denoted by $F_i$, in $\mathcal{H}_i$  such that $F_1, \ldots, F_{t+1}$ are pairwise vertex-disjoint. 
        Define $B_0:= \emptyset$. 
        Suppose that we have defined $B_i \subset V(\mathcal{H})$ with $|B_i| \le im$ for some $i\in [0,t]$. 
        Consider the $r$-graph $\mathcal{H}_{i+1}$. 
        It follows from the definition of $L$ that  
        \begin{align}\label{equ:1st-Hi-max-deg}
            d_{\mathcal{H}_{i+1}}(v_{i+1})
            \ge d_{\mathcal{H}}(v_{i+1}) - t \binom{n-2}{r-2}
            & \ge (c_1 + 2\delta)\binom{n-1}{r-1} - t\frac{r-1}{n-1}\binom{n-1}{r-1} \notag \\
            & \ge (c_1 + \delta)\binom{n-1}{r-1}. 
        \end{align}
        Since 
        \begin{align*}
            |B_i| 
            \le i m 
            \le tm 
            \le \min\left\{\frac{\delta \cdot \mathrm{ex}(n,F)}{\binom{n-1}{r-1}},\  \frac{\delta (n-1)}{r-1}\right\}, 
        \end{align*}
        it follows from~\eqref{equ:1st-Hi-lower-bound},~\eqref{equ:1st-Hi-max-deg}, and Lemma~\ref{LEMMA:1st-interval-avoid-B} that there exists a copy of $F$ in $\mathcal{H}_{i+1}$ that have empty intersection with $B_i$. Denote this copy of $F$ by $F_{i+1}$ and let $B_{i+1}:= B_i \cup V(F_{i+1})$. 
        Then $|B_{i+1}| =  |B_i|+m \le (i+1)m$. 
        Inductively, we can find $F_1, \ldots, F_{t+1}$ as desired, contradicting the $(t+1)F$-freeness of $\mathcal{H}$. 
    \end{proof}
    Let $t_1 := t-\ell$. It follows from Claim~\ref{CLAIM:1st-nondegenerate-size-L-upper-bound} that $t_1 \ge 0$.
    
    \begin{claim}\label{CLAIM:1st-nondegenerate-H[U]-t1F-free}
        The induced subgraph $\mathcal{H}[U]$ is $(t_1+1)F$-free. 
    \end{claim}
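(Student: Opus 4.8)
The plan is to argue by contradiction: suppose $\mathcal{H}[U]$ contains $t_1+1$ pairwise vertex-disjoint copies of $F$, say $\widetilde{F}_1, \ldots, \widetilde{F}_{t_1+1}$, all living inside $U = V\setminus L$. Together with the $t_1+1 = t-\ell+1$ copies we can extract from $U$, I want to build $\ell$ further copies of $F$, one through each vertex $v_1, \ldots, v_\ell$ of $L$, all pairwise disjoint and disjoint from the $\widetilde{F}_j$'s, yielding $(t_1+1) + \ell = t+1$ vertex-disjoint copies of $F$ in $\mathcal{H}$ and contradicting the $(t+1)F$-freeness.

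First I would set aside the vertex set $B^{*} := \bigcup_{j\in[t_1+1]} V(\widetilde{F}_j)$, which has size $m(t_1+1) \le m(t+1)$. Then I would process the high-degree vertices $v_1, \ldots, v_\ell$ one at a time exactly as in the proof of Claim~\ref{CLAIM:1st-nondegenerate-size-L-upper-bound}: maintaining a growing set $B_i$ of already-used vertices, with $B_0 := B^{*}$, at step $i+1$ I look at $\mathcal{H}_{i+1} := \mathcal{H}[U\cup\{v_{i+1}\}]$ and apply Lemma~\ref{LEMMA:1st-interval-avoid-B} to find a copy of $F$ in $\mathcal{H}_{i+1}$ avoiding $B_i$, then update $B_{i+1} := B_i \cup V(F_{i+1})$. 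The two hypotheses of Lemma~\ref{LEMMA:1st-interval-avoid-B} need to be checked: the edge-count lower bound $|\mathcal{H}_{i+1}| \ge (1-c_2+\delta)\,\mathrm{ex}(n,F)$ follows from~\eqref{equ:1st-H-lower-bound} since deleting the at most $t$ vertices of $L$ (other than $v_{i+1}$) removes at most $\binom{n}{r}-\binom{n-t}{r}$ edges; the degree lower bound $d_{\mathcal{H}_{i+1}}(v_{i+1}) \ge (c_1+\delta)\binom{n-1}{r-1}$ is~\eqref{equ:1st-Hi-max-deg}. The only thing to verify is the size bound on the forbidden set: $|B_i| \le m(t_1+1) + im \le m(t+1) + tm = m(2t+1)$, which I would need to be at most $\min\{\delta\cdot\mathrm{ex}(n,F)/\binom{n-1}{r-1},\ \delta(n-1)/(r-1)\}$ — and here the factor $8$ in the hypothesis $t \le \delta(n-1)/(8m(r-1))$ of Theorem~\ref{THM:1st-interval} (and the analogous slack in the $\mathrm{ex}(n,F)$ bound) is precisely what gives the room, since $m(2t+1) \le 3mt \le 4mt$ is comfortably within the allowed range.

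The main obstacle — really the only nontrivial point — is bookkeeping the constant $\delta$ and confirming that the accumulated forbidden set $B_i$, which is now roughly twice as large as in Claim~\ref{CLAIM:1st-nondegenerate-size-L-upper-bound} (because it also contains the $t_1+1$ pre-chosen copies), still fits under the threshold of Lemma~\ref{LEMMA:1st-interval-avoid-B}. This is why the theorem was stated with the $8m(r-1)$ denominator rather than something tighter. Once that inequality is in place the induction runs identically to the earlier claim, producing $F_1, \ldots, F_\ell$ through $v_1, \ldots, v_\ell$ that are pairwise disjoint and disjoint from all $\widetilde{F}_j$, giving $t+1$ disjoint copies of $F$ — the desired contradiction. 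Hence $\mathcal{H}[U]$ is $(t_1+1)F$-free.
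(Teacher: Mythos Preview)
Your approach is essentially the same as the paper's: set $B_0$ to be the union of the $t_1+1$ assumed copies in $\mathcal{H}[U]$ and then greedily find $\ell$ further copies through $v_1,\ldots,v_\ell$ via Lemma~\ref{LEMMA:1st-interval-avoid-B}, exactly as in Claim~\ref{CLAIM:1st-nondegenerate-size-L-upper-bound}.

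However, your bookkeeping on $|B_i|$ is loose and the justification you give for it is incorrect. You bound $|B_i| \le m(t_1+1) + im \le m(2t+1)$ and then appeal to ``the factor $8$ \ldots\ (and the analogous slack in the $\mathrm{ex}(n,F)$ bound)''. But the first hypothesis in Theorem~\ref{THM:1st-interval}, namely $t \le \delta\cdot\mathrm{ex}(n,F)/\bigl(m\binom{n-1}{r-1}\bigr)$, carries \emph{no} extra factor, so $m(2t+1) \le \delta\cdot\mathrm{ex}(n,F)/\binom{n-1}{r-1}$ does not follow from it. The easy fix is to notice that since $t_1 = t - \ell$ and $i \le \ell - 1$, one has $t_1 + 1 + i \le t$, hence $|B_i| \le mt$ --- the \emph{same} bound as in Claim~\ref{CLAIM:1st-nondegenerate-size-L-upper-bound}, so the argument goes through without invoking any extra slack at all.
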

    \begin{proof}
        Suppose to the contrary that there exists $t_1+1$ pairwise vertex-disjoint copies of $F$, denoted by $F_1', \ldots, F_{t_1+1}'$, in $\mathcal{H}[U]$. 
        Let $B_0:= \bigcup_{i\in [t_1+1]}V(F_i')$ and 
        notice that $|B_0| = (t_1+1) m$. 
        For $i\in [\ell]$ let $\mathcal{H}_i:= \mathcal{H}[U\cup \{v_i\}]$. 
        Repeating the argument as in the proof of Claim~\ref{CLAIM:1st-nondegenerate-size-L-upper-bound}, we find a copy of $F$, denoted by $F_i$, in $\mathcal{H}_i$ for every $i\in [\ell]$,  such that $F_1, \ldots, F_{\ell}$ are pairwise vertex-disjoint. 
        Moreover, we can also guarantee that each $F_i$ have empty intersection with $B_0$. 
        However, this contradicts the $(t+1)F$-freeness of $\mathcal{H}$, since $F_1, \ldots, F_{\ell}, F_1', \ldots, F_{t_1}'$ are $t+1$ pairwise vertex-disjoint copies of $F$ in $\mathcal{H}$. 
    \end{proof}
    It follows from $\ell \le t \le \frac{\delta (n-1)}{8m(r-1)}$ (by Claim~\ref{CLAIM:1st-nondegenerate-size-L-upper-bound}),  the definition of $L$, Facts~\ref{FACT:binom-inequality-a}, and~\ref{FACT:inequality-c} that 
    \begin{align*}
        \Delta(\mathcal{H}[U]) 
        \le (c_1+2\delta)\binom{n-1}{r-1}
        & \le (c_1+2\delta) \left(\frac{n}{n-\ell-r}\right)^{r-1}\binom{n-\ell-1}{r-1} \\
        & \le (c_1+2\delta) \left(1+ \frac{4(r-1)(\ell+r)}{n}\right)\binom{n-\ell-1}{r-1} \\
        & \le (c_1+2\delta) \left(1+ \frac{\delta}{m}\right)\binom{n-\ell-1}{r-1} \\
        & \le (c_1+3\delta)\binom{n-\ell-1}{r-1} 
        \le \left(\frac{1}{m} - \delta\right)\binom{n-\ell-1}{r-1},
    \end{align*}
    where the last inequality follows from the assumption that $\delta \le \frac{1}{4}\left(\frac{1}{m}-c_1\right)$. 
    In addition, since $n$ is large, we have $\frac{\delta m (n-\ell)}{r-1} - r \ge \frac{\delta (n-1)}{8m(r-1)} \ge t$. 
    So it follows from Claim~\ref{CLAIM:1st-nondegenerate-size-L-upper-bound} and Lemma~\ref{LEMMA:1st-interval-bounded-max-degree} that 
    \begin{align*}
        |\mathcal{H}[U]|
        & \le \binom{n-\ell}{r} - \binom{n-\ell-(t-\ell)}{r} + \mathrm{ex}(n-\ell-(t-\ell), F) \\
        & = \binom{n-\ell}{r} - \binom{n-t}{r} + \mathrm{ex}(n-t, F). 
    \end{align*}
    Consequently, we have 
    \begin{align*}
        |\mathcal{H}| 
        \le \binom{n}{r}-\binom{n-\ell}{r}+ |\mathcal{H}[U]|
        \le \binom{n}{r} - \binom{n-t}{r} + \mathrm{ex}(n-t, F),  
    \end{align*}
    proving Theorem~\ref{THM:1st-interval}. 
\end{proof}
\section{Proofs for theorems in the third interval}\label{SEC:proof-3rd-interval}
In this section, we prove Theorems~\ref{THM:3rd-interval} and~\ref{THM:3rd-interval-graph}. 
Both proofs share a similar strategy, and we begin with the proof of Theorem~\ref{THM:3rd-interval-graph} since it is more straightforward to comprehend compared to the hypergraph case. 

\subsection{Graphs: proof of Theorem~\ref{THM:3rd-interval-graph}}\label{SUBSEC:proof-3rd-interval-gp}
Let us provide a concise overview of the proof strategy for Theorem~\ref{THM:3rd-interval-graph}. 
We will start with an extremal $(t+1)\mathbb{B}$-free graph $G$, take a maximum collection of pairwise vertex-disjoint copies of $\mathbb{B}$ in $G$, and denote the union of their vertex sets by $V_1$. 
We will show that the set $L \subset V_1$ of large degree vertices contains most vertices in $V_1$. 
Then, a key step is to use the Alon--Yuster Theorem (Theorem~\ref{THM:AY-graph-factor}) to show that the number of edges crossing $L$ and $V(G)\setminus V_1$ is small. 
Using this key property and applying some relatively trivial estimates, we will obtain the desired bound on the size of $|G|$.

\begin{proof}[Proof of Theorem~\ref{THM:3rd-interval-graph}]
   Fix integers $s_2 \ge s_1 \ge 2$ and an $s_1$ by $s_2$ bipartite graph $B_{s_1, s_2} =: \mathbb{B}$. 
   Let
   \begin{align*}
       s:= s_1 + s_2 \quad\text{and}\quad
       \varepsilon := \frac{1}{65s_1 s^2}. 
   \end{align*}
   Let $n$ be a sufficiently large integer, $t \in \left[\frac{n}{s}-\varepsilon n, \frac{n}{s}\right]$ be an integer, and $G$ be an $n$-vertex $(t+1)\mathbb{B}$-free graph with maximum number of edges. 
   Let $\hat{t} := s(t+1)-1$ for simplicity. 
   Notice from Proposition~\ref{PROP:three-lower-bounds} and Fact~\ref{FACT:increasing-f(ell)} that 
   \begin{align}\label{equ:3rd-graph-G-lower-bound}
       |G| 
        \ge g_3(n,t,\mathbb{B})
        = \binom{\hat{t}}{2} 
            + \mathrm{ex}\left(n-\hat{t}, \mathbb{B}\right) 
         \ge \binom{st}{2} + \mathrm{ex}(n-st,\mathbb{B}). 
   \end{align}
   Let $\mathcal{B} = \{\mathbb{B}_1, \ldots, \mathbb{B}_t\}$ be a collection of pairwise vertex-disjoint copies of $\mathbb{B}$ in $G$ (the existence of $\mathcal{B}$ is guaranteed by the maximality of $G$). 
    Let 
    \begin{align*}
        V := V(G), \quad 
        V_1 := \bigcup_{i\in [t]}V(B_i),
        \quad
        V_2:= V\setminus V_1, 
        \quad\text{and}\quad 
        G_i := G[V_i]\quad\text{for $i\in \{1,2\}$}. 
    \end{align*}
    Note that $|V_1| = st$ and $G_2$ is $\mathbb{B}$-free, so we have (by Theorem~\ref{THM:graph-KST})
    \begin{align}\label{equ:3rd-G2-upper-bound}
        |G_2| 
        \le \mathrm{ex}(n-st, \mathbb{B})
        \le C (n-st)^{2-\frac{1}{s_1}}. 
    \end{align}
    Trivially, we have 
    \begin{align}\label{equ:3rd-G12-upper-bound}
        |G[V_1, V_2]| \le |V_1||V_2| = st(n-st). 
    \end{align}
    Let 
    \begin{align*}
        \alpha := 1-\frac{1}{2s_1}, \quad 
        L := \left\{v\in V_1 \colon d_{G_1}(v) \ge \alpha (st-1)\right\}, 
        \quad
        S:= V_1 \setminus L, 
        \quad\text{and}\quad 
        \ell := |L|. 
    \end{align*}
    It follows from the definition of $L$ that 
    \begin{align}\label{equ:3rd-G1-upper-bound}
        |G_1|
        & = \frac{1}{2}\left(\sum_{v\in L}d_{G_1}(v) + \sum_{v\in S}d_{G_1}(v)\right) \notag \\
        & \le \frac{1}{2}\left(\ell (st-1) + (st-\ell) \alpha (st-1)\right) 
         = \binom{st}{2} - \frac{st-\ell}{2}(1-\alpha)(st-1). 
    \end{align}

    \begin{claim}\label{CLAIM:3rd-graph-S-upper-bound}
        We have $|S| \le \frac{4(n-st)}{(1-\alpha)}$. 
    \end{claim}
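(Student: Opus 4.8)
The plan is to bound $|S|$ by comparing two estimates for $|G|$: the lower bound \eqref{equ:3rd-graph-G-lower-bound} coming from the extremal construction, and an upper bound obtained by summing \eqref{equ:3rd-G1-upper-bound}, \eqref{equ:3rd-G2-upper-bound}, and \eqref{equ:3rd-G12-upper-bound}. Concretely, I would write
\begin{align*}
    \binom{st}{2} + \mathrm{ex}(n-st,\mathbb{B})
    \le |G|
    = |G_1| + |G[V_1,V_2]| + |G_2|
    \le \binom{st}{2} - \frac{st-\ell}{2}(1-\alpha)(st-1) + st(n-st) + \mathrm{ex}(n-st,\mathbb{B}),
\end{align*}
and cancel the common terms $\binom{st}{2}$ and $\mathrm{ex}(n-st,\mathbb{B})$ on both sides. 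This yields
\begin{align*}
    \frac{st-\ell}{2}(1-\alpha)(st-1) \le st(n-st),
\end{align*}
i.e. $|S| = st - \ell \le \frac{2st(n-st)}{(1-\alpha)(st-1)}$. Since $t$ is of order $n/s$ (in particular $st - 1 \ge st/2$ once $n$ is large), we have $\frac{st}{st-1} \le 2$, so $|S| \le \frac{4(n-st)}{1-\alpha}$, which is exactly the claimed bound.

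The one subtlety is making sure the term $|G[V_1,V_2]|$ really is absorbed cleanly: the bound $|G[V_1,V_2]| \le st(n-st)$ is what drives the final inequality, and the contributions of $|G_2|$ and $\mathrm{ex}(n-st,\mathbb{B})$ must coincide so they cancel — here they do, since $G_2$ is $\mathbb{B}$-free on $n-st$ vertices. I should also double-check that $st \le n$ so that $V_2$ and the quantity $n-st$ make sense; this is immediate from $t \le n/s$. The deletion of the common $\binom{st}{2}$ and $\mathrm{ex}(n-st,\mathbb{B})$ terms is what makes the estimate tight enough to be useful.

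I do not expect any real obstacle in this particular claim — it is a bookkeeping step. The only thing to be careful about is the passage from $\frac{st}{st-1}$ to the constant $2$, which requires $n$ (hence $t$, hence $st$) to be sufficiently large; this is already assumed. The genuinely delicate part of the proof of Theorem~\ref{THM:3rd-interval-graph} comes later: controlling $|G[L, V_2]|$ via the Alon--Yuster Theorem (Theorem~\ref{THM:AY-graph-factor}), where one argues that if too many edges crossed between $L$ and $V_2$, a high-minimum-degree subgraph would appear allowing one to pack more than $t$ disjoint copies of $\mathbb{B}$. But that is a subsequent claim, not this one.
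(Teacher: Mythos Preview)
Your proposal is correct and follows essentially the same approach as the paper: combine the lower bound~\eqref{equ:3rd-graph-G-lower-bound} with the upper bound $|G| = |G_1| + |G[V_1,V_2]| + |G_2|$ obtained from~\eqref{equ:3rd-G1-upper-bound},~\eqref{equ:3rd-G12-upper-bound},~\eqref{equ:3rd-G2-upper-bound}, cancel the common terms, and use $\frac{st}{st-1} \le 2$ to reach $|S| \le \frac{4(n-st)}{1-\alpha}$. The paper's proof is line-for-line the same computation.
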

    \begin{proof}
        It follows from~\eqref{equ:3rd-G2-upper-bound},~\eqref{equ:3rd-G12-upper-bound}, and~\eqref{equ:3rd-G1-upper-bound} that 
        \begin{align*}
            |G|
            & = |G_1|+ |G[V_1, V_2]| + |G_2| \\
            & \le \binom{st}{2} - \frac{st-\ell}{2}(1-\alpha)(st-1) + st(n-st)
                + \mathrm{ex}(n-st, \mathbb{B}). 
        \end{align*}
        Combined with~\eqref{equ:3rd-graph-G-lower-bound}, we obtain 
        \begin{align*}
            - \frac{st-\ell}{2}(1-\alpha)(st-1) + st(n-st) \ge 0,
        \end{align*}
        which implies that $|S|
            = st-\ell 
            \le \frac{2st(n-st)}{(st-1)(1-\alpha)}
            \le \frac{4(n-st)}{1-\alpha}$. 
    \end{proof}
    Let $\mathcal{B}_{S}$ be the collection of members in $\mathcal{B}$ that have nonempty intersection with $S$. Let 
    \begin{align*}
        S_1:= \bigcup_{B_i\in \mathcal{B}_{S}} V(B_i)
        \quad\text{and}\quad
        L_1:= V_1\setminus S_1. 
    \end{align*}
    Note that $S\subset S_1$ and $L_1\subset L$. 
    Also, notice from Claim~\ref{CLAIM:3rd-graph-S-upper-bound} that 
    \begin{align}\label{equ:3rd-graph-S1-upper-bound}
            |S_1| 
            \le  s|S|
            \le \frac{4s(n-st)}{1-\alpha}. 
    \end{align}

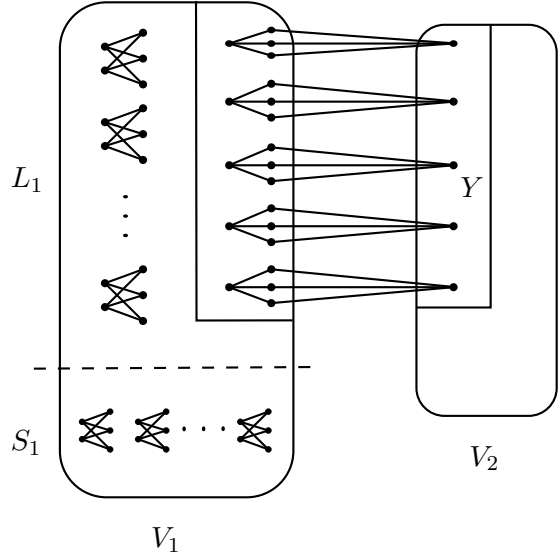
\begin{figure}[htbp]
\centering
\tikzset{every picture/.style={line width=0.8pt}} 
\begin{tikzpicture}[x=0.75pt,y=0.75pt,yscale=-1,xscale=1]
\draw   (116,449.52) .. controls (116,436.65) and (126.44,426.21) .. (139.31,426.21) -- (209.25,426.21) .. controls (222.12,426.21) and (232.56,436.65) .. (232.56,449.52) -- (232.56,651.86) .. controls (232.56,664.74) and (222.12,675.18) .. (209.25,675.18) -- (139.31,675.18) .. controls (126.44,675.18) and (116,664.74) .. (116,651.86) -- cycle ;
\draw  [fill={rgb, 255:red, 0; green, 0; blue, 0 }  ,fill opacity=1 ] (137,448.5) .. controls (137,447.67) and (137.67,447) .. (138.5,447) .. controls (139.33,447) and (140,447.67) .. (140,448.5) .. controls (140,449.33) and (139.33,450) .. (138.5,450) .. controls (137.67,450) and (137,449.33) .. (137,448.5) -- cycle ;
\draw  [fill={rgb, 255:red, 0; green, 0; blue, 0 }  ,fill opacity=1 ] (156,441.5) .. controls (156,440.67) and (156.67,440) .. (157.5,440) .. controls (158.33,440) and (159,440.67) .. (159,441.5) .. controls (159,442.33) and (158.33,443) .. (157.5,443) .. controls (156.67,443) and (156,442.33) .. (156,441.5) -- cycle ;
\draw  [fill={rgb, 255:red, 0; green, 0; blue, 0 }  ,fill opacity=1 ] (137,460.5) .. controls (137,459.67) and (137.67,459) .. (138.5,459) .. controls (139.33,459) and (140,459.67) .. (140,460.5) .. controls (140,461.33) and (139.33,462) .. (138.5,462) .. controls (137.67,462) and (137,461.33) .. (137,460.5) -- cycle ;
\draw  [fill={rgb, 255:red, 0; green, 0; blue, 0 }  ,fill opacity=1 ] (156,467.5) .. controls (156,466.67) and (156.67,466) .. (157.5,466) .. controls (158.33,466) and (159,466.67) .. (159,467.5) .. controls (159,468.33) and (158.33,469) .. (157.5,469) .. controls (156.67,469) and (156,468.33) .. (156,467.5) -- cycle ;
\draw  [fill={rgb, 255:red, 0; green, 0; blue, 0 }  ,fill opacity=1 ] (156,454.5) .. controls (156,453.67) and (156.67,453) .. (157.5,453) .. controls (158.33,453) and (159,453.67) .. (159,454.5) .. controls (159,455.33) and (158.33,456) .. (157.5,456) .. controls (156.67,456) and (156,455.33) .. (156,454.5) -- cycle ;
\draw    (138.5,448.5) -- (157.5,441.5) ;
\draw    (138.5,460.5) -- (157.5,454.5) ;
\draw    (138.5,448.5) -- (157.5,454.5) ;
\draw    (138.5,460.5) -- (157.5,441.5) ;
\draw    (138.5,448.5) -- (157.5,467.5) ;
\draw    (138.5,460.5) -- (157.5,467.5) ;
\draw  [fill={rgb, 255:red, 0; green, 0; blue, 0 }  ,fill opacity=1 ] (137,486.5) .. controls (137,485.67) and (137.67,485) .. (138.5,485) .. controls (139.33,485) and (140,485.67) .. (140,486.5) .. controls (140,487.33) and (139.33,488) .. (138.5,488) .. controls (137.67,488) and (137,487.33) .. (137,486.5) -- cycle ;
\draw  [fill={rgb, 255:red, 0; green, 0; blue, 0 }  ,fill opacity=1 ] (156,479.5) .. controls (156,478.67) and (156.67,478) .. (157.5,478) .. controls (158.33,478) and (159,478.67) .. (159,479.5) .. controls (159,480.33) and (158.33,481) .. (157.5,481) .. controls (156.67,481) and (156,480.33) .. (156,479.5) -- cycle ;
\draw  [fill={rgb, 255:red, 0; green, 0; blue, 0 }  ,fill opacity=1 ] (137,498.5) .. controls (137,497.67) and (137.67,497) .. (138.5,497) .. controls (139.33,497) and (140,497.67) .. (140,498.5) .. controls (140,499.33) and (139.33,500) .. (138.5,500) .. controls (137.67,500) and (137,499.33) .. (137,498.5) -- cycle ;
\draw  [fill={rgb, 255:red, 0; green, 0; blue, 0 }  ,fill opacity=1 ] (156,505.5) .. controls (156,504.67) and (156.67,504) .. (157.5,504) .. controls (158.33,504) and (159,504.67) .. (159,505.5) .. controls (159,506.33) and (158.33,507) .. (157.5,507) .. controls (156.67,507) and (156,506.33) .. (156,505.5) -- cycle ;
\draw  [fill={rgb, 255:red, 0; green, 0; blue, 0 }  ,fill opacity=1 ] (156,492.5) .. controls (156,491.67) and (156.67,491) .. (157.5,491) .. controls (158.33,491) and (159,491.67) .. (159,492.5) .. controls (159,493.33) and (158.33,494) .. (157.5,494) .. controls (156.67,494) and (156,493.33) .. (156,492.5) -- cycle ;
\draw    (138.5,486.5) -- (157.5,479.5) ;
\draw    (138.5,498.5) -- (157.5,492.5) ;
\draw    (138.5,486.5) -- (157.5,492.5) ;
\draw    (138.5,498.5) -- (157.5,479.5) ;
\draw    (138.5,486.5) -- (157.5,505.5) ;
\draw    (138.5,498.5) -- (157.5,505.5) ;
\draw  [fill={rgb, 255:red, 0; green, 0; blue, 0 }  ,fill opacity=1 ] (137,567.5) .. controls (137,566.67) and (137.67,566) .. (138.5,566) .. controls (139.33,566) and (140,566.67) .. (140,567.5) .. controls (140,568.33) and (139.33,569) .. (138.5,569) .. controls (137.67,569) and (137,568.33) .. (137,567.5) -- cycle ;
\draw  [fill={rgb, 255:red, 0; green, 0; blue, 0 }  ,fill opacity=1 ] (156,560.5) .. controls (156,559.67) and (156.67,559) .. (157.5,559) .. controls (158.33,559) and (159,559.67) .. (159,560.5) .. controls (159,561.33) and (158.33,562) .. (157.5,562) .. controls (156.67,562) and (156,561.33) .. (156,560.5) -- cycle ;
\draw  [fill={rgb, 255:red, 0; green, 0; blue, 0 }  ,fill opacity=1 ] (137,579.5) .. controls (137,578.67) and (137.67,578) .. (138.5,578) .. controls (139.33,578) and (140,578.67) .. (140,579.5) .. controls (140,580.33) and (139.33,581) .. (138.5,581) .. controls (137.67,581) and (137,580.33) .. (137,579.5) -- cycle ;
\draw  [fill={rgb, 255:red, 0; green, 0; blue, 0 }  ,fill opacity=1 ] (156,586.5) .. controls (156,585.67) and (156.67,585) .. (157.5,585) .. controls (158.33,585) and (159,585.67) .. (159,586.5) .. controls (159,587.33) and (158.33,588) .. (157.5,588) .. controls (156.67,588) and (156,587.33) .. (156,586.5) -- cycle ;
\draw  [fill={rgb, 255:red, 0; green, 0; blue, 0 }  ,fill opacity=1 ] (156,573.5) .. controls (156,572.67) and (156.67,572) .. (157.5,572) .. controls (158.33,572) and (159,572.67) .. (159,573.5) .. controls (159,574.33) and (158.33,575) .. (157.5,575) .. controls (156.67,575) and (156,574.33) .. (156,573.5) -- cycle ;
\draw    (138.5,567.5) -- (157.5,560.5) ;
\draw    (138.5,579.5) -- (157.5,573.5) ;
\draw    (138.5,567.5) -- (157.5,573.5) ;
\draw    (138.5,579.5) -- (157.5,560.5) ;
\draw    (138.5,567.5) -- (157.5,586.5) ;
\draw    (138.5,579.5) -- (157.5,586.5) ;
\draw  [fill={rgb, 255:red, 0; green, 0; blue, 0 }  ,fill opacity=1 ] (126,637.21) .. controls (126,636.6) and (126.49,636.11) .. (127.1,636.11) .. controls (127.71,636.11) and (128.2,636.6) .. (128.2,637.21) .. controls (128.2,637.81) and (127.71,638.3) .. (127.1,638.3) .. controls (126.49,638.3) and (126,637.81) .. (126,637.21) -- cycle ;
\draw  [fill={rgb, 255:red, 0; green, 0; blue, 0 }  ,fill opacity=1 ] (139.96,632.1) .. controls (139.96,631.49) and (140.46,631) .. (141.06,631) .. controls (141.67,631) and (142.17,631.49) .. (142.17,632.1) .. controls (142.17,632.7) and (141.67,633.19) .. (141.06,633.19) .. controls (140.46,633.19) and (139.96,632.7) .. (139.96,632.1) -- cycle ;
\draw  [fill={rgb, 255:red, 0; green, 0; blue, 0 }  ,fill opacity=1 ] (126,645.97) .. controls (126,645.36) and (126.49,644.87) .. (127.1,644.87) .. controls (127.71,644.87) and (128.2,645.36) .. (128.2,645.97) .. controls (128.2,646.57) and (127.71,647.06) .. (127.1,647.06) .. controls (126.49,647.06) and (126,646.57) .. (126,645.97) -- cycle ;
\draw  [fill={rgb, 255:red, 0; green, 0; blue, 0 }  ,fill opacity=1 ] (139.96,651.08) .. controls (139.96,650.48) and (140.46,649.99) .. (141.06,649.99) .. controls (141.67,649.99) and (142.17,650.48) .. (142.17,651.08) .. controls (142.17,651.69) and (141.67,652.18) .. (141.06,652.18) .. controls (140.46,652.18) and (139.96,651.69) .. (139.96,651.08) -- cycle ;
\draw  [fill={rgb, 255:red, 0; green, 0; blue, 0 }  ,fill opacity=1 ] (139.96,641.59) .. controls (139.96,640.98) and (140.46,640.49) .. (141.06,640.49) .. controls (141.67,640.49) and (142.17,640.98) .. (142.17,641.59) .. controls (142.17,642.19) and (141.67,642.68) .. (141.06,642.68) .. controls (140.46,642.68) and (139.96,642.19) .. (139.96,641.59) -- cycle ;
\draw    (127.1,637.21) -- (141.06,632.1) ;
\draw    (127.1,645.97) -- (141.06,641.59) ;
\draw    (127.1,637.21) -- (141.06,641.59) ;
\draw    (127.1,645.97) -- (141.06,632.1) ;
\draw    (127.1,637.21) -- (141.06,651.08) ;
\draw    (127.1,645.97) -- (141.06,651.08) ;
\draw  [fill={rgb, 255:red, 0; green, 0; blue, 0 }  ,fill opacity=1 ] (154,637.21) .. controls (154,636.6) and (154.49,636.11) .. (155.1,636.11) .. controls (155.71,636.11) and (156.2,636.6) .. (156.2,637.21) .. controls (156.2,637.81) and (155.71,638.3) .. (155.1,638.3) .. controls (154.49,638.3) and (154,637.81) .. (154,637.21) -- cycle ;
\draw  [fill={rgb, 255:red, 0; green, 0; blue, 0 }  ,fill opacity=1 ] (167.96,632.1) .. controls (167.96,631.49) and (168.46,631) .. (169.06,631) .. controls (169.67,631) and (170.17,631.49) .. (170.17,632.1) .. controls (170.17,632.7) and (169.67,633.19) .. (169.06,633.19) .. controls (168.46,633.19) and (167.96,632.7) .. (167.96,632.1) -- cycle ;
\draw  [fill={rgb, 255:red, 0; green, 0; blue, 0 }  ,fill opacity=1 ] (154,645.97) .. controls (154,645.36) and (154.49,644.87) .. (155.1,644.87) .. controls (155.71,644.87) and (156.2,645.36) .. (156.2,645.97) .. controls (156.2,646.57) and (155.71,647.06) .. (155.1,647.06) .. controls (154.49,647.06) and (154,646.57) .. (154,645.97) -- cycle ;
\draw  [fill={rgb, 255:red, 0; green, 0; blue, 0 }  ,fill opacity=1 ] (167.96,651.08) .. controls (167.96,650.48) and (168.46,649.99) .. (169.06,649.99) .. controls (169.67,649.99) and (170.17,650.48) .. (170.17,651.08) .. controls (170.17,651.69) and (169.67,652.18) .. (169.06,652.18) .. controls (168.46,652.18) and (167.96,651.69) .. (167.96,651.08) -- cycle ;
\draw  [fill={rgb, 255:red, 0; green, 0; blue, 0 }  ,fill opacity=1 ] (167.96,641.59) .. controls (167.96,640.98) and (168.46,640.49) .. (169.06,640.49) .. controls (169.67,640.49) and (170.17,640.98) .. (170.17,641.59) .. controls (170.17,642.19) and (169.67,642.68) .. (169.06,642.68) .. controls (168.46,642.68) and (167.96,642.19) .. (167.96,641.59) -- cycle ;
\draw    (155.1,637.21) -- (169.06,632.1) ;
\draw    (155.1,645.97) -- (169.06,641.59) ;
\draw    (155.1,637.21) -- (169.06,641.59) ;
\draw    (155.1,645.97) -- (169.06,632.1) ;
\draw    (155.1,637.21) -- (169.06,651.08) ;
\draw    (155.1,645.97) -- (169.06,651.08) ;
\draw  [fill={rgb, 255:red, 0; green, 0; blue, 0 }  ,fill opacity=1 ] (205,637.21) .. controls (205,636.6) and (205.49,636.11) .. (206.1,636.11) .. controls (206.71,636.11) and (207.2,636.6) .. (207.2,637.21) .. controls (207.2,637.81) and (206.71,638.3) .. (206.1,638.3) .. controls (205.49,638.3) and (205,637.81) .. (205,637.21) -- cycle ;
\draw  [fill={rgb, 255:red, 0; green, 0; blue, 0 }  ,fill opacity=1 ] (218.96,632.1) .. controls (218.96,631.49) and (219.46,631) .. (220.06,631) .. controls (220.67,631) and (221.17,631.49) .. (221.17,632.1) .. controls (221.17,632.7) and (220.67,633.19) .. (220.06,633.19) .. controls (219.46,633.19) and (218.96,632.7) .. (218.96,632.1) -- cycle ;
\draw  [fill={rgb, 255:red, 0; green, 0; blue, 0 }  ,fill opacity=1 ] (205,645.97) .. controls (205,645.36) and (205.49,644.87) .. (206.1,644.87) .. controls (206.71,644.87) and (207.2,645.36) .. (207.2,645.97) .. controls (207.2,646.57) and (206.71,647.06) .. (206.1,647.06) .. controls (205.49,647.06) and (205,646.57) .. (205,645.97) -- cycle ;
\draw  [fill={rgb, 255:red, 0; green, 0; blue, 0 }  ,fill opacity=1 ] (218.96,651.08) .. controls (218.96,650.48) and (219.46,649.99) .. (220.06,649.99) .. controls (220.67,649.99) and (221.17,650.48) .. (221.17,651.08) .. controls (221.17,651.69) and (220.67,652.18) .. (220.06,652.18) .. controls (219.46,652.18) and (218.96,651.69) .. (218.96,651.08) -- cycle ;
\draw  [fill={rgb, 255:red, 0; green, 0; blue, 0 }  ,fill opacity=1 ] (218.96,641.59) .. controls (218.96,640.98) and (219.46,640.49) .. (220.06,640.49) .. controls (220.67,640.49) and (221.17,640.98) .. (221.17,641.59) .. controls (221.17,642.19) and (220.67,642.68) .. (220.06,642.68) .. controls (219.46,642.68) and (218.96,642.19) .. (218.96,641.59) -- cycle ;
\draw    (206.1,637.21) -- (220.06,632.1) ;
\draw    (206.1,645.97) -- (220.06,641.59) ;
\draw    (206.1,637.21) -- (220.06,641.59) ;
\draw    (206.1,645.97) -- (220.06,632.1) ;
\draw    (206.1,637.21) -- (220.06,651.08) ;
\draw    (206.1,645.97) -- (220.06,651.08) ;
\draw  [dash pattern={on 4.5pt off 4.5pt}]  (103,610.47) -- (247.02,609.63) ;
\draw  [fill={rgb, 255:red, 0; green, 0; blue, 0 }  ,fill opacity=1 ] (149.42,523.99) .. controls (149.42,523.81) and (149.12,523.67) .. (148.75,523.67) .. controls (148.38,523.67) and (148.08,523.81) .. (148.08,523.99) .. controls (148.08,524.17) and (148.38,524.32) .. (148.75,524.32) .. controls (149.12,524.32) and (149.42,524.17) .. (149.42,523.99) -- cycle ;
\draw  [fill={rgb, 255:red, 0; green, 0; blue, 0 }  ,fill opacity=1 ] (149.42,543.47) .. controls (149.42,543.29) and (149.12,543.15) .. (148.75,543.15) .. controls (148.38,543.15) and (148.08,543.29) .. (148.08,543.47) .. controls (148.08,543.65) and (148.38,543.8) .. (148.75,543.8) .. controls (149.12,543.8) and (149.42,543.65) .. (149.42,543.47) -- cycle ;
\draw  [fill={rgb, 255:red, 0; green, 0; blue, 0 }  ,fill opacity=1 ] (149.42,533.73) .. controls (149.42,533.55) and (149.12,533.41) .. (148.75,533.41) .. controls (148.38,533.41) and (148.08,533.55) .. (148.08,533.73) .. controls (148.08,533.91) and (148.38,534.06) .. (148.75,534.06) .. controls (149.12,534.06) and (149.42,533.91) .. (149.42,533.73) -- cycle ;
\draw  [fill={rgb, 255:red, 0; green, 0; blue, 0 }  ,fill opacity=1 ] (197.49,641.34) .. controls (197.67,641.34) and (197.82,641.04) .. (197.81,640.67) .. controls (197.81,640.3) and (197.66,640.01) .. (197.49,640.01) .. controls (197.31,640.01) and (197.16,640.31) .. (197.16,640.68) .. controls (197.17,641.04) and (197.31,641.34) .. (197.49,641.34) -- cycle ;
\draw  [fill={rgb, 255:red, 0; green, 0; blue, 0 }  ,fill opacity=1 ] (178.01,641.46) .. controls (178.19,641.45) and (178.34,641.16) .. (178.34,640.79) .. controls (178.33,640.42) and (178.19,640.12) .. (178.01,640.12) .. controls (177.83,640.12) and (177.68,640.42) .. (177.69,640.79) .. controls (177.69,641.16) and (177.84,641.46) .. (178.01,641.46) -- cycle ;
\draw  [fill={rgb, 255:red, 0; green, 0; blue, 0 }  ,fill opacity=1 ] (187.75,641.4) .. controls (187.93,641.4) and (188.08,641.1) .. (188.07,640.73) .. controls (188.07,640.36) and (187.93,640.06) .. (187.75,640.06) .. controls (187.57,640.07) and (187.42,640.36) .. (187.43,640.73) .. controls (187.43,641.1) and (187.57,641.4) .. (187.75,641.4) -- cycle ;
\draw   (232.17,586.18) -- (184.2,586.25) -- (183.95,425.52) ;
\draw   (293.98,451.51) .. controls (293.98,443.78) and (300.25,437.51) .. (307.98,437.51) -- (349.98,437.51) .. controls (357.71,437.51) and (363.98,443.78) .. (363.98,451.51) -- (363.98,620.22) .. controls (363.98,627.95) and (357.71,634.22) .. (349.98,634.22) -- (307.98,634.22) .. controls (300.25,634.22) and (293.98,627.95) .. (293.98,620.22) -- cycle ;
\draw   (331.02,437.62) -- (331.02,579.71) -- (294.24,579.71) ;
\draw  [fill={rgb, 255:red, 0; green, 0; blue, 0 }  ,fill opacity=1 ] (199,446.94) .. controls (199,446.32) and (199.67,445.81) .. (200.5,445.81) .. controls (201.33,445.81) and (202,446.32) .. (202,446.94) .. controls (202,447.57) and (201.33,448.08) .. (200.5,448.08) .. controls (199.67,448.08) and (199,447.57) .. (199,446.94) -- cycle ;
\draw  [fill={rgb, 255:red, 0; green, 0; blue, 0 }  ,fill opacity=1 ] (220,440.13) .. controls (220,439.51) and (220.67,439) .. (221.5,439) .. controls (222.33,439) and (223,439.51) .. (223,440.13) .. controls (223,440.76) and (222.33,441.27) .. (221.5,441.27) .. controls (220.67,441.27) and (220,440.76) .. (220,440.13) -- cycle ;
\draw  [fill={rgb, 255:red, 0; green, 0; blue, 0 }  ,fill opacity=1 ] (313.97,446.94) .. controls (313.97,446.32) and (313.3,445.81) .. (312.47,445.81) .. controls (311.64,445.81) and (310.97,446.32) .. (310.97,446.94) .. controls (310.97,447.57) and (311.64,448.08) .. (312.47,448.08) .. controls (313.3,448.08) and (313.97,447.57) .. (313.97,446.94) -- cycle ;
\draw  [fill={rgb, 255:red, 0; green, 0; blue, 0 }  ,fill opacity=1 ] (220,452.99) .. controls (220,452.37) and (220.67,451.86) .. (221.5,451.86) .. controls (222.33,451.86) and (223,452.37) .. (223,452.99) .. controls (223,453.62) and (222.33,454.13) .. (221.5,454.13) .. controls (220.67,454.13) and (220,453.62) .. (220,452.99) -- cycle ;
\draw  [fill={rgb, 255:red, 0; green, 0; blue, 0 }  ,fill opacity=1 ] (220,446.94) .. controls (220,446.32) and (220.67,445.81) .. (221.5,445.81) .. controls (222.33,445.81) and (223,446.32) .. (223,446.94) .. controls (223,447.57) and (222.33,448.08) .. (221.5,448.08) .. controls (220.67,448.08) and (220,447.57) .. (220,446.94) -- cycle ;
\draw    (200.5,446.94) -- (221.5,440.13) ;
\draw    (312.47,446.94) -- (221.5,446.94) ;
\draw    (200.5,446.94) -- (221.5,446.94) ;
\draw    (313.97,446.94) -- (221.5,440.13) ;
\draw    (200.5,446.94) -- (221.5,452.99) ;
\draw    (312.47,446.94) -- (221.5,452.99) ;
\draw  [fill={rgb, 255:red, 0; green, 0; blue, 0 }  ,fill opacity=1 ] (199.03,476.17) .. controls (199.03,475.34) and (199.7,474.67) .. (200.53,474.67) .. controls (201.36,474.67) and (202.03,475.34) .. (202.03,476.17) .. controls (202.03,477) and (201.36,477.67) .. (200.53,477.67) .. controls (199.7,477.67) and (199.03,477) .. (199.03,476.17) -- cycle ;
\draw  [fill={rgb, 255:red, 0; green, 0; blue, 0 }  ,fill opacity=1 ] (220.03,467.17) .. controls (220.03,466.34) and (220.7,465.67) .. (221.53,465.67) .. controls (222.36,465.67) and (223.03,466.34) .. (223.03,467.17) .. controls (223.03,468) and (222.36,468.67) .. (221.53,468.67) .. controls (220.7,468.67) and (220.03,468) .. (220.03,467.17) -- cycle ;
\draw  [fill={rgb, 255:red, 0; green, 0; blue, 0 }  ,fill opacity=1 ] (314,476.17) .. controls (314,475.34) and (313.33,474.67) .. (312.5,474.67) .. controls (311.67,474.67) and (311,475.34) .. (311,476.17) .. controls (311,477) and (311.67,477.67) .. (312.5,477.67) .. controls (313.33,477.67) and (314,477) .. (314,476.17) -- cycle ;
\draw  [fill={rgb, 255:red, 0; green, 0; blue, 0 }  ,fill opacity=1 ] (220.03,484.17) .. controls (220.03,483.34) and (220.7,482.67) .. (221.53,482.67) .. controls (222.36,482.67) and (223.03,483.34) .. (223.03,484.17) .. controls (223.03,485) and (222.36,485.67) .. (221.53,485.67) .. controls (220.7,485.67) and (220.03,485) .. (220.03,484.17) -- cycle ;
\draw  [fill={rgb, 255:red, 0; green, 0; blue, 0 }  ,fill opacity=1 ] (220.03,476.17) .. controls (220.03,475.34) and (220.7,474.67) .. (221.53,474.67) .. controls (222.36,474.67) and (223.03,475.34) .. (223.03,476.17) .. controls (223.03,477) and (222.36,477.67) .. (221.53,477.67) .. controls (220.7,477.67) and (220.03,477) .. (220.03,476.17) -- cycle ;
\draw    (200.53,476.17) -- (221.53,467.17) ;
\draw    (312.5,476.17) -- (221.53,476.17) ;
\draw    (200.53,476.17) -- (221.53,476.17) ;
\draw    (314,476.17) -- (221.53,467.17) ;
\draw    (200.53,476.17) -- (221.53,484.17) ;
\draw    (312.5,476.17) -- (221.53,484.17) ;
\draw  [fill={rgb, 255:red, 0; green, 0; blue, 0 }  ,fill opacity=1 ] (199.03,508.17) .. controls (199.03,507.34) and (199.7,506.67) .. (200.53,506.67) .. controls (201.36,506.67) and (202.03,507.34) .. (202.03,508.17) .. controls (202.03,509) and (201.36,509.67) .. (200.53,509.67) .. controls (199.7,509.67) and (199.03,509) .. (199.03,508.17) -- cycle ;
\draw  [fill={rgb, 255:red, 0; green, 0; blue, 0 }  ,fill opacity=1 ] (220.03,499.17) .. controls (220.03,498.34) and (220.7,497.67) .. (221.53,497.67) .. controls (222.36,497.67) and (223.03,498.34) .. (223.03,499.17) .. controls (223.03,500) and (222.36,500.67) .. (221.53,500.67) .. controls (220.7,500.67) and (220.03,500) .. (220.03,499.17) -- cycle ;
\draw  [fill={rgb, 255:red, 0; green, 0; blue, 0 }  ,fill opacity=1 ] (314,508.17) .. controls (314,507.34) and (313.33,506.67) .. (312.5,506.67) .. controls (311.67,506.67) and (311,507.34) .. (311,508.17) .. controls (311,509) and (311.67,509.67) .. (312.5,509.67) .. controls (313.33,509.67) and (314,509) .. (314,508.17) -- cycle ;
\draw  [fill={rgb, 255:red, 0; green, 0; blue, 0 }  ,fill opacity=1 ] (220.03,516.17) .. controls (220.03,515.34) and (220.7,514.67) .. (221.53,514.67) .. controls (222.36,514.67) and (223.03,515.34) .. (223.03,516.17) .. controls (223.03,517) and (222.36,517.67) .. (221.53,517.67) .. controls (220.7,517.67) and (220.03,517) .. (220.03,516.17) -- cycle ;
\draw  [fill={rgb, 255:red, 0; green, 0; blue, 0 }  ,fill opacity=1 ] (220.03,508.17) .. controls (220.03,507.34) and (220.7,506.67) .. (221.53,506.67) .. controls (222.36,506.67) and (223.03,507.34) .. (223.03,508.17) .. controls (223.03,509) and (222.36,509.67) .. (221.53,509.67) .. controls (220.7,509.67) and (220.03,509) .. (220.03,508.17) -- cycle ;
\draw    (200.53,508.17) -- (221.53,499.17) ;
\draw    (312.5,508.17) -- (221.53,508.17) ;
\draw    (200.53,508.17) -- (221.53,508.17) ;
\draw    (314,508.17) -- (221.53,499.17) ;
\draw    (200.53,508.17) -- (221.53,516.17) ;
\draw    (312.5,508.17) -- (221.53,516.17) ;
\draw  [fill={rgb, 255:red, 0; green, 0; blue, 0 }  ,fill opacity=1 ] (199.03,538.83) .. controls (199.03,538) and (199.7,537.33) .. (200.53,537.33) .. controls (201.36,537.33) and (202.03,538) .. (202.03,538.83) .. controls (202.03,539.66) and (201.36,540.33) .. (200.53,540.33) .. controls (199.7,540.33) and (199.03,539.66) .. (199.03,538.83) -- cycle ;
\draw  [fill={rgb, 255:red, 0; green, 0; blue, 0 }  ,fill opacity=1 ] (220.03,529.83) .. controls (220.03,529) and (220.7,528.33) .. (221.53,528.33) .. controls (222.36,528.33) and (223.03,529) .. (223.03,529.83) .. controls (223.03,530.66) and (222.36,531.33) .. (221.53,531.33) .. controls (220.7,531.33) and (220.03,530.66) .. (220.03,529.83) -- cycle ;
\draw  [fill={rgb, 255:red, 0; green, 0; blue, 0 }  ,fill opacity=1 ] (314,538.83) .. controls (314,538) and (313.33,537.33) .. (312.5,537.33) .. controls (311.67,537.33) and (311,538) .. (311,538.83) .. controls (311,539.66) and (311.67,540.33) .. (312.5,540.33) .. controls (313.33,540.33) and (314,539.66) .. (314,538.83) -- cycle ;
\draw  [fill={rgb, 255:red, 0; green, 0; blue, 0 }  ,fill opacity=1 ] (220.03,546.83) .. controls (220.03,546) and (220.7,545.33) .. (221.53,545.33) .. controls (222.36,545.33) and (223.03,546) .. (223.03,546.83) .. controls (223.03,547.66) and (222.36,548.33) .. (221.53,548.33) .. controls (220.7,548.33) and (220.03,547.66) .. (220.03,546.83) -- cycle ;
\draw  [fill={rgb, 255:red, 0; green, 0; blue, 0 }  ,fill opacity=1 ] (220.03,538.83) .. controls (220.03,538) and (220.7,537.33) .. (221.53,537.33) .. controls (222.36,537.33) and (223.03,538) .. (223.03,538.83) .. controls (223.03,539.66) and (222.36,540.33) .. (221.53,540.33) .. controls (220.7,540.33) and (220.03,539.66) .. (220.03,538.83) -- cycle ;
\draw    (200.53,538.83) -- (221.53,529.83) ;
\draw    (312.5,538.83) -- (221.53,538.83) ;
\draw    (200.53,538.83) -- (221.53,538.83) ;
\draw    (314,538.83) -- (221.53,529.83) ;
\draw    (200.53,538.83) -- (221.53,546.83) ;
\draw    (312.5,538.83) -- (221.53,546.83) ;
\draw  [fill={rgb, 255:red, 0; green, 0; blue, 0 }  ,fill opacity=1 ] (199.03,569.5) .. controls (199.03,568.67) and (199.7,568) .. (200.53,568) .. controls (201.36,568) and (202.03,568.67) .. (202.03,569.5) .. controls (202.03,570.33) and (201.36,571) .. (200.53,571) .. controls (199.7,571) and (199.03,570.33) .. (199.03,569.5) -- cycle ;
\draw  [fill={rgb, 255:red, 0; green, 0; blue, 0 }  ,fill opacity=1 ] (220.03,560.5) .. controls (220.03,559.67) and (220.7,559) .. (221.53,559) .. controls (222.36,559) and (223.03,559.67) .. (223.03,560.5) .. controls (223.03,561.33) and (222.36,562) .. (221.53,562) .. controls (220.7,562) and (220.03,561.33) .. (220.03,560.5) -- cycle ;
\draw  [fill={rgb, 255:red, 0; green, 0; blue, 0 }  ,fill opacity=1 ] (314,569.5) .. controls (314,568.67) and (313.33,568) .. (312.5,568) .. controls (311.67,568) and (311,568.67) .. (311,569.5) .. controls (311,570.33) and (311.67,571) .. (312.5,571) .. controls (313.33,571) and (314,570.33) .. (314,569.5) -- cycle ;
\draw  [fill={rgb, 255:red, 0; green, 0; blue, 0 }  ,fill opacity=1 ] (220.03,577.5) .. controls (220.03,576.67) and (220.7,576) .. (221.53,576) .. controls (222.36,576) and (223.03,576.67) .. (223.03,577.5) .. controls (223.03,578.33) and (222.36,579) .. (221.53,579) .. controls (220.7,579) and (220.03,578.33) .. (220.03,577.5) -- cycle ;
\draw  [fill={rgb, 255:red, 0; green, 0; blue, 0 }  ,fill opacity=1 ] (220.03,569.5) .. controls (220.03,568.67) and (220.7,568) .. (221.53,568) .. controls (222.36,568) and (223.03,568.67) .. (223.03,569.5) .. controls (223.03,570.33) and (222.36,571) .. (221.53,571) .. controls (220.7,571) and (220.03,570.33) .. (220.03,569.5) -- cycle ;
\draw    (200.53,569.5) -- (221.53,560.5) ;
\draw    (312.5,569.5) -- (221.53,569.5) ;
\draw    (200.53,569.5) -- (221.53,569.5) ;
\draw    (314,569.5) -- (221.53,560.5) ;
\draw    (200.53,569.5) -- (221.53,577.5) ;
\draw    (312.5,569.5) -- (221.53,577.5) ;

\draw (90,507.8) node [anchor=north west][inner sep=0.75pt]   [align=left] {$L_1$};
\draw (90,640) node [anchor=north west][inner sep=0.75pt]   [align=left] {$S_1$};
\draw (160,688.8) node [anchor=north west][inner sep=0.75pt]   [align=left] {$V_1$};
\draw (319,647.8) node [anchor=north west][inner sep=0.75pt]   [align=left] {$V_2$};
\draw (314.5,512.67) node [anchor=north west][inner sep=0.75pt]   [align=left] {$Y$};
\end{tikzpicture}
\caption{Supplementary diagram for Claim~\ref{CLAIM:3rd-graph-Y-upper-bound} when $F = K_{2,3}$.}
\label{fig:3rd-part}
\end{figure}

    Let 
    \begin{align*}
        Y
        := \left\{v\in V_2 \colon |N_{G}(v) \cap L_1| \ge s_1 s\right\}. 
    \end{align*}
  \begin{claim}\label{CLAIM:3rd-graph-Y-upper-bound}
        We have $|Y| \le s-1$. 
   \end{claim}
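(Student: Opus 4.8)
The plan is to argue by contradiction: assuming $|Y| \ge s$, I will build $t+1$ pairwise vertex-disjoint copies of $\mathbb{B}$ in $G$, contradicting that $G$ is $(t+1)\mathbb{B}$-free. Fix distinct $y_1,\dots,y_s \in Y$. Recall that the members of $\mathcal{B}$ not meeting $S$ (say there are $k$ of them) are exactly the ones whose union is $L_1$, so $|L_1| = sk$, while the other $t-k$ members of $\mathcal{B}$ lie inside $S_1$ and are therefore disjoint from $L_1 \cup V_2 \supseteq L_1 \cup \{y_1,\dots,y_s\}$. Hence it suffices to find $k+1$ pairwise disjoint copies of $\mathbb{B}$ inside the $s(k+1)$-vertex set $L_1 \cup \{y_1,\dots,y_s\}$; adding the $t-k$ untouched members of $\mathcal{B}$ then gives the desired $t+1$ copies.

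Since $\mathbb{B} \subseteq K_{s_1,s_2}$, it is enough to produce copies of $K_{s_1,s_2}$. First I would greedily choose pairwise disjoint sets $W^{(1)},\dots,W^{(s)} \subseteq L_1$ with $|W^{(j)}| = s_1$ and $W^{(j)} \subseteq N_G(y_j)$; this is possible because $|N_G(y_j)\cap L_1| \ge s_1 s$ while fewer than $s_1 s$ vertices of $L_1$ have been used before step $j$ — this is precisely the point of the threshold $s_1 s$ in the definition of $Y$. Next, each $v \in W^{(j)} \subseteq L$ has at most $(1-\alpha)(st-1) = \frac{st-1}{2s_1}$ non-neighbours inside $V_1$, so by a union bound at least $\frac{st}{2} - |S_1|$ vertices of $L_1$ are adjacent to all of $W^{(j)}$. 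By Claim~\ref{CLAIM:3rd-graph-S-upper-bound} we have $|S_1| \le 8 s_1 s (n-st)$, and $t \ge \frac{n}{s} - \varepsilon n$ forces $n-st$, and hence $|S_1|$, to be $o(n)$, so this common neighbourhood has size $\Theta(n)$. Therefore I can extend each $\{y_j\}\cup W^{(j)}$ to a copy $\mathbb{B}^{(j)}$ of $K_{s_1,s_2}$ by adding $s_2-1$ further vertices of $L_1$ chosen from this common neighbourhood while avoiding the $O(1)$ vertices already committed; the copies $\mathbb{B}^{(1)},\dots,\mathbb{B}^{(s)}$ are pairwise disjoint, and $\mathbb{B}^{(j)}$ contains exactly $y_j$ together with $s-1$ vertices of $L_1$.

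Let $L_1'$ consist of the remaining $sk - s(s-1) = s(k-s+1)$ vertices of $L_1$, a number divisible by $v(\mathbb{B}) = s$. Every $v \in L_1'$ satisfies $d_{G[L_1']}(v) \ge \alpha(st-1) - |S_1| - s(s-1)$, and since $s_1 \ge 2$ gives $\alpha \ge \frac34$, $|S_1| = o(n)$, $st \ge (1-o(1))n$ and $|L_1'| \le st \le n$, this is at least $(\tfrac12 + \tfrac1{16})|L_1'|$ once $n$ is large. As $\chi(K_{s_1,s_2}) = 2$ and $|L_1'| = \Theta(n)$ is large, the Alon--Yuster theorem (Theorem~\ref{THM:AY-graph-factor}) yields a perfect $K_{s_1,s_2}$-tiling of $G[L_1']$, i.e. $k-s+1$ pairwise disjoint copies of $\mathbb{B}$. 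Together with $\mathbb{B}^{(1)},\dots,\mathbb{B}^{(s)}$ these give $k+1$ pairwise disjoint copies of $\mathbb{B}$ inside $L_1 \cup \{y_1,\dots,y_s\}$, producing the contradiction; hence $|Y| \le s-1$.

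The main obstacle is incorporating the vertices $y_j$, which have only a bounded number of neighbours in $L_1$ and so cannot be absorbed by Alon--Yuster directly. The resolution is to extract from each $y_j$ only an $s_1$-subset $W^{(j)}$ of its $L_1$-neighbourhood — the threshold $s_1 s$ being exactly what makes this feasible for all $s$ of them simultaneously — then complete $s$ copies one at a time using the linear-sized common neighbourhoods inside $L_1$, and only afterwards apply Alon--Yuster to the leftover set $L_1'$, whose size is conveniently divisible by $s$. The remaining degree bookkeeping (bounding $|S_1|$ through $t \approx n/s$ and verifying the $\tfrac12+\varepsilon'$ condition) is routine.
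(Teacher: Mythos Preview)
Your proof is correct and follows essentially the same approach as the paper's: greedily build $s$ copies of $K_{s_1,s_2}$ through the $y_j$'s using the threshold $s_1 s$, then apply Alon--Yuster to the leftover $L_1'$ (whose size is divisible by $s$), and combine with the untouched members of $\mathcal{B}$ in $S_1$. One minor imprecision: $n-st$ and $|S_1|$ are bounded by a small constant times $n$ (indeed $|S_1|\le 8s_1s\cdot s\varepsilon n = \tfrac{8n}{65}$), not $o(n)$ as you write; however, with $\alpha\ge\tfrac34$ and $st\ge(1-s\varepsilon)n$ one still gets $\alpha(st-1)-|S_1|-s(s-1)\ge(\tfrac12+\tfrac1{16})|L_1'|$, so the Alon--Yuster hypothesis holds and your argument goes through.
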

   \begin{proof}
        Suppose to the contrary that $|Y| \ge s$. 
        Fix $s$ vertices $u_1, \ldots, u_s \in Y$. 
        We will show that there exists a collection $\mathcal{B}' = \{B_1', \ldots, B_s'\}$ of pairwise vertex-disjoint copies of $\mathbb{B}$ in $G[L_1\cup V_2]$ such that $u_i \in B_i'$ for $i\in [s]$. 
        Indeed, it follows from the definition of $Y$ that there exists an $s_1$-set $N_i \subset N_{G}(u_i) \cap L_1 \subset L$ for every $i\in [s]$ such that sets $N_1, \ldots, N_s$ are pairwise disjoint. 
        It follows from the definition of $L$ that 
        \begin{align*}
            |\bigcap_{v\in N_i} N_{G}(v) \cap L_1|
            & \ge st - s_1\left(st - \alpha (st-1)\right) - |S_1| \\
            & \ge  \left( 1-s_1(1-\alpha)\right) st - s_1s - \frac{4s(n-st)}{1-\alpha}
             \ge s_2s. 
        \end{align*}
        Then, a simple greedy argument shows that such a collection $\mathcal{B}'$ exists. 

        Let $L':= L_1\setminus \bigcup_{i \in [s]}V(B_i')$. Observe that 
        $|L_1|= |L|-|S_1| - s(s-1)$ is divisible by $s$. 
        In addition, it follows from the definition of $L$ that
        \begin{align*}
            \delta(G[L_1'])
            & \ge \alpha(st-1) - \left(|S_1|+s(s-1)\right) \\
            & \ge \alpha(st-1) - \left(\frac{4s(n-st)}{1-\alpha}+s(s-1)\right)  \\
            & \ge \left(1-\frac{1}{2s_1}\right)(st-1) 
                    - \frac{2s_1\cdot 4s \cdot \varepsilon s}{1-\varepsilon s}st - s(s-1) \\
            & > \left(1-\frac{1}{2s_1}\right)(st-1) 
                    - \frac{1}{8}st - s(s-1)
            > \left(\frac{1}{2}+\frac{1}{16}\right)(st-1). 
        \end{align*}
        Here, we used the assumption that $t \ge \frac{n}{s} - \varepsilon n$, $s_1 \ge 2$, and $\varepsilon = \frac{1}{65s_1s^2}$. 
        So it follows from Theorem~\ref{THM:AY-graph-factor} that $G[L_1']$ contains a perfect $B_{s_1,s_2}$-matching, i.e. $\frac{|L_1|}{s} \mathbb{B} \subset G[L_1']$. 
        These copies of $\mathbb{B}$ together with $\mathcal{B}_{S} \cup \mathcal{B}'$ shows that $(t+1)\mathbb{B} \subset G$ (see Figure~\ref{fig:3rd-part}), which is a contradiction. 
   \end{proof}
    Claim~\ref{CLAIM:3rd-graph-Y-upper-bound} implies that the induced bipartite graph $G[L_1, V_2]$ of $G$ on $L_1$ and $V_2$ satisfies 
    \begin{align}\label{equ:3rd-graph-G-L1V2}
        |G[L_1, V_2]|
        \le (s-1)|L_1| + \left(|V_2|-s+1\right)(s_1s-1)
        < (s-1)st + s_1s (n-st). 
    \end{align}
    It follows from~\eqref{equ:3rd-G2-upper-bound},~\eqref{equ:3rd-G1-upper-bound},~\eqref{equ:3rd-graph-S1-upper-bound}, and ~\eqref{equ:3rd-graph-G-L1V2} that 
    \begin{align*}
        |G|
        & = |G_1| + |G_2| + |G[L_1, V_2]| + |G[S_1, V_2]| \\
        & \le \binom{st}{2} - \frac{st-\ell}{2}(1-\alpha)(st-1) 
                + \mathrm{ex}(n-st, \mathbb{B}) \\
        &\quad     + (s-1)st + s_1s (n-st) 
                + s(st-\ell)(n-st) \\
        & \le \binom{st}{2} + \mathrm{ex}(n-st, \mathbb{B}) + s_1s n 
                - \left(\frac{1-\alpha}{2}(st-1) - s(n-st)\right). 
    \end{align*}
    Since $t \ge \frac{n}{s} - \varepsilon n$ and $\varepsilon = \frac{1}{65s_1s^2}$, we have 
    \begin{align*}
        \frac{1-\alpha}{2}(st-1) - s(n-st)
        \ge \frac{1}{4s_1}(1-\varepsilon s)n - 1 - s \cdot \varepsilon s n 
        > 0. 
    \end{align*}
    Consequently, 
    \begin{align*}
        |G|
         < \binom{st}{2} + \mathrm{ex}\left(n-st, \mathbb{B}\right) + s_1s n 
         \le \binom{\hat{t}}{2} + \mathrm{ex}\left(n-\hat{t}, \mathbb{B}\right) + s_1sn,  
    \end{align*}
    completing the proof of Theorem~\ref{THM:3rd-interval-graph}. 
\end{proof}
\subsection{Hypergraphs: proof of Theorem~\ref{THM:3rd-interval}}\label{SUBSEC:proof-3rd-interval-hygp}
The proof strategy for Theorem~\ref{THM:3rd-interval} is similar to that of Theorem~\ref{THM:3rd-interval-graph},  except in the hypergraph case we substitute Claim~\ref{CLAIM:3rd-graph-Y-upper-bound} with a less precise estimate, as presented in Claim~\ref{CLAIM:3rd-HL1V2}.
\begin{proof}[Proof of Theorem~\ref{THM:3rd-interval}]
Fix integers $r\ge 3$, $s_r \ge \cdots \ge s_1\ge 2$, and an $r$-partite $r$-graph $\mathbb{B} := B_{s_1, \ldots, s_r}^{r}$  with part sizes $s_1, \ldots, s_r$.  
Let 
\begin{align*}
    s:= \sum_{i\in [r]}s_i, \quad 
    A:= 2er^2\prod_{i\in [r]}s_i,
    \quad\text{and}\quad
    \varepsilon := \frac{1}{8er^2s^2A}. 
\end{align*}
Let $n$ be a sufficiently large integer and $t \in \left[\frac{n}{s}-\varepsilon n, \frac{n}{s}\right]$ be an integer. 
Let $\mathcal{H}$ be a maximum $n$-vertex $(t+1)\mathbb{B}$-free $r$-graph. 
Let $\hat{t} := s(t+1)-1$ for simplicity. 
It follows from Proposition~\ref{PROP:three-lower-bounds} and Fact~\ref{FACT:increasing-f(ell)} that 
\begin{align}\label{equ:3rd-lower-bound-H}
    |\mathcal{H}|
    \ge g_3(n,t,\mathbb{B})
    = \binom{\hat{t}}{r} 
        + \mathrm{ex}\left(n- \hat{t}, \mathbb{B}\right) 
    \ge \binom{st}{r} + \mathrm{ex}(n-st, \mathbb{B}). 
\end{align}
Let $\mathcal{B} = \{\mathbb{B}_1, \ldots, \mathbb{B}_{t}\}$ be a collection of vertex-disjoint copies of $\mathbb{B}$ in $\mathcal{H}$ (the existence of such a collection follows from the maximality of $\mathcal{H}$). 
Let 
\begin{align*}
    V:= V(\mathcal{H}), \quad 
    V_1 := \bigcup_{i\in [t]}V(\mathbb{B}_i), \quad 
    V_2:= V \setminus V_1, \quad\text{and}\quad 
    \mathcal{H}_i := \mathcal{H}[V_i]\quad\text{for $i\in \{1,2\}$}. 
\end{align*}
Observe that $|V_1| = st$ and $\mathcal{H}_2$ is $\mathbb{B}$-free. 
Hence, by Theorem~\ref{THM:Erdos-hypergraph-KST}, we have 
\begin{align}\label{equ:3rd-H2}
    |\mathcal{H}_2| 
    \le \mathrm{ex}\left(n-st, \mathbb{B}\right)
    \le C (n-st)^{r-\frac{1}{s_1\cdots s_{r-1}}}. 
\end{align}
%
Since $n-st \le \varepsilon s n \le \frac{n}{2r}$, it follows from Fact~\ref{LEMMA:binom-inequ-b} that 
\begin{align}\label{equ:3rd-H12}
    |\mathcal{H}[V_1, V_2]| 
    \le \sum_{v\in V_2}d_{\mathcal{H}}(v)
    \le (n-st)\binom{n-1}{r-1} 
    < e (n-st)\binom{st-1}{r-1}. 
\end{align}

Let 
\begin{align*}
    \alpha := 1-\frac{1}{2A}, \quad
    L:= \left\{v\in V_1 \colon d_{\mathcal{H}_1}(v) \ge \alpha \binom{st-1}{r-1}\right\}, 
    \quad
    S:= V_1\setminus L, \quad\text{and}\quad 
    \ell := |L|. 
\end{align*}
It follows from the definition of $L$ that 
\begin{align}\label{equ:3rd-H1}
    |\mathcal{H}_1|
    & = \frac{1}{r}\left(\sum_{v\in L}d_{\mathcal{H}_1}(v) + \sum_{v\in S}d_{\mathcal{H}_1}(v)\right) \notag \\
    & \le \frac{1}{r}\left(\ell \binom{st-1}{r-1} + (st-\ell)\alpha \binom{st-1}{r-1}\right)
    = \binom{st}{r} - (1-\alpha)\frac{st-\ell}{r} \binom{st-1}{r-1}. 
\end{align}
\begin{claim}\label{CLAIM:3rd-L-lower-bound}
    We have $|S| < \frac{er(n-st)}{1-\alpha}$. 
\end{claim}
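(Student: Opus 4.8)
The plan is to mirror the argument of Claim~\ref{CLAIM:3rd-graph-S-upper-bound} from the graph case. The starting point is the decomposition $|\mathcal{H}| = |\mathcal{H}_1| + |\mathcal{H}[V_1,V_2]| + |\mathcal{H}_2|$, where all three pieces have already been bounded above: $|\mathcal{H}_1| \le \binom{st}{r} - (1-\alpha)\frac{st-\ell}{r}\binom{st-1}{r-1}$ by~\eqref{equ:3rd-H1}, the crossing edges $|\mathcal{H}[V_1,V_2]| < e(n-st)\binom{st-1}{r-1}$ by~\eqref{equ:3rd-H12}, and $|\mathcal{H}_2| \le \mathrm{ex}(n-st,\mathbb{B})$ by~\eqref{equ:3rd-H2}. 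Summing these three bounds gives
\[
    |\mathcal{H}| < \binom{st}{r} - (1-\alpha)\frac{st-\ell}{r}\binom{st-1}{r-1} + e(n-st)\binom{st-1}{r-1} + \mathrm{ex}(n-st,\mathbb{B}).
\]

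Next I would compare this with the lower bound~\eqref{equ:3rd-lower-bound-H} coming from the construction $\mathcal{G}_3(n,t,\mathbb{B})$, namely $|\mathcal{H}| \ge \binom{st}{r} + \mathrm{ex}(n-st,\mathbb{B})$. The terms $\binom{st}{r}$ and $\mathrm{ex}(n-st,\mathbb{B})$ cancel on both sides, leaving
\[
    (1-\alpha)\frac{st-\ell}{r}\binom{st-1}{r-1} < e(n-st)\binom{st-1}{r-1}.
\]
Since $st \ge r$ for $n$ large, $\binom{st-1}{r-1} > 0$, so dividing through by it and recalling $|S| = st - \ell$ yields $|S| < \frac{er(n-st)}{1-\alpha}$, which is exactly the claimed bound.

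I do not anticipate any real obstacle here: the claim is a purely arithmetic consequence of the three upper bounds and the one lower bound already established. The only minor points to keep in mind are that $\binom{st-1}{r-1}$ is nonzero (which holds for sufficiently large $n$) and that the strict inequality in~\eqref{equ:3rd-H12} propagates so that the final bound is strict, matching the statement of the claim. The substantive content lies in the bounds~\eqref{equ:3rd-H1}--\eqref{equ:3rd-H2} preceding the claim, not in the claim itself.
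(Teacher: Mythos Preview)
Your proposal is correct and follows essentially the same approach as the paper: combine the three upper bounds~\eqref{equ:3rd-H1},~\eqref{equ:3rd-H12},~\eqref{equ:3rd-H2} on the pieces of the decomposition, compare against the lower bound~\eqref{equ:3rd-lower-bound-H}, cancel, and divide through by $\binom{st-1}{r-1}$. Both the structure and the details match.
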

\begin{proof}
    It follows from~\eqref{equ:3rd-H2},~\eqref{equ:3rd-H12}, and~\eqref{equ:3rd-H1} that 
    \begin{align*}
        |\mathcal{H}|
        & = |\mathcal{H}_1| + |\mathcal{H}[V_1, V_2]| + |\mathcal{H}_2| \\
        & <  \binom{st}{r} - (1-\alpha)\frac{st-\ell}{r} \binom{st-1}{r-1}       + e (n-st)\binom{st-1}{r-1}
            + \mathrm{ex}\left(n-st, \mathbb{B}\right).  
    \end{align*}
    Combined with~\eqref{equ:3rd-lower-bound-H}, we obtain 
    \begin{align*}
        - (1-\alpha)\frac{st-\ell}{r} \binom{st-1}{r-1}       + e (n-st)\binom{st-1}{r-1} 
        > 0, 
    \end{align*}
    which implies that $|S| = st-\ell <  \frac{er(n-st)}{1-\alpha}$. 
\end{proof}

Let $\mathcal{B}_{S}$ be the collection of elements in $\mathcal{B}$ that have nonempty intersection with $S$. Let 
\begin{align*}
    S_1 
    := \bigcup_{\mathbb{B}_i \in \mathcal{B}_{S}}V(\mathbb{B}_i)  
    \quad\text{and}\quad
    L_1 := V_1 \setminus S_1. 
\end{align*}
Observe that $S\subset S_1$ and $L_1\subset L$. 
Also observe from Claim~\ref{CLAIM:3rd-L-lower-bound} that 
\begin{align}\label{equ:3rd-S1-upper-bound}
    |S_1| \le s|S| \le \frac{ers(n-st)}{1-\alpha}. 
\end{align}
%
%
\begin{claim}\label{CLAIM:3rd-HL1V2}
   The $r$-graph $\mathcal{H}':= \mathcal{H}[L_1\cup V_2] \setminus \mathcal{H}[L_1]$ is $s  \mathbb{B}$-free. 
   In particular, we have 
   \begin{align*}
       |\mathcal{H}'|
       \le \mathrm{ex}_{\mathrm{star}}(n-st, n, s \mathbb{B}). 
   \end{align*}
\end{claim}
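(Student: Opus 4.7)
The approach is proof by contradiction via a packing argument: from $s$ pairwise vertex-disjoint copies of $\mathbb{B}$ inside $\mathcal{H}'$, I plan to produce $t+1$ pairwise vertex-disjoint copies of $\mathbb{B}$ inside $\mathcal{H}$, contradicting its $(t+1)\mathbb{B}$-freeness. The ``In particular'' clause is then immediate: every edge of $\mathcal{H}'$ has at least one vertex in $V_2$ by the very definition of $\mathcal{H}'$, so $\mathcal{H}'$ is an $|V_2|$-star on its $|L_1|+|V_2|$ vertices; since $|V_2|=n-st$, padding with isolated vertices up to $n$ vertices is harmless and gives $|\mathcal{H}'|\le\mathrm{ex}_{\mathrm{star}}(n-st,n,s\mathbb{B})$ from $s\mathbb{B}$-freeness.

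Suppose $\mathbb{B}_1',\ldots,\mathbb{B}_s'\subseteq\mathcal{H}'$ are pairwise vertex-disjoint copies of $\mathbb{B}$. Since every edge of each $\mathbb{B}_i'$ must hit $V_2$, the set $V(\mathbb{B}_i')$ contains at least one vertex of $V_2$ and hence at most $s-1$ vertices of $L_1$, so $W:=\bigcup_{i\in[s]}V(\mathbb{B}_i')\cap L_1$ has $|W|\le s(s-1)$. Because $|S_1|$ is divisible by $s$ (it is a disjoint union of $s$-element vertex sets of blocks in $\mathcal{B}_S$), so is $|L_1|=st-|S_1|$. I then pass to a subset $L_1''\subseteq L_1\setminus W$ with $s\mid|L_1''|$ and $|L_1''|/s\ge|L_1|/s-\lceil|W|/s\rceil\ge|L_1|/s-(s-1)$, obtained by discarding at most $s-1$ additional vertices from $L_1\setminus W$ to meet divisibility.

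The technical heart of the argument is verifying the minimum-degree hypothesis of Theorem~\ref{THM:Packing-F-mindegree} (Lu--Sz\'ekely) on $\mathcal{H}[L_1'']$: namely $\delta(\mathcal{H}[L_1''])\ge(1-1/A)\binom{|L_1''|-1}{r-1}$. For each $v\in L_1''\subseteq L$, the defining property of $L$ yields $d_{\mathcal{H}_1}(v)\ge\alpha\binom{st-1}{r-1}$, while the edges of $\mathcal{H}_1$ through $v$ lost when restricting to $\mathcal{H}[L_1'']$ number at most $(st-|L_1''|)\binom{st-2}{r-2}\le(|S_1|+s^2)\binom{st-2}{r-2}$. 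Combining the bound $|S_1|<2eArs(n-st)$ from Claim~\ref{CLAIM:3rd-L-lower-bound} with the input $n-st\le\varepsilon sn$ for $\varepsilon=1/(16e^2r^4s^2\prod_{i\in[r]}s_i)$, and a careful comparison between $\binom{|L_1''|-1}{r-1}$ and $\binom{st-1}{r-1}$ that exploits the closeness of $|L_1''|$ to $st$, should certify the inequality.

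Once Lu--Sz\'ekely applies, a perfect $\mathbb{B}$-matching $\mathcal{M}$ in $\mathcal{H}[L_1'']$ of size $|L_1''|/s$ is obtained. The three families $\mathcal{B}_S$, $\{\mathbb{B}_1',\ldots,\mathbb{B}_s'\}$, and $\mathcal{M}$ are pairwise vertex-disjoint (contained respectively in $S_1$, in $W\cup V_2$, and in $L_1''\subseteq L_1\setminus W$), so their union furnishes at least
\begin{align*}
\frac{|S_1|}{s}+s+\frac{|L_1''|}{s}\ \ge\ \frac{|S_1|}{s}+s+\frac{|L_1|}{s}-(s-1)\ =\ t+1
\end{align*}
pairwise vertex-disjoint copies of $\mathbb{B}$ in $\mathcal{H}$, the required contradiction. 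The main obstacle will be the minimum-degree verification: the slack $\alpha-(1-1/A)=1/(2A)$ between the $L$-threshold and the Lu--Sz\'ekely threshold is narrow, so balancing it against the error term contributed by $|S_1|+s^2$ will require sharp use of the upper bound on $n-st$ built into the hypothesis $t\ge n/s-n/(16e^2r^4s^2\prod_{i\in[r]}s_i)$.
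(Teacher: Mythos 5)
Your proposal matches the paper's proof essentially step for step: a contradiction argument that removes $W=\bigcup_{i}V(\mathbb{B}_i')\cap L_1$ (plus a few extra vertices for divisibility), applies the Lu--Sz\'ekely theorem to the resulting induced subgraph $\mathcal{H}[L_1'']$ to obtain a near-perfect $\mathbb{B}$-matching, and combines it with $\mathcal{B}_S$ and $\mathcal{B}'$ to produce $t+1$ pairwise vertex-disjoint copies of $\mathbb{B}$. The only cosmetic difference is that the paper fixes $L_1'\subset L_1\setminus W$ of size exactly $s(t+1-s)-|S_1|=|L_1|-s(s-1)$ whereas you allow a possibly slightly larger $L_1''$, but the final count of $t+1$ and the minimum-degree verification (bounding the degree loss by $(|S_1|+s^2)\binom{st-2}{r-2}$ against the slack $1/(2A)$) are the same; your explicit observation that $s\mid|S_1|$ and hence $s\mid|L_1|$ is used implicitly by the paper when it writes $|S_1|/s$.
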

\begin{proof}
    Suppose to the contrary that there exists a collection $\mathcal{B}'= \{\mathbb{B}_1', \ldots, \mathbb{B}_{s}'\}$ of pairwise vertex-disjoint copies of $\mathbb{B}$ in $\mathcal{H}'$. 
    Let $W:= \bigcup_{i\in [s]}V(\mathbb{B}'_i)$, $W_1:= W \cap L_1$, and $W_2:= W\cap V_2$. 
    Observe that every $\mathbb{B}_i'$ contains at least one vertex from $V_2$ (since $L_1$ is an independent set in $\mathcal{H}'$). 
    So we have $|W_1| \le (s-1)s$. 
    It follows that 
    \begin{align*}
        |V_1\setminus W_1|
        \ge st - (s-1)s
        = s\left(t+1-s\right). 
    \end{align*}
    Fix an arbitrary subset $L_1' \subset L_1\setminus W_1$ of size exactly $s\left(t+1-s\right) - |S_1|$. 
    It follows from the definition of $L$ that the induced subgraph $\mathcal{H}[L_1']$ satisfies 
    \begin{align*}
        \delta(\mathcal{H}[L_1'])
        & \ge \alpha \binom{st-1}{t-1} - |V_1\setminus L_1'| \binom{st-2}{r-2} \\
        & \ge \alpha \binom{st-1}{t-1} - \left(|S_1| + (s-1)s\right)\frac{r-1}{st-1} \binom{st-1}{r-1} \\
        & > \left(\alpha - \left(\frac{ers(n-st)}{1-\alpha}+ s^2\right)\frac{r-1}{st-1}\right) \binom{st-1}{r-1},
    \end{align*}
    where the last inequality follows from~\eqref{equ:3rd-S1-upper-bound}. 
    Since $n$ is sufficiently large and $t \ge n/s - \varepsilon n$, simple calculations show that $\left(\frac{ers(n-st)}{1-\alpha}+ s^2\right)\frac{r-1}{st-1} < \frac{1}{2A}$. 
    Therefore, we have $\delta(\mathcal{H}[L_1']) \ge \left(1-\frac{1}{A}\right)\binom{st-1}{r-1}$. 
    It follows from Theorem~\ref{THM:Packing-F-mindegree} that $\left(t+1-s - |S_1|/s\right)\mathbb{B} \subset \mathcal{H}[L_1']$. 
    However, these $t+1-s - |S_1|/s$ copies of $\mathbb{B}$ together with $\mathcal{B}_S\cup \mathcal{B}'$ show that $(t+1)\mathbb{B} \subset \mathcal{H}$, a contradiction. 
\end{proof}

Let $\mathcal{H}'' := \left\{e\in \mathcal{H} \colon e\cap S_1 \neq \emptyset \text{ and } e\cap V_2 \neq \emptyset\right\}$. 
Using Fact~\ref{LEMMA:binom-inequ-b} and Claim~\ref{CLAIM:3rd-L-lower-bound}, we obtain 
\begin{align*}
    |\mathcal{H}''|
    \le |S_1||V_2|\binom{n-2}{r-2}
    & \le s |S| \times (n-st) \times e \binom{st-2}{r-2} \\
    & \le es(n-st)|S| \times \frac{r}{st} \binom{st-1}{r-1} \\
    & = \frac{\varepsilon er^2 s n}{(1/s-\varepsilon)n} \frac{st-\ell}{r} \binom{st-1}{r-1}
    \le \frac{1}{4A} \frac{st-\ell}{r} \binom{st-1}{r-1}, 
\end{align*}
where the last inequality follows from $\varepsilon = \frac{1}{8er^2s^2A}$. 
Combined with~\eqref{equ:3rd-H1} and Claim~\ref{CLAIM:3rd-HL1V2}, we obtain 
\begin{align*}
    |\mathcal{H}|
    & = |\mathcal{H}_1|  + |\mathcal{H}''| + |\mathcal{H}'| \\
    & \le \binom{st}{r} - (1-\alpha)\frac{st-\ell}{r} \binom{st-1}{r-1}
      + \frac{1}{4A} \frac{st-\ell}{r} \binom{st-1}{r-1} 
        + \mathrm{ex}_{\mathrm{star}}(n-st, n, s \mathbb{B})\\
    & = \binom{st}{r} - \left(\frac{1}{2A} - \frac{1}{4A}\right)\frac{st-\ell}{r} \binom{st-1}{r-1} + + \mathrm{ex}_{\mathrm{star}}(n-st, n, s \mathbb{B}) \\
    & < \binom{s(t+1)-1}{r} + + \mathrm{ex}_{\mathrm{star}}(n-st, n, s \mathbb{B}), 
\end{align*}
completing the proof of Theorem~\ref{THM:3rd-interval}. 
\end{proof}
\section{Proofs for theorems in the second interval}\label{SEC:proof-2nd-interval}
In this section, we prove Theorems~\ref{THM:2nd-interval} and~\ref{THM:2nd-interval-graph}. 
Let us start with some technical lemmas. 
\subsection{Preparations}
Using Proposition~\ref{PROP:hypergraph-KST-Zaran}, we establish the following lemma, ensuring a near-maximum $K_{s_1, \ldots, s_r}^{r}$-matching in a semibipartite $r$-graph with minimum degree of $\Omega(n^{r-1})$ on one side.

\begin{lemma}\label{LEMMA:2nd-near-perfect-K-semibipartite}
  Let $r \ge 2$ and $s_r \ge \cdots \ge s_1 \ge 2$ be integers and let $\alpha > 0$ be a real number. 
  Let $s:= s_1 + \cdots + s_r$ and $\mathbb{K}:= K_{s_1, \ldots, s_r}^{r}$. 
  The following holds for sufficiently large $n$. 
  Suppose that $\mathcal{H}$ is an $m$ by $n$ semibipartite $r$-graph on $V_1$ and $V_2$ with $m \le \frac{\alpha n}{s-s_1}$ and $d_{\mathcal{H}}(v) \geq \alpha n^{r-1}$ for every $v \in V_1$.
  Then $\mathcal{H}$ contains at least $\left \lfloor  \frac{m}{s_1} - \frac{4}{\alpha}\right \rfloor$  pairwise vertex-disjoint copies of ordered $\mathbb{K}$. 
\end{lemma}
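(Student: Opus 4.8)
The plan is a greedy extraction combined with the Zarankiewicz-type estimate from Proposition~\ref{PROP:hypergraph-KST-Zaran} (using Theorem~\ref{THM:graph-KST} instead when $r=2$). First I would fix a maximum collection $\mathcal{M}$ of pairwise vertex-disjoint ordered copies of $\mathbb{K}$ in $\mathcal{H}$ and set $k := |\mathcal{M}|$. It is enough to prove $k \ge \frac{m}{s_1} - \frac{4}{\alpha}$, since $k$ is an integer and this then yields $k \ge \left\lfloor \frac{m}{s_1} - \frac{4}{\alpha}\right\rfloor$. Let $V_1' \subseteq V_1$ and $V_2' \subseteq V_2$ be the vertices avoided by $\mathcal{M}$, so that $m' := |V_1'| = m - k s_1$ and $|V_2'| = n - k(s-s_1)$, and let $\mathcal{H}' := \mathcal{H}[V_1' \cup V_2']$; by the maximality of $\mathcal{M}$, the semibipartite $r$-graph $\mathcal{H}'$ contains no ordered copy of $\mathbb{K}$.

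The first substantive step is to show that $\mathcal{H}'$ still has large degrees on the $V_1'$-side. Since $k s_1 \le m \le \frac{\alpha n}{s - s_1}$, we have $k(s-s_1) \le \frac{\alpha n}{s_1}$, so deleting the used vertices of $V_2$ removes at most $k(s-s_1)\binom{n-1}{r-2} \le \frac{\alpha n}{s_1}\cdot\frac{n^{r-2}}{(r-2)!} \le \frac{\alpha}{2} n^{r-1}$ sets from the link of any $v \in V_1'$ (here we use $s_1 (r-2)! \ge 2$). Hence $d_{\mathcal{H}'}(v) \ge \frac{\alpha}{2} n^{r-1}$ for every $v \in V_1'$, and since $\mathcal{H}'$ is semibipartite, $|\mathcal{H}'| = \sum_{v \in V_1'} d_{\mathcal{H}'}(v) \ge \frac{\alpha}{2} m' n^{r-1}$.

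On the other hand, since $\mathcal{H}'$ is an $m'$ by $|V_2'|$ semibipartite $r$-graph with no ordered copy of $\mathbb{K}$, Proposition~\ref{PROP:hypergraph-KST-Zaran} (or Theorem~\ref{THM:graph-KST} when $r=2$) gives $|\mathcal{H}'| \le Z(m', |V_2'|, s_1, \ldots, s_r) \le C_1 m' n^{\,r-1 - \frac{1}{s_1 \cdots s_{r-1}}} + s_1 n^{r-1}$ for a constant $C_1 = C_1(r, s_1, \ldots, s_r)$, where I bounded $|V_2'| \le n$ and $(s_1-1)\binom{n}{r-1} \le s_1 n^{r-1}$. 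For $n$ large enough (in terms of $\alpha$ and $\mathbb{K}$) the first term is at most $\frac{\alpha}{4} m' n^{r-1}$, and comparing the two bounds on $|\mathcal{H}'|$ forces $\frac{\alpha}{2} m' n^{r-1} \le \frac{\alpha}{4} m' n^{r-1} + s_1 n^{r-1}$, hence $m' \le \frac{4 s_1}{\alpha}$. Therefore $k s_1 = m - m' \ge m - \frac{4 s_1}{\alpha}$, i.e.\ $k \ge \frac{m}{s_1} - \frac{4}{\alpha}$, which completes the proof.

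I expect the argument to be routine once set up; the only place calling for care is the degree bookkeeping in the second step, where the hypothesis $m \le \frac{\alpha n}{s - s_1}$ is exactly what is needed to ensure the minimum degree on the $V_1'$-side drops by at most a factor of two, together with the harmless split between the graph case ($r=2$, Theorem~\ref{THM:graph-KST}) and the hypergraph case ($r\ge 3$, Proposition~\ref{PROP:hypergraph-KST-Zaran}) for the Zarankiewicz estimate.
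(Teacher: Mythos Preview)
Your proposal is correct and follows essentially the same approach as the paper: take a maximum collection of pairwise vertex-disjoint ordered copies of $\mathbb{K}$, note the leftover semibipartite $r$-graph has no ordered $\mathbb{K}$, bound its size from below using the minimum-degree condition on $V_1'$ (with the same bookkeeping that $k(s-s_1)\le \alpha n/s_1$ so degrees drop by at most half), and from above via Proposition~\ref{PROP:hypergraph-KST-Zaran}, then compare. The only cosmetic difference is that the paper phrases the final step as a contradiction (assuming $\ell \le m/s_1 - 4/\alpha$ and deriving an ordered $\mathbb{K}$ in $\mathcal{H}'$), whereas you argue directly that $m' \le 4s_1/\alpha$; the computations are the same.
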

\begin{proof}
    Let $n$ be sufficiently large and $\mathcal{H}$ be an $m$ by $n$ semibipartite $r$-graph on $V_1$ and $V_2$ with $n \geq (s-s_1)m/\alpha$ and $d_{\mathcal{H}}(v) \geq \alpha n^{r-1}$ for every $v \in V_1$. 
    We may assume that $m \ge \frac{4s_1}{\alpha}$, since otherwise $\left \lfloor  \frac{m}{s_1} - \frac{4}{\alpha}\right \rfloor \le 0$, and there is nothing to prove. 
    Let $\mathcal{K} = \{K_1, \ldots, K_{\ell}\}$ be a maximum collection of pairwise vertex-disjoint copies of ordered $K$ in $\mathcal{H}$.
    Let $B:= \bigcup_{i\in [\ell]}V(K_i)$, $B_i:= B\cap V_i$, and $V_i':= V_i\setminus B_i$ for $i\in \{1,2\}$. 
    Let $\mathcal{H}'$ be the induced subgraph of $\mathcal{H}$ on $V_1'\cup V_2'$ and note that $\mathcal{H'}$ is also semibipartite. 
    Suppose to the contrary that $\ell \le \frac{m}{s_1} - \frac{4}{\alpha}$. 
    Then $|B_1| = s_1 \ell \le m- \frac{4s_1}{\alpha}$ and $|B_2| = (s-s_1)\ell \le (s-s_1)\frac{m}{s_1} \le \frac{\alpha n}{s_1}$. 
    Therefore, $|V_1'| = m- |B_1| \ge \frac{4s_1}{\alpha}$ and 
    \begin{align*}
        |\mathcal{H}'|
        = \sum_{v\in V_1'}d_{\mathcal{H}'}(v)
        & \ge \sum_{v\in V_1'}\left(d_{\mathcal{H}}(v) - |B_2|\binom{n-1}{r-2}\right)  \\
        &  \ge |V_1'| \left(\alpha n^{r-1} - \frac{\alpha n}{s_1} \binom{n-1}{r-2}\right) \\
        & \ge |V_1'| \times \frac{\alpha}{2}n^{r-1} \\
        & = |V_1'| \times \frac{\alpha}{4}n^{r-1} 
         + |V_1'| \times \frac{\alpha}{4}n^{r-1} 
         \ge C |V_1'| n^{r-1-\frac{1}{s_1\cdots s_{r-1}}} + s_1 n^{r-1}. 
    \end{align*}
    It follows from Proposition~\ref{PROP:hypergraph-KST-Zaran} that $\mathcal{H}'$ contains an ordered copy of $K$, contradicting the maximality of $\mathcal{K}$. 
\end{proof}

If we extend the assumptions in Lemma~\ref{LEMMA:2nd-near-perfect-K-semibipartite} by assuming the number of vertices with near-maximum degree is at least some constant, then we can enhance the lemma by finding a maximum $K_{s_1, \ldots, s_r}^{r}$-matching. 
The proof relies on a simple absorption strategy, pairing a low-degree vertex with $s_1-1$ high-degree vertices.

\begin{figure}[htbp]
\centering
\tikzset{every picture/.style={line width=0.85pt}} 
\begin{tikzpicture}[x=0.75pt,y=0.75pt,yscale=-1,xscale=1]
\draw [line width=1pt]  (160.95,149.32) .. controls (160.95,143.29) and (165.84,138.4) .. (171.87,138.4) -- (524.41,138.4) .. controls (530.44,138.4) and (535.33,143.29) .. (535.33,149.32) -- (535.33,182.08) .. controls (535.33,188.11) and (530.44,193) .. (524.41,193) -- (171.87,193) .. controls (165.84,193) and (160.95,188.11) .. (160.95,182.08) -- cycle ;
\draw [line width=1pt]  (159.79,57.95) .. controls (159.79,54.11) and (162.9,51) .. (166.74,51) -- (527.22,51) .. controls (531.06,51) and (534.17,54.11) .. (534.17,57.95) -- (534.17,78.8) .. controls (534.17,82.64) and (531.06,85.75) .. (527.22,85.75) -- (166.74,85.75) .. controls (162.9,85.75) and (159.79,82.64) .. (159.79,78.8) -- cycle ;
 \draw    (305,51) -- (305,85.75) ;
\draw    (176.43,71.11) -- (176.43,171.63) ;
\draw    (192.69,71.11) -- (192.69,171.63) ;
\draw    (208.96,71.11) -- (208.96,171.63) ;
\draw    (176.43,71.11) -- (192.69,171.63) ;
\draw [fill=uuuuuu]   (176.43,71.11)  circle (1.5pt);
\draw [fill=uuuuuu]   (176.43,171.63)  circle (1.5pt);
\draw [fill=uuuuuu]   (192.69,71.11) circle (1.5pt);
\draw [fill=uuuuuu]   (192.69,171.63)  circle (1.5pt);
\draw [fill=uuuuuu]   (208.96,71.11)  circle (1.5pt);
\draw [fill=uuuuuu]   (208.96,171.63)  circle (1.5pt);
%
\draw    (192.69,71.11) -- (208.96,171.63) ;
\draw    (208.96,71.11) -- (192.69,171.63) ;
\draw    (208.96,71.11) -- (176.43,171.63) ;
\draw    (192.69,71.11) -- (176.43,171.63) ;
\draw    (176.43,71.11) -- (208.96,171.63) ;
%
\draw [fill=uuuuuu]   (487.71,71.11)  circle (1.5pt);
\draw [fill=uuuuuu]   (487.71,171.63)  circle (1.5pt);
\draw [fill=uuuuuu]   (503.97,71.11) circle (1.5pt);
\draw [fill=uuuuuu]   (503.97,171.63)  circle (1.5pt);
\draw [fill=uuuuuu]   (520.23,71.11)  circle (1.5pt);
\draw [fill=uuuuuu]   (520.23,171.63)  circle (1.5pt);
\draw    (487.71,71.11) -- (487.71,171.63) ;
\draw    (503.97,71.11) -- (503.97,171.63) ;
\draw    (520.23,71.11) -- (520.23,171.63) ;
\draw    (487.71,71.11) -- (503.97,171.63) ;
\draw    (503.97,71.11) -- (520.23,171.63) ;
\draw    (520.23,71.11) -- (503.97,171.63) ;
\draw    (520.23,71.11) -- (487.71,171.63) ;
\draw    (503.97,71.11) -- (487.71,171.63) ;
\draw    (487.71,71.11) -- (520.23,171.63) ;
\draw [fill=uuuuuu]   (440.09,72.08)  circle (1.5pt);
\draw [fill=uuuuuu]   (440.09,172.6)  circle (1.5pt);
\draw [fill=uuuuuu]   (456.35,72.08)  circle (1.5pt);
\draw [fill=uuuuuu]   (456.35,172.6)  circle (1.5pt);
\draw [fill=uuuuuu]   (472.61,72.08)  circle (1.5pt);
\draw [fill=uuuuuu]   (472.61,172.6)  circle (1.5pt);
%
\draw    (440.09,72.08) -- (440.09,172.6) ;
\draw    (456.35,72.08) -- (456.35,172.6) ;
\draw    (472.61,72.08) -- (472.61,172.6) ;
\draw    (440.09,72.08) -- (456.35,172.6) ;
\draw    (456.35,72.08) -- (472.61,172.6) ;
\draw    (472.61,72.08) -- (456.35,172.6) ;
\draw    (472.61,72.08) -- (440.09,172.6) ;
\draw    (456.35,72.08) -- (440.09,172.6) ;
\draw    (440.09,72.08) -- (472.61,172.6) ;
\draw [fill=uuuuuu]    (278.64,70.13) circle (1.5pt);
\draw [fill=uuuuuu]   (278.64,170.65)  circle (1.5pt);
\draw [fill=uuuuuu]   (294.91,70.13) circle (1.5pt);
\draw [fill=uuuuuu]   (294.91,170.65)  circle (1.5pt);
\draw [fill=uuuuuu]   (376.4,73.06)  circle (1.5pt);
\draw [fill=uuuuuu]   (376.4,166.75)  circle (1.5pt);
%
\draw    (278.64,70.13) -- (278.64,170.65) ;
\draw    (294.91,70.13) -- (294.91,170.65) ;
\draw    (278.64,70.13) -- (294.91,170.65) ;
\draw    (294.91,70.13) -- (278.64,170.65) ;
\draw    (376.4,73.06) -- (294.91,170.65) ;
\draw    (376.4,73.06) -- (376.4,166.75) ;
\draw    (294.91,70.13) -- (376.4,166.75) ;
\draw    (376.4,73.06) -- (278.64,170.65) ;
\draw    (278.64,70.13) -- (376.4,166.75) ;
\draw (128,62.78) node [anchor=north west][inner sep=0.75pt]   [align=left] {$V_1$};
\draw (128,162.33) node [anchor=north west][inner sep=0.75pt]   [align=left] {$V_2$};
\draw (225.42,30) node [anchor=north west][inner sep=0.75pt]   [align=left] {$L$};
\draw (409.9,30) node [anchor=north west][inner sep=0.75pt]   [align=left] {$S$};
\end{tikzpicture}

\caption{Supplementary diagram for Claim~\ref{LEMMA:2nd-perfect-K-semibipartite} when $F = K_{3,3}$.}
\label{fig:Lemma5.2}
\end{figure}

\begin{lemma}\label{LEMMA:2nd-perfect-K-semibipartite}
     Let $r \ge 2$ and $s_r \ge \cdots \ge s_1 \ge 2$ be integers and $\alpha > 0$ be a real number. 
    Let $s:= s_1 + \cdots + s_r$ and $\mathbb{K}:= K_{s_1, \ldots, s_r}^{r}$.  
    The following statement holds for sufficiently large $n$. 
    Suppose that $\mathcal{H}$ is an $m$ by $n$ semibipartite $r$-graph on $V_1$ and $V_2$ that satisfies 
    \begin{enumerate}[label=(\roman*)]
        \item\label{LEMMA:2nd-perfect-K-semibipartite-1} 
                $m \le \frac{\alpha n}{8(s-s_1)}$, 
        \item\label{LEMMA:2nd-perfect-K-semibipartite-2}            
                $d_{\mathcal{H}}(v) \geq \alpha n^{r-1}$ for every $v \in V_1$, and 
        \item\label{LEMMA:2nd-perfect-K-semibipartite-3}  
                $L := \left\{v \in V_1 \colon d(v) \geq \binom{n}{r-1} - \frac{\alpha n^{r-1}}{2s_1} \right\}$
        has size at least $\min\left\{\frac{5s_1(s_1-1)}{\alpha},\ \frac{s_1-1}{s_1}m\right\}$. 
    \end{enumerate}
    Then $\mathcal{H}$ contains $\left \lfloor  m/s_1\right \rfloor$  pairwise vertex-disjoint copies of ordered $\mathbb{K}$.
\end{lemma}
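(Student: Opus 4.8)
The plan is to assemble the $\lfloor m/s_1\rfloor$ pairwise vertex-disjoint ordered copies of $\mathbb{K}$ greedily: repeatedly choose a block of $s_1$ vertices in $V_1$ and extend it into $V_2$. The engine of the argument is the following observation, which I would record first. Write $S:=V_1\setminus L$. If $P_1\subseteq V_1$ has $|P_1|=s_1$ and at most one vertex of $P_1$ lies in $S$, then $|L_{\mathcal{H}}(P_1)|\ge \alpha n^{r-1}-(s_1-1)\tfrac{\alpha n^{r-1}}{2s_1}\ge\tfrac{\alpha n^{r-1}}{2}$, because by hypothesis (ii) some vertex of $P_1$ has a link of size at least $\alpha n^{r-1}$, while each vertex of $L$ omits at most $\binom{n}{r-1}-\bigl(\binom{n}{r-1}-\tfrac{\alpha n^{r-1}}{2s_1}\bigr)=\tfrac{\alpha n^{r-1}}{2s_1}$ of the $(r-1)$-subsets of $V_2$. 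Since $K_{s_2,\ldots,s_r}^{r-1}$ is $(r-1)$-partite, Theorem~\ref{THM:Erdos-hypergraph-KST} (or Theorem~\ref{THM:graph-KST} when $r=3$, and trivially when $r=2$) gives $\mathrm{ex}(N,K_{s_2,\ldots,s_r}^{r-1})=o(N^{r-1})$; hence for any $U\subseteq V_2$ with $|U|=o(n)$ the restriction of $L_{\mathcal{H}}(P_1)$ to $V_2\setminus U$ still has $\Omega(n^{r-1})$ edges and therefore contains an ordered copy of $K_{s_2,\ldots,s_r}^{r-1}$; adjoining $P_1$ produces an ordered copy of $\mathbb{K}$ disjoint from $U$. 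Throughout the construction the set $U$ of already-used vertices of $V_2$ satisfies $|U|\le(s-s_1)\lfloor m/s_1\rfloor\le\tfrac{\alpha n}{8s_1}=o(n)$ by hypothesis (i), so this extension step never fails.

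With this in hand I would split on the size of $S$. In the first case, $|S|\le\lfloor m/s_1\rfloor$ — this holds in particular whenever $|L|\ge\tfrac{s_1-1}{s_1}m$. Then I partition $V_1$ into $\lfloor m/s_1\rfloor$ blocks of size exactly $s_1$ (discarding the at most $s_1-1$ leftover vertices), placing the vertices of $S$ into distinct blocks and filling the remaining $s_1\lfloor m/s_1\rfloor-|S|\le m-|S|=|L|$ slots with vertices of $L$; each block then contains at most one vertex of $S$, and processing the blocks one at a time and extending each as above finishes this case.

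In the second case, $|S|>\lfloor m/s_1\rfloor$, hence $|L|<\tfrac{s_1-1}{s_1}m$, so hypothesis (iii) forces $|L|\ge\tfrac{5s_1(s_1-1)}{\alpha}$ (and $m\ge\tfrac{5s_1^2}{\alpha}$). Here I would reserve a subset $L^{\ast}\subseteq L$ of an appropriate constant size $k^{\ast}$ (possible since $|L|\ge\tfrac{5s_1(s_1-1)}{\alpha}$), apply Lemma~\ref{LEMMA:2nd-near-perfect-K-semibipartite} to the semibipartite $r$-graph $\mathcal{H}[V_1\setminus L^{\ast},V_2]$, whose hypotheses still hold, to obtain $\ell_0:=\bigl\lfloor\tfrac{m-k^{\ast}}{s_1}-\tfrac{4}{\alpha}\bigr\rfloor$ disjoint ordered copies of $\mathbb{K}$, and let $R'$ be the set of vertices of $V_1\setminus L^{\ast}$ missed by these copies, so $|R'|<\tfrac{4s_1}{\alpha}+s_1$. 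It then remains to build $\lfloor m/s_1\rfloor-\ell_0$ further disjoint ordered copies inside $R:=R'\cup L^{\ast}$: since $|R|=m-s_1\ell_0\ge s_1\bigl(\lfloor m/s_1\rfloor-\ell_0\bigr)$, all $S$-vertices of $R$ lie in $R'$, and $k^{\ast}$ has been chosen so that $\lfloor m/s_1\rfloor-\ell_0\ge|R'|$, I can partition $R$ into $\lfloor m/s_1\rfloor-\ell_0$ blocks of size $s_1$ with at most one $S$-vertex each and extend each as in the first paragraph; together with the $\ell_0$ copies from Lemma~\ref{LEMMA:2nd-near-perfect-K-semibipartite} this yields the required $\lfloor m/s_1\rfloor$ disjoint ordered copies.

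The only genuinely new ingredient is the common-link estimate of the first paragraph; everything else is greedy assembly. The main nuisance, which I would carry out carefully at the very end, is the numerology of the second case — exhibiting a single constant $k^{\ast}$ with $k^{\ast}\le|L|$ and $\lfloor m/s_1\rfloor-\ell_0\ge|R'|$ simultaneously — and this is precisely what the slack built into hypotheses (i) and (iii) and into the threshold $\tfrac{\alpha n^{r-1}}{2s_1}$ defining $L$ is designed to provide.
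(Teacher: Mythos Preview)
Your approach is essentially the same absorption idea as the paper's --- the common-link estimate in your first paragraph is exactly the engine the paper uses --- but the organization in the ``$S$ large'' regime differs, and there are two slips worth flagging.

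First, the claim ``$|U|\le \alpha n/(8s_1)=o(n)$'' is false: $\alpha$ is a fixed constant, so $|U|=\Theta(n)$. The conclusion is still fine, because removing $|U|$ vertices from $V_2$ kills at most $|U|\binom{n-1}{r-2}\le \tfrac{\alpha n}{8s_1}\cdot\tfrac{n^{r-2}}{(r-2)!}\le \tfrac{\alpha n^{r-1}}{4}$ sets from a common link of size $\ge \tfrac{\alpha n^{r-1}}{2}$, leaving $\ge \tfrac{\alpha n^{r-1}}{4}$, still far above $\mathrm{ex}(n,K^{r-1}_{s_2,\ldots,s_r})$. You should phrase the extension step this way rather than via $o(n)$.

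Second, your Case~2 numerology as stated does not close for all $\alpha$. The condition ``$\lfloor m/s_1\rfloor-\ell_0\ge |R'|$'' forces roughly $k^{\ast}\ge 4s_1(s_1-1)/\alpha+s_1^2$, which can exceed the guaranteed $|L|\ge 5s_1(s_1-1)/\alpha$ when $\alpha>1-1/s_1$. One fix is to weaken the requirement to ``each block has at most one $S$-vertex'', for which it suffices that either $|S\cap R|\le b$ or $|L\cap R|\ge (s_1-1)b$; the latter follows from $k^{\ast}\ge 4s_1(s_1-1)/\alpha+s_1(s_1-1)$, which is $\le 5s_1(s_1-1)/\alpha$ once $\alpha<1$ (and (ii) forces $\alpha<1$ for large $n$). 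A cleaner fix --- and this is what the paper does --- is to apply Lemma~\ref{LEMMA:2nd-near-perfect-K-semibipartite} to $\mathcal{H}[S\cup V_2]$ rather than to $\mathcal{H}[(V_1\setminus L^{\ast})\cup V_2]$: then the leftover is at most $\min\{5s_1/\alpha,\,|S|\}$ vertices, all in $S$, and hypothesis~(iii) is tailored exactly so that each can be paired with a disjoint $(s_1-1)$-set from $L$. This avoids the reservation step and the attendant bookkeeping entirely.
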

\begin{proof}
    Let $S:= V_1 \setminus L$, $\mathcal{H}_{S} := \mathcal{H}[S\cup V_2]$, and $\mathcal{K}_{S} = \{\mathbb{K}_1, \ldots, \mathbb{K}_{\ell}\}$ be a maximum collection of ordered copies of $\mathbb{K}$ in $\mathcal{H}_{S}$. 
    Let 
    \begin{align*}
        B_{S} := \bigcup_{i\in [\ell]}V(\mathbb{K}_i), \quad
        V_2':= V_2 \setminus B_S, \quad 
        S_1 := S\setminus B_{S}, \quad\text{and}\quad 
        m_1:= |S_1|. 
    \end{align*}
    It follows from Lemma~\ref{LEMMA:2nd-near-perfect-K-semibipartite} that $\ell \ge \max\left\{\left\lfloor \frac{|S|}{s_1} - \frac{4}{\alpha} \right\rfloor,\ 0\right\}$, and hence, 
    \begin{align}\label{equ:LEMMA:2nd-perfect-K-semibipartite-m1-upper-bound}
        m_1 
        = |S| - s_1 \ell 
        \le |S| -  s_1 \times \max\left\{\left\lfloor \frac{|S|}{s_1} - \frac{4}{\alpha} \right\rfloor,\ 0\right\}
        \le \min\left\{\frac{5s_1}{\alpha},\ |S|\right\}. 
    \end{align}
    Let us assume that $S_1:= \{v_1, \ldots, v_{m_1}\}$. 
    Let $T_1, \ldots, T_{m_1}$ be pairwise disjoint $(s_1-1)$-subsets of $L$ (the existence of such $T_i$'s is guaranteed by~\ref{LEMMA:2nd-perfect-K-semibipartite-3} and~\eqref{equ:LEMMA:2nd-perfect-K-semibipartite-m1-upper-bound}).
  Let $\mathcal{H}_i$ denote the induced subgraph of $\mathcal{H}$ on $\{v_i\} \cup T_i \cup V_2'$ for $i\in [m_1]$. 
  
  Let $B_0:= \emptyset$. 
  Suppose that we have defined $B_i \subset V_2$ of size $i(s-s_1)$ for some $i\in [0,m_1-1]$. 
  We will find an ordered copy of $\mathbb{K}$ in $\mathcal{H}_{i+1}':= \mathcal{H}_{i+1}-B_i$. 
  Indeed, notice from~\ref{LEMMA:2nd-perfect-K-semibipartite-1}  that 
  \begin{align*}
      |B_i \cup B_S| 
      \le \frac{m}{s_1}(s-s_1) 
      \le \frac{\alpha n}{4}. 
  \end{align*}
    Combined with~\ref{LEMMA:2nd-perfect-K-semibipartite-2}  and~\ref{LEMMA:2nd-perfect-K-semibipartite-3}, we see that the common link $L_{\mathcal{H}_{i+1}'}(T_{i+1}\cup \{v_{i+1}\})$ satisfies 
  \begin{align*}
      L_{\mathcal{H}_{i+1}'}(T_{i+1}\cup \{v_{i+1}\})
      & \ge d_{\mathcal{H}}(v_{i+1}) - \sum_{u\in T_{i+1}}\left(\binom{n}{r-1}-d_{\mathcal{H}}(u)\right) - |B_i \cup B_S|\binom{n-1}{r-2} \\
      & \ge \alpha n^{r-1} - (s_1-1)\frac{\alpha n^{r-1}}{2s_1} - \frac{\alpha n}{4}\binom{n-1}{r-2} 
      \ge \frac{\alpha n^{r-1}}{4}. 
  \end{align*}
  Since $n$ is sufficiently large, it follows from Theorem~\ref{THM:Erdos-hypergraph-KST} that $K_{s_2, \ldots, s_{r}}^{r-1} \subset L_{\mathcal{H}_{i+1}'}(T_{i+1}\cup \{v_{i+1}\})$. This copy of $K_{s_2, \ldots, s_{r}}^{r-1}$ together with $T_{i+1}\cup \{v_{i+1}\}$ forms an ordered copy of  $\mathbb{K}$, denoted by $\mathbb{K}'_{i+1}$,  in $\mathcal{H}_{i+1}'$ (see Figure~\ref{fig:Lemma5.2}).  
  Let $B_{i+1}:= B_i\cup (V(\mathbb{K}'_{i+1}) \cap V_2)$ and notice that $|B_{i+1}| = |B_i|+s-s_1 = (i+1)(s-s_1)$.  
  Inductively, we can find an ordered copy of $\mathbb{K}$, denoted by $\mathbb{K}_i'$, from $\mathcal{H}_{i}$ for every $i\in [m_1]$ such that $\mathbb{K}'_1, \ldots, \mathbb{K}'_{m_1}$ are pairwise vertex-disjoint. 
  Let $L':= L\setminus \bigcup_{i\in [m_i]}T_{i}$ and $V_2'':= V_2\setminus (B_{m_1} \cup B_{S})$. 
  Repeating the argument above to the induced subgraph of $\mathcal{H}$ on $L'\cup V_2''$, we can find $\lfloor m/s_1 \rfloor - m_1-\ell$ pairwise vertex-disjoint ordered copies of $\mathbb{K}$ in $\mathcal{H}[L'\cup V_2'']$, showing that $\mathcal{H}$ contains at least $\left \lfloor  m/s_1\right \rfloor$  pairwise vertex-disjoint copies of ordered $\mathbb{K}$ (see Figure~\ref{fig:Lemma5.2}).
\end{proof}

A minor adjustment to the proof of Lemma~\ref{LEMMA:2nd-perfect-K-semibipartite} results in the following simple lemma.

\begin{lemma}\label{LEMMA:2nd-perfect-K-semibipartite-a}
    Let $r \ge 2$ and $s_r \ge \cdots \ge s_1 \ge 2$ be integers and $\alpha < 1/s_1$ be a nonnegative real number. 
    Let $s:= s_1 + \cdots + s_r$ and $\mathbb{K}:= K_{s_1, \ldots, s_r}^{r}$.  
    The following statement holds for sufficiently large $n$. 
    Suppose that $\mathcal{H}$ is an $m$ by $n$ semibipartite $r$-graph on $V_1$ and $V_2$ that satisfies 
    \begin{align*}
        m \le \frac{(1-\alpha s_1)s_1}{(r-1)(s-s_1)}n \quad\text{and}\quad 
        d_{\mathcal{H}}(v)
        \ge (1-\alpha)\binom{n}{r-1} \quad\text{for all $v\in V_1$}. 
    \end{align*}
     Then $\mathcal{H}$ contains $\left \lfloor  m/s_1\right \rfloor$  pairwise vertex-disjoint copies of ordered $\mathbb{K}$.
\end{lemma}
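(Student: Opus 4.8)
The plan is to construct the $\lfloor m/s_1\rfloor$ ordered copies of $\mathbb{K}$ greedily, one at a time, mimicking the last stage of the proof of Lemma~\ref{LEMMA:2nd-perfect-K-semibipartite}; the simplification here is that \emph{every} vertex of $V_1$ already has near-complete degree, so there is no exceptional low-degree set to absorb. Suppose inductively that pairwise vertex-disjoint ordered copies $\mathbb{K}_1,\dots,\mathbb{K}_{j-1}$ of $\mathbb{K}$ have been found for some $j\le\lfloor m/s_1\rfloor$, using $(j-1)s_1$ vertices of $V_1$ and $(j-1)(s-s_1)$ vertices of $V_2$. Since $(j-1)s_1\le m-s_1$, we may pick a set $U\subseteq V_1$ of $s_1$ unused vertices; let $P\subseteq V_2$ be the set of unused vertices and put $N:=|P|=n-(j-1)(s-s_1)$. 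It suffices to find a copy of $K^{r-1}_{s_2,\dots,s_r}$ with vertex set inside $P$ lying in the common link $L_{\mathcal{H}}(U)$, since adjoining $U$ to it yields the $j$-th ordered copy $\mathbb{K}_j$.

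To locate such a copy I would bound below the number of $(r-1)$-subsets of $P$ belonging to $L_{\mathcal{H}}(U)$. As $\mathcal{H}$ is semibipartite we have $L_{\mathcal{H}}(U)\subseteq\binom{V_2}{r-1}$, and a union bound over the $s_1$ vertices of $U$ combined with the degree hypothesis shows that at most $\alpha s_1\binom{n}{r-1}$ of the $(r-1)$-subsets of $V_2$ avoid $L_{\mathcal{H}}(U)$; hence at least $\binom{N}{r-1}-\alpha s_1\binom{n}{r-1}$ of the $(r-1)$-subsets of $P$ lie in $L_{\mathcal{H}}(U)$. The crucial point is that this quantity exceeds $\mathrm{ex}\!\left(N,K^{r-1}_{s_2,\dots,s_r}\right)$ once $n$ is large. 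Since $j-1\le\lfloor m/s_1\rfloor-1\le m/s_1-1$, the hypothesis $m(s-s_1)/s_1\le(1-\alpha s_1)n/(r-1)$ gives
\begin{align*}
    N \;\ge\; n-\frac{m(s-s_1)}{s_1}+(s-s_1)\;\ge\;\left(1-\frac{1-\alpha s_1}{r-1}\right)n+(s-s_1).
\end{align*}
When $r\ge 3$, Bernoulli's inequality (with exponent $r-1\ge 2$ and $0<\frac{1-\alpha s_1}{r-1}<1$, the strictness coming from $\alpha<1/s_1$) yields $\left(1-\frac{1-\alpha s_1}{r-1}\right)^{r-1}>\alpha s_1$, so by Fact~\ref{FACT:binom-inequality-a} we get $\binom{N}{r-1}-\alpha s_1\binom{n}{r-1}\ge c\binom{n}{r-1}$ for a constant $c=c(r,s_1,\alpha)>0$ and all large $n$; and since $K^{r-1}_{s_2,\dots,s_r}$ is degenerate, $\mathrm{ex}\!\left(N,K^{r-1}_{s_2,\dots,s_r}\right)=o(N^{r-1})=o(n^{r-1})$ by Theorem~\ref{THM:graph-KST} (if $r=3$) or Theorem~\ref{THM:Erdos-hypergraph-KST} (if $r\ge 4$), so the desired copy exists. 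When $r=2$ the display reads $N\ge\alpha s_1 n+(s-s_1)$, whence $\binom{N}{1}-\alpha s_1\binom{n}{1}=N-\alpha s_1 n\ge s-s_1=s_2>s_2-1=\mathrm{ex}(N,K^{1}_{s_2})$, so again a copy (namely $s_2$ vertices of $P$ in $L_{\mathcal{H}}(U)$) exists.

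Iterating this step for $j=1,\dots,\lfloor m/s_1\rfloor$ yields the required pairwise vertex-disjoint family of ordered copies of $\mathbb{K}$. I expect the only non-routine ingredient to be the estimate $\binom{N}{r-1}-\alpha s_1\binom{n}{r-1}>\mathrm{ex}\!\left(N,K^{r-1}_{s_2,\dots,s_r}\right)$, whose heart is the strict inequality $\left(1-\frac{1-\alpha s_1}{r-1}\right)^{r-1}>\alpha s_1$ — precisely what the bound on $m$ in the hypothesis is calibrated to deliver — while the remaining bookkeeping (the floor function and lower-order $\binom{n-1}{r-2}$-type terms) is routine.
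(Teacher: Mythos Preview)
Your greedy argument is correct and is precisely the ``minor adjustment'' the paper has in mind: since every vertex of $V_1$ already has near-complete degree, the absorption stage of Lemma~\ref{LEMMA:2nd-perfect-K-semibipartite} becomes vacuous and one is left with exactly the final greedy step you wrote out. The key estimate $\binom{N}{r-1}-\alpha s_1\binom{n}{r-1}\ge c\binom{n}{r-1}$ via the strict Bernoulli inequality $\bigl(1-\tfrac{1-\alpha s_1}{r-1}\bigr)^{r-1}>\alpha s_1$ (using $s-s_1\ge 2(r-1)\ge r-1$ to absorb the shift in Fact~\ref{FACT:binom-inequality-a}) is exactly what the hypothesis on $m$ is designed to yield, and your separate treatment of $r=2$ is clean.
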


Using Lemma~\ref{LEMMA:2nd-near-perfect-K-semibipartite}, we can derive the following crude but useful upper bound for $\mathrm{ex}(n,(t+1)F)$. 

\begin{lemma}\label{LEM:2nd-weak-bound}
    Let $r \ge 2$ and $s_r \ge \cdots \ge s_1 \ge 2$ be integers. 
    Let $s:= s_1 + \cdots + s_r$ and $\mathbb{B}:= B_{s_1, \ldots, s_r}^{r}$ be an $r$-partite $r$-graph with part sizes $s_1, \ldots, s_r$. 
    For every $\alpha > 0$ there exists  $N_0$ such that the following statement holds for all $n \ge N_0$ and $t \le \frac{\alpha n}{9s^2 r!}$. 
    Every $n$-vertex $(t+1)\mathbb{B}$-free $r$-graph $\mathcal{H}$ satisfies  
    \begin{align*}
        |\mathcal{H}|
        \le \left(s_1t + \frac{17 sr!}{\alpha} \right) \Delta(\mathcal{H})
            + \frac{\alpha s_1}{2r!} t n^{r-1} 
            + \mathrm{ex}(n-st,\mathbb{B}). 
    \end{align*}
    If, in addition, $\Delta(\mathcal{H}) \le (1-\alpha)\binom{n-1}{r-1}$ and $t \ge \frac{5}{\alpha s_1}\left(\frac{\mathrm{ex}(n,\mathbb{B})}{\binom{n-1}{r-1}} + \frac{18 sr!}{\alpha}\right)$, then
    \begin{align*}
        |\mathcal{H}| 
        \le \binom{n}{r}-\binom{n-s_1(t-1)}{r}. 
    \end{align*}
\end{lemma}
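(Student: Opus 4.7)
Since $\mathcal{H}$ is $(t+1)\mathbb{B}$-free, I fix a maximum $\mathbb{B}$-packing $\mathcal{B}_1,\dots,\mathcal{B}_{\ell}$ with $\ell\le t$ and set $V_1=\bigcup_i V(\mathcal{B}_i)$, $V_2=V(\mathcal{H})\setminus V_1$. Because $\mathbb{B}$ is $r$-partite with smallest part of size $s_1$, each $\mathcal{B}_i$ admits a vertex cover $C_i\subset V(\mathcal{B}_i)$ of size $s_1$; put $C=\bigcup_i C_i$, so $|C|\le s_1 t$, and $D=V_1\setminus C$. Maximality forces $\mathcal{H}[V_2]$ to be $\mathbb{B}$-free, giving $|\mathcal{H}[V_2]|\le\mathrm{ex}(n-s\ell,\mathbb{B})$; the difference from $\mathrm{ex}(n-st,\mathbb{B})$, bounded via Theorem~\ref{THM:Erdos-hypergraph-KST} by $O\bigl(s(t-\ell)\,n^{r-1-1/(s_1\cdots s_{r-1})}\bigr)=o(n^{r-1})$ under the hypothesis $t\le\alpha n/(9s^2 r!)$, is absorbed by the $\frac{\alpha s_1}{2r!}t n^{r-1}$ slack in the target.

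\textbf{Step 2 (Controlling a high-degree set $L$).} Let $\tau:=\frac{\alpha s_1}{2(s-s_1)r!}n^{r-1}$ and define
\begin{align*}
L:=\bigl\{v\in V(\mathcal{H})\setminus C : d_{\mathcal{H}}(v)\ge\tau\bigr\}.
\end{align*}
The key claim is $|L|\le 17sr!/\alpha$. Suppose otherwise; form the semibipartite $r$-graph $\mathcal{G}$ on $L$ versus $V(\mathcal{H})\setminus L$ consisting of edges $e\in\mathcal{H}$ with $|e\cap L|=1$. Since $|L|$ is polylogarithmically small in $n$, the correction from edges with more than one $L$-vertex is negligible and each $v\in L$ satisfies $d_{\mathcal{G}}(v)\ge\tau/2$. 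Applying Lemma~\ref{LEMMA:2nd-near-perfect-K-semibipartite} with parameter $\tau/(2n^{r-1})$ produces at least $\lfloor|L|/s_1 - O(r!/\alpha)\rfloor\ge 1$ pairwise vertex-disjoint ordered copies of $K^r_{s_1,\dots,s_r}$ (hence of $\mathbb{B}$) disjoint from $C$; a swap step rebuilding each affected $\mathcal{B}_i$ through its $D$-part (using the $\tau/2$ degree slack) folds the new copy into the packing to yield $\ge\ell+1$ pairwise vertex-disjoint copies of $\mathbb{B}$, contradicting maximality.

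\textbf{Step 3 (Assembly and the second inequality).} Given the bound on $|L|$, decompose
\begin{align*}
|\mathcal{H}|\le |C|\,\Delta(\mathcal{H})+|L|\,\Delta(\mathcal{H})+|\mathcal{H}[V(\mathcal{H})\setminus(C\cup L)]|.
\end{align*}
The first two summands give $\bigl(s_1 t+\tfrac{17 sr!}{\alpha}\bigr)\Delta(\mathcal{H})$. For the third, every vertex of $V(\mathcal{H})\setminus(C\cup L)$ has $\mathcal{H}$-degree below $\tau$, so edges touching $D\setminus L$ contribute at most $|D|\tau\le\tfrac{\alpha s_1 tn^{r-1}}{2r!}$ while edges inside $V_2\setminus L$ are bounded by $\mathrm{ex}(n-st,\mathbb{B})$ via Step~1, establishing the first inequality. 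For the second inequality, substituting $\Delta(\mathcal{H})\le(1-\alpha)\binom{n-1}{r-1}$ frees $\alpha s_1 t\binom{n-1}{r-1}$ of slack in the main term; the lower bound $t\ge\tfrac{5}{\alpha s_1}\bigl(\mathrm{ex}(n,\mathbb{B})/\binom{n-1}{r-1}+18 sr!/\alpha\bigr)$ is precisely what is needed for the $\tfrac{17 sr!}{\alpha}\Delta(\mathcal{H})$ and $\mathrm{ex}(n-st,\mathbb{B})$ terms to fit inside this slack, and Facts~\ref{FACT:binom-inequality-a}--\ref{FACT:inequality-c} then finish the comparison with $\binom{n}{r}-\binom{n-s_1(t-1)}{r}$.

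\textbf{Main obstacle.} The delicate step is Step~2: the new copies of $K^r_{s_1,\dots,s_r}$ produced by Lemma~\ref{LEMMA:2nd-near-perfect-K-semibipartite} are disjoint from $C$ but may meet $D$, hence overlap the existing packing, so the contradiction with maximality only follows after a careful swap-and-rebuild step that substitutes the affected $\mathcal{B}_i$'s. The threshold $\tau$ must be calibrated so that simultaneously (i) $|D|\tau$ matches $\tfrac{\alpha s_1}{2r!}tn^{r-1}$, (ii) the correction $|L|\binom{n-2}{r-2}$ in the degree of $v\in L$ in $\mathcal{G}$ is dominated by $\tau/2$, and (iii) the constant $4/\alpha'$ of Lemma~\ref{LEMMA:2nd-near-perfect-K-semibipartite} unwinds into the advertised $17 sr!/\alpha$; tracking these three constraints together is what forces the specific constants appearing in the final bound.
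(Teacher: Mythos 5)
You take a genuinely different route from the paper, and the key step does not hold up.

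The paper defines its high-degree set $L$ \emph{inside $V_1$} (the union of the packing's vertex sets), allows $|L|$ to be as large as $s_1(t+1)+O_{\alpha,s,r}(1)$, and reaches a contradiction by applying Lemma~\ref{LEMMA:2nd-near-perfect-K-semibipartite} to the semibipartite link of $L$ to \emph{directly} extract $t+1$ pairwise vertex-disjoint ordered copies of $\mathbb{B}$; it never tries to augment the given packing. The decomposition is then $|\mathcal{H}|\le\sum_{v\in L}d(v)+\sum_{v\in V_1\setminus L}d(v)+|\mathcal{H}[V_2]|$, with $|V_1\setminus L|\tau$ absorbed into $\frac{\alpha s_1}{2r!}tn^{r-1}$ and $\mathcal{H}[V_2]$ bounded by $\mathrm{ex}(n-st,\mathbb{B})$.

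Your decomposition instead introduces an explicit cover $C=\bigcup_i C_i$ and moves $L$ to $V(\mathcal{H})\setminus C$, whence you must show $|L|\le 17sr!/\alpha$ — a \emph{constant}, not $\Theta(t)$. This is where the proposal breaks. First, the covers $C_i$ are forced up to symmetry to be the images of the $s_1$-part of $\mathbb{B}$ under the embeddings, and there is no guarantee they catch the high-degree vertices of $\mathcal{B}_i$. Concretely, in $K_t^r\uproduct \mathrm{EX}(n-t,\mathbb{B})$, a maximum packing can be chosen so that each $\mathcal{B}_i$ meets the apex clique in a single vertex $v_i$ lying in a part of $\mathcal{B}_i$ of size $>s_1$; then $v_i\in D_i$, $v_i\notin C$, and $v_i$ has degree $\binom{n-1}{r-1}\ge\tau$, so $|L|\ge\ell=\Theta(t)$ — contradicting your claim. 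Second, even setting the counterexample aside, the ``swap-and-rebuild'' step you invoke to pass from a single new copy of $\mathbb{B}$ (possibly meeting $D$) to an augmented packing requires finding a fresh copy of $\mathbb{B}$ whose $s_1$-side is $C_i$ and which avoids $K\cup\bigcup_{j\ne i}V(\mathcal{B}_j)$; nothing ensures the vertices of $C_i$ have large degree, so no such replacement need exist. You flag this as the ``main obstacle'' but do not resolve it, and without it Step~2 fails; the downstream arithmetic in Step~3 then has no basis. The fix is exactly the paper's: keep $L$ inside $V_1$, accept $|L|\approx s_1 t$, and extract a full $(t+1)$-packing from the semibipartite link rather than an augmentation.
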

\begin{proof}
    Fix $\alpha>0$ and let $n$ be sufficiently large. 
    Let $\mathcal{H}$ be a maximum $n$-vertex $(t+1)\mathbb{B}$-free $r$-graph. 
    Let $\mathcal{B} = \{\mathbb{B}_1, \ldots, \mathbb{B}_{t}\}$ be a collection of pairwise vertex-disjoint copies of $\mathbb{B}$ in $\mathcal{H}$. 
    Let 
    \begin{align*}
        V:= V(\mathcal{H}),\ 
        V_1:= \bigcup_{i\in [t]}V(\mathbb{B}_i),\ 
        V_2:= V \setminus V_1,\quad\text{and}\quad
        L
        := \left\{v\in V_1 \colon d_{\mathcal{H}}(v) \ge \frac{\alpha s_1 n^{r-1}}{2 s r!} \right\}.
    \end{align*}
    
    \begin{claim}\label{CLAIM:2nd-L-upper-bound-LEMMA}
      We have $|L| \le  \left( t +1 + \frac{16sr!}{\alpha s_1}\right)s_1$.
    \end{claim}
    \begin{proof}
        Suppose that this is not true. 
        Let 
        \begin{align*}
            m := \left\lceil \left( t +1 + \frac{16sr!}{\alpha s_1}\right)s_1 \right\rceil
            \le 2\times \frac{\alpha n}{9s^2 r!} \times s_1 
            \le \frac{\alpha s_1 n}{4s^2 r!}
            \le \frac{\alpha (n-m)}{8s}. 
        \end{align*}
        We may assume that $|L| = m$, since otherwise we can replace $L$ by a subset of size $m$. 
        Let $\mathcal{H}'$ be the collection of edges in $\mathcal{H}$ that have exactly one vertex in $L$. 
        Notice that $\mathcal{H}'$ is semibipartite and for every $v\in L$, we have 
        \begin{align*}
            d_{\mathcal{H}'}(v)
            \ge d_{\mathcal{H}}(v) - |L|\binom{n-2}{r-2}
            \ge \frac{\alpha s_1n^{r-1}}{2s r!}
                - m\frac{r-1}{n-1}\binom{n-1}{r-1}
            \ge \frac{\alpha s_1n^{r-1}}{4s r!}. 
        \end{align*}
        Therefore, by Lemma~\ref{LEMMA:2nd-near-perfect-K-semibipartite},  the semibipartite $r$-graph $\mathcal{H}'$ contains at least 
        \begin{align*}
             \left\lfloor \frac{m}{s_1} - 4\left(\frac{\alpha s_1}{4sr!}\right)^{-1} \right\rfloor
             \ge \left\lfloor t +1 + \frac{16 s r!}{\alpha s_1}  - \frac{16 s r!}{\alpha s_1} \right\rfloor
             = t+1
        \end{align*}
        copies of ordered $\mathbb{B}$, a contradiction. 
    \end{proof}
    It follows from Claim~\ref{CLAIM:2nd-L-upper-bound-LEMMA} and some simple calculations that 
    \begin{align*}
      |\mathcal{H}| 
        & = \sum_{v\in L}d_{\mathcal{H}}(v) + \sum_{v\in V_1\setminus L}d_{\mathcal{H}}(v) + |\mathcal{H}[V_2]| \\
        & \le \left(s_1t + s_1 +  \frac{16 sr!}{\alpha} \right)  \Delta(\mathcal{H})
            + st \times \frac{\alpha s_1n^{r-1}}{2 s r!}
            + \mathrm{ex}(n-st,\mathbb{B}) \\
        & \le \left(s_1t +  \frac{17 sr!}{\alpha} \right)  \Delta(\mathcal{H})
            + \frac{\alpha s_1}{2r!} t n^{r-1} 
            + \mathrm{ex}(n-st,\mathbb{B}). 
    \end{align*}
    Assume additionally that $\Delta(\mathcal{H}) \le (1-\alpha)\binom{n-1}{r-1}$ and $t \ge \frac{5}{\alpha s_1}\left(\frac{\mathrm{ex}(n,\mathbb{B})}{\binom{n-1}{r-1}} + \frac{18 sr!}{\alpha}\right)$. 
    Then it follows from the inequality above that  
    \begin{align*}
        |\mathcal{H}| 
        & \le \left(s_1t +  \frac{17 sr!}{\alpha} \right)  (1- \alpha)\binom{n-1}{r-1}
            + \frac{\alpha s_1}{2r!} t n^{r-1} 
            + \mathrm{ex}(n-st,\mathbb{B}) \\
        & \le s_1 t \left(1- \frac{\alpha}{2}\right)\binom{n-1}{r-1} 
            - s_1 t \frac{\alpha}{2} \binom{n-1}{r-1} 
            + \frac{17 sr!}{\alpha} \binom{n-1}{r-1}  + \frac{\alpha s_1}{2r!} t n^{r-1}  + \mathrm{ex}(n,\mathbb{B}). 
    \end{align*}
    Since $n$ is large and $t \ge \frac{5}{\alpha s_1}\left(\frac{\mathrm{ex}(n,\mathbb{B})}{\binom{n-1}{r-1}} + \frac{18 sr!}{\alpha}\right)$, we have 
    \begin{align*}
        & - s_1 t \frac{\alpha}{2} \binom{n-1}{r-1} 
            + \frac{17 sr!}{\alpha} \binom{n-1}{r-1}  + \frac{\alpha s_1}{2r!} t n^{r-1} + \mathrm{ex}(n,\mathbb{B})\\
        & \le -\left(\frac{\alpha s_1}{2}t- \frac{18 sr!}{\alpha} - \frac{\alpha s_1}{2r} t-o(1) - \frac{\mathrm{ex}(n,\mathbb{B})}{\binom{n-1}{r-1}}\right)\binom{n-1}{r-1}  - s_1 \binom{n-1}{r-1}\\
        & \le -\left(\frac{\alpha s_1}{5}t- \frac{18 sr!}{\alpha} - \frac{\mathrm{ex}(n,\mathbb{B})}{\binom{n-1}{r-1}}\right)\binom{n-1}{r-1} - s_1 \binom{n-1}{r-1}
        \le - s_1 \binom{n-1}{r-1}. 
    \end{align*}
    It follows $t \le \frac{\alpha n}{9s^2 r!}$,  Facts~\ref{FACT:binom-inequality-a}, and~\ref{FACT:inequality-c} that 
    \begin{align*}
        \left(1- \frac{\alpha}{2}\right)\binom{n-1}{r-1}
        & \le \left(1- \frac{\alpha}{2}\right)\left(\frac{n}{n-s_1t-r}\right)^{r-1}\binom{n-s_1t}{r-1} \\
        & \le \left(1- \frac{\alpha}{2}\right)\left(1+ \frac{4(r-1)(s_1t+r)}{n} \right)\binom{n-s_1t}{r-1} \\
        & \le \left(1- \frac{\alpha}{2}\right)\left(1+ \frac{\alpha}{2} \right)\binom{n-s_1t}{r-1}
        \le \binom{n-s_1t}{r-1}. 
    \end{align*}
    Therefore, we obtain 
    \begin{align*}
        |\mathcal{H}| 
        \le s_1(t-1)\binom{n-s_1t}{r-1} +  s_1 \binom{n-1}{r-1} - s_1 \binom{n-1}{r-1}
        \le \binom{n}{r} - \binom{n-s_1(t-1)}{r}, 
    \end{align*}
    completing the proof of Lemma~\ref{LEM:2nd-weak-bound}. 
\end{proof}

\subsection{Hypergraphs: proof of Theorem~\ref{THM:2nd-interval}}\label{SUBSEC:proof-2nd-hypergraph}
In this subsection, we establish the validity of Theorem~\ref{THM:2nd-interval}. 
Our proof strategy begins by considering the collection $L \subset V(\mathcal{H})$ of all vertices in an $n$-vertex $(t+1)F$-free $r$-graph $\mathcal{H}$ with degree at least $\Omega(n^{r-1})$. 
We will show that a significant proportion of vertices in $L$ possess near-maximum degree. 
Consequently, leveraging Lemma~\ref{LEMMA:2nd-perfect-K-semibipartite}, we can infer that the size of $L$ is less than $s_1(t+1)$.
Subsequently, we show that the induced subgraph $\mathcal{H}-L$ cannot contain many vertex-disjoint copies of $F$. 
By employing the size constraint provided by Lemma~\ref{LEMMA:trivial-max-degree} on $\mathcal{H}-L$, we derive the desired upper bound on $|\mathcal{H}|$.

\begin{proof}[Proof of Theorem~\ref{THM:2nd-interval}]
    Fix integers $r\ge 2$, $s_r \ge \cdots \ge s_1\ge 2$, and an $r$-partite $r$-graph $B_{s_1, \ldots, s_r}^{r} =: \mathbb{B}$  with part sizes $s_1, \ldots, s_r$. 
    Let 
    \begin{align*}
        s:= \sum_{i\in [r]}s_i, \quad
        \delta_1 := \frac{1}{16es_1s}, \quad  
        \delta_2 := \frac{1}{2es}, \quad\text{and}\quad
        \varepsilon := \frac{\delta_1}{32s^2 r!}. 
    \end{align*}
    Let $n$ be sufficiently large and $t$ be an integer satisfying 
    \begin{align}\label{equ:2nd-hygp-t-range}
        \frac{20}{\delta_1}\left(\frac{\mathrm{ex}(n,\mathbb{B})}{\binom{n-1}{r-1}} + \frac{20 sr!}{\delta_1}\right)
        \le t \le \varepsilon n.  
    \end{align}
    Let $\mathcal{H}$ be a maximum $n$-vertex $(t+1)\mathbb{B}$-free $r$-graph with vertex $V$. 
    Let 
    \begin{align*}
        L_{1} := \left\{v\in V \colon d_{\mathcal{H}}(v) \ge (1-\delta_1)\binom{n-1}{r-1}\right\},\ 
        L_2  := \left\{v\in V\setminus L_1 \colon d_{\mathcal{H}}(v) \ge \delta_2 \binom{n-1}{r-1}\right\}, 
    \end{align*}
    and 
    \begin{align*}
        L & := L_1\cup L_2, \quad  
        U := V \setminus L, \quad 
        \ell:= |L|, \quad 
        \ell_1 := |L_1|, \quad 
        \ell_2:= |L_2|. 
    \end{align*}
    In addition, let 
    \begin{align*}
        \mathcal{H}_1:= \mathcal{H} - L_1,\quad 
        \mathcal{H}_2:= \mathcal{H}[U], \quad 
        n_1:= n-\ell_1, 
        \quad\text{and}\quad
         t_1 := t - \left\lfloor \ell_1/s_1 \right\rfloor. 
    \end{align*}
    \begin{claim}\label{CLAIM:2nd-H1-upper-bound}
        We have $\ell_1 < s_1(t+1)$ and $\mathcal{H}_1$ is $(t_1 + 1)\mathbb{B}$-free. 
    \end{claim}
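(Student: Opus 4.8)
The plan is to prove both halves by contradiction, each time manufacturing $t+1$ pairwise vertex-disjoint copies of $\mathbb{B}$ in $\mathcal{H}$ by running Lemma~\ref{LEMMA:2nd-perfect-K-semibipartite-a} on an auxiliary semibipartite $r$-graph whose heavy side lives inside $L_1$. The only global facts I will use are: $\mathbb{B}$ is a subgraph of $\mathbb{K}:=K_{s_1,\ldots,s_r}^r$, so an ordered copy of $\mathbb{K}$ contains a copy of $\mathbb{B}$; $t\le \varepsilon n$ with $\varepsilon=\delta_1/(32s^2r!)$ chosen tiny relative to $\delta_1=1/(16es_1s)$; and $2\delta_1 s_1=\tfrac{1}{8es}<1$ so that $\alpha:=2\delta_1<1/s_1$ is admissible in Lemma~\ref{LEMMA:2nd-perfect-K-semibipartite-a}.

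For $\ell_1<s_1(t+1)$: suppose otherwise and fix $L_1'\subseteq L_1$ with $|L_1'|=s_1(t+1)=:m$. Let $\mathcal{H}'$ be the semibipartite $r$-graph on $L_1'$ and $V\setminus L_1'$ formed by the edges of $\mathcal{H}$ meeting $L_1'$ in exactly one vertex. For $v\in L_1'$ we have $d_{\mathcal{H}'}(v)\ge d_{\mathcal{H}}(v)-(m-1)\binom{n-2}{r-2}\ge(1-\delta_1)\binom{n-1}{r-1}-m\tfrac{r-1}{n-1}\binom{n-1}{r-1}$, and since $m=s_1(t+1)\le 2s_1\varepsilon n$ the subtracted term is at most $\delta_1\binom{n-1}{r-1}$, so $d_{\mathcal{H}'}(v)\ge(1-2\delta_1)\binom{n-1}{r-1}\ge(1-2\delta_1)\binom{n-m}{r-1}$. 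Also $m\le 2s_1\varepsilon n\le\frac{(1-2\delta_1 s_1)s_1}{(r-1)(s-s_1)}(n-m)$ for large $n$, because $1-2\delta_1 s_1>1/2$, $n-m\ge n/2$, and $\varepsilon$ is far below $\frac{1}{8(r-1)(s-s_1)}$. Lemma~\ref{LEMMA:2nd-perfect-K-semibipartite-a} (with its ``$n$'' taken to be $n-m$ and $\alpha=2\delta_1$) then produces $\lfloor m/s_1\rfloor=t+1$ pairwise vertex-disjoint ordered copies of $\mathbb{K}$, hence $t+1$ pairwise vertex-disjoint copies of $\mathbb{B}$ in $\mathcal{H}$, contradicting $(t+1)\mathbb{B}$-freeness.

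For the second assertion, first note that $\ell_1<s_1(t+1)$ forces $\lfloor\ell_1/s_1\rfloor\le t$, so $t_1=t-\lfloor\ell_1/s_1\rfloor\ge 0$. Suppose $\mathcal{H}_1=\mathcal{H}-L_1$ contained $t_1+1$ pairwise vertex-disjoint copies of $\mathbb{B}$, and let $W_0$ be the union of their vertex sets, so $W_0\subseteq V\setminus L_1$ and $|W_0|=(t_1+1)s\le(\varepsilon n+1)s$. Form the semibipartite $r$-graph $\mathcal{H}''$ on $L_1$ and $V\setminus L_1\setminus W_0$ from the edges of $\mathcal{H}$ that meet $L_1$ in exactly one vertex and avoid $W_0$; exactly as above, each $v\in L_1$ has $d_{\mathcal{H}''}(v)\ge(1-\delta_1)\binom{n-1}{r-1}-(\ell_1+|W_0|)\tfrac{r-1}{n-1}\binom{n-1}{r-1}\ge(1-2\delta_1)\binom{n-\ell_1-|W_0|}{r-1}$ since $\ell_1+|W_0|=O(\varepsilon n)$, and $\ell_1\le 2s_1\varepsilon n$ satisfies the size hypothesis relative to $n-\ell_1-|W_0|\ge n/2$. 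Lemma~\ref{LEMMA:2nd-perfect-K-semibipartite-a} then yields $\lfloor\ell_1/s_1\rfloor$ pairwise vertex-disjoint ordered copies of $\mathbb{K}$ inside $L_1\cup(V\setminus L_1\setminus W_0)$, hence $\lfloor\ell_1/s_1\rfloor$ copies of $\mathbb{B}$ avoiding $W_0$; these are disjoint from the $t_1+1$ copies in $\mathcal{H}_1$ (whose vertices lie in $W_0$). Together they give $t_1+1+\lfloor\ell_1/s_1\rfloor=t+1$ pairwise vertex-disjoint copies of $\mathbb{B}$ in $\mathcal{H}$, a contradiction, so $\mathcal{H}_1$ is $(t_1+1)\mathbb{B}$-free.

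I do not expect a genuine obstacle here; the work is entirely bookkeeping. The one point requiring care is making the two degree estimates go through after deleting $O(\varepsilon n)$ vertices (the set $L_1'$, resp. $L_1\cup W_0$): one must check that the drop $(\ell_1+|W_0|)\tfrac{r-1}{n-1}\binom{n-1}{r-1}$ stays below $\delta_1\binom{n-1}{r-1}$ and that $m\le 2s_1\varepsilon n$ obeys the size constraint of Lemma~\ref{LEMMA:2nd-perfect-K-semibipartite-a} — both of which are exactly what the choices $\delta_1=1/(16es_1s)$ and $\varepsilon=\delta_1/(32s^2r!)$ are engineered to guarantee, with Facts~\ref{FACT:binom-inequality-a} and~\ref{FACT:inequality-c} handling the comparisons between $\binom{n-1}{r-1}$ and $\binom{n-O(\varepsilon n)}{r-1}$. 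In other words, the substantive content is simply that ``$\delta_1$-heavy'' is a robust enough notion of large degree to survive removal of $O(\varepsilon n)$ vertices while keeping $2\delta_1<1/s_1$.
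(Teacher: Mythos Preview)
Your proposal is correct and follows essentially the same approach as the paper: both halves are proved by contradiction via Lemma~\ref{LEMMA:2nd-perfect-K-semibipartite-a}, applied to the semibipartite $r$-graph of edges meeting $L_1$ (or a suitable subset) in exactly one vertex, after verifying that the degree loss from deleting $O(\varepsilon n)$ vertices is absorbed by the slack in $\delta_1$. The only cosmetic difference is that the paper works with $\alpha=\tfrac{1}{16s_1s}$ while you take $\alpha=2\delta_1=\tfrac{1}{8es_1s}$, but both satisfy $\alpha<1/s_1$ and the size hypothesis, so the argument is the same.
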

    \begin{proof}
        Suppose that the first part is not true. 
        We may assume that $\ell_1 = s_1(t+1)$ since otherwise we can replace $L_1$ by an $s_1(t+1)$-subset. 
        Let $\mathcal{G}$ be the collection of edges in $\mathcal{H}$ that have exactly one vertex in $L_1$. 
        Notice that $\mathcal{G}$ is semibipartite, and for every $v\in L_1$ it follows from the assumptions on $t$ and $\delta_1$ that 
        \begin{align*}
            d_{\mathcal{G}}(v)
            \ge d_{\mathcal{H}}(v) - |L_1|\binom{n-2}{r-2}
            & \ge (1-\delta_1)\binom{n-1}{r-1} - s_1(t+1) \frac{r-1}{n-1} \binom{n-1}{r-1} \\
            & > \left(1- \frac{1}{16 s_1 s}\right)\binom{n-1}{r-1}. 
        \end{align*}
        Since, in addition, $\left(1-\frac{s_1}{16s_1s}\right)s_1\frac{1}{(r-1)(s-s_1)}(n-\ell_1) \ge 2\varepsilon s_1 n \ge \ell_1$ (by~\eqref{equ:2nd-hygp-t-range}), 
        it follows from Lemma~\ref{LEMMA:2nd-perfect-K-semibipartite-a} that $(t+1)\mathbb{B} \subset (t+1)K_{s_1, \ldots, s_r}^r\subset \mathcal{G}$, a contradiction.

        Now suppose to the contrary that there exists a collection  $\mathcal{B} = \{\mathbb{B}_1, \ldots, \mathbb{B}_{t_1+1}\}$ of $t_1+1$ pairwise vertex-disjoint copies of $\mathbb{B}$ in $\mathcal{H}_1$. 
        Let $B := \bigcup_{i\in [t_1+1]}V(\mathbb{B}_i)$ and $V' := V\setminus B$.
        Let $\mathcal{G}_1$ be the collection of edges in $\mathcal{H}[V']$ that contain exactly one vertex in $L_1$. 
        Similar to the argument above, we have 
        \begin{align*}
            d_{\mathcal{G}_1}(v)
            \ge d_{\mathcal{H}}(v) - |L_1\cup B_1|\binom{n-2}{r-2}
            & \ge (1-\delta_1)\binom{n-1}{r-1} - 2s(t+1) \frac{r-1}{n-1} \binom{n-1}{r-1} \\
            & > \left(1- \frac{1}{16 s_1 s}\right)\binom{n-1}{r-1}. 
        \end{align*}
        Similarly, it follows from~\eqref{equ:2nd-hygp-t-range} and  Lemma~\ref{LEMMA:2nd-perfect-K-semibipartite-a} that  $\lfloor \ell_1/s_1 \rfloor \mathbb{B} \subset \mathcal{G}_1$, which together with $\mathcal{B}$ implies that $(t+1)\mathbb{B}\subset \mathcal{H}$, a contradiction.  
    \end{proof}
    By Claim~\ref{CLAIM:2nd-H1-upper-bound}, we obtain 
    \begin{align*}
        n_1 - r 
        = n-\ell_1 -r  
        \ge n- s_1 (t+1) - r 
        \ge (1-2\varepsilon s_1)n. 
    \end{align*}
    Combined with the definition of $L_1$, Facts~\ref{FACT:binom-inequality-a}, and~\ref{FACT:inequality-c}, we obtain     \begin{align*}
        \Delta(\mathcal{H}_1)
        \le (1-\delta_1)\binom{n-1}{r-1}
        & \le (1-\delta_1) \left(\frac{n}{n_1-r}\right)^{r-1}\binom{n_1-1}{r-1} \\
        & \le (1-\delta_1) \left(1+8\varepsilon r s_1\right)\binom{n_1-1}{r-1} \\
        & \le (1-\delta_1) \left(1+\frac{\delta_1}{4}\right)\binom{n_1-1}{r-1} 
         \le \left(1-\frac{\delta_1}{2}\right)\binom{n_1-1}{r-1}. 
    \end{align*}
    Let
    \begin{align*}
        \theta
        := \frac{5}{\delta_1 s_1/2}\left(\frac{\mathrm{ex}(n_1,\mathbb{B})}{\binom{n_1-1}{r-1}} + \frac{18sr!}{\delta_1/2}\right)
        \le \frac{10}{\delta_1 s_1}\left(\frac{2 \mathrm{ex}(n,\mathbb{B})}{\binom{n-1}{r-1}} + \frac{36sr!}{\delta_1}\right)
        \le \frac{t-s+1}{s_1}. 
    \end{align*}
    If $t_1 \ge  \theta$, then 
    it follows from $t_1 \le t \le \frac{\delta_1 n}{32s^2 r!} \le \frac{\delta_1 n_1/2}{9s^2 r!}$ and Lemma~\ref{LEM:2nd-weak-bound} that 
    \begin{align*}
        |\mathcal{H}_1|
        \le \binom{n_1}{r} - \binom{n_1-s_1(t_1-1)}{r}
        = \binom{n-\ell_1}{r} - \binom{n-s_1 t}{r}. 
    \end{align*}
    Consequently, 
    \begin{align*}
        |\mathcal{H}|
        \le \binom{n}{r} -\binom{n-\ell_1}{r} + |\mathcal{H}_1|
        \le \binom{n}{r} - \binom{n-s_1 t}{r},  
    \end{align*}
    and we are done. 
    So we may assume that $t_1 \le \theta$, which implies that 
    \begin{align}\label{equ:2nd-hygp-L1-lower-bound}
        \ell_1 
        \ge s_1 \left(t - \theta\right)
        \ge s_1 \left(t - \frac{t-s+1}{s_1}\right)
        = \frac{s_1-1}{s_1} s_1(t+1). 
    \end{align}
    %
    Let 
    \begin{align*}
        t_2
        := t- \left\lfloor \ell/s_1 \right\rfloor
        \le t_1
        \le \theta. 
    \end{align*}
    \begin{claim}\label{CLAIM:2nd-L1+L2-upper-bound}
        We have $\ell_1 + \ell_2 \le s_1(t+1)-1$, and $\mathcal{H}[U]$ is $(t_2+1) \mathbb{B}$-free.
    \end{claim}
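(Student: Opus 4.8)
The plan is to prove both parts of Claim~\ref{CLAIM:2nd-L1+L2-upper-bound} by the same device: if either assertion fails, we shall exhibit $t+1$ pairwise vertex-disjoint copies of $\mathbb{B}$ in $\mathcal{H}$ (using $\mathbb{B}\subset K_{s_1,\ldots,s_r}^{r}$), contradicting the $(t+1)\mathbb{B}$-freeness of $\mathcal{H}$. In both cases the engine is Lemma~\ref{LEMMA:2nd-perfect-K-semibipartite}, applied to a semibipartite $r$-graph whose ``one-vertex side'' is a set of high-degree vertices of $\mathcal{H}$, with the near-maximum-degree subset supplied by $L_1$.

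For the inequality $\ell_1+\ell_2\le s_1(t+1)-1$, suppose it fails. Since $\ell_1<s_1(t+1)$ by Claim~\ref{CLAIM:2nd-H1-upper-bound}, choose $L_2'\subset L_2$ with $|L_1\cup L_2'|=s_1(t+1)=:m$, set $L':=L_1\cup L_2'$, and let $\mathcal{G}$ be the collection of edges of $\mathcal{H}$ meeting $L'$ in exactly one vertex; this is semibipartite on $(L',V\setminus L')$. Removing $L'$ costs each vertex at most $m\binom{n-2}{r-2}=o(n^{r-1})$ edges, because $m\le s_1(\varepsilon n+1)$ and $\varepsilon$ is tiny. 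Hence every vertex of $L_1\subset L'$ retains $\mathcal{G}$-degree at least $(1-\delta_1)\binom{n-1}{r-1}-o(n^{r-1})\ge \binom{n}{r-1}-\frac{\alpha n^{r-1}}{2s_1}$, and every vertex of $L_2'$ retains $\mathcal{G}$-degree at least $\delta_2\binom{n-1}{r-1}-o(n^{r-1})\ge \alpha n^{r-1}$, for a suitable constant $\alpha=\alpha(\delta_1,\delta_2,r,s)$ (any $\alpha$ with $\tfrac{1}{8es(r-1)!}\le\alpha\le\tfrac{1}{4es(r-1)!}$ works with $\delta_1=\tfrac{1}{16es_1s}$, $\delta_2=\tfrac{1}{2es}$, $\varepsilon=\tfrac{\delta_1}{32s^2r!}$, and such an $\alpha$ also gives $m\le \tfrac{\alpha n}{8(s-s_1)}$, which is hypothesis~\ref{LEMMA:2nd-perfect-K-semibipartite-1}). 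Hypothesis~\ref{LEMMA:2nd-perfect-K-semibipartite-3} holds because $L_1\subset L'$ and, by~\eqref{equ:2nd-hygp-L1-lower-bound}, $\ell_1\ge \tfrac{s_1-1}{s_1}s_1(t+1)=\tfrac{s_1-1}{s_1}m$; this is exactly why~\eqref{equ:2nd-hygp-L1-lower-bound} was recorded. Lemma~\ref{LEMMA:2nd-perfect-K-semibipartite} then yields $\lfloor m/s_1\rfloor=t+1$ pairwise vertex-disjoint ordered copies of $K_{s_1,\ldots,s_r}^{r}$, hence of $\mathbb{B}$, a contradiction.

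For the second part, suppose $\mathcal{H}[U]$ contains $t_2+1$ pairwise vertex-disjoint copies of $\mathbb{B}$, and let $B$ be the union of their vertex sets, so $|B|=s(t_2+1)\le s(\varepsilon n+1)$. Repeat the argument with $V_1:=L$ and $V_2:=V\setminus(L\cup B)$: deleting $L\cup B$ costs each vertex of $L$ only $(\ell+|B|)\binom{n-2}{r-2}=o(n^{r-1})$ edges, so the same degree estimates survive, and hypothesis~\ref{LEMMA:2nd-perfect-K-semibipartite-3} is again met since $L_1\subset L$ and $\ell_1\ge (s_1-1)(t+1)>\tfrac{s_1-1}{s_1}\ell$ (using $\ell\le s_1(t+1)-1$ from the part just proved). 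Lemma~\ref{LEMMA:2nd-perfect-K-semibipartite} now produces $\lfloor \ell/s_1\rfloor$ further pairwise vertex-disjoint copies of $\mathbb{B}$, all avoiding $B$ because $B$ is disjoint from $L$ and from $V_2$. Together with the $t_2+1$ copies inside $B$ this gives $t_2+1+\lfloor \ell/s_1\rfloor=t+1$ pairwise vertex-disjoint copies of $\mathbb{B}$ in $\mathcal{H}$, a contradiction, which completes the proof of the claim.

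The only genuinely delicate point is the choice of $\alpha$: it must be small enough that the weaker threshold $\delta_2\binom{n-1}{r-1}$ defining $L_2$ still clears $\alpha n^{r-1}$ after the deletions (hypothesis~\ref{LEMMA:2nd-perfect-K-semibipartite-2}), yet large enough that the near-maximum degree $(1-\delta_1)\binom{n-1}{r-1}$ of the $L_1$-vertices clears $\binom{n}{r-1}-\tfrac{\alpha n^{r-1}}{2s_1}$ (hypothesis~\ref{LEMMA:2nd-perfect-K-semibipartite-3}) and that $s_1(t+1)\le \tfrac{\alpha n}{8(s-s_1)}$ (hypothesis~\ref{LEMMA:2nd-perfect-K-semibipartite-1}); checking that the intervals imposed by these requirements overlap for the prescribed $\delta_1,\delta_2,\varepsilon$ is the one computational step, and everything else is routine bookkeeping with Facts~\ref{FACT:binom-inequality-a} and~\ref{FACT:inequality-c}.
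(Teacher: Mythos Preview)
Your proposal is correct and follows essentially the same approach as the paper: both arguments apply Lemma~\ref{LEMMA:2nd-perfect-K-semibipartite} to the semibipartite $r$-graph of edges meeting $L$ (or a trimmed version of it) in exactly one vertex, using~\eqref{equ:2nd-hygp-L1-lower-bound} to verify hypothesis~\ref{LEMMA:2nd-perfect-K-semibipartite-3}. The only difference is cosmetic: the paper merges the two assertions into a single case analysis (setting $B'=\emptyset$ when $\ell\ge s_1(t+1)$ after trimming, and $B'=\bigcup V(\mathbb{B}_i')$ otherwise), whereas you treat them separately and are more explicit about the admissible range for the auxiliary constant $\alpha$.
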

    \begin{proof}
        The proof is similar to that of Claim~\ref{CLAIM:2nd-H1-upper-bound}. 
        Suppose to the contrary that Claim~\ref{CLAIM:2nd-L1+L2-upper-bound} is not true. 
        If $\ell < s_1(t+1)$, then let $\mathcal{B}' := \left\{\mathbb{B}_1', \ldots, \mathbb{B}_{t_2+1}' \right\}$ be a collection of pairwise vertex-disjoint copies of $\mathbb{B}$ in $\mathcal{H}[U]$ and let $B':= \bigcup_{i\in [t_2+1]}V(\mathbb{B}_i')$. 
        If $\ell \ge s_1(t+1)$, then by removing vertices in $L_2$, we may assume that $\ell= s_1(t+1)$. In addition, we set $B':= \emptyset$ in this case. 
         Let $\mathcal{G}_2$ be the collection of edges in $\mathcal{H}[V\setminus B']$ that contain exactly one vertex in $L$.
        To build a contradiction, it suffices to show that $\lfloor \ell/s_1 \rfloor \mathbb{B} \subset \mathcal{G}_2$.  
        Indeed, observe that for every $v\in L_1$, we have 
        \begin{align*}
            d_{\mathcal{G}_2}(v)
            & \ge d_{\mathcal{H}}(v) - |L\cup B'| \binom{n-2}{r-2} \\
            & \ge (1-\delta_1)\binom{n-1}{r-1} - 2s(t+1)\frac{r-1}{n-1}\binom{n-1}{r-1} 
             \ge \binom{n-1}{r-1} - \frac{\delta_2 n^{r-1}}{4s_1}, 
        \end{align*}
        and for every $u\in L_2$, we have 
        \begin{align*}
            d_{\mathcal{G}_2}(u)
            & \ge d_{\mathcal{H}}(u) - |L\cup B'| \binom{n-2}{r-2} \\
            & \ge \delta_2\binom{n-1}{r-1} - 2s(t+1)\frac{r-1}{n-1}\binom{n-1}{r-1} 
             \ge \frac{\delta_2 n^{r-1}}{2}.  
        \end{align*}
        Since, in addition, $\ell \le 2s_1t \le 2\varepsilon s_1 n \le \frac{1}{2} \frac{\delta_2 n/2}{8(s-s_1)} \le \frac{\delta_2 (n-|B'|)/2}{8(s-s_1)}$, 
         it follows from~\eqref{equ:2nd-hygp-L1-lower-bound} and Lemma~\ref{LEMMA:2nd-perfect-K-semibipartite} that $\lfloor \ell/s_1 \rfloor \mathbb{B} \subset \mathcal{G}_2$, a contradiction. 
    \end{proof}
    %
    %
    Since $\mathcal{H}[U]$ is $(t_2+1)\mathbb{B}$-free and  $\Delta(\mathcal{H}[U]) \le \delta_2 \binom{n-1}{r-1} \le \frac{1}{2s}\binom{n-\ell-1}{r-1}$ (by Fact~\ref{FACT:binom-inequality-b}),  
    it follows from Lemma~\ref{LEMMA:trivial-max-degree} and simple calculations that  
    \begin{align}\label{equ:THM-2nd-hypg-HU-upper-bound}
        |\mathcal{H}[U]|
        & \le st_2 \times \frac{1}{2s}\binom{n-\ell-1}{r-1} + \mathrm{ex}(n-\ell-st_2, \mathbb{B}) \\
        & \le \binom{n-\ell}{r} - \binom{n-s_1(t+1)+1}{r-1} + \mathrm{ex}(n-s_1(t+1)+1, \mathbb{B}). \notag
    \end{align}
    It follows that 
    \begin{align*}
        |\mathcal{H}|
        & \le \binom{n}{r}-\binom{n-\ell}{r} + |\mathcal{H}[U]| \\
        & \le \binom{n}{r}-\binom{n-\ell}{r} + \binom{n-\ell}{r} - \binom{n-s_1(t+1)+1}{r-1} + \mathrm{ex}(n-s_1(t+1)+1, \mathbb{B}) \\
        & = \binom{n}{r} - \binom{n-s_1(t+1)+1}{r} + \mathrm{ex}(n-s_1(t+1)+1, \mathbb{B}), 
    \end{align*}
    completing the proof of Theorem~\ref{THM:2nd-interval}. 
\end{proof}

\subsection{Graphs: proof of Theorem~\ref{THM:2nd-interval-graph}}\label{SUBSEC:proof-2nd-graph}
In this subsection, we establish the validity of Theorem~\ref{THM:2nd-interval-graph}. 
This proof is a continuation of the argument in the previous subsection. 
To achieve the exact bound we desired, a more detailed analysis of the underlying structures is necessary.

Given a family $\mathcal{F}$ of $r$-graphs and an integer $t \ge 0$ let 
\begin{align*}
    (t+1) \mathcal{F}
    := \left\{F_1 \sqcup \cdots \sqcup F_{t+1} \colon F_i \in \mathcal{F} \text{ for all } i\in [t+1]\right\}. 
\end{align*}
We will use the following result which follows essentially from the same proof for Theorem~\ref{THM:1st-interval}.

\begin{proposition}\label{PROP:2nd-graph-one-star-is-better}
    Let $s_2 \ge s_1 \ge 2$ be integers and $\mathbb{B}:= B_{s_1, s_2}$ be an $s_1$ by $s_2$ bipartite graph. 
    Let $\mathbb{B}':= \mathbb{B}[s_2+1]$. 
    Suppose that $t \le \frac{n}{2e(s_2+1)}$. 
    Then 
    \begin{align*}
        \mathrm{ex}\left(n,(t+1)\mathbb{B}'\right)
        \le \mathrm{ex}\left(n,\mathbb{B}'\sqcup tK_{1,s_2}\right)
        \le \binom{n}{2}-\binom{n-t}{2}+ \mathrm{ex}\left(n-t,(t+1)\mathbb{B}'\right). 
    \end{align*}
\end{proposition}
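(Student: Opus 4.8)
The plan is to treat the two inequalities separately: the first is elementary, and the second follows the proof of Theorem~\ref{THM:1st-interval} with $F$ replaced by $\mathbb{B}'$ and the role of ``$t$ pairwise vertex-disjoint copies of $F$'' replaced by ``$t$ pairwise vertex-disjoint copies of $K_{1,s_2}$''.

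\textbf{First inequality.} Write $\mathbb{B}=B[W_1,W_2]$ with $|W_1|=s_1\ge 1$, $|W_2|=s_2$, and fix any $w\in W_1$. The induced subgraph $\mathbb{B}[\{w\}\cup W_2]$ is a member of $\mathbb{B}'=\mathbb{B}[s_2+1]$ which, being the star $K_{1,d_{\mathbb{B}}(w)}$ together with $s_2-d_{\mathbb{B}}(w)$ isolated vertices, is a subgraph of $K_{1,s_2}$. Hence every copy of $K_{1,s_2}$ contains a copy of a member of $\mathbb{B}'$, so any copy of $B'\sqcup tK_{1,s_2}$ with $B'\in\mathbb{B}'$ contains $t+1$ pairwise vertex-disjoint copies of members of $\mathbb{B}'$, i.e. a copy of a member of $(t+1)\mathbb{B}'$. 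Therefore every $(t+1)\mathbb{B}'$-free graph is $(\mathbb{B}'\sqcup tK_{1,s_2})$-free, and $\mathrm{ex}(n,(t+1)\mathbb{B}')\le\mathrm{ex}(n,\mathbb{B}'\sqcup tK_{1,s_2})$ follows.

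\textbf{Second inequality.} Let $\mathcal{H}$ be an $n$-vertex $(\mathbb{B}'\sqcup tK_{1,s_2})$-free graph with the maximum number of edges. Put $L:=\{v\in V(\mathcal{H}):d_{\mathcal{H}}(v)\ge D\}$ for a suitable threshold $D$ just below $\tfrac{1}{s_2+1}(n-1)$ (the analogue of the threshold $(c_1+2\delta)\binom{n-1}{r-1}$ used in the proof of Theorem~\ref{THM:1st-interval}), $U:=V(\mathcal{H})\setminus L$, $\ell:=|L|$. First I would show $\ell\le t$: if $\ell\ge t+1$ then, arguing as in Claim~\ref{CLAIM:1st-nondegenerate-size-L-upper-bound}, greedily embed $t$ pairwise vertex-disjoint copies of $K_{1,s_2}$ centered at $t$ vertices of $L$ together with one more disjoint copy of $\mathbb{B}[\{w\}\cup W_2]$ through a further vertex of $L$, using that $t\le\frac{n}{2e(s_2+1)}$ makes the total number $(t+1)(s_2+1)$ of vertices needed small compared with both $n$ and $D$; this is the analogue of the ``boundedness'' step, which for the family $\mathbb{B}'$ reduces to producing the star-like member $\mathbb{B}[\{w\}\cup W_2]$. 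Next I would show $\mathcal{H}[U]$ is $(\mathbb{B}'\sqcup(t-\ell)K_{1,s_2})$-free, since any such configuration in $\mathcal{H}[U]$ could be completed by $\ell$ further disjoint stars centered at the vertices of $L$. Finally I would bound $|\mathcal{H}[U]|$: take a maximal family $\mathcal{S}$ of pairwise vertex-disjoint copies of $K_{1,s_2}$ in $\mathcal{H}[U]$; if $|\mathcal{S}|\ge t-\ell$ then deleting the vertex sets of $t-\ell$ of them leaves a $\mathbb{B}'$-free graph, which gives the analogue of Lemma~\ref{LEMMA:trivial-max-degree}, namely $|\mathcal{H}[U]|\le (t-\ell)(s_2+1)\Delta(\mathcal{H}[U])+\mathrm{ex}(|U|-(t-\ell)(s_2+1),\mathbb{B}')$, and then the binomial manipulations of Lemma~\ref{LEMMA:1st-interval-bounded-max-degree} (using $\Delta(\mathcal{H}[U])\le D<\tfrac{1}{s_2+1}(n-1)$) yield $|\mathcal{H}[U]|\le\binom{n-\ell}{2}-\binom{n-t}{2}+\mathrm{ex}(n-t,\mathbb{B}')$; if instead $|\mathcal{S}|<t-\ell$ then $\mathcal{H}$ is $tK_{1,s_2}$-free, so $|\mathcal{H}|\le\mathrm{ex}(n,tK_{1,s_2})\le\binom{n}{2}-\binom{n-t}{2}+\binom{(t+1)(s_2+1)-1}{2}$, and $\binom{(t+1)(s_2+1)-1}{2}\le\mathrm{ex}(n-t,(t+1)\mathbb{B}')$ because the clique $K_{(t+1)(s_2+1)-1}$ is $(t+1)\mathbb{B}'$-free. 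Combining with $|\mathcal{H}|\le\binom{n}{2}-\binom{n-\ell}{2}+|\mathcal{H}[U]|$ and $\mathrm{ex}(\,\cdot\,,\mathbb{B}')\le\mathrm{ex}(\,\cdot\,,(t+1)\mathbb{B}')$ gives the claimed bound.

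\textbf{Main obstacle.} The delicate points are: (i) establishing the ``boundedness''-type input for the family $\mathbb{B}'$ with a constant $<\frac{1}{s_2+1}$, so that the threshold $D$ defining $L$ can be pushed high enough to run the greedy embeddings while still forcing $\ell\le t$ over the whole range $t\le\frac{n}{2e(s_2+1)}$; and (ii) the case $|\mathcal{S}|<t-\ell$, where one must check that the essentially extremal $tK_{1,s_2}$-free constructions do not beat the right-hand side --- this is precisely where the weaker term $\mathrm{ex}(n-t,(t+1)\mathbb{B}')$ (rather than $\mathrm{ex}(n-t,\mathbb{B}')$) is used, absorbing clique-type $(t+1)\mathbb{B}'$-free graphs, and where the relevant structural properties of $\mathbb{B}$ (as in Theorem~\ref{THM:2nd-interval-graph}) enter. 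Degenerate $\mathbb{B}$ (e.g. with an isolated vertex, so that some member of $\mathbb{B}'$ has no edge) should be handled separately by a short direct estimate.
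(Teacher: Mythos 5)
Your proof of the first inequality is fine, and for the second inequality you are following exactly the route the paper itself indicates (it offers no details beyond the remark that the statement follows from the argument for Theorem~\ref{THM:1st-interval}); however, the adaptation as you describe it has two genuine gaps. The milder one is a threshold incompatibility: the final count needs $(s_2+1)\Delta(\mathcal{H}[U])\le n-t$, which is why you place $D$ just below $\tfrac{n-1}{s_2+1}$, but the greedy embeddings behind ``$\ell\le t$'' and behind the $(\mathbb{B}'\sqcup(t-\ell)K_{1,s_2})$-freeness of $\mathcal{H}[U]$ need each vertex of $L$ to have at least about $(t+1)(s_2+1)$ neighbours; when $s_2\ge 5$ and $t$ is near the top of the allowed range, $(t+1)(s_2+1)\approx \tfrac{n}{2e}>\tfrac{n-1}{s_2+1}\ge D$, so a vertex of degree exactly $D$ may have all its neighbours inside the already-used set and the greedy cannot be completed; your two requirements on $D$ are only compatible for $t=O\bigl(n/(s_2+1)^2\bigr)$, not on the whole range $t\le \tfrac{n}{2e(s_2+1)}$.

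The decisive gap is the terminal case $|\mathcal{S}|<t-\ell$. The estimate $\mathrm{ex}(n,tK_{1,s_2})\le\binom{n}{2}-\binom{n-t}{2}+\binom{(t+1)(s_2+1)-1}{2}$ is false: the graph $K_{t-1}\uproduct H$ with $\Delta(H)\le s_2-1$ has no $t$ pairwise disjoint stars (every star must use a vertex of the $(t-1)$-clique) and has $\binom{n}{2}-\binom{n-t+1}{2}+\bigl\lfloor\tfrac{(s_2-1)(n-t+1)}{2}\bigr\rfloor$ edges, whose surplus over $\binom{n}{2}-\binom{n-t}{2}$ is of order $s_2n$ and cannot be absorbed by the $O(t^2s_2^2)$ clique term (also, $K_{(t+1)(s_2+1)-1}$ plus isolated vertices need not be $(t+1)\mathbb{B}'$-free, since members of $\mathbb{B}'$ may have isolated vertices which can be placed outside the clique). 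This case cannot be patched within your scheme, because it reflects a real feature of the statement: every member of $\mathbb{B}'\sqcup tK_{1,s_2}$ contains $K_{1,s_2}$, so every graph of maximum degree $s_2-1$ is $(\mathbb{B}'\sqcup tK_{1,s_2})$-free and the middle quantity is always at least $\lfloor (s_2-1)n/2\rfloor$; hence the claimed bound forces $\mathrm{ex}(n-t,(t+1)\mathbb{B}')=\Omega(s_2 n)$ whenever $t$ is small compared with $s_2$. That holds for $\mathbb{B}=K_{s_1,s_2}$ (disjoint copies of $K_{s_2}$ are $(t+1)\mathbb{B}'$-free), but not for an arbitrary bipartite $\mathbb{B}$: if some vertex of the side of size $s_1$ has degree $1$, then $\mathbb{B}'$ contains $K_2$ plus isolated vertices, so a $(t+1)\mathbb{B}'$-free graph has matching number at most $t$ and only $O(tn)$ edges; concretely, for $s_1=2$, $s_2=5$, $t=1$ and $n$ large one gets $\mathrm{ex}(n,\mathbb{B}'\sqcup K_{1,5})\ge 2n$ (a $4$-regular graph) while the right-hand side equals $(n-1)+(n-2)=2n-3$. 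So in the stated generality the middle inequality itself fails for small $t$, and no variant of the Theorem~\ref{THM:1st-interval} argument can close your case $|\mathcal{S}|<t-\ell$ without additional hypotheses on $\mathbb{B}$ (or on $t$) --- exactly the point you flag as ``main obstacle (ii)'', which therefore is not an obstacle you have overcome but the place where the proof, as proposed, breaks.
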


\begin{proof}[Proof of Theorem~\ref{THM:2nd-interval-graph}]
    Fix integers $s_2 \ge s_1 \ge 2$ and fix an $s_1$ by $s_2$ bipartite connected graph $B_{s_1, s_2} =: \mathbb{B}$. 
     Let $s:= s_1 + s_2$, $\hat{t} := s_1(t+1)-1$, 
    \begin{align*}
        \delta_1 := \frac{1}{16es_1s}, \quad  
        \delta_2 := \frac{1}{2es}, \quad 
        \delta_3:= \frac{1}{2(s_2+1)}, \quad\text{and}\quad
        \varepsilon := \frac{\delta_1}{64s^2}. 
    \end{align*}
    Let $n$ be sufficiently large and $t$ be an integer satisfying 
    \begin{align}\label{equ:2nd-gp-t-range}
        \max \left\{\sqrt{32s_1sn},\  \frac{20 s^2 \theta}{\delta_3 s_1} \right\} 
        \le t 
        \le \varepsilon n, 
    \end{align}
    where 
    \begin{align*}
        \theta:= \frac{20}{\delta_1 s_1}\left(\frac{\mathrm{ex}(n,\mathbb{B})}{n-1} + \frac{18sr!}{\delta_1}\right). 
    \end{align*}

    Let $G$ be a maximum $n$-vertex $(t+1)\mathbb{B}$-free graph on $V$. 
    Let $L_1, L_2, L, U, \ell_1, \ell_2, \ell, n_1, t_1, t_2$ be the same as defined in the proof of Theorem~\ref{THM:2nd-interval} (with $r=2$). 
    All conclusions in the previous subsection still hold for $G$. 
    To summarize, we have  
    \begin{itemize}
        \item $s_1(t-\theta) \le \ell_1 \le \ell \le s_1(t+1)-1$, 
        \item $t_2 := t - \lfloor \ell/s_1 \rfloor \le t_1 : = t - \lfloor \ell_1/s_1 \rfloor \le \theta$, 
        \item $\Delta := \Delta(G[U]) \le \delta_2 (n-1)$, 
        \item $G[U]$ is $(t_2+1)\mathbb{B}$-free, 
        and hence, 
                \begin{align}\label{equ:THM-2nd-pg-GU-upper-bound}
                    |G[U]|
                    \le st_2 \times \delta_2 (n-1) + \mathrm{ex}(n-\ell-st_2, \mathbb{B})
                    \le \frac{\theta n}{2} +  \mathrm{ex}(n, \mathbb{B})
                    \le \theta n. 
                \end{align}  
    \end{itemize}
    We divide $U$ further by letting 
    \begin{align*} 
        S
        := \left\{u\in U \colon |N_{G}(u) \cap L| \le (1-\delta_3)|L|\right\} 
        \quad\text{and}\quad 
        W := U \setminus S. 
    \end{align*}
    In particular, since $G[S] \subset G[U]$ is $(t_2+1) \mathbb{B}$-free, it follows from Lemma~\ref{LEMMA:trivial-max-degree} that 
    \begin{align}\label{equ:2nd-graph-G[S]-upper-bound}
        |G[S]|
        \le s t_2 |S| + \mathrm{ex}\left(|S|, \mathbb{B}\right)
        \le s \theta |S| + \mathrm{ex}\left(|S|, \mathbb{B}\right). 
    \end{align}
    By the definition  of $S$, we have 
    \begin{align}\label{equ:THM-2nd-pg-G[L,U]-upper-bound}
        |G[L,U]|
        \le |W||L| + |S|\times (1-\delta_3)|L|
        = \ell (n-\ell) - \delta_3 \ell |S|. 
    \end{align}
        It follows from~\eqref{equ:THM-2nd-pg-GU-upper-bound} and~\eqref{equ:THM-2nd-pg-G[L,U]-upper-bound} that 
        \begin{align*}
            |G|
              = |G[L]| + |G[L,U]| + |G[U]| 
            & \le \binom{\ell}{2} + \ell (n-\ell) - \delta_3 \ell |S|
            + \theta n. 
        \end{align*}
        Combined with  $|G|\ge g_{2}(n,t,\mathbb{B}) \ge \binom{n}{2}-\binom{n-s_1(t+1)+1}{2} \ge \binom{\ell}{2} + \ell (n-\ell)$, 
         we obtain 
        \begin{align*}
            |S|
            \le \frac{\theta n}{s_1(t-\theta)}
            < \frac{n}{2}. 
        \end{align*}
    which combined with Proposition~\ref{PROP:Turan-ratio} implies that  
    \begin{align*}
        \frac{\mathrm{ex}\left(n,\mathbb{B}\right)}{n}
        \ge \left(1 - \frac{|S|}{n}\right)\frac{\mathrm{ex}\left(|S|,\mathbb{B}\right)}{|S|}
        \ge \frac{1}{2}\frac{\mathrm{ex}\left(|S|,\mathbb{B}\right)}{|S|}.
    \end{align*}
    Consequently, by~\eqref{equ:2nd-gp-t-range}, we obtain 
    \begin{align}\label{equ:2nd-graph-ratio}
        \frac{\delta_3 \ell}{2}|S| - \mathrm{ex}\left(|S|, \mathbb{B}\right)
          \ge \left(\frac{\delta_3}{2} \times s_1(t-\theta) - 2\frac{\mathrm{ex}\left(n, \mathbb{B}\right)}{n} \right)|S|
         \ge 0. 
    \end{align}
    In addition, 
    \begin{align}\label{equ:2nd-graph-tech-b}
        \frac{\delta_3 \ell}{2} - s \theta - 4s^2 \theta
        \ge \frac{\delta_3}{2}\times s_1(t-\theta) - s \theta - 4s^2 \theta
        > 0. 
    \end{align}
    Let 
    \begin{align*}
        t_3:= s_1(t+1)-1-\ell, \quad 
        S_1 := \left\{v\in S \colon |N_{G}(v) \cap W| \ge 4s^2 \theta \right\}, \quad\text{and}\quad
        x := |S_1|. 
    \end{align*}
    It follows from~\eqref{equ:2nd-graph-G[S]-upper-bound},~\eqref{equ:2nd-graph-ratio},~\eqref{equ:2nd-graph-tech-b} and definition of $S_1$ that 
    \begin{align*}
        |G[S]| + |G[L,S]|+ |G[S,W]| 
         & \le  s \theta |S| + \mathrm{ex}\left(|S|, \mathbb{B}\right)
            + (1-\delta_3)\ell |S| + x \Delta + 4s^2 \theta |S| \notag \\
         & \le  \ell |S| - \left(\delta_3 \ell - s \theta - 4s^2 \theta\right) |S| + \mathrm{ex}\left(|S|, \mathbb{B}\right) + \frac{xn}{2} \notag \\
         & \le \ell |S| + \frac{xn}{2} - \frac{\delta_3 \ell}{2}|S| + \mathrm{ex}\left(|S|, \mathbb{B}\right) 
          \le \ell |S| + \frac{xn}{2}. 
    \end{align*}
    Consequently, 
    \begin{align}\label{equ:2nd-graph-G[L,U]-G[S]-G[S,W]}
        |G[L,U]| + |G[S]| + |G[S,W]| 
        & = |G[L,W]|+ |G[S]|+|G[L,S]|+|G[L,W]| \notag \\
        & \le \ell |W| + \ell |S| + \frac{xn}{2}
        = \ell(n-\ell) + \frac{xn}{2}. 
    \end{align}
    
    For every set $T \subset U$ let $N_T:= \bigcap_{u\in T}N_{G}(u) \cap L$ denote the set of common neighbors of $T$ in $L$. 
    It follows from the definition of $W$ that for every $(s_2+1)$-set $T\subset W$ we have 
    \begin{align}\label{equ:THM-2nd-pg-NT-lower-bound}
        |N_T| 
        \ge \ell - \sum_{u\in T}\left(\ell - |N_{G}(u) \cap L|\right)
        \ge \left(1-\delta_3 (s_2+1)\right)\ell
        = \frac{\ell}{2}. 
    \end{align}
    %
    
\begin{figure}[htbp]
\centering
\tikzset{every picture/.style={line width=0.85pt}} 

\begin{tikzpicture}[x=0.75pt,y=0.75pt,yscale=-1,xscale=1]

\draw [line width=1pt]  (531.27,45.47) .. controls (535.38,45.47) and (538.69,48.8) .. (538.65,52.92) -- (538.45,75.25) .. controls (538.42,79.37) and (535.06,82.7) .. (530.94,82.69) -- (179.86,82.45) .. controls (175.75,82.45) and (172.45,79.11) .. (172.48,75) -- (172.68,52.66) .. controls (172.71,48.55) and (176.08,45.22) .. (180.19,45.22) -- cycle ;
\draw  [line width=1pt] (519.35,97.2) .. controls (530.94,97.21) and (540.25,106.61) .. (540.15,118.2) -- (539.6,181.14) .. controls (539.5,192.73) and (530.02,202.12) .. (518.43,202.11) -- (196.02,201.88) .. controls (184.43,201.87) and (175.12,192.47) .. (175.22,180.89) -- (175.77,117.94) .. controls (175.87,106.35) and (185.35,96.97) .. (196.94,96.98) -- cycle ;
\draw    (539.6,180) -- (175.22,180) ;
\draw    (430,181) -- (430,203) ;

\draw    (525,145) -- (510,192) ;
\draw    (510,145) -- (510,192) ;
\draw    (495,145) -- (510,192) ;
\draw    (510,61) -- (525,145) ;
\draw    (510,61) -- (510,145) ;
\draw    (510,61) -- (495,145) ;
\draw [fill=uuuuuu]    (525,145) circle (1.5pt);
\draw [fill=uuuuuu]   (510,145)  circle (1.5pt);
\draw [fill=uuuuuu]   (495,145) circle (1.5pt);
\draw [fill=uuuuuu]   (510,192)  circle (1.5pt);
\draw [fill=uuuuuu]   (510,61)  circle (1.5pt);
%
\draw    (470,145) -- (455,192) ;
\draw    (455,145) -- (455,192) ;
\draw    (440,145) -- (455,192) ;
\draw    (455,61) -- (470,145) ;
\draw    (455,61) -- (455,145) ;
\draw    (455,61) -- (440,145) ;
\draw [fill=uuuuuu]    (470,145) circle (1.5pt);
\draw [fill=uuuuuu]   (455,145)  circle (1.5pt);
\draw [fill=uuuuuu]   (440,145) circle (1.5pt);
\draw [fill=uuuuuu]   (455,61)  circle (1.5pt);
\draw [fill=uuuuuu]   (455,192)  circle (1.5pt);
%

\draw    (405,126) -- (390,162) ;
\draw    (390,126) -- (390,162) ;
\draw    (375,126) -- (390,162) ;
\draw    (390,61) -- (405,126) ;
\draw    (390,61) -- (390,126) ;
\draw    (390,61) -- (375,126) ;
\draw [fill=uuuuuu]    (405,126) circle (1.5pt);
\draw [fill=uuuuuu]   (390,126)  circle (1.5pt);
\draw [fill=uuuuuu]   (375,126) circle (1.5pt);
\draw [fill=uuuuuu]   (390,162)  circle (1.5pt);
\draw [fill=uuuuuu]   (390,61)  circle (1.5pt);
%
\draw    (355,126) -- (340,162) ;
\draw    (340,126) -- (340,162) ;
\draw    (325,126) -- (340,162) ;
\draw    (340,61) -- (355,126) ;
\draw    (340,61) -- (340,126) ;
\draw    (340,61) -- (325,126) ;
\draw [fill=uuuuuu]    (355,126) circle (1.5pt);
\draw [fill=uuuuuu]   (340,126)  circle (1.5pt);
\draw [fill=uuuuuu]   (325,126) circle (1.5pt);
\draw [fill=uuuuuu]   (340,61)  circle (1.5pt);
\draw [fill=uuuuuu]   (340,162)  circle (1.5pt);
%
\draw    (230,65) -- (240,130) ;
\draw    (230,65) -- (217,130) ;
\draw    (230,65) -- (194,130) ;
\draw    (204,65) -- (240,130) ;
\draw    (204,65) -- (217,130) ;
\draw    (204,65) -- (194,130) ;
\draw [fill=uuuuuu]    (230,65) circle (1.5pt);
\draw [fill=uuuuuu]   (204,65)  circle (1.5pt);
\draw [fill=uuuuuu]   (240,130) circle (1.5pt);
\draw [fill=uuuuuu]   (217,130)  circle (1.5pt);
\draw [fill=uuuuuu]   (194,130)  circle (1.5pt);
%
\draw    (290,65) -- (300,130) ;
\draw    (290,65) -- (277,130) ;
\draw    (290.79,65) -- (254,130) ;
\draw    (264,65) -- (300,130) ;
\draw    (264,65) -- (277,130) ;
\draw    (264,65) -- (254,130) ;
\draw [fill=uuuuuu]    (290,65) circle (1.5pt);
\draw [fill=uuuuuu]   (264,65)  circle (1.5pt);
\draw [fill=uuuuuu]   (300,130) circle (1.5pt);
\draw [fill=uuuuuu]   (277,130) circle (1.5pt);
\draw [fill=uuuuuu]   (254,130)  circle (1.5pt);
%

\draw (150,60) node [anchor=north west][inner sep=0.75pt]   [align=left] {$L$};
\draw (150,130) node [anchor=north west][inner sep=0.75pt]   [align=left] {$W$};
\draw (150,185) node [anchor=north west][inner sep=0.75pt]   [align=left] {$S$};
\draw (480,209) node [anchor=north west][inner sep=0.75pt]   [align=left] {$S_1$};
\end{tikzpicture}

\caption{Supplementary diagram for Claim~\ref{CLAIM:2nd-graph-S1-G[W]} when $F = K_{2,3}$.}
\label{fig:2nd-graph}
\end{figure}
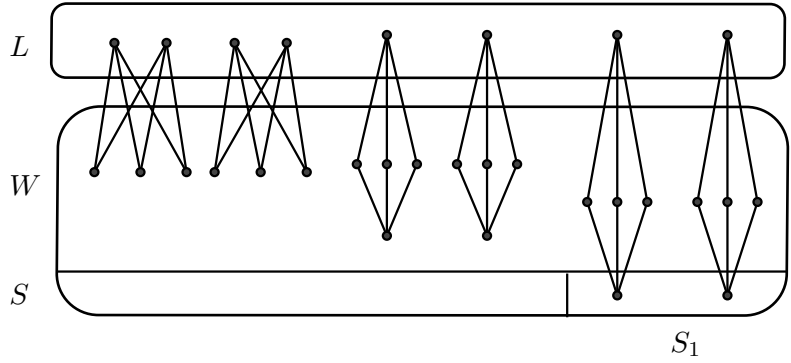

    Let $\mathbb{B}':= \mathbb{B}[s_2+1]$ and a crucial observation is that every $\mathbb{B}'$-free graph is also $K_{1,s_2}$-free. 
    \begin{claim}\label{CLAIM:2nd-graph-S1-G[W]}
        We have $x \le t_3$ and $G[W]$ is $(t_3-x+1) K_{1,s_2}$-free. 
        Moreover,  
        \begin{align}\label{equ:2nd-graph-G[W]-upper-bound-1}
            |G[W]|
            \le \frac{t_3-x}{2}n + \frac{s_2}{2}n. 
        \end{align}
    \end{claim}
    \begin{proof}
        Suppose to the contrary that there exists a $(t_3+1)$-set $\{v_1, \ldots, v_{t_3+1}\} \subset S_1$ such that $|N_{G}(v_i) \cap W| \ge 4s^2 \theta \ge 2s (t_3+1)$ for all $i\in [t_3+1]$. 
        Choose an $s_2$-set $X_i$ from $N_{G}(v_i) \cap W$ for each $i\in [t_2+1]$ such that $\{X_1, \ldots, X_{t_3+1}\}$ are pairwise disjoint. 
        For each $X_i$ choose an $(s_1-1)$-set $Y_i \subset N_{X_i}$ such that $\{Y_1, \ldots, Y_{t_3+1}\}$ are pairwise disjoint (the existence of such sets are guaranteed by~\eqref{equ:THM-2nd-pg-NT-lower-bound}). 
        Notice that $B \subset K_{s_1, s_2} \subset G[\{v_i\} \cup X_i \cup Y_i]$ for $i\in [t_3+1]$. 
        Let $Z:= \bigcup_{i\in [t_3+1]}\left(\{v_i\} \cup X_i \cup Y_i\right)$ and $G'$ be the induced subgraph of $G$ on $V\setminus Z$.
        For every $v\in L_1 \setminus Z$ we have 
        \begin{align*}
            d_{G'}(v) 
            \ge d_{G}(v) - |Z|
            \ge (1-\delta_1)(n-1) - s(t_3+1)
            \ge (1-2\delta_1)(n-1), 
        \end{align*}
        and for every $u\in L_2 \setminus Z$ we have 
        \begin{align*}
            d_{G'}(v) 
            \ge d_{G}(v) - |Z|
            \ge \delta_2 (n-1) - s(t_3+1)
            \ge \frac{\delta_2}{2}(n-1).  
        \end{align*}
        Since, in addition, $\ell \le s_1(t+1) \le 2\varepsilon s_1 n \le \frac{\delta_2 (n-\ell-|Z|)/2}{8(s-s_1)}$, 
        it follows from Lemma~\ref{LEMMA:2nd-perfect-K-semibipartite} that $G'$ contains at least 
        \begin{align*}
            \left\lfloor \frac{\ell- (s_1-1)t_3}{s_1} \right\rfloor
            = \ell - (s_1-1)(t+1)
        \end{align*}
        pairwise vertex-disjoint copies of $\mathbb{B}$. 
        This implies that $G$ contains 
        \begin{align*}
            t_3+1 + \ell - (s_1-1)(t+1) 
            = s_1(t+1)-1-\ell + 1 + \ell - (s_1-1)(t+1)
            = t+1
        \end{align*}
        pairwise vertex-disjoint copies of $\mathbb{B}$, a contradiction. 
        This proves that $x \le t_3$. 
        The proof for the $(t_3-x+1) K_{1,s_2}$-freeness of $G[W]$ is similar (see Figure~\ref{fig:2nd-graph}), so we omit the details. 

        Recall that $\Delta(G[W]) \le \Delta(G[U]) \le \delta_3 n$, so it follows from Lemma~\ref{LEMMA:trivial-max-degree} and $(t_3-x+1) K_{1,s_2}$-freeness of $G[W]$ that 
        \begin{align*}
            |G[W]|
            \le (t_3-x)(s_2+1)\times \delta_3 n + \mathrm{ex}(n,K_{1,s_2})
            \le \frac{t_3-x}{2}n+ \frac{s_2}{2}n, 
        \end{align*}
        where in the last inequality, we used the trivial bound $\mathrm{ex}(n,K_{1,s_2}) \le \frac{s_2}{2}n$. 
    \end{proof}
    %

    It follows from~\eqref{equ:2nd-graph-G[S]-upper-bound},~\eqref{equ:2nd-graph-G[L,U]-G[S]-G[S,W]}, and~\eqref{equ:2nd-graph-G[W]-upper-bound-1} that 
    \begin{align}\label{equ:2nd-graph-G-upper-bound-a}
        |G|
        & = |G[L]| + |G[L,U]| + |G[S]| + |G[S,W]| + |G[W]| \notag \\
        & \le \binom{\ell}{2} - |\overline{G[L]}| + \ell(n-\ell) + \frac{xn}{2} + \frac{t_3-x}{2}n + \frac{s_2}{2}n \notag \\
        & \le \binom{n}{2}-\binom{n-s_1(t+1)+1}{2} - t_3 \times \frac{2n}{3}+ \frac{t_3}{2}n + \frac{s_2}{2}n- |\overline{G[L]}| \notag \\
        & \le \binom{n}{2}-\binom{n-s_1(t+1)+1}{2} + \frac{s_2}{2}n- \frac{t_3 n}{6} - |\overline{G[L]}|. 
    \end{align}
    Since $|G| \ge \binom{n}{2} -\binom{n-s_1(t+1)-1}{2}$, it follows from the inequality above that 
    \begin{align}\label{equ:2nd-graph-t3-upper-bound}
        t_3 \le 3s_2, 
    \end{align}
    and 
    \begin{align}\label{equ:2nd-graph-complement-G[L]}
        |\overline{G[L]}| \le \frac{s_2}{2}n, 
    \end{align}

    \begin{claim}\label{CLAIM:2nd-graph-G[N_t]}
        The following statements hold. 
        \begin{enumerate}[label=(\roman*)]
          \item\label{CLAIM:2nd-graph-G[N_t]-1} Every set $L'\subset L$ of size at least $\sqrt{2s_1sn}$ contains a copy of $K_{s_1-1}$,
          \item\label{CLAIM:2nd-graph-G[N_t]-2} for every $T \in \binom{W}{s_2+1}$, the induced graph $G[N_T]$ contains $s t_3$ pairwise vertex-disjoint copies of $K_{s_1-1}$, and 
          \item\label{CLAIM:2nd-graph-G[N_t]-3} $G[W]$ is $(t_3-x+1) \mathbb{B}'$-free. 
        \end{enumerate}
    \end{claim}
    \begin{proof}
        Suppose to the contrary that there exists a set $L'\subset L$ of size $\sqrt{2s_1sn}$ such that $G[L']$ is $K_{s_1-1}$-free. 
        Then it follows from Tur\'{a}n's theorem that $|G[L']| \le \left(\frac{1}{2}-\frac{1}{2(s_1-1)}\right)|L'|^2$, and consequently, 
        \begin{align*}
            |\overline{G[L]}|
            \ge |\overline{G[L']}|
            \ge \binom{|L'|}{2} - \left(\frac{1}{2}-\frac{1}{2(s_1-1)}\right)|L'|^2 
            \ge \frac{|L'|^2}{4(s_1-1)} 
            > \frac{s_2 n}{2}, 
        \end{align*}
        contradicting~\eqref{equ:2nd-graph-complement-G[L]}. 
        This proves~\ref{CLAIM:2nd-graph-G[N_t]-1}. 

        Let $T \in \binom{W}{s_2+1}$. 
        By~\eqref{equ:THM-2nd-pg-NT-lower-bound},~\eqref{equ:2nd-graph-t3-upper-bound}, and the assumption on $t$, we have
        \begin{align*}
            |N_T|
            \ge \frac{\ell}{2}
            \ge \frac{s_1(t-\theta)}{2}
            \ge \frac{t}{2}
            \ge \frac{\sqrt{32s_1 sn}}{2}
            \ge \sqrt{2s_1sn} + (s_1-1)s t_3. 
        \end{align*}
        It follows from~\ref{CLAIM:2nd-graph-G[N_t]-1} and a simple greedy argument that $G[N_T]$ contains $s t_3$ pairwise vertex-disjoint copies of $K_{s_1-1}$. 

        Notice that if $T \in \binom{W}{s_2+1}$ contains a copy of some element in $\mathbb{B}'$, then by (i), there are disjoint $(s_1-1)$-sets $T'_1, \ldots, T'_{s t_3}$ in $N_{T}$ such that $T\cup T'_i$ contains a copy of $\mathbb{B}$ for all $i\in [s t_3]$. 
        Using this fact and  a similar argument as in the proof of Claim~\ref{CLAIM:2nd-graph-S1-G[W]} we obtain~\ref{CLAIM:2nd-graph-G[N_t]-3}. 
    \end{proof}

    Similar to~\eqref{equ:2nd-graph-G-upper-bound-a}, it follows from~\eqref{equ:2nd-graph-G[L,U]-G[S]-G[S,W]}, Claim~\ref{CLAIM:2nd-graph-G[N_t]}~\ref{CLAIM:2nd-graph-G[N_t]-3}, Proposition~\ref{PROP:2nd-graph-one-star-is-better}, and Fact~\ref{FACT:increasing-f(ell)} that 
    \begin{align*}
        |G|
        & \le \binom{\ell}{2}  + \ell(n-\ell) + \frac{xn}{2} + \mathrm{ex}\left(n-\ell-x, (t_3-x+1)\mathbb{B}'\right) \\
        & \le \binom{n}{2}-\binom{n-(\ell+x)}{2} + \mathrm{ex}\left(n-\ell-x, (t_3-x+1)\mathbb{B}'\right)  \\
        & \le \binom{n}{2} - \binom{n-(\ell+x+t_3-x)}{2} + \mathrm{ex}\left(n-\ell-x -(t_3-x), \mathbb{B}'\right) \\
        & = \binom{n}{2}-\binom{n-s_1(t+1)+1}{2} + \mathrm{ex}\left(n-s_1(t+1)+1, \mathbb{B}'\right),  
    \end{align*}
    completing the proof of Theorem~\ref{THM:2nd-interval-graph}. 
\end{proof}
\section{Concluding remarks}\label{SEC:Remarks}
Fix an $r$-graph $F$ on $m$ vertices. 
We say a collection $\left\{\mathcal{H}_1, \ldots, \mathcal{H}_{t+1}\right\}$ of $r$-graphs on the same vertex set $V$ has a \textbf{rainbow $F$-matching} 
if there exists a collection $\left\{S_i \colon i \in [t+1]\right\}$ of pairwise disjoint $m$-subsets of $V$ such that $F \subset \mathcal{H}_{i}[S_i]$ for all $i\in [t+1]$.
There has been considerable interest in extending some classical results to a rainbow version recently. See e.g. \cite{AH,GLMY22,HLS12,KK21,LWY22,LWY23} for some recent progress on the rainbow version of the Erd\H{o}s Matching Conjecture.
Rainbow version of theorems in the present work can be obtained without too much change to the corresponding proof. 
We omit the details and refer the reader to~{\cite[Theorem~1.8]{HLLYZ23}} for more information. 

Given (not necessary different) $r$-graphs $F_1, \ldots, F_{t+1}$, 
let $\mathrm{ex}\left(n, \bigsqcup_{i\in [t+1]}F_i\right)$ denote the maximum number of edges in an $n$-vertex $r$-graph without pairwise vertex-disjoint copies of $F_1, \ldots, F_{t}$, and $F_{t+1}$. 
One could also use proofs in the present work (with minor changes) to obtain some upper bounds for $\mathrm{ex}\left(n, \bigsqcup_{i\in [t+1]}F_i\right)$. 
We refer the reader to~{\cite[Theorem~6.1]{HLLYZ23}} for more information. 

It seems to be a very challenging problem to determine $\mathrm{ex}\left(n,(t+1)F\right)$ for all feasible values of $t$ in general. 
We tend to believe that for $F = K_{s, \ldots, s}^{r}$, $s\ge 2$, the asymptotic behavior of $\mathrm{ex}\left(n,(t+1)F\right)$ is governed by the three families $\mathcal{G}_1(n,t,F)$, $\mathcal{G}_2(n,t,F)$, and $\mathcal{G}_3(n,t,F)$ defined in the first section. 
However, for general $r$-graphs $F$, there might be other extremal constructions. 
Indeed, for $i\in [r-1]$, we say a set $S\subset V(F)$ is \textbf{$i$-independent} in $F$ if every edge in $F$ contains at most $i$ vertices in $S$. 
Let the \textbf{$i$-independent vertex covering number} $\tau_{i}(F)$ (if it exists) of $F$ be 
\begin{align*}
    \tau_{i}(F)
    := \min\left\{|S| \colon S\subset V(\mathcal{H}),\ \text{$S$ is $i$-independent, and } |S\cap e| \neq \emptyset \text{ for all } e\in \mathcal{H}\right\}. 
\end{align*}
Observe that for $1\le i < j \le r-1$, if $\tau_{i}(F)$ and $\tau_{j}(F)$ exist, then $\tau\le \tau_{j}(F) \le \tau_{i}(F)$. 
For $i\in [r]$ let 
\begin{align*}
    \mathcal{B}\left(n,m,r,i\right)
    := \left\{e\in \binom{[n]}{r} \colon 1 \le |e\cap [m]| \le i\right\}. 
\end{align*}
It is easy to see that $\mathcal{B}\left(n,(t+1)\tau_{i}(F)-1,r,i\right)$ is $(t+1)F$-free. Therefore, we have the following lower bound. 
\begin{fact}
    Let $F$ be an $r$-graph and suppose that $\tau_{i}(F)$ exists for some $i\in [r-1]$. 
    Then 
    \begin{align*}
        \mathrm{ex}\left(n,(t+1)F\right)
        \ge \sum_{j \in [i]}\binom{(t+1)\tau_{i}(F)-1}{j}\binom{n-(t+1)\tau_{i}(F)+1}{r-j}. 
    \end{align*}
\end{fact}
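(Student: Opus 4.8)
The plan is to exhibit an explicit $(t+1)F$-free $r$-graph on $n$ vertices with exactly $\sum_{j\in[i]}\binom{(t+1)\tau_i(F)-1}{j}\binom{n-(t+1)\tau_i(F)+1}{r-j}$ edges, namely the $r$-graph $\mathcal{B}\bigl(n,(t+1)\tau_i(F)-1,r,i\bigr)$ defined just above the statement. Write $\tau_i := \tau_i(F)$ and $M := (t+1)\tau_i-1$, and fix the ground set $[n]$ with the distinguished set $[M]$. First I would verify the edge count: an edge $e\in\binom{[n]}{r}$ lies in $\mathcal{B}(n,M,r,i)$ precisely when $1\le |e\cap[M]|\le i$, so grouping edges by $j := |e\cap[M]|$ gives $\binom{M}{j}$ choices inside $[M]$ and $\binom{n-M}{r-j}$ choices outside, and summing over $j\in[i]$ yields exactly the claimed quantity. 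This is the routine part.

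The substance is showing $\mathcal{B}(n,M,r,i)$ is $(t+1)F$-free. Suppose for contradiction it contains pairwise vertex-disjoint copies $F_1,\ldots,F_{t+1}$ of $F$. For each $k\in[t+1]$, let $S_k := V(F_k)\cap[M]$. Every edge $e$ of the copy $F_k$ is an edge of $\mathcal{B}(n,M,r,i)$, hence satisfies $|e\cap[M]|\ge 1$; since $e\subseteq V(F_k)$ and $e\cap[M]\subseteq S_k$, this says $e\cap S_k\ne\emptyset$ — so $S_k$ is a \emph{vertex cover} of $F_k$. Moreover $|e\cap[M]|\le i$ forces $|e\cap S_k|\le i$, so $S_k$ is \emph{$i$-independent} in $F_k$. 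By the definition of the $i$-independent vertex covering number, any set that is $i$-independent and covers all edges of a copy of $F$ has size at least $\tau_i$; therefore $|S_k|\ge\tau_i$ for every $k$. (Here one uses that $\tau_i(F)$ is an isomorphism invariant, so it applies to each copy $F_k$.)

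Since the copies $F_1,\ldots,F_{t+1}$ are pairwise vertex-disjoint, the sets $S_1,\ldots,S_{t+1}$ are pairwise disjoint subsets of $[M]$, whence $M\ge\sum_{k=1}^{t+1}|S_k|\ge(t+1)\tau_i$. But $M=(t+1)\tau_i-1<(t+1)\tau_i$, a contradiction. Hence no such family of $t+1$ disjoint copies exists, i.e. $\nu(F,\mathcal{B}(n,M,r,i))\le t$, so $\mathcal{B}(n,M,r,i)$ is $(t+1)F$-free. Combining with the edge count,
\[
    \mathrm{ex}\bigl(n,(t+1)F\bigr)
    \ge \bigl|\mathcal{B}(n,M,r,i)\bigr|
    = \sum_{j\in[i]}\binom{(t+1)\tau_i(F)-1}{j}\binom{n-(t+1)\tau_i(F)+1}{r-j},
\]
which is the desired bound.

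I do not anticipate a genuine obstacle here; the only point needing a little care is the logical direction in the $i$-independence argument — one must check that $S_k$ being a small $i$-independent cover of the \emph{copy} $F_k$ really does force $|S_k|\ge\tau_i(F)$, which is immediate once we note $\tau_i$ is defined as a minimum over all such covers and is invariant under isomorphism. (Implicitly this also requires $\tau_i(F)$ to exist, which is exactly the hypothesis of the statement; note $\tau_i(F)$ always exists when $i\ge 1$ and $F$ is $r$-partite, since one full part of $F$ is $1$-independent hence $i$-independent and covers every edge.) Everything else is bookkeeping.
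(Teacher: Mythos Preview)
Your proposal is correct and takes exactly the same approach as the paper: the paper simply asserts ``It is easy to see that $\mathcal{B}\left(n,(t+1)\tau_{i}(F)-1,r,i\right)$ is $(t+1)F$-free'' and deduces the bound, while you have accurately supplied the missing details (each disjoint copy forces an $i$-independent cover of size at least $\tau_i$ inside $[M]$, and $M=(t+1)\tau_i-1$ is too small).
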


We hope the following natural problem could inspire further research on this topic. 

\begin{problem}\label{PROP:t+1-C4}
    Determine $\mathrm{ex}\left(n,(t+1)C_4\right)$ for large $n$ and for all $t \le n/4$. 
\end{problem}

We would like to remind the reader that constructions in $\mathcal{G}_{3}(n,t,C_4)$ are not maximal. 
Indeed, for a member $G \in \mathcal{G}_{3}(n,t,C_4)$, let $V_1 \cup V_2$ be the partition of $V(G)$ such that $G[V_1] \cong K_{2t+1}$ and $G[V_2]$ is maximum $C_4$-free.
Let $\hat{G}$ be a graph obtained from $G$ by adding a matching $M$ crossing $V_1$ and $V_2$ such that the set $V(M) \cap V_2$ is an independent set in $G[V_2]$. 
It is easy to see that $\hat{G}$ is still $(t+1)C_4$-free. 
The orthogonal polarity graphs show that the size of $M$ can have order $(n-2t-1)^{3/4}$ for some special values of $n-2t-1$ (see e.g.~\cite{Brown66,ER62,ERS66,Fur83,Fur94,MW07}). 
This construction shows that determining the exact value of $\mathrm{ex}\left(n,(t+1)C_4\right)$ when $t$ is close to $n/4$ is probably challenging and the following problem seems related. 

\begin{problem}\label{PROP:C4-free-indepence-number}
    Let $n \ge \alpha \ge 1$ be integers. 
    What is the maximum number of edges in an $n$-vertex $C_4$-free graph with independence number at least $\alpha$? 
\end{problem}

It would be interesting to know whether the lower bound $\sqrt{32s_2sn}$ for $t$ in Theorem~\ref{THM:2nd-interval-graph} is necessary in general. 
In an accompanying note~\cite{HHLLYZ23b}, we showed that this lower bound is not necessary for a special class of bipartite graphs including even cycles. 

\begin{problem}\label{PROP:2nd-graph-better-t}
    Characterize the family of bipartite graphs $B_{s_1,s_2}$ for which the lower bound $\sqrt{32s_2(s_1+s_2)n}$ for $t$ in Theorem~\ref{THM:2nd-interval-graph} can be omitted. 
\end{problem}

To refine the bound presented in Theorem~\ref{THM:2nd-interval}, it appears that one might need to address the following problem first.

\begin{problem}\label{PROB:r-1-large-vtx}
    Let $n \ge r \ge 3$, $s_r \ge \cdots \ge s_1 \ge 2$, and $m \le \binom{n-1}{r-1}$ be integers. 
    What is the maximum number of edges in an $n$-vertex $K_{s_1, \ldots, s_r}^{r}$-free $r$-graph $\mathcal{H}$ with an $(s_1-1)$-set $L \subset V(\mathcal{H})$ such that $d_{\mathcal{H}}(v) \ge \binom{n-1}{r-1} - m$ for all $v\in L$?
\end{problem}

Recall that the \textbf{shadow} of an $r$-graph $\mathcal{H}$ is 
\begin{align*}
    \partial\mathcal{H}
    := \left\{A\in \binom{V(\mathcal{H})}{r-1} \colon A\subset E \text{ for some } E\in \mathcal{H}\right\}. 
\end{align*}
The classical Kruskal-Katone Theorem~\cite{KA68,KR63} determines the maximum size of an $r$-graph with bounded shadow size. 
It seems that Problem~\ref{PROB:r-1-large-vtx} is related to the following Kruskal-Katone type problem. 
\begin{problem}\label{PROB:KK-F-free}
    Let $r \ge 3$ and $F$ be a degenerate $r$-graph. 
    Let $m \le \binom{n}{r-1}$ be an integer. 
    What is the maximum number of edges in an $n$-vertex $F$-free $r$-graph $\mathcal{H}$ with $|\partial \mathcal{H}| \le m$? 
\end{problem}
An analogous problem for nondegenerate $r$-graphs was studied systematically in~\cite{LM21a} (see also~\cite{Liu20a,LiuMuk23,LMR1,HLLMZ22,LP22} for furthrt related results). 

Theorem~\ref{THM:1st-interval} motivates the following analogous problem to~{\cite[Problem~6.4]{HLLYZ23}}.

\begin{problem}\label{PROB:1st-threshold}
    Let $r\ge 2$ be an integers and $F$ be a degenerate $r$-graph. 
    What is the largest value of $s(F)$ such that for sufficiently large $n$, 
    \begin{align*}
        \mathrm{ex}(n,(t+1)F)
        = g_{1}(n,t,F)
        \quad\text{holds for all }
        t \le \left(s(F)-o(1)\right)\frac{\mathrm{ex}(n,F)}{\binom{n-1}{r-1}}? 
    \end{align*}
\end{problem}

Theorem~\ref{THM:1st-interval} also motivates the following question for nondegenerate hypergraphs. 

\begin{problem}\label{PROB:Turan-density-upper-bound}
    Let $r \ge 4$ be an integer. 
    Find sufficient and/or necessary conditions for a nondegenerate $r$-graph $F$ to satisfy $\pi(F) < \frac{1}{v(F)}$. 
\end{problem}

By Erd{\H o}s' theorem~\cite{Erdos64}, every nondegenerate $r$-graph $F$ satisfies the lower bound $\pi(F) \ge \frac{r!}{r^r}$. 
This, together with straightforward calculations and constructions of $K_{4}^{3-}$-free $3$-graphs (see e.g.~\cite{MPS11}), shows that there is no nondegenerate $r$-graph $F$ with $\pi(F) < \frac{1}{v(F)}$ when $r\le 3$. 
\bibliographystyle{abbrv}
\bibliography{CHBipartite}
\begin{appendix}
\section{Proof of Theorem~\ref{THM:Erdos-hypergraph-KST}}\label{APPEND:hypergraph-KST}
\begin{theorem}[Erd\H{o}s~\cite{Erdos64}]\label{APPEND:THM:Erdos-hypergraph-KST}
    Suppose that $n \ge r \ge 3$ and $s_r\ge \cdots \ge s_1 \ge 1$ are integers. 
    Then 
    \begin{align*}
        \mathrm{ex}(n,K_{s_1, \ldots, s_r}^{r})
        \le \frac{(s_2 + \ldots+ s_r-r+1)^{\frac{1}{s_1}}}{r}n^{r-\frac{1}{s_1\cdots s_{r-1}}} + \frac{s_1 -1}{r} \binom{n}{r-1}. 
    \end{align*}
\end{theorem}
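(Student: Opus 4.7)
The plan is induction on $r \ge 2$. The base case $r=2$ is precisely Theorem~\ref{THM:graph-KST}. For the inductive step, fix an $n$-vertex $K_{s_1,\ldots,s_r}^{r}$-free $r$-graph $\mathcal{H}$ and assume the bound for $(r-1)$-graphs. First I would observe that we may assume $\overline{d} := r|\mathcal{H}|/\binom{n}{r-1} \ge s_1 - 1$, since otherwise $|\mathcal{H}| \le \frac{s_1-1}{r}\binom{n}{r-1}$ and the conclusion is immediate; this low-density regime is exactly what the additive term in the stated bound is designed to absorb.

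The main step is a standard double count. For each $(r-1)$-set $T$, let $d(T) := |N_{\mathcal{H}}(T)|$ denote its codegree. Count pairs $(S,T)$ with $T \in \binom{V(\mathcal{H})}{r-1}$ and $S \in \binom{N_{\mathcal{H}}(T)}{s_1}$. Summing by $T$ first gives $\sum_T \binom{d(T)}{s_1}$. Summing by $S$ first: for each $s_1$-set $S$, the common link $\mathcal{L}(S) := \bigcap_{v\in S} L_{\mathcal{H}}(v)$ is an $(r-1)$-graph on $V(\mathcal{H})\setminus S$ that is $K_{s_2,\ldots,s_r}^{r-1}$-free, since any copy of $K_{s_2,\ldots,s_r}^{r-1}$ in $\mathcal{L}(S)$ together with $S$ would produce a copy of $K_{s_1,\ldots,s_r}^{r}$ in $\mathcal{H}$. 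Therefore
\[
    \sum_{T}\binom{d(T)}{s_1}
    \;\le\; \binom{n}{s_1}\cdot \mathrm{ex}\!\left(n-s_1,K_{s_2,\ldots,s_r}^{r-1}\right).
\]

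To extract the desired bound I would apply Jensen's inequality to the convex function $x\mapsto\binom{x}{s_1}$ on $[s_1-1,\infty)$, yielding $\binom{n}{r-1}\binom{\overline{d}}{s_1}\le \sum_T\binom{d(T)}{s_1}$. Combining this with the elementary estimate $\binom{x}{s_1}\ge (x-s_1+1)^{s_1}/s_1!$, substituting the inductive bound on $\mathrm{ex}(n-s_1,K_{s_2,\ldots,s_r}^{r-1})$, and taking $s_1$-th roots, the inductive exponent $r-1-\frac{1}{s_2\cdots s_{r-1}}$ combines with the factor $n^{s_1}$ from $\binom{n}{s_1}$ and the denominator $n^{(r-1)(s_1-1)}$ inherent in $\binom{\overline{d}}{s_1}$ to produce the claimed exponent $r-\frac{1}{s_1\cdots s_{r-1}}$.

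The main obstacle will be arithmetic bookkeeping for the leading constant. One must propagate the inductive constant $(s_3+\cdots+s_r-r+2)^{1/s_2}/(r-1)$ through the $s_1$-th root to recover $(s_2+\cdots+s_r-r+1)^{1/s_1}/r$, while simultaneously showing that all lower-order errors (the $s_1-1$ shift in the Jensen step together with the additive term $\frac{s_2-1}{r-1}\binom{n}{r-2}$ inherited from the induction, both after multiplication by $\binom{n}{s_1}$) can be consolidated into the single clean term $\frac{s_1-1}{r}\binom{n}{r-1}$. This reduces to direct but careful algebra, following the original argument of Erd\H{o}s~\cite{Erdos64}.
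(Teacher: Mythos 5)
Your proposal is correct and follows essentially the same approach as the paper: induction on $r$ with base case $r=2$ (Kővári–Sós–Turán), the same double count (the quantity you count is exactly the number $X$ of copies of $K_{s_1,1,\ldots,1}^{r}$), Jensen's inequality applied to $x\mapsto\binom{x}{s_1}$ for the lower bound, and the observation that each common link $L_{\mathcal{H}}(S)$ is $K_{s_2,\ldots,s_r}^{r-1}$-free for the upper bound. Your explicit reduction to the case $\overline{d}\ge s_1-1$ is the right move (the paper leaves this implicit), and your plan to push $\binom{n}{r-2}$ into the exponent $r-1-\tfrac{1}{s_2\cdots s_{r-1}}$ and to use $(s_3+\cdots+s_r-r+2)^{1/s_2}\le s_3+\cdots+s_r-r+2$ so the inherited constants collapse to $s_2+\cdots+s_r-r+1$ is exactly how the paper completes the algebra.
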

\begin{proof}
We prove this theorem by induction on $r$. 
The base case $r=2$ is the K{\H o}v\'{a}ri--S\'{o}s--Tur\'{a}n Theorem. 
So we may assume that $r \geq 3$. 
Let $\mathcal{H}$ be a $K_{s_1, \cdots, s_r}^{r}$-free $r$-graph on $n$ vertices.  Let $X$ denote the number of copies of $K_{s_1, 1,\ldots ,1}^{r}$ in $\mathcal{H}$. 
We use a standard double-counting argument to bound the number of edges in $\mathcal{H}$.
First, it follows from Jensen's inequality that 
\begin{align}\label{eq-Hyper-KST-lowerbound}
    X 
    & = \sum_{T \in \binom{n}{r-1}} \binom{|N(T)|}{s_1} \notag \\ 
    & \ge \binom{n}{r-1} \binom{\sum_{T \in \binom{n}{r-1}} |N(T)| / \binom{n}{r-1}}{s_1} \notag \\
    &  = \binom{n}{r-1} \binom{r|\mathcal{H}| / \binom{n}{r-1}}{s_1} 
    \geq \frac{1}{s_1!} \binom{n}{r-1} \left( \frac{r|\mathcal{H}|}{\binom{n}{r-1}} -(s_1 - 1) \right)^{s_1}.
\end{align}
From the other direction, notice that for every $S \in \binom{n}{s_1}$, the common link $L(S)$ is $K_{s_2, \ldots, s_r}^{r-1}$-free. 
It follows from  the induction hypothesis that 
\begin{align}\label{eq-Hyper-KST-link}
    |L_{\mathcal{H}}(S)| 
    &\leq \frac{(s_3 + \cdots+ s_r-r+2)^{\frac{1}{s_2}}}{r-1}n^{r-1-\frac{1}{s_2\cdots s_{r-1}}} + \frac{s_2 -1}{r-1} \binom{n}{r-2} \notag \\
    & \le \frac{s_2 + \cdots+ s_r-r+1}{r-1}  n^{r-1-\frac{1}{s_2\cdots s_{r-1}}}. 
\end{align}
It follows that 
\begin{align}\label{eq-Hyper-KST-upperbound}
    X  
    = \sum_{S \in \binom{n}{s_1}} |L(S)|  
    \le \frac{s_2 + \cdots+ s_r-r+1}{r-1} n^{r-1-\frac{1}{s_2\cdots s_{r-1}}}\binom{n}{s_1}. 
\end{align}
Combining~\eqref{eq-Hyper-KST-lowerbound} and~\eqref{eq-Hyper-KST-upperbound}, we obtain
\begin{align*}
   |\mathcal{H}| 
   & \le \frac{1}{r}\left(\frac{s_2 + \cdots+ s_r-r+1}{r-1} n^{r-1-\frac{1}{s_2\cdots s_{r-1}}}\frac{n^{s_1}}{\binom{n}{r-1}}\right)^{\frac{1}{s_1}}\binom{n}{r-1} + \frac{s_1-1}{r}\binom{n}{r-1} \\
   & \le \frac{\left(s_2 + \cdots+ s_r-r+1\right)^{\frac{1}{s_1}} }{r} n^{r-\frac{1}{s_1\cdots s_{r-1}}} + \frac{s_1 -1}{r} \binom{n}{r-1}, 
\end{align*}
completing the induction. 
\end{proof}
\section{Proof of Proposition~\ref{PROP:hypergraph-KST-Zaran}}\label{APPEND:hypergraph-KST-semibipartite}
\begin{proposition}\label{APPEND:PROP:hypergraph-KST-Zaran}
    Suppose that $r \ge 3$, $s_r\ge \cdots \ge s_1 \ge 1$, and $m, n\ge 1$ are integers. 
    Then 
    \begin{align*}
        Z(m,n,s_1, \ldots, s_r)
        \le \frac{\left(s_2 + \cdots+ s_r-r+1\right)^{\frac{1}{s_1}} }{r-1} mn^{r-1-\frac{1}{s_1\cdots s_{r-1}}} + (s_1-1)\binom{n}{r-1}. 
    \end{align*}    
\end{proposition}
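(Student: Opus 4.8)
The plan is to mirror Erd\H{o}s' double-counting argument for the ordinary Tur\'an number of $K_{s_1,\ldots,s_r}^r$ (Theorem~\ref{APPEND:THM:Erdos-hypergraph-KST}), but carried out in the semibipartite setting, inducting on $r$. Fix a semibipartite $r$-graph $\mathcal{H} = \mathcal{H}[V_1, V_2]$ with $|V_1| = m$, $|V_2| = n$, in which every edge has exactly one vertex in $V_1$, and which contains no ordered copy of $\mathbb{K} := K_{s_1,\ldots,s_r}^r$ (that is, no copy with its $s_1$-part inside $V_1$ and all other parts inside $V_2$). I want to bound $|\mathcal{H}|$. The base case $r=2$ is exactly the bipartite K\H{o}v\'ari--S\'os--Tur\'an bound $Z(m,n,s_1,s_2) \le (s_2-1)^{1/s_1} mn^{1-1/s_1} + (s_1-1)n$ from Theorem~\ref{THM:graph-KST}, which already has the claimed shape (note $(s_2-1)^{1/s_1} \le s_2^{1/s_1}$ and the linear term is $(s_1-1)n = (s_1-1)\binom{n}{1}$).

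For the inductive step $r \ge 3$, let $X$ count the number of (ordered) copies of $K_{s_1,1,\ldots,1}^r$ in $\mathcal{H}$, i.e.\ pairs consisting of an $(r-1)$-set $T \subseteq V_2$ together with an $s_1$-subset $S$ of its neighborhood $N(T) \subseteq V_1$. Counting $X$ by first choosing $T$ and using convexity of $\binom{\cdot}{s_1}$ (Jensen), together with $\sum_{T \in \binom{V_2}{r-1}} |N(T)| = (r-1)|\mathcal{H}|$ — here the factor is $r-1$ rather than $r$ precisely because each edge has only $r-1$ vertices on the $V_2$ side — gives
\begin{align*}
    X \ge \binom{n}{r-1}\binom{(r-1)|\mathcal{H}|/\binom{n}{r-1}}{s_1} \ge \frac{1}{s_1!}\binom{n}{r-1}\left(\frac{(r-1)|\mathcal{H}|}{\binom{n}{r-1}} - (s_1-1)\right)^{s_1}.
\end{align*}
From the other side, count $X$ by first choosing $S \in \binom{V_1}{s_1}$; the common link $L_{\mathcal{H}}(S)$ is an $(r-1)$-graph on $V_2$ (all $r-1$ remaining vertices lying in $V_2$), and it must be $K_{s_2,\ldots,s_r}^{r-1}$-free, for otherwise $S$ together with such a copy would be an ordered $\mathbb{K}$. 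Applying the ordinary hypergraph bound (Theorem~\ref{APPEND:THM:Erdos-hypergraph-KST}) to $L_{\mathcal{H}}(S)$ yields $|L_{\mathcal{H}}(S)| \le \frac{(s_3+\cdots+s_r-r+2)^{1/s_2}}{r-1}n^{r-1-1/(s_2\cdots s_{r-1})} + \frac{s_2-1}{r-1}\binom{n}{r-2} \le \frac{s_2+\cdots+s_r-r+1}{r-1}n^{r-1-1/(s_2\cdots s_{r-1})}$, so that $X = \sum_S |L_{\mathcal{H}}(S)| \le \frac{s_2+\cdots+s_r-r+1}{r-1}n^{r-1-1/(s_2\cdots s_{r-1})}\binom{n}{s_1}$. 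Combining the two estimates on $X$, solving for $|\mathcal{H}|$, and using $\binom{n}{s_1} \le n^{s_1}/s_1!$ and $\binom{n}{r-1} \ge$ a suitable power of $n$ to simplify the exponent bookkeeping, reproduces $\frac{(s_2+\cdots+s_r-r+1)^{1/s_1}}{r-1}mn^{r-1-1/(s_1\cdots s_{r-1})} + (s_1-1)\binom{n}{r-1}$ after taking $s_1$-th roots, exactly as in the ordinary case but with $r$ replaced by $r-1$ in the denominators and an $m$ appearing (via $\binom{m}{s_1} \le m^{s_1}/s_1!$ in place of $\binom{n}{s_1}$ when we instead count $X$ from the $V_1$ side, which is where the linear-in-$m$ main term originates).

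The main obstacle — though it is bookkeeping rather than a genuine difficulty — is tracking the constants and the additive lower-order term through the $s_1$-th root: the inequality $\left(A|\mathcal{H}|/\binom{n}{r-1} - (s_1-1)\right)^{s_1} \le (\text{stuff})$ must be converted back to a clean linear-plus-error bound on $|\mathcal{H}|$, and one has to check that the "$(s_1-1)$" correction contributes at most $(s_1-1)\binom{n}{r-1}$ to the final count. This is handled exactly as in Erd\H{o}s' original argument (using $(a+b)^{1/s_1} \le a^{1/s_1} + b^{1/s_1}$ for nonnegative reals), so I would simply remark that the computation is identical to that in the proof of Theorem~\ref{APPEND:THM:Erdos-hypergraph-KST} and omit the routine algebra, which is presumably why the paper states "the proof is essentially the same as that of Erd\H{o}s" and omits it.
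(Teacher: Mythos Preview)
Your overall strategy is exactly the paper's: double-count copies of $K_{s_1,1,\ldots,1}^r$ in the semibipartite graph, lower-bound via Jensen over $(r-1)$-sets $T\subset V_2$, upper-bound via the $K^{r-1}_{s_2,\ldots,s_r}$-freeness of the common links $L_{\mathcal H}(S)$ for $S\in\binom{V_1}{s_1}$, then take $s_1$-th roots. So the architecture is right.

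There is, however, a genuine counting slip that breaks the numerics. You assert $\sum_{T\in\binom{V_2}{r-1}}|N(T)| = (r-1)|\mathcal H|$, reasoning that each edge has $r-1$ vertices on the $V_2$ side. But the relevant count is the number of $(r-1)$-\emph{subsets} of an edge lying entirely in $V_2$, and since $|e\cap V_2|=r-1$ there is exactly $\binom{r-1}{r-1}=1$ such subset per edge. Hence $\sum_{T}|N(T)| = |\mathcal H|$, not $(r-1)|\mathcal H|$; this is precisely what the paper uses. With your factor $(r-1)$ you would end up dividing the final bound by $r-1$, producing a main term $\frac{(\cdots)^{1/s_1}}{(r-1)^2}\,m\,n^{r-1-1/(s_1\cdots s_{r-1})}$, which is strictly stronger than the proposition and in fact false. (Relatedly, on the upper-bound side the sum over $S$ ranges over $\binom{V_1}{s_1}$, so the factor is $\binom{m}{s_1}\le m^{s_1}/s_1!$, not $\binom{n}{s_1}$; you eventually say this, but your displayed inequality has the wrong binomial.) Once you replace $(r-1)|\mathcal H|$ by $|\mathcal H|$ and $\binom{n}{s_1}$ by $\binom{m}{s_1}$, the algebra you describe goes through verbatim and matches the paper's proof.
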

\begin{proof}
Let $\mathcal{H}$ be an $m$ by $n$ semibipartite $r$-graph on $V_1$ and $V_2$ with $|V_1| = m$ and $|V_2| = n$. 
Suppose that $\mathcal{H}$ does not contain a copy of $K_{s_1, \ldots, s_r} = K[W_1, \ldots, W_r]$ with $W_1$ contained in $V_1$ and $W_2, \ldots, W_{r}$ contained in $V_2$. 
We use a standard double-counting argument to bound the size of $\mathcal{H}$. 
Let 
\begin{align*}
    X
    := \sum_{T\in \binom{V_2}{r-1}}\binom{|N(T)|}{s_1}
    = \sum_{S\in \binom{V_1}{s_1}}|L(S)|. 
\end{align*}
First, it follows from Jensen's inequality that 
\begin{align}\label{equ:KST-Zaran-X-lower-bound}
    X
    & \ge \binom{n}{r-1}\binom{\sum_{T\in \binom{V_2}{r-1}}|N(T)|/\binom{n}{r-1}}{s_1} \notag \\
    & = \binom{n}{r-1}\binom{|\mathcal{H}|/\binom{n}{r-1}}{s_1} 
     \ge \frac{1}{s_1!}\binom{n}{r-1}\left(\frac{|\mathcal{H}|}{\binom{n}{r-1}} - (s_1-1)\right)^{s_1}. 
\end{align}
From the other direction, notice that for every $S\in \binom{V_1}{s_1}$, the common link $L_{\mathcal{H}}(S) \subset \binom{V_2}{r-1}$ is $K_{s_2, \ldots, s_r}^{r-1}$-free. 
So it follows from~\eqref{eq-Hyper-KST-link} that 
\begin{align*}
    |L_{\mathcal{H}}(S)|
    \le \frac{s_2 + \cdots+ s_r-r+1}{r-1} n^{r-1-\frac{1}{s_2\cdots s_{r-1}}}. 
\end{align*}
Consequently, 
\begin{align}\label{equ:KST-Zaran-X-upper-bound}
    X
    & \le \binom{m}{s_1} \cdot  \frac{s_2 + \cdots+ s_r-r+1}{r-1} n^{r-1-\frac{1}{s_2\cdots s_{r-1}}} \notag \\
    & \le \frac{1}{s_1!} \frac{s_2 + \cdots+ s_r-r+1}{r-1} n^{r-1-\frac{1}{s_2\cdots s_{r-1}}} m^{s_1}. 
\end{align}
Combining~\eqref{equ:KST-Zaran-X-lower-bound} and~\eqref{equ:KST-Zaran-X-upper-bound} we obtain 
\begin{align*}
    |\mathcal{H}|
    & \le \left(\frac{(s_2 + \cdots+ s_r-r+1) m^{s_1} n^{r-1-\frac{1}{s_2 \cdots s_{r-1}}}}{(r-1)\binom{n}{r-1}}\right)^{\frac{1}{s_1}}\binom{n}{r-1} + (s_1-1)\binom{n}{r-1} \\
    & \le \left(\frac{(s_2 + \cdots+ s_r-r+1) m^{s_1} n^{r-1-\frac{1}{s_2 \cdots s_{r-1}}}}{r-1}\right)^{\frac{1}{s_1}}\left(\frac{n^{r-1}}{(r-1)!}\right)^{1-\frac{1}{s_1}} + (s_1-1)\binom{n}{r-1} \\
    & \le \frac{(s_2 + \cdots+ s_r-r+1)^{\frac{1}{s_1}}}{r-1} mn^{r-1-\frac{1}{s_1\cdots s_{r-1}}}  + (s_1-1) \binom{n}{r-1}, 
\end{align*}
    completing the proof of Proposition~\ref{PROP:hypergraph-KST-Zaran}. 
\end{proof}
\end{appendix}
\end{document}